\newtheorem{theorem}{Theorem}[section]
\newtheorem{corollary}[theorem]{Corollary}
\newtheorem{lemma}[theorem]{Lemma}
\newtheorem{assumption}[theorem]{Assumption}
\theoremstyle{definition}
\theoremstyle{remark}
\newtheorem{remark}[theorem]{Remark}
\numberwithin{equation}{section}
\begin{document}
\def\Pro{{\mathbb{P}}}
\def\E{{\mathbb{E}}}
\def\e{{\varepsilon}}
\def\veps{{\varepsilon}}
\def\ds{{\displaystyle}}
\def\nat{{\mathbb{N}}}
\def\Dom{{\textnormal{Dom}}}
\def\dist{{\textnormal{dist}}}
\def\R{{\mathbb{R}}}
\def\O{{\mathcal{O}}}
\def\T{{\mathcal{T}}}
\def\Tr{{\textnormal{Tr}}}
\def\I{{\mathcal{I}}}
\def\A{{\mathcal{A}}}
\def\H{{\mathcal{H}}}
\def\S{{\mathcal{S}}}
\def\sgn{{\textnormal{sign}}}

\title{Metastability and exit problems for systems of stochastic reaction-diffusion equations}

\author{M. Salins K. Spiliopoulos\footnote{Boston University, Department of Mathematics and Statistics, 111 Cummington Mall, Boston, 02215.
E-mails: msalins@bu.edu, kspiliop@math.bu.edu. K.S. was partially supported by NSF DMS 1550918 and Simons Foundation Award  672441.}}
%
%
%

\maketitle

\begin{abstract}
In this paper we develop a metastability theory for 
a class of stochastic reaction-diffusion equations exposed to small multiplicative noise. We consider the case where the unperturbed reaction-diffusion equation features multiple asymptotically stable equilibria. When the system is exposed to small stochastic perturbations, it is likely to stay near one equilibrium for a long period of time, but will eventually transition to the neighborhood of another equilibrium. We are interested in studying the exit time from the full domain of attraction (in a function space) surrounding an equilibrium and therefore do not assume that the domain of attraction  features uniform attraction to the equilibrium. This means that the boundary of the domain of attraction is allowed to contain saddles and limit cycles.  Our method of proof is purely infinite dimensional, i.e., we do not go through finite dimensional approximations. In addition, we address the multiplicative noise case and we do not impose gradient type of assumptions on the nonlinearity. We prove large deviations logarithmic asymptotics for the exit time and for the exit shape, also characterizing the most probable set of shapes of solutions at the time of exit from the domain of attraction.
\end{abstract}

\textbf{MSC: }60F10, 60H15, 35R60, 60G40.

\textbf{Keywords: }Stochastic partial differential equations, stochastic reaction-diffusion equation, metastability, small noise, large deviations, exit time, exit place


\section{Introduction}

In this paper we study the following  $r$-dimensional system of stochastic reaction-diffusion equations  exposed to small noise on the spatial domain $\xi \in [0,L]$, $X^\e(t,\xi) = (u_1^\e(t,\xi),...,u_r^\e(t,\xi))$
\begin{equation} \label{eq:intro-ac}
  \begin{cases}
    \frac{\partial u^\e_i}{ \partial t} (t,\xi) = \mathcal{A}_i u^\e_i(t,\xi) + f_i(X^\e(t,\xi)) + \sqrt{\e}\sum_{j=1}^r g_{ij}(X^\e(t,\xi))\frac{\partial w_j}{\partial t}(t,\xi),\\
    u^\e_i(0,\xi) = x_i(\xi), \ \ \ \ u^\e_i(t,0) = u^\e_i(t,L) = 0.
  \end{cases}
\end{equation}

A uniform large deviations principle for a similar system of equations was investigated in \cite{Cerrai-RocknerLDP} (see also \cite{BudhirajaDupuisSalins2018,s-1992,kx-1996}).

The differential operators $\mathcal{A}_i$ are elliptic second-order operators such that for ${\varphi} \in C^2([0,L])$
\begin{equation*}
  \mathcal{A}_i {\varphi}(\xi)= a^i(\xi)\frac{d^2 {\varphi}}{d \xi^2}(\xi) + b^i(\xi)\frac{d{\varphi}}{d\xi}(\xi).
\end{equation*}

In the above expression $a^i(\xi)$ belong to $C^1([0,L])$, $b^i$ belong to $C([0,L])$ and the $a^i(\xi)$ are positive and strictly bounded from below by a positive number.

The vector-valued function $f: \mathbb{R}^r \to \mathbb{R}^r$ given by $f = (f_1,...,f_r)$ is locally Lipschitz continuous and satisfies certain dissipativity properties that are specified in Assumption \ref{assum:nonlinear} below. The matrix-valued function $g=(g_{ij}) :\mathbb{R}^r \to \mathbb{R}^{r\times r}$ is locally Lipschitz continuous and uniformly elliptic.

The driving noises $w_i$ $i=1,..,r$ are independent cylindrical Wiener processes on the Hilbert space $L^2([0,L]:\mathbb{R})$. The vector-valued random field $w = (w_1,...,w_r)$ is a cylindrical Wiener processes on the Hilbert space $H:=L^2([0,T]:\mathbb{R}^r)$.

We let $E$ denote the function space $C_0([0,L]:\mathbb{R}^r)$ of continuous vector-valued functions on $[0,L]$ that are equal to zero on the endpoints, endowed with the supremum norm. For any initial data $x \in E$, we denote the solution to \eqref{eq:intro-ac} by $X^\e_x(t,\xi) = (u^\e_1,...,u^\e_r)$ to emphasize the dependence on the initial data {$x(\xi)=(x_{1}(\xi),\cdots, x_{r}(\xi))$}. The process $X^\e_x(t,\cdot)$ has solutions that are almost surely continuous in time and space (see \cite{c-2003}) and therefore we can consider $X^\e_x(t):=X^\e_x(t,\cdot)$ as $E$-valued for any $t \geq 0$.


The purpose of this paper is to develop a metastability theory for $X^\e_x$ in the infinite dimensional function space $E$ that is analogous to the metastability theory for finite dimensional diffusions developed by \cite{FWbook}. In contrast to previous characterizations of the exit time in infinite dimensions, see \cite{Barret2015, BerglundGentz2013, BerglundWeber2017, cm-1997,BudhirajaDupuisSalins2018,dpz-1991, f-1988,g-2005,g-2008,l-2018}, we do not assume that the domain of attraction in $E$ has uniform attraction to the equilibrium, our method of proof is purely infinite dimensional, i.e. we do not go through finite dimensional approximations, we allow multiplicative noise and we do not impose gradient type of assumptions on the nonlinearity. In addition, we study, for the first time in this paper, the logarithmic asymptotic of the exit shape distribution and we characterize the most likely exit behaviors from the domain of attraction.

A set ${D_{u}}\subset E$ surrounding an asymptotically stable equilibrium $x_*$ of the unperturbed system $X^0_x$ is said to be uniformly attractive if
\begin{equation} \label{eq:unif-attr-def}
  \lim_{t \to \infty} \sup_{x \in {D_{u}}} |X^0_x(t) - x_*|_E = 0.
\end{equation}

In this paper we study the problem of exiting from a full domain of attraction. Specifically, if $x_* \in E$ is an asymptotically stable equilibrium of the unperturbed system $X^0_x$, we study the exit from
\[D := \{x \in E: \lim_{t \to \infty} |X^{0}_x(t) - x_*|_E = 0\}.\]

 Such a set $D$ is not uniformly attractive and
the boundary of $D$ can contain unstable equilibria (saddles) or limit cycles.
The exit problem from such a $D$ is often referred to in the literature as the case of a characteristic boundary. In the finite dimensional case, the characteristic boundary case was studied by Day in \cite{Day1990}. Genuinely new ideas are needed in passing to infinite dimensions since (a): the domain of attraction is not compact, and (b): in infinite dimensions one cannot connect any two points in $E$ with a controlled path (both of (a) and (b) are standard tools used in finite dimensions).

The stochastically perturbed system $X^\e_x(t)$, when $\e>0$ is small but positive, is likely to stay in $D$ for long periods of time. Once the system exits $D$ it can enter a different metastable domain. Due to certain non-degeneracy assumptions on the stochastic perturbations, the exit time $\tau^\e_x := \inf\{t>0: X^\e_x(t) \not \in D\}$ is finite with probability one, but diverges as $\e \to 0$. We will characterize the exponential divergence rates
\begin{equation} \label{eq:intro-log-of-tau}
  \lim_{\e \to 0} \e\log\E \tau^\e_x \text{ and } \lim_{\e \to 0} \e \log \tau^\e_x
\end{equation}
as well as the limiting distribution of the exit shape
$X^\e_x(\tau^\e_x)$ (Theorems \ref{thm:meanExitAsymptotics} and \ref{thm:exit-shape}).

In this paper we use recent uniform large deviations principle results for stochastic partial differential equations, \cite{BudhirajaDupuisSalins2018, Cerrai-RocknerLDP}, to prove logarithmic asymptotics for the mean exit time and for its corresponding distribution function, as the noise converges to zero. In addition, we prove, for the first time in infinite dimensions without uniform attraction,  lower and upper bounds for the large deviations principle of the exit shape (analogous to the exit position in finite dimensions).  Under additional conditions such lower and upper bounds match, yielding a large deviations principle for the exit shape.

To the best of our knowledge one of the first papers that studied large deviations for infinite dimensional systems with an eye towards metastability is \cite{FarisLasinio1982}. In \cite{FarisLasinio1982} the authors study the stochastic Allen-Cahn equation, characterize in detail the set of stable and unstable equilibrium points, properties of the equilibrium action, and obtain upper and lower bounds on the probabilities of tunnleing from one stable equilibrium to another.

Detailed results on mean exit time asymptotics for stochastic reaction diffusion equations (of the type considered in \cite{FarisLasinio1982}) can also be found in \cite{BerglundGentz2013}, see also \cite{Barret2015, BerglundWeber2017} and \cite{Debussche2013,Hogele2019}. The setup in \cite{BerglundGentz2013} is restricted to additive noise, with the nonlinear term in the equation being of gradient type.  The method of proof in \cite{BerglundGentz2013} is based on approximating the mean exit time of the infinite dimensional process by a sequence of mean exit times of appropriate finite dimensional problems making use of the potential theoretic approach to metastability developed in \cite{BovierEckhoffGayrandKlein2004,BovierGayrandKlein2005}. In this paper, we address the multiplicative noise case, we do not impose gradient type of restrictions on the nonlinear term and our method of proof is purely infinite dimensional without going through the finite dimensional approximation.  In addition, we obtain the logarithmic asymptotics of the exit shape distribution.

We comment on the assumptions that we impose on this model. We assume that the differential operator $\mathcal{A}$ is elliptic. Such an operator generates an analytic $C_0$ semigroup on $E$.  We assume that the nonlinear function $f$ is locally Lipschitz continuous and features a super-linear dissipativity property (see Assumption \ref{assum:nonlinear}). One important consequence of this kind of dissipativity is that it forces the solutions into a finite part of the space with overwhelming probability (see \eqref{eq:X-bound-sup}-\eqref{eq:X-big-zero-prob}). The compactness of the semigroup along with this dissipativity leads to certain compactness and tightness properties of the solutions. Any nonlinear odd-degree polynomial with negative leading coefficient such as $f(x) = -x^3 + x$ features super-linear dissipativity. The assumption that $f$ features super-linear dissipativity enables the proof of exit time and exit shape asymptotics from unbounded domains of the function space. If $f$ does not feature super-linear dissipativity (for example, if $f$ is globally Lipschitz continuous) then the methods developed in this paper can be adapted to prove exit time and exit shape asymptotics from bounded domains of attraction only. Since we expect the full domains of attraction around an attractor to be unbounded, we focus on the super-linear dissipativity setting.

We assume that the spatial dimension is $1$ so that the system exposed to space-time white noise is well-defined. If the spatial dimension were greater than 1, then an appropriately degenerate colored noise would be required for function-valued solutions to exist (See Hypothesis 2 of \cite{Cerrai-RocknerLDP}), however, the methods used in the current paper would not work if the noise were degenerate. We also assume that the diffusion matrix $g$ is invertible and that the inverse in uniformly bounded in operator norm (Assumption \ref{A:DiffusionCoeff}). The non-degeneracy of the space-time white noise and non-degeneracy of $g$ enable us to prove important results about the ability to connect controlled trajectories (Theorem \ref{thm:control}). We also make two reasonable assumptions about the boundary of the domain of attraction $D$. We assume that if $x$ is an initial condition that does not belong to $\bar{D}$ then deterministic control problems starting at $x$ cannot reach $\partial D$ with arbitrarily small controls (Assumption \ref{A:AttractionProperty}). Essentially this assumption says that $\partial D$ acts as a separatrix between different attractive domains in the function space. We also assume that there exists a finite collection $K_1,...,K_n \subset \partial D$ of so-called $V$ equivalence classes that contain all of the $\omega$-limit points of the unperturbed system $X^0_x$ for $x \in \partial D$ (see Assumpiton \ref{A:K_equivalenceClassV}).

All of the assumptions outlined in the previous paragraphs are important for our analysis, but they are also very general. We do not assume that $f$ has any gradient structure. We allow for multiplicative noise and allow for general non-degenerate, bounded, and Lipschitz continuous diffusion matrices $g$. As mentioned  before, we study the exit from a full domain of attraction around an asymptotically stable solution. Furthermore, in contrast to the assumptions imposed in most of the investigations of exit place asymptotics in the finite dimensional setting (eg. \cite{FWbook,Day1990}), we make no assumptions about the regularity of the boundary of $D$. In lieu of assuming boundary regularity we propose two new notions of the quasipotential to characterize the exit shape problem. Without assuming any regularity properties of the boundary of $D$ we cannot quite prove a LDP for the exit shape, but rather prove the result with a lower bound and upper bound that do not necessarily match (Theorem \ref{thm:exit-shape}). We expect analogous results to hold for finite dimensional exit place asymptotics from regions with irregular characteristic boundaries. In Section \ref{S:ExitShape_AdditionalCond} we explore which extra requirements on $D$ lead to an LDP for the exit shape.

The rest of the paper is organized as follows.  In Section \ref{S:Assumptions} we fix our notations and we describe our main assumptions about $\mathcal{A}$, $f$, $g$, and $\partial w/\partial t$ from \eqref{eq:intro-ac}. 
Our other two main assumptions about the domain of attraction $D$ are presented in Section \ref{S:LDP} after the quasipotential $V(x,y)$ has been defined. In Section \ref{S:MildFormulation} we describe the mild formulation of (\ref{eq:intro-ac}) and establish some key uniform bounds on its solution that will be used throughout the paper (see Theorem \ref{T:BOUNDEDNESSPROPOFX}).

In Section \ref{S:LDP}, we recall the related uniform large deviations results of \cite{BudhirajaDupuisSalins2018, Cerrai-RocknerLDP}, and define the quasipotentials $V(x,y)$, $\tilde{V}_D(x,y)$, and $\hat{V}_D(x,y)$ in terms of optimal control problems with respect to the large deviations rate function. Then we present the main results of our paper, Theorems \ref{thm:meanExitAsymptotics} and \ref{thm:exit-shape}, which characterize the exponential divergence rate of the exit time and the mean exit time \eqref{eq:intro-log-of-tau}, and the large deviations upper and lower bounds of the distribution of the exit shape $X^\e_x(\tau^\e_x)$, in terms of these quasipotentials.

Section \ref{S:controlability} contains a key result of the paper on controllability of the underlying controlled infinite dimensional dynamical system as well as results on continuity, lower semicontinuity, and compactness properties of the  the quasipotentials.  
In particular, even though, and in contrast to the finite dimensional case, we cannot connect any two arbitrary elements of the domain of attraction with a controlled trajectory, we can still construct a control that connects initial conditions to certain given controlled paths after a short time and with minimal additional energy, given that the initial points are close in the uniform norm. The results of Section \ref{S:controlability} may be of independent interest.  

Sections \ref{S:ExitTimeAsymptotics} and \ref{S:ExitShape} contain the proofs of  Theorems \ref{thm:meanExitAsymptotics} and \ref{thm:exit-shape} respectively. In Section \ref{S:ExitTimeAsymptotics} we prove that the exponential divergence rates of the exit time and the mean exit time \eqref{eq:intro-log-of-tau} are characterized in terms of the quasipotential.
What allows one to derive the desired results are the controllability results, the continuity and regularity properties of the quasipotential and the uniform bounds established in Section \ref{S:controlability}.

 In Section \ref{S:ExitShape} we prove large deviations lower and upper bounds  for the distribution of the exit shape $X^\e_x(\tau^\e_x)$, analogous to the exit place in finite dimensions. Due to the lack of uniform attraction to the stable equilibrium,  the characteristic boundary of $D$, and the infinite dimensionality of our setting, the proof here is considerably more complicated than in the analogous finite dimensional case of \cite{FWbook} and \cite{Day1990}.  Again we make extensive use of the controllability result of Section \ref{S:controlability} and we study the logarithmic asymptotics of appropriate Markov chains, analogous in spirit to the Markov chain constructed in \cite{FWbook} but substantially different in order to deal with the infinite dimensional aspect of the problem and due to the inability to consider reflected process (in contrast to what was done in \cite{Day1990}).
 Then, in Section \ref{S:ExitShape_AdditionalCond}, we demonstrate that if we impose an additional assumption about boundary regularity of the domain of attraction $D$ and controllability of the processes near the boundary, that the large deviations lower and upper bounds in Theorem \ref{thm:exit-shape} match, leading to a large deviations principle for the exit shape distribution.

\section{Assumptions and notation}\label{S:Assumptions}
For any Banach spaces $\mathcal{X}$, $\mathcal{Y}$ let $\mathscr{L}(\mathcal{X},\mathcal{Y})$ be the set of bounded linear operators from $\mathcal{X} \to \mathcal{Y}$ endowed with the norm
\[|B|_{\mathscr{L}(\mathcal{X},\mathcal{Y})} := \sup_{|x|_\mathcal{X}=1} |Bx|_{\mathcal{Y}}.\]
We let $\mathscr{L}(\mathcal{X}):=\mathscr{L}(\mathcal{X},\mathcal{X})$.

For any Banach space $\mathcal{X}$ define the distance between a set $K \subset \mathcal{X}$ and $y \in \mathcal{X}$
\[\dist_\mathcal{X}(y,K):= \inf_{x \in K} |x-y|_E.\]

As introduced in the previous section, let $E:=C_0([0,L]:\mathbb{R}^r)$ endowed with the supremum norm
\[|x|_E:=\sup_{\xi \in [0,L]}|x(\xi)|.\]

Let $H:=L^2([0,L]:\mathbb{R}^r)$  with the norm
\[|x|_H:= \left(\int_0^L |x(\xi)|^2 d\xi \right)^{1/2}\]
and denote the associated inner product by
\[\left<x,y\right>_H := \int_0^L x(\xi)\cdot y(\xi)d\xi.\]

In the  expressions above $| \ |$ denotes the $\mathbb{R}^r$ norm and $\cdot$ denotes the $\mathbb{R}^r$ inner product.

Arguing as in Section 2 of \cite{Cerrai-RocknerLDP} we can assume without loss of generality that the differential operator $\mathcal{A}$ is in divergence form and its realization $A$ in $H$ with imposed boundary conditions is self-adjoint. Then there exists a complete orthonormal system of $H$ such that $Ae_k = -\alpha_k e_k$.  Define fractional powers $(-A)^\delta e_k = \alpha_k^\delta e_k$. Define fractional Sobolev spaces for $\delta \in \mathbb{R}$ $H^\delta$ is the completion of $C^\infty_0([0,L]:\mathbb{R}^r)$ under the norm
\begin{equation} \label{eq:Sobolev-spaces}
|x|_{H^\delta}^2 = \sum_k \alpha_k^\delta \left<x,e_k\right>_H^2.
\end{equation}

For any subset $D \subset E$ we let $\bar{D}$ denote the closure of $D$ in $E$. We let $D^c$ denote the complement of $D$ in $E$.
For any $x \in E$ and $\delta>0$ we denote the open ball by
\[B(x,\delta):= \{y \in E: |x-y|_E<\delta\}.\]

Assume that the nonlinearity $f:\mathbb{R}^r \to \mathbb{R}^r$ satisfies the following assumption.
\begin{assumption} \label{assum:nonlinear}
  \begin{enumerate}[(a)]
    \item $f$ is Locally Lipschitz continuous. {For any $R>0$ there exists $C=C(R)$ such that whenever $x,y \in \mathbb{R}^r$ with $|x|\leq R$ and $|y|\leq R$,
    \begin{equation}
      |f(x)-f(y)| \leq C|x-y|.
    \end{equation}}
    \item There exist $\rho_{*},\lambda,C>0$ such that for any $x, h \in \mathbb{R}^r$, 
    \begin{equation} \label{eq:f-dissip}
      (f(x+h) - f(x))\cdot\frac{h}{|h|} \leq - \lambda |h|^{1+\rho_{*}} + C(1 + |x|^{1+\rho_{*}}).
    \end{equation}
    and
   \begin{equation}
      |f(x)| \leq C(1+|x|^{1+\rho_{*}}).
   \end{equation}
  \end{enumerate}
\end{assumption}

Assumption \ref{assum:nonlinear} is satisfied, for example, when the $f_i$ are polynomials with odd degree and a negative coefficient for the leading term. If $\rho_{*}=2m\geq 2$ is an even number, $\lambda_i>0$, $i\in \{1,...,r\}$ and $p_i(x)$ for $i \in \{1,...,r\}$ are polynomials on $\mathbb{R}^r$ with degree less than or equal to $2m$  then
\begin{equation}
  f_i(x) = -\lambda_i x_i^{1+2m} + p_i(x)
\end{equation}
will satisfy our assumptions.
As we will show below, see Theorem \ref{T:BOUNDEDNESSPROPOFX}, property \eqref{eq:f-dissip} will guarantee that the process spends most of its time on a finite part of the space.

Define the Nemyitskii operator $F: E \to E$ by for $x \in E$ and $\xi \in [0,L]$,
\begin{equation} \label{eq:F-def}
  F(x)(\xi) = f(x(\xi)).
\end{equation}
Under Assumption \ref{assum:nonlinear}, $F$ has the properties that
\begin{enumerate}
  \item $F$ is locally Lipschitz continuous. For any $R>0$ there exists $\kappa=\kappa(R)$ such that whenever $x,y \in E$ with $|x|_E\leq R$ and $|y|_E\leq R$, $|F(x)-F(y)|_E \leq \kappa |x-y|_E.$
  \item For any $x,h \in E$ and any 
  $\delta \in \partial |h|_E := \{y \in E^\star: |y|_{E^\star} = 1 \text{ and } \left<h,y\right>=|h|_E\},$
      \begin{equation} \label{eq:F-dissip}
        \left<F(x+h) - F(x), \delta \right> \leq -\lambda |h|_E^{1+\rho_{*}} + C(1 + |x|_E^{1+\rho_{*}}).
      \end{equation}
  \item There exists $C>0$ such that for any $x \in E$,
        \begin{equation} \label{eq:F-growth}
          |F(x)|_E \leq C(1 + |x|_E^{1 + \rho_{*}}).
        \end{equation}
\end{enumerate}

In regards to the diffusion coefficient $g$ we impose Assumption \ref{A:DiffusionCoeff}.
\begin{assumption}\label{A:DiffusionCoeff}
  There exists $0<\kappa_0< \kappa_1$ such that for all $x,h \in \mathbb{R}^r$,
  \begin{equation} \label{eq:g-bound}
   \kappa_0|h| \leq |g(x)h| \leq \kappa_1|h|.
  \end{equation}
  There exists $\kappa>0$ such that for any $x, y \in \mathbb{R}^r$
  \begin{equation} \label{eq:g-Lip}
    |g(x) - g(y)|_{\mathscr{L}(\mathbb{R}^r)} \leq \kappa |x-y|.
  \end{equation}
\end{assumption}

For $x \in E$,$h \in H$ and $\xi \in [0,L]$ define
\begin{equation} \label{eq:G-cap-def}
  [G(x)h](\xi):= g(x(\xi))h(\xi).
\end{equation}
$G$ is a mapping from $E \to \mathscr{L}(H)$. Furthermore, it satisfies the following properties.
\begin{enumerate}
  \item For any $x \in E$ and $h \in H$,
  \begin{equation} \label{eq:G-bound}
    \kappa_0 |h|_H \leq |G(x)h|_H \leq \kappa_1 |h|_H
  \end{equation}
  \item For any $x, y \in E$,
  \begin{equation} \label{eq:G-Lip}
    |G(x) - G(y)|_{\mathscr{L}(H)} \leq \kappa |x-y|_E.
  \end{equation}
  \item For any $x \in E$, $G(x)$ is invertible and
  \begin{equation}
    [G^{-1}(x)h](\xi) = g^{-1}(x(\xi))h(\xi)
  \end{equation}
  and
  \begin{equation} \label{eq:G-inv-norm}
    |G^{-1}(x)|_{\mathscr{L}(H)} \leq \frac{1}{\kappa_0}.
  \end{equation}
\end{enumerate}
\[\]

\begin{remark}\label{R:boundednessG}
We mention here that the upper boundedness assumption on $G(x)$ has been mainly utilized in the proofs of Theorem \ref{thm:stoch-conv-bound}, Theorem \ref{T:BOUNDEDNESSPROPOFX}, part 3, and Theorem \ref{thm:exp-est-outside-ball}. At the expense of slightly more elaborate estimates, one could weaken the global upper bound assumption to potentially allow some growth and as long as the statements of Theorem \ref{thm:stoch-conv-bound}-\ref{thm:exp-est-outside-ball} hold, the rest of the results of the paper will also hold. We chose to assume boundedness to simplify those proofs and thus focus on the rest of the results which also constitute the main novelty of the paper.
\end{remark}

We close this section by describing the space-time white noises. For any $i \in \{1,...,r\}$, $\frac{\partial w_i}{\partial t}$ are independent and have the property that for any $T>0$ and any deterministic $\varphi \in L^2([0,L]\times[0,T])$,
\[\int_0^T \int_0^L \varphi(t,\xi) \frac{\partial w_i}{\partial t}(t,\xi)d\xi dt\]
is a Gaussian random variable. They have covariances
\begin{align*}
  &\E \left(\int_0^T \int_0^L \varphi(t,\xi) \frac{\partial w_i}{\partial t}d\xi dt \right) \left( \int_0^T \int_0^L \psi(t,\xi) \frac{\partial w_j}{\partial t}d\xi dt\right) \\
  &= \delta_{ij}\int_0^T \int_0^L \varphi(t,\xi)\psi(t,\xi)d\xi dt.
\end{align*}
where $\delta_{ij} = 1$ if $i=j$ and $\delta_{ij}=0$ for $i \not =j$.
Recall that the eigenfunctions of $A$,
$\{e_k\}_{k=1}^\infty$, form a complete orthonormal basis of $H:=L^2([0,L]:\mathbb{R}^r)$. For $i \in \{1,...,r\}$, let $e_{k,i}(\xi)$ be the $i$th component of $e_k(\xi)$  then
\begin{equation*}
  \beta_k(t):= \sum_{i=1}^r \int_0^t \int_0^L e_{k,i}(\xi) \frac{\partial w_i}{\partial t}(t,\xi)d\xi dt
\end{equation*}
is a family of independent identically distributed one-dimensional Brownian motions. Let $\{\mathcal{F}_t\}$ be the natural filtration of this collection of Brownian motions. Let $w(t)$ be the formal sum
\begin{equation*}
  w(t)= \sum_{k=1}^\infty e_k \beta_k(t).
\end{equation*}

For any $\mathcal{F}_t$-adapted $H$-valued process, $\varphi(t)$ satisfying $\E \int_0^T |\varphi(t)|_H^2 dt<+\infty$, the stochastic integral is defined to be the $L^2(\Omega)$ limit
\begin{equation*}
  \int_0^T \left<\varphi(s),dw(s)\right>_H = \sum_{k=1}^\infty \int_0^T \left<\varphi(s),e_k\right>_H d\beta_k(s).
\end{equation*}

\section{The mild solution and its properties}\label{S:MildFormulation}

As stated in the previous section, $A$ is the realization of $\mathcal{A}=\text{diag}(\mathcal{A}_1,...,\mathcal{A}_r)$ in $E$ with the imposed boundary conditions. $A$ generates a $C_0$ semigroup in $E$, which we call $S(t)$. $S(t)$ has many regularization properties including (see for example \cite{CerraiSmoothing1999})
\begin{equation} \label{eq:semigroup-regularity}
  |S(t)h|_E \leq C t^{-1/4} |h|_H.
\end{equation}
$S(t)$ is also a compact semigroup in the sense that for any $t>0$ the image $\{S(t)x : |x|_E\leq 1\}$ is pre-compact in $E$. Let $F:E \to E$ be defined by \eqref{eq:F-def} and $G: E \to \mathscr{L}(H)$ be defined by \eqref{eq:G-cap-def}.

The mild solution of \eqref{eq:intro-ac} is the solution to the integral equation
\begin{equation} \label{eq:mild}
  X^\e_x(t) = S(t)x + \int_0^t S(t-s)F(X^\e_x(s))ds + \sqrt{\e}\int_0^t S(t-s)G(X^\e_x(s))dw(s).
\end{equation}
The subscript $x$ represents the initial $E$-valued condition.

In the sequel, we establish certain boundedness and regularizing properties of $X^\e_x(t)$, the solution to (\ref{eq:mild}). Let us define  $Y^\e_x(t)$ to be the stochastic convolution
\begin{equation}\label{eq:stoch-conv-def}
      Y^\e_x(t): = \sqrt{\e}\int_0^t S(t-s) G(X^\e_x(s))dw(s).
\end{equation}

Using the classical stochastic factorization method and the fact that $g$ is uniformly bounded above \eqref{eq:g-bound} we obtain Theorem \ref{thm:stoch-conv-bound} (see Lemma 4.1 in \cite{CerraiRDEAveraging1}).
\begin{theorem} \label{thm:stoch-conv-bound}
  There exists  $p>1$ and a constant $C=C(p)>0$ such that for any $x \in E$ and $\e>0$,
  \begin{equation} \label{eq:stoch-conv-bound}
    \E\sup_{t \in [0,T]}|Y^\e_x(t)|_E^p \leq C\e^{\frac{p}{2}} T.
  \end{equation}
\end{theorem}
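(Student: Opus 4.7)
The plan is to apply the stochastic factorization method of Da Prato--Zabczyk. Fix $\alpha\in(0,1)$ to be chosen, use the elementary identity
$$\frac{\sin(\pi\alpha)}{\pi}\int_r^t (t-s)^{\alpha-1}(s-r)^{-\alpha}\,ds=1,\qquad 0\leq r<t,$$
together with the semigroup law $S(t-r)=S(t-s)S(s-r)$ and a stochastic Fubini argument, to rewrite
$$Y^\e_x(t)=\frac{\sin(\pi\alpha)}{\pi}\int_0^t (t-s)^{\alpha-1}S(t-s)\,Z^\e_x(s)\,ds,$$
where
$$Z^\e_x(s):=\sqrt{\e}\int_0^s (s-r)^{-\alpha}S(s-r)G(X^\e_x(r))\,dw(r).$$
The advantage is that all of the stochastic content is now concentrated in $Z^\e_x(s)$, while the outer $ds$-integral is deterministic and is amenable to H\"older's inequality together with the $L^p\to E$ smoothing of the heat semigroup.

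First I would bound moments of $Z^\e_x(s)$. Since we are in spatial dimension one, the eigenvalues of $A$ satisfy $\alpha_k\sim k^2$, which yields the Hilbert--Schmidt estimate $\|S(t)\|_{HS(H)}^2\leq C t^{-1/2}$. Combined with the uniform operator bound $|G(y)|_{\mathscr{L}(H)}\leq\kappa_1$ from \eqref{eq:G-bound}, a pointwise Burkholder--Davis--Gundy argument applied for each $\xi\in[0,L]$ to the scalar-valued stochastic integral gives, for any $p\geq 2$ and any $\alpha\in(0,1/4)$,
$$\E|Z^\e_x(s)|_{L^p([0,L]:\R^r)}^p\leq C_p\,\e^{p/2}\left(\int_0^s (s-r)^{-2\alpha-1/2}\,dr\right)^{p/2}\leq C_p\,\e^{p/2},$$
uniformly in $s\in[0,T]$. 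Only the upper bound $\kappa_1$ on $g$ enters here, consistent with what the statement asserts.

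Finally I would pass back to $Y^\e_x$ in $E$. Using the 1D smoothing inequality $|S(t)h|_E\leq C t^{-1/(2p)}|h|_{L^p}$ inside the factorization representation and applying H\"older's inequality with conjugate exponent $q=p/(p-1)$ to the outer $ds$-integral yields
$$|Y^\e_x(t)|_E^p\leq C\left(\int_0^t (t-s)^{(\alpha-1-1/(2p))q}\,ds\right)^{p/q}\int_0^t |Z^\e_x(s)|_{L^p}^p\,ds.$$
Combining with the preceding bound on $Z^\e_x$, taking the supremum over $t\in[0,T]$, and then expectation produces the claimed estimate $\E\sup_{t\in[0,T]}|Y^\e_x(t)|_E^p\leq C\,\e^{p/2}T$. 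The main obstacle is the balancing of exponents: $\alpha$ must be small enough ($\alpha<1/4$) for the kernel $(s-r)^{-2\alpha-1/2}$ in the $Z^\e_x$-bound to be integrable, yet $\alpha>3/(2p)$ for the outer factorization integral to converge. These constraints are simultaneously satisfiable only once $p$ is chosen sufficiently large (for instance $p>6$), which is precisely why the statement asserts the existence of some specific $p>1$ and why the constant $C$ is allowed to depend on $p$.
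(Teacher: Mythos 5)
Your proposal is correct and follows the same route as the paper, which does not write out a proof but simply invokes the classical stochastic factorization method together with the uniform upper bound on $g$ (citing Lemma 4.1 of \cite{CerraiRDEAveraging1}); your exponent bookkeeping ($\alpha<1/4$ from the one-dimensional Hilbert--Schmidt estimate $\|S(t)\|_{HS}^2\leq Ct^{-1/2}$, $\alpha>3/(2p)$ from convergence of the outer integral, hence $p>6$) is exactly the standard computation behind that citation. The only cosmetic discrepancy is that the outer deterministic integral $\bigl(\int_0^t(t-s)^{(\alpha-1-1/(2p))q}\,ds\bigr)^{p/q}$ contributes an additional positive power of $t$, so as written your argument yields a bound of the form $C\e^{p/2}T^{1+\beta}$ with $\beta>0$ rather than $C\e^{p/2}T$; this is immaterial here, since the paper only ever applies the estimate with $T=1$.
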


The main result of this section is the following theorem.
\begin{theorem}\label{T:BOUNDEDNESSPROPOFX}
  The mild solution $X^\e_x$ solving \eqref{eq:mild} satisfies the following bounds almost surely
  \begin{enumerate}
    \item There exists $C>0$ such that or any $x \in E$, $\e>0$ and $t>0$,
    \begin{equation} \label{eq:X-bound-x}
      \sup_{s \in [0,t]} |X^\e_x(s)|_E \leq C \left(1 + |x|_E +  \sup_{s \in [0,t]} |Y^\e_x(s)|_E\right).
    \end{equation}
    \item There exists $C>0$ such that for any $x \in E$, $\e>0$, and $t>0$,
    \begin{equation} \label{eq:X-bound-sup}
      |X^\e_x(t)|_E \leq C \left(1 + t^{-\frac{1}{\rho_{*}}} +  \sup_{s \in [0,t]} |Y^\e_x(s)|_E \right).
    \end{equation}
    \item  For $R_0$ large enough, we have
    \begin{equation}
     \sup_{|x|_E>R_0} \Pro(|X^\e_x(1)|_E>R_0) \leq C\e^{\frac{p}{2}}. \label{eq:X-big-zero-prob}
   \end{equation}
  \end{enumerate}
\end{theorem}

The proof of Theorem \ref{T:BOUNDEDNESSPROPOFX} will be given in Appendix \ref{App:MildSolution}.

 In addition, using an estimate from  \cite{Cerrai-RocknerLDP}, we can show that the probability of $X^\e_x$ staying outside of a bounded set is exponentially small. This result uses the fact that $g$ is bounded, but as \cite{Cerrai-RocknerLDP} demonstrated, stopping time arguments can be used to apply this theorem to the case where $g$ is unbounded.
\begin{theorem}[Theorem 3.2 of \cite{Cerrai-RocknerLDP}] \label{thm:exp-est-outside-ball}
  For any $T>0$, there exist $c_1,c_2, \lambda>0$ such that for any $\delta>0$ and $u \in C([0,T]:E)$ that is adapted to the filtration of $w(t)$,
  \begin{equation*}
    \Pro \left(\sup_{t \in [0,T]} \left|\int_0^t S(t-s)G(u(s))dw(s) \right|_E \geq \delta \right) \leq c_1 \exp \left(-\frac{\delta^2}{c_2 T^\lambda} \right).
  \end{equation*}
\end{theorem}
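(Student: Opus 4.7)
The plan is to follow the classical stochastic factorization approach of Da Prato--Kwapień--Zabczyk, exploiting crucially the fact that Assumption \ref{A:DiffusionCoeff} yields a deterministic $\mathscr{L}(H)$-bound on $G(u(\cdot))$, independent of the adapted process $u$. The sub-Gaussian nature of the tail will come from the exponential martingale inequality applied coordinate-wise in the orthonormal basis $\{e_k\}$ of $H$.

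First, fix $\alpha\in(0,1/2)$ and an intermediate space $X$ (either $H$ itself or $H^{-\delta}$ for a small $\delta>0$, to be determined). Set
\begin{equation*}
Y_\alpha(s):=\int_0^s (s-r)^{-\alpha}S(s-r)G(u(r))\,dw(r),
\end{equation*}
so that the factorization identity reads
\begin{equation*}
Z(t):=\int_0^t S(t-s)G(u(s))\,dw(s)=\tfrac{\sin(\pi\alpha)}{\pi}\int_0^t(t-s)^{\alpha-1}S(t-s)Y_\alpha(s)\,ds.
\end{equation*}
Combining this with a smoothing estimate $|S(r)|_{\mathscr{L}(X,E)}\leq Cr^{-\beta}$ (available for $\beta<\alpha$ in one spatial dimension by \eqref{eq:semigroup-regularity} and Sobolev embedding) and applying Hölder's inequality with a conjugate pair $(p,q)$ chosen so that $q(\alpha-1-\beta)>-1$, one obtains the pathwise bound
\begin{equation*}
\sup_{t\in[0,T]}|Z(t)|_E\leq C_{\alpha,p,\beta}\,T^\mu\left(\int_0^T|Y_\alpha(s)|_X^p\,ds\right)^{1/p}
\end{equation*}
for some explicit $\mu=\mu(\alpha,p,\beta)\geq 0$.

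Next, because $|G(u(r))|_{\mathscr{L}(H)}\leq\kappa_1$ almost surely by \eqref{eq:G-bound}, the Hilbert--Schmidt norm of the integrand of $Y_\alpha$ viewed as an operator from $H$ to $X$ satisfies the pathwise bound
\begin{equation*}
\bigl|(s-r)^{-\alpha}S(s-r)G(u(r))\bigr|_{HS(H,X)}^2\leq\kappa_1^2(s-r)^{-2\alpha}|S(s-r)|_{HS(H,X)}^2.
\end{equation*}
Choosing $X$ so that this quantity is integrable in $r$ near $s$, the Novikov criterion applied to each real martingale $\int_0^s\langle(s-r)^{-\alpha}S(s-r)G(u(r))e_k,\eta\rangle\,d\beta_k(r)$, summed over $k$ via the tensor structure, yields a Fernique-type estimate
\begin{equation*}
\E\exp\bigl(\gamma_T|Y_\alpha(s)|_X^2\bigr)\leq 2,\qquad \gamma_T\geq c\,T^{-\nu},
\end{equation*}
uniformly in $s\in[0,T]$ and in the adapted process $u$, for some $\nu\geq 0$. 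Integrating this in $s$, applying Jensen's inequality (convexity of $\exp$) to pass the exponential inside the $L^p$ norm of $Y_\alpha$, and Chebyshev's inequality on the resulting exponential moment, delivers
\begin{equation*}
\Pro\!\left(\int_0^T|Y_\alpha(s)|_X^p\,ds\geq M\right)\leq c_1\exp(-c\,T^{-\nu'}M^{2/p}),
\end{equation*}
which together with the bound of the first paragraph yields the desired conclusion with $\lambda=\lambda(\alpha,p,\beta,\nu)$.

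The main obstacle is the simultaneous satisfaction of three competing constraints: (i) the exponent $(s-r)^{-2\alpha}|S(s-r)|_{HS(H,X)}^2$ must be integrable in $r$ (for the sub-Gaussian bound on $Y_\alpha$), (ii) the smoothing exponent $\beta$ of $S$ from $X$ to $E$ must be strictly less than $\alpha$ (for Hölder in the factorization), and (iii) one needs $p$ large enough to fit into the Hölder pairing. In dimension one these can be arranged by taking $X=H^{-\delta}$ with small $\delta>0$ and $\alpha$ in the interval $(1/4-\delta/2,\,1/2)$; the required bound $|S(r)|_{HS(H,H^{-\delta})}^2\lesssim r^{\delta-1/2}$ comes from Weyl asymptotics $\alpha_k\sim k^2$. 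The remaining steps are quantitative but routine once the exponents are fixed, and the $\delta^2$ exponent in the tail is forced by the sub-Gaussianity of Step 2, which itself uses only boundedness—not Lipschitz continuity—of $g$.
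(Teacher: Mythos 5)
First, a point of reference: the paper does not prove this statement at all — it is quoted verbatim as Theorem 3.2 of \cite{Cerrai-RocknerLDP}, so the only ``proof'' in the paper is a citation. Your overall strategy (stochastic factorization plus sub-Gaussian tail bounds that use only the uniform bound $|G(u(r))|_{\mathscr{L}(H)}\le\kappa_1$ and not the Lipschitz continuity of $g$) is indeed the strategy behind the cited result, and your closing remark about which hypothesis forces the $\delta^2$ in the exponent is correct.

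There is, however, a genuine gap in the exponent bookkeeping: the intermediate space $X=H^{-\delta}$ cannot work, for any $\delta>0$ and any $\alpha$. Your integrability requirement for $Y_\alpha$ is $\int_0^s(s-r)^{-2\alpha}|S(s-r)|^2_{HS(H,H^{-\delta})}\,dr<\infty$; since $|S(r)|^2_{HS(H,H^{-\delta})}\sim r^{\delta-1/2}$ by Weyl asymptotics, this forces $\alpha<\tfrac14+\tfrac{\delta}{2}$. On the other side, the best available smoothing exponent is $|S(r)|_{\mathscr{L}(H^{-\delta},E)}\lesssim r^{-(\frac14+\frac{\delta}{2})}$ (combine \eqref{eq:semigroup-regularity} with $|S(r)|_{\mathscr{L}(H^{-\delta},H)}\lesssim r^{-\delta/2}$; the exponent is sharp because $H^{1/2+\epsilon}\hookrightarrow L^\infty$ is the Sobolev threshold in one dimension), so your H\"older step needs $\alpha>\tfrac14+\tfrac{\delta}{2}+\tfrac1p$. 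Every unit of $\delta$ gained on the Hilbert--Schmidt side is lost on the smoothing side, and the two windows never overlap; your proposed range $\alpha\in(1/4-\delta/2,\,1/2)$ in fact violates the first constraint (for $\alpha$ near $1/2$ the integrand behaves like $(s-r)^{\delta-3/2}$). The standard cure, and what Cerrai--R\"ockner actually do, is to take $X=L^{2m}([0,L])$ for large $m$: the well-posedness constraint on $Y_\alpha(s)(\xi)$ is then pointwise in $\xi$ and costs $\int_0^L K_r(\xi,\eta)^2\,d\eta\sim r^{-1/2}$ independently of $m$ (so $\alpha<1/4$ suffices), while the smoothing $L^{2m}\to E$ costs only $r^{-1/(4m)}$, leaving the nonempty window $3/(4m)<\alpha<1/4$. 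A second, related gap is in your final step: Jensen lets you pass the exponential inside the time integral only for $p=2$, which the H\"older step excludes (it would need $\alpha>3/4$); for large $p=2m$, converting the pointwise bounds $\E\exp(\gamma Y_\alpha(s,\xi)^2)\le2$ into a Gaussian tail for $\bigl(\int_0^T|Y_\alpha(s)|_{L^{2m}}^{2m}\,ds\bigr)^{1/(2m)}$ requires either optimizing over $m$ in the elementary bound $x^{2m}\le m!\,\gamma^{-m}e^{\gamma x^2}$ or a Banach-space-valued exponential martingale inequality. Relatedly, the coordinate-wise Novikov argument does not by itself control the full norm of an infinite-dimensional stochastic integral; one needs the vector-valued exponential martingale inequality for integrands with a deterministic bound on the quadratic variation.
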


From this we can immediately get exponential estimates on the probability that $|X^\e_x(T)|_E$ is large.
\begin{corollary} \label{cor:exp-est-X}
  For any $T>0$,
  \begin{equation*}
    \limsup_{R \to \infty}\limsup_{\e \to 0} \sup_{x \in E}\e \log \Pro \left(|X^\e_x(T)|_E \geq R \right) \leq -\infty.
  \end{equation*}
\end{corollary}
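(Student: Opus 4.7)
The plan is to use the pathwise bound \eqref{eq:X-bound-sup} of Theorem \ref{T:BoundednessPropOfX} to reduce the event $\{|X^\e_x(T)|_E \geq R\}$ to an event on the stochastic convolution $Y^\e_x$, and then to estimate the latter probability by the Gaussian tail bound furnished by Theorem \ref{thm:exp-est-outside-ball}.

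First I would fix $T>0$ and use \eqref{eq:X-bound-sup} at time $t=T$: there exists $C>0$, independent of $x \in E$ and $\e>0$, such that almost surely
\begin{equation*}
|X^\e_x(T)|_E \leq C\left(1 + T^{-1/\rho} + \sup_{s\in[0,T]} |Y^\e_x(s)|_E\right).
\end{equation*}
Hence, on the event $\{|X^\e_x(T)|_E \geq R\}$, we must have
\begin{equation*}
\sup_{s\in[0,T]} |Y^\e_x(s)|_E \geq R_T := \frac{R}{C} - 1 - T^{-1/\rho},
\end{equation*}
which is positive and tending to infinity with $R$ for all sufficiently large $R$. Crucially, the constant $C$ and the threshold $R_T$ do not depend on either $x$ or $\e$, so the inclusion holds uniformly in $x\in E$.

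Next I would apply Theorem \ref{thm:exp-est-outside-ball}. Writing $Y^\e_x(t) = \sqrt{\e}\int_0^t S(t-s)G(X^\e_x(s))dw(s)$ and noting that $X^\e_x(\cdot)$ is adapted to the filtration of $w$, the theorem (applied with $u(\cdot) = X^\e_x(\cdot)$ and threshold $R_T/\sqrt{\e}$) gives constants $c_1,c_2,\lambda>0$ depending only on $T$ such that
\begin{equation*}
\Pro\left(\sup_{s\in[0,T]}|Y^\e_x(s)|_E \geq R_T\right) = \Pro\left(\sup_{s\in[0,T]}\left|\int_0^s S(s-r)G(X^\e_x(r))dw(r)\right|_E \geq \frac{R_T}{\sqrt{\e}}\right) \leq c_1 \exp\!\left(-\frac{R_T^2}{c_2\, \e\, T^\lambda}\right).
\end{equation*}
Combining the two displays and taking $\e \log(\cdot)$, the uniformity in $x$ is preserved and
\begin{equation*}
\sup_{x \in E} \e\log \Pro\bigl(|X^\e_x(T)|_E \geq R\bigr) \leq \e \log c_1 - \frac{R_T^2}{c_2\, T^\lambda}.
\end{equation*}

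Finally, letting $\e \to 0$ the term $\e\log c_1$ vanishes, yielding
\begin{equation*}
\limsup_{\e \to 0}\sup_{x\in E} \e\log \Pro\bigl(|X^\e_x(T)|_E\geq R\bigr) \leq - \frac{R_T^2}{c_2 T^\lambda}.
\end{equation*}
Since $R_T \to \infty$ as $R \to \infty$, sending $R\to\infty$ gives the claimed limit $-\infty$. There is no real obstacle here because all the heavy lifting is done by Theorem \ref{T:BoundednessPropOfX} (which tames the nonlinear super-linear growth via dissipativity) and by Theorem \ref{thm:exp-est-outside-ball} (which provides the sub-Gaussian tail for the stochastic convolution uniformly in the adapted integrand, using only the boundedness of $g$). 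The only small point to watch is that the bound in Theorem \ref{thm:exp-est-outside-ball} is uniform over all adapted processes, so the estimate on $Y^\e_x$ is automatically uniform in $x \in E$, which is what makes the supremum over $x$ legitimate.
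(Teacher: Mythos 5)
Your proposal is correct and follows essentially the same route as the paper: reduce to the stochastic convolution via the pathwise bound \eqref{eq:X-bound-sup}, then apply the exponential tail estimate of Theorem \ref{thm:exp-est-outside-ball} with the threshold rescaled by $\sqrt{\e}$, and let $\e\to 0$ followed by $R\to\infty$. Your explicit remark that the tail bound is uniform over adapted integrands, hence uniform in $x$, is a point the paper leaves implicit but is exactly what justifies the supremum over $x\in E$.
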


\begin{proof}
  By Theorem \ref{T:BOUNDEDNESSPROPOFX}, for any $x \in E$, $\e>0$, $R>0$, and $T>0$,
  \begin{equation*}
    \Pro\left(|X^\e_x(T)|_E \geq R \right) \leq \Pro \left(\sup_{s \in [0,T]} |Y^\e_x(s)|_E \geq \frac{R}{C} -(1 + T^{-\frac{1}{\rho_{*}}}) \right)
  \end{equation*}
  where $Y^\e_x$ is given by \eqref{eq:stoch-conv-def}. By Theorem \ref{thm:exp-est-outside-ball}, letting $\tilde R = \frac{R}{C} -(1 + T^{-\frac{1}{\rho_{*}}})$,
  \begin{align*}
    \Pro\left( |X^\e_x(T)|_E \geq R \right) \leq c_1 \exp \left(-\frac{\tilde{R}^2}{c_2\e T^\lambda} \right).
  \end{align*}

  For arbitrarily large $V>0$, we can choose $R$ large enough so that $\frac{\tilde{R}^2}{c_2 T^\lambda}>V$. Then we can conclude that
  \[\limsup_{\e \to 0} \sup_{x \in E}\e \log \Pro \left(|X^\e_x(T)|_E \geq R \right) \leq -V.\]

  Our result follows because $V>0$ was arbitrary.
\end{proof}

\section{Uniform large deviations principle and metastability}\label{S:LDP}

Define the rate functions for $x \in E$, $T>0$ and $\varphi \in C([0,T]:E)$,
\begin{equation} \label{eq:rate-fct-def}
  I_x^T(\varphi) = \inf\left\{\frac{1}{2}\int_0^T |u(t)|_H^2 dt: u \in L^2([0,T]:H), \varphi = X^{0,u}_x \right\}
\end{equation}
where for any $u \in L^2([0,T]:H)$, $X^{0,u}_x$ solves the controlled problem
\begin{equation} \label{eq:control}
  X^{0,u}_x(t) = S(t)x + \int_0^t S(t-s)F(X^{0,u}_x(s))ds + \int_0^t S(t-s)G(X^{0,u}_x(s))u(s)ds.
\end{equation}

For $x \in E$, $T\geq 0$, and $s \geq 0$,  denote the level sets of the rate function by
\[\Phi_x^T(s) = \{\varphi \in C([0,T]:E): I_x(\varphi) \leq s\}.\]
 The following uniform large deviations result over bounded sets is well known.
\begin{theorem}[See \cite{Cerrai-RocknerLDP,BudhirajaDupuisSalins2018}] \label{thm:LDP}
  For any fixed $T>0$, $X^\e_x$ satisfies a uniform large deviations principle in $C([0,T]:E)$ with respect to the rate function $I_x^T$ uniformly over bounded subsets of $E$.
  \begin{enumerate}
    \item For any $\delta>0$, $s_0>0$, and bounded subset $E_0 \subset E$,
    \begin{align}
      \liminf_{\e \to 0} \inf_{x \in E_0} \inf_{\varphi \in \Phi_x^T(s_0)} \left(\e \log \Pro(|X^\e_x - \varphi|_{C([0,T]:E)}<\delta) + I_x^T(\varphi)  \right) &\geq 0. \label{eq:ldp-low}
    \end{align}
    \item For any $\delta>0$ and $s_0>0$, and bounded subset $E_0 \subset E$,
    \begin{equation} \label{eq:ldp-up}
      \limsup_{\e \to 0} \sup_{x \in E_0} \sup_{s \in [0,s_0]} \left(\e \log \Pro(\dist_{C([0,T]:E)}(X^\e_x, \Phi_x^T(s)) \geq \delta) + s \right) \leq 0.
    \end{equation}
  \end{enumerate}
\end{theorem}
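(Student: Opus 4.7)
The plan is to invoke the weak convergence / variational approach to large deviations for SPDEs, as developed in \cite{BudhirajaDupuisSalins2018} (building on Bou\'e--Dupuis and Cerrai--R\"ockner). Writing $X^\e_x = \mathcal{G}^\e_x(\sqrt{\e}\, w)$ for the measurable solution map defined by \eqref{eq:mild}, the Bou\'e--Dupuis variational representation reads
\begin{equation*}
-\e\log \E\exp\bigl(-\e^{-1} F(X^\e_x)\bigr) = \inf_{u} \E\Bigl[\tfrac{1}{2}\int_0^T |u(s)|_H^2 ds + F(X^{\e,u}_x)\Bigr]
\end{equation*}
for every bounded continuous $F: C([0,T]:E) \to \R$, where the infimum runs over $\{\mathcal{F}_t\}$-predictable $H$-valued controls of finite energy and $X^{\e,u}_x$ solves the mild equation obtained from \eqref{eq:mild} by adding the controlled drift $\int_0^\cdot S(\cdot - s) G(X^{\e,u}_x(s)) u(s)\, ds$. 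Equivalence of this representation with \eqref{eq:ldp-low}--\eqref{eq:ldp-up} on bounded sets $E_0 \subset E$ reduces, by the uniform version of the framework in \cite{BudhirajaDupuisSalins2018}, to checking two conditions, each to be verified uniformly over $x \in E_0$.

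First, I would check a continuity/compactness condition on the deterministic skeleton: for every $N > 0$, whenever $x_n \in E_0$ with $x_n \to x$ in $E$ and $u_n \to u$ weakly in $L^2([0,T]:H)$ with $\int_0^T |u_n(s)|_H^2 ds \leq N$, the controlled trajectories $X^{0,u_n}_{x_n}$ of \eqref{eq:control} converge to $X^{0,u}_x$ in $C([0,T]:E)$. Second, the stochastic weak convergence condition: for any $\{\mathcal{F}_t\}$-predictable controls $u_\e$ with $\int_0^T|u_\e(s)|_H^2 ds \leq N$ a.s., and any $x_\e \to x$ with $x_\e \in E_0$, joint weak convergence $(u_\e, x_\e) \Rightarrow (u,x)$ in the weak $L^2$ topology on the $N$-ball times $E$ implies $X^{\e,u_\e}_{x_\e} \Rightarrow X^{0,u}_x$ in $C([0,T]:E)$.

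The key ingredients for both conditions are (i) uniform a priori bounds on $|X^{0,u}_x|_E$ over $x$ ranging in a bounded set and $u$ of bounded energy, which follow from the semi-inner-product dissipativity \eqref{eq:F-dissip} along the lines of Theorem \ref{T:BoundednessPropOfX}, and (ii) tightness of the forcing term $\int_0^\cdot S(\cdot - s) G(X^{0,u}_x(s)) u(s)\, ds$ in $C([0,T]:E)$, obtained from compactness of $S(t)$ for $t > 0$ together with the smoothing estimate \eqref{eq:semigroup-regularity} and the boundedness of $G$ from \eqref{eq:G-bound}. For the purely stochastic part of the second condition, Theorem \ref{thm:stoch-conv-bound} shows that the $\sqrt{\e}$-weighted convolution vanishes in $L^p(\Omega; C([0,T]:E))$ as $\e \to 0$, so in the limit only the skeleton \eqref{eq:control} survives.

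The main obstacle will be the uniform a priori estimate on $|X^{0,u}_x|_E$: the super-linear growth \eqref{eq:F-growth} rules out a direct Gronwall argument, so one must pair \eqref{eq:F-dissip} against an element of $\partial |\cdot|_E$ to absorb the super-linear term into the left-hand side, yielding a bound $\sup_{t \in [0,T]} |X^{0,u}_x(t)|_E \leq \Psi\bigl(|x|_E, T, \int_0^T|u(s)|_H^2 ds\bigr)$ with $\Psi$ independent of the specific control. Once that bound is in hand, equicontinuity through semigroup compactness, together with local Lipschitz continuity of $F$ and $G$ on bounded sets and strong convergence of the test function $s \mapsto S(t - s)G(X^{0,u_n}_{x_n}(s))$ against the weakly convergent $u_n(s)$, allows passage to the limit in \eqref{eq:control} and delivers the first condition; the second follows by combining it with Theorem \ref{thm:stoch-conv-bound}. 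Uniformity in $x \in E_0$ is built in throughout since every estimate depends on $x$ only through $\sup_{x \in E_0}|x|_E$.
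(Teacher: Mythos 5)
The paper does not prove this theorem: it is stated as a known result and attributed directly to \cite{Cerrai-RocknerLDP,BudhirajaDupuisSalins2018}, so there is no in-paper proof to compare against. Your sketch correctly reconstructs the weak-convergence argument of \cite{BudhirajaDupuisSalins2018} -- the Bou\'e--Dupuis representation plus the two sufficient conditions (continuity/compactness of the skeleton map under weak convergence of controls, and convergence of the controlled stochastic system to the skeleton), and these are precisely the ingredients the paper itself imports later as Theorems \ref{thm:continuity} and \ref{thm:compact}. The only point worth flagging is that passing from the uniform Laplace principle to the Freidlin--Wentzell-type bounds \eqref{eq:ldp-low}--\eqref{eq:ldp-up} uniformly over \emph{non-compact} bounded sets $E_0$ is not automatic and requires the equicontinuity refinement developed in \cite{BudhirajaDupuisSalins2018} (see also \cite{Salins2018}); you defer to ``the uniform version of the framework,'' which is acceptable, but that equivalence is itself a nontrivial step rather than a formality.
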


Let us next present our main results of this work on metastability.

Let $x_*$ be an asymptotically stable equilibrium of the unperturbed system {and recall $D$, its domain of attraction,}
\begin{equation}
  D := \{x \in E: \lim_{t \to\infty} |X^0_x(t) - x_*|_E = 0\}.\label{Eq:DomainAttraction}
\end{equation}
\begin{assumption} \label{assum:D-boundary}
  Assume that for every $y \in \partial D$ and $\delta>0$
  \[B(y,\delta) \cap (\bar D)^c \not = \emptyset.\]
\end{assumption}
Assumption \ref{assum:D-boundary} is a technical assumption that prevents pathological cases.

We are interested in the time it takes for the stochastic system to exit this attracting set
\begin{equation*}
  \tau^\e_x: = \inf\{t>0: X^\e_x(t) \not \in D\}.
\end{equation*}

We characterize the asymptotic behavior of the exit time $\tau^\e_x$ and the exit shape $X^\e_x(\tau^\e_x)$ in terms of functionals $V, \tilde{V}_D$, and $\hat{V}_D$, which we call quasipotentials.

  For any $x, y \in E$, define
\begin{equation} \label{eq:quasipotential-def}
  V(x,y) : = \inf \left\{I_x^T(\varphi): T\geq 0, \varphi \in C([0,T]:E), \varphi(0) = x, \varphi(T) = y \right\}.
\end{equation}

Without confusion, for any subsets $D_1, D_2 \subset E$  we define
\begin{equation} \label{eq:quasipotential-def-sets}
  V(D_1,D_2) := \inf_{x \in D_1} \inf_{y \in D_2} V(x,y).
\end{equation}

In addition to the assumptions of Section \ref{S:Assumptions},  we shall also make the following assumption about the domain of attraction $D$ defined in \eqref{Eq:DomainAttraction}.
\begin{assumption}\label{A:AttractionProperty}
For any $x \not \in \bar{D}$, $V(x,\partial D)>0$.
\end{assumption}

\begin{remark} \label{rem:stay-away}
  Assumption \ref{A:AttractionProperty} guarantees that $\partial D$ acts like a separatrix.
  An important consequence of Assumption \ref{A:AttractionProperty} is that for any $x \not \in \bar D$ and $t \geq 0$.
  \begin{equation} \label{eq:stay-away-from-boundary}
    \dist_E(X^0_x(t), \partial D)>0.
  \end{equation}

Because $D$ was defined in \eqref{Eq:DomainAttraction} to be the domain of attraction for $x_*$, $V(x,\partial D)>0$ and \eqref{eq:stay-away-from-boundary}  also holds for all $x \in D$. Indeed, this is the content of Lemma \ref{L:PositiveQuasipotnetial} in Section \ref{S:controlability}.
\end{remark}

Theorem \ref{thm:meanExitAsymptotics} characterizes the logarithmic asymptotic of the exit time $\tau^\e_x$.
\begin{theorem} \label{thm:meanExitAsymptotics}
Let Assumptions \ref{assum:nonlinear}, \ref{A:DiffusionCoeff}, \ref{assum:D-boundary},  and \ref{A:AttractionProperty} hold.  For any $x \in D$,
  \begin{equation*} 
    \limsup_{\e \to 0} \e \log \E \tau^\e_x = V(x_*, \partial D)
  \end{equation*}
  and for any $\gamma>0$,
  \begin{equation*} 
    \lim_{\e \to 0} \Pro( V(x_*, \partial D) - \gamma<\e \log \tau^\e_x < V(x_*, \partial D) + \gamma) =1.
  \end{equation*}
\end{theorem}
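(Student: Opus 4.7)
The proof plan follows the Freidlin--Wentzell strategy for exit-time asymptotics, adapted to infinite dimensions with a characteristic boundary. I split into (a) an exponential upper bound on $\E\tau^\e_x$ via an explicit exit trajectory and a renewal argument, and (b) an exponential lower bound on $\tau^\e_x$ in probability via a Markov-chain-of-excursions argument; the concentration statement then follows from these together with Markov's inequality.

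For the upper bound, fix $\gamma>0$ and choose $T_0>0$ and a control $\varphi$ with $\varphi(0)=x_*$ and $\varphi(T_0)\notin\bar D$ of action $I_{x_*}^{T_0}(\varphi)\le V(x_*,\partial D)+\gamma/4$. That one can push slightly past $\partial D$ with negligible extra cost uses Assumption \ref{A:AttractionProperty} together with Remark \ref{rem:stay-away}. To handle arbitrary $x\in D$, precede the exit phase by a relaxation phase of length $T_1$: since $X^0_x(t)\to x_*$ for every $x\in D$, one can choose $T_1$ so that $X^0_x(T_1)$ lies in a small ball $B(x_*,\rho)$, and \eqref{eq:X-bound-sup} forces unbounded initial conditions into a bounded subset of $E$ almost instantaneously. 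The controllability results of Section \ref{S:controlability} then allow one to splice any endpoint close to $x_*$ with the given escape path at only $\gamma/4$ extra cost. The uniform LDP lower bound in Theorem \ref{thm:LDP} then yields
\[
\Pro(\tau^\e_x\le T_0+T_1)\ge \exp\bigl(-(V(x_*,\partial D)+\gamma)/\e\bigr)
\]
for $\e$ small, uniformly in $x$ on bounded subsets of $D$. A standard renewal argument based on the strong Markov property then gives $\E\tau^\e_x\le(T_0+T_1)\exp((V(x_*,\partial D)+\gamma)/\e)$, and letting $\gamma\to 0$ produces $\limsup_{\e\to 0}\e\log\E\tau^\e_x\le V(x_*,\partial D)$.

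For the lower bound on $\tau^\e_x$, introduce nested balls $B(x_*,\rho)\subset B(x_*,2\rho)\subset D$ and the excursion stopping times $\sigma_n,\tau_n$ recording exits from $\bar B(x_*,2\rho)$ and returns to $\bar B(x_*,\rho)$, truncated by $\tau^\e_x$. By Theorem \ref{thm:LDP} applied to the closed set of $C([0,T_2]:E)$-paths that hit $\partial D$, together with the lower semicontinuity of $V(\cdot,\partial D)$ at $x_*$ to be established in Section \ref{S:controlability}, the probability that any single excursion exits $D$ is at most $\exp(-(V(x_*,\partial D)-\gamma/2)/\e)$ for $\e$ small. The elapsed time between consecutive returns is tight in probability because of the dissipativity bound \eqref{eq:X-bound-sup}, the exponential tail in Theorem \ref{thm:exp-est-outside-ball}, and the fact that the deterministic flow carries the bounded portion of $D\setminus B(x_*,\rho)$ back into $B(x_*,\rho)$ in uniformly bounded time. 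Iterating, achieving $\tau^\e_x<\exp((V(x_*,\partial D)-\gamma)/\e)$ requires succeeding at one of exponentially many nearly independent attempts whose success probability is only $\exp(-(V(x_*,\partial D)-\gamma/2)/\e)$, an event of vanishing probability. Markov's inequality applied to the upper bound on $\E\tau^\e_x$ yields the upper half of the concentration statement, and the lower half of the concentration statement in turn gives $\liminf_{\e\to 0}\e\log\E\tau^\e_x\ge V(x_*,\partial D)$, matching the $\limsup$ bound and completing the proof.

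The principal obstacle is that $D$ is neither compact nor uniformly attracting, so excursions cannot a priori be confined to a compact set and many arguments familiar from the finite dimensional setting fail. The remedy combines the dissipativity bounds of Theorem \ref{T:BoundednessPropOfX} and the Gaussian-type tail of the stochastic convolution in Theorem \ref{thm:exp-est-outside-ball}, which together confine $X^\e$ to a large ball in $E$ with overwhelming probability, with the controllability results in Section \ref{S:controlability}. The latter are essential because, in the infinite dimensional setting, one cannot connect arbitrary pairs of points at small control cost; splicing a relaxation path with an optimal escape path therefore requires a dedicated construction costing only $\gamma$ of extra action, and this is the technical heart making the renewal and excursion arguments go through in both bounds.
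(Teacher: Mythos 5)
Your upper-bound argument is essentially the paper's: relaxation of an arbitrary $x\in D$ toward $x_*$ via the deterministic flow, splicing with a near-optimal escape path using the controllability theorem and Assumption \ref{A:AttractionProperty}, the uniform LDP lower bound over $D_R$, and a renewal/geometric-series estimate in which the unboundedness of $D$ is handled by \eqref{eq:X-big-zero-prob}. That half is fine.

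The lower bound as you propose it has a genuine gap. You run the excursion argument directly on $D$, and the two estimates you need are (i) that a single excursion exits $D$ with probability at most $e^{-(V(x_*,\partial D)-\gamma/2)/\e}$ and (ii) that excursion durations are controlled because ``the deterministic flow carries the bounded portion of $D\setminus B(x_*,\rho)$ back into $B(x_*,\rho)$ in uniformly bounded time.'' Claim (ii) is exactly what fails for a characteristic boundary: $D$ is not uniformly attracting, so points of $D$ near a saddle or limit cycle on $\partial D$ take arbitrarily long to return to $B(x_*,\rho)$, even within a fixed bounded subset of $D$. Consequently the event $\{\tau_1>T\}$ (the process avoids both $B(x_*,\rho)$ and $\partial D$ for time $T$) is not super-exponentially small in $T$ uniformly over starting points, and estimate (i) — which is proved by splitting on $\{\tau_1\le T\}$ versus $\{\tau_1>T\}$ — cannot be closed. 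Controlling such lingering requires the $\omega$-limit structure of Assumption \ref{A:K_equivalenceClassV} and Lemma \ref{lem:impossible-to-stay-away}, which are not among the hypotheses of Theorem \ref{thm:meanExitAsymptotics}.

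The paper avoids this entirely: it runs the excursion argument only on the truncated domains $D^{\rho,T}_R$ of \eqref{Eq:DefDrhoT_R}, which are bounded and uniformly attractive by construction, so the standard lemmas (Lemmas 8.8--8.10 of \cite{BudhirajaDupuisSalins2018}, and the argument of Theorem 5.7.11 of \cite{DemboZeitouni1997}) apply verbatim and give the lower bound with constant $V(x_*,\partial D^{\rho,T}_R)$. Since $\tau^\e_x\ge\tau^\e_{D^{\rho,T}_R}$ almost surely, the full result then follows from the internal regularity of the quasipotential, $\lim_{T\to\infty}V(x_*,\partial D^{\rho,T}_R)=V(x_*,\partial D)$ (Theorem \ref{L:InnerBoundaryRegularityQP} and Corollary \ref{L:InnerBoundaryRegularityQP_R}). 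Your proposal is missing both this reduction and the quasipotential-regularity lemma that makes it yield the correct constant; you should replace the direct excursion argument on $D$ with this two-step scheme.
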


The proof of Theorem \ref{thm:meanExitAsymptotics} is in Section \ref{S:ExitTimeAsymptotics}.

Let us next analyze the distribution of $X^\e_x(\tau^\e_x)$. Because $X^\e_x(\tau^\e_x)$ takes values in $E=C_{0}([0,L]:\mathbb{R}^r)$, we call this object the exit shape.

As in \cite{f-1988,FWbook,Day1990} define an equivalence relation $\sim$ such that
\begin{equation} \label{eq:equivalence}
  x \sim y \text{ if and only if } V(x,y)  =V(y,x) = 0.
\end{equation}

Any equivalence class defined with respect to this equivalence relation is called a $V$ equivalence class.
\begin{assumption}\label{A:K_equivalenceClassV}
There exists a finite number of  $V$ equivalence classes $K_1, ...., K_N \subset E$ such  that every $\omega$-limit point of $X^0_x(t)$ for $x \in \partial D$ is contained in one of the $K_i$.
\end{assumption}

\begin{remark}
  If $y \in \partial D$ is an equilibrium of the unperturbed system, meaning $X^0_y(t)=y$ for all $t>0$, then $K_i=\{y\}$ should be one of these equivalence classes. Notice that such a $y$ is an unstable equilibrium because $y \in \partial D$ where $D$ is the set of attraction for $x_*$.   If there exists a $y \in \partial D$ and a period $t_0>0$ such that $X^0_y(t_0) = y$, then the set $K_i = \{X^0_y(t): t \in [0,t_0]\}$ is called a limit cycle and could also be an example of a $V$ equivalence class.
\end{remark}

The next lemma demonstrates that each $K_i \subset \partial D$.
\begin{lemma} \label{lem:K_i-in-boundary}
  If $K$ is a $V$ equivalence class such that $K \cap \partial D \not = \emptyset$, then $K \subset \partial D$.
\end{lemma}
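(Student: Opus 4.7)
The plan is to exploit the fact that the relation $\sim$ is defined in \eqref{eq:equivalence} via the quasipotential together with the already-available positivity statements for $V(\cdot,\partial D)$ from Assumption \ref{A:AttractionProperty} and Remark \ref{rem:stay-away}. Fix $x \in K \cap \partial D$ and an arbitrary $y \in K$; the goal is to derive $y \in \partial D$. Since $x \sim y$ we have $V(y,x) = 0$, and because $x \in \partial D$ the definition \eqref{eq:quasipotential-def-sets} yields
\begin{equation*}
 0 \;=\; V(y,x) \;\geq\; V(y,\partial D).
\end{equation*}
Thus $V(y,\partial D) = 0$, and the whole lemma reduces to the dichotomy: either $y$ is already in $\partial D$, or $V(y,\partial D) > 0$ and we have a contradiction.

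The second alternative is what I would establish next. If $y \notin \bar D$, then $V(y,\partial D) > 0$ is exactly the content of Assumption \ref{A:AttractionProperty}. If instead $y \in D$, then the positivity $V(y,\partial D) > 0$ is recorded in Remark \ref{rem:stay-away}. To conclude that $E \setminus \partial D$ is covered by these two cases, I would observe that the domain of attraction $D$ is open. This is a standard consequence of continuous dependence of the deterministic flow $X^0_{\cdot}(t)$ on its initial condition in $E$ (itself a byproduct of the local Lipschitz property of $F$ and the mild formulation \eqref{eq:mild} at $\e = 0$): given $y \in D$, asymptotic stability of $x_*$ provides a ball $B(x_*, r_0) \subset D$ that is invariant and attractive on a short time scale; picking $T$ with $X^0_y(T) \in B(x_*, r_0/2)$ and using continuity of $z \mapsto X^0_z(T)$, one finds a neighborhood of $y$ whose trajectories lie in $B(x_*, r_0)$ at time $T$ and hence belong to $D$. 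With $D$ open, $E \setminus \partial D = D \cup (E \setminus \bar D)$, and the argument above closes.

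The main obstacle, such as it is, is the very mild one of openness of $D$: everything else is bookkeeping once the positivity statements for $V(\cdot,\partial D)$ have been put side by side. I do not anticipate any genuinely infinite-dimensional difficulty here, because the statement only uses the order structure of $V$ together with soft properties of the deterministic flow; none of the controllability machinery of Section \ref{S:controlability} is required.
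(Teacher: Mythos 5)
Your proof is correct and follows essentially the same route as the paper: both reduce the claim to the positivity $V(y,\partial D)>0$ for $y\notin\partial D$, supplied by Assumption \ref{A:AttractionProperty} for $y\notin\bar D$ and Remark \ref{rem:stay-away} for $y\in D$, which contradicts $V(y,x)=0$ for $x\in K\cap\partial D$. The only superfluous step is the openness of $D$ (the paper proves this separately as Theorem \ref{T:OpenAttractionSets}, but it is not needed here, since $\bar D = D\cup\partial D$ already gives $E\setminus\partial D\subset D\cup(E\setminus\bar D)$ for any set $D$).
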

\begin{proof}
 { Let $y \in K \cap \partial D$ and $z\in K$. If $z \not \in \partial D$ then Assumption \ref{A:AttractionProperty} and Remark \ref{rem:stay-away} guarantee that $V(z,y)>0$ so $z \not \sim y$. Therefore, $z\in\partial D$ which implies $K \subset \partial D$.}
\end{proof}


The exit shape asymptotics are described by the quasipotentials $\tilde{V}_{D}(x,y)$ and $\hat{V}_{D}(x,y)$ as defined below. For any $\rho>0$, and $x,y \in \bar{D}$ define
\begin{align} \label{eq:V-D_rho}
  \tilde{V}_D^\rho(x,y) := \inf \Bigg\{I_x^T(\varphi)&: T\geq 0, \varphi \in C([0,T]:E), \varphi(0)=x, \varphi(T)=y,\nonumber\\
   &\varphi(t) \in D \cup B(x,\rho)\cup B(y,\rho)  \text{ for } t \in [0,T] \Bigg\}
\end{align}
and for any $x,y \in \bar{D}$, set
\begin{equation}\label{eq:V-tilde-D}
  \tilde{V}_D(x,y):=\lim_{ \rho \to 0} \tilde{V}^\rho_D(x,y).
\end{equation}

Notice that if $x,y \in D$, then $\tilde{V}_D(x,y)$
has the explicit representation
\begin{align*}
  \tilde{V}_D(x,y) = \inf\Big\{I^T_x(\varphi): &T\geq0, \varphi \in C([0,T]:E), \varphi(0)=x, \varphi(T)=y,\\
   &\varphi(t) \in D \text{ for } t \in [0,T]\Big\},
\end{align*}
because there exists $\rho_0>0$ such that for all $\rho \in (0,\rho_0)$, $B(x,\rho)\cup B(y,\rho) \subset D$. In general, for $0< \rho_1<\rho_2$, and $x, y \in \bar{D}$
 \begin{equation} \label{eq:V-inequality}
   V(x,y) \leq \tilde{V}^{\rho_2}_D(x,y) \leq \tilde{V}^{\rho_1}_D(x,y) \leq \tilde{V}_D(x,y). 
 \end{equation}

In addition, let us set
\begin{align}\label{eq:hat-V-D_rho}
  \hat{V}^\rho_D(x, y):=  \inf \Big\{ &I^T_x(\varphi): T\geq0, \varphi(0)=x, \varphi(T)=y, \nonumber\\
  &\varphi(t) \in D \cup B(x,\rho) \cup B(y,\rho) \cup \bigcup_{i=1}^N B(K_i, \rho) \text{ for } t \in [0,T]\Big\}.
\end{align}
and
\begin{equation}\label{eq:V-hat-D}
  \hat{V}_D(x,y):= \lim_{ \rho \to 0} \hat{V}^\rho_D(x,y).
\end{equation}

{Clearly for any $x,y\in\bar{D}$ we have that
\[
\hat{V}_D(x,y)\leq \tilde{V}_D(x,y).
\]
}

As usual, for any $D_1, D_2 \subset \bar{D}$,
\begin{align}
  \tilde{V}_D(D_1,D_2) &:= \inf_{x \in D_1} \inf_{y \in D_2} \tilde{V}_D(x,y),\label{eq:V-tilde-of-sets}\\
  \hat{V}_D(D_1,D_2) &:= \inf_{x \in D_1} \inf_{y \in D_2} \hat{V}_D(x,y),\label{eq:V-hat-of-sets}
\end{align}

In other words, $\tilde{V}^\rho_D(x,y)$ and $\hat{V}^\rho_D(x,y)$ are the minimum actions of any path that connects $x$ and $y$ without leaving $D \cup B(x,\rho)\cup B(y,\rho)$ and $D \cup B(x,\rho)\cup B(y,\rho) \cup \bigcup_{i=1}^N B(K_i, \rho)$ respectively. So when $\rho$ is small and $x$ and $y$ are on the boundary of $D$, the paths are allowed to barely leave $D$, but only when they are near $x$ or $y$ (or one of the $K_i$). The expansion around these points and sets enables us to prove our theorems without assumptions on the regularity of the boundary of $D$.

Now we are ready to state our result on the exit shape  $X^\e_x(\tau^\e_x)$ asymptotics.
\begin{theorem} \label{thm:exit-shape}
  Let Assumptions \ref{assum:nonlinear}, \ref{A:DiffusionCoeff}, \ref{assum:D-boundary},  \ref{A:AttractionProperty}, and \ref{A:K_equivalenceClassV} hold. The exit shape $X^\e_x(\tau^\e_x)$ satisfies a large deviations lower bound with respect to the rate function $\tilde{J}: \partial D \to [0,\infty]$ given by
  \begin{equation*}
    \tilde{J}(y): = \tilde{V}_D(x_*, y) - V(x_*, \partial D).
  \end{equation*}
  In particular, for any $\delta>0$, $x \in D$ and $ y \in \partial D$,
  \begin{equation} \label{eq:shape-LDP-lower}
    \liminf_{\e  \to 0} \e \log \Pro(|X^\e_x(\tau^\e_x) - y|<\delta) \geq  \tilde{J}(y).
  \end{equation}
  The exit shape satisfies a large deviations upper bound with respect to the rate function $\hat{J}:  \partial D \to [0,+\infty]$ given by
  \begin{equation*}
    \hat{J}(y) := \hat{V}_D(x_*, y) - V(x_*, \partial D)
  \end{equation*}
  For any $\delta>0, s>0$, and $x \in D$,
  \begin{equation} \label{eq:shape-LDP-upper}
    \limsup_{\e \to 0} \e \log \Pro(\dist_E(X^\e_x(\tau^\e_x), \hat\Psi(s)) \geq \delta) \leq -s
  \end{equation}
  where $\hat \Psi(s) = \{ y \in \partial D: \hat J(y) \leq s\}$.
\end{theorem}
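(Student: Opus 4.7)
The plan is to prove the two bounds by a Freidlin--Wentzell-type Markov chain analysis combined with the uniform large deviations principle (Theorem \ref{thm:LDP}), the dissipativity/tightness bounds of Theorem \ref{T:BoundednessPropOfX}, the exit-time asymptotics of Theorem \ref{thm:meanExitAsymptotics}, and the controllability/continuity results of Section \ref{S:controlability}. Throughout, the $\rho$-expansion in the definitions of $\tilde{V}_D^\rho$ and $\hat{V}_D^\rho$ is what lets us stitch controlled pieces together without a regularity assumption on $\partial D$.

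For the lower bound \eqref{eq:shape-LDP-lower}, fix $y\in\partial D$, $\delta>0$ and an arbitrarily small $\gamma>0$. Using the limit defining $\tilde V_D$, pick $\rho>0$ and a control $u$ producing $\varphi=X^{0,u}_{x_*}$ that reaches $B(y,\delta/4)$ in some finite time $T$ while staying in $D\cup B(x_*,\rho)\cup B(y,\rho)$ and with action $I^T_{x_*}(\varphi)\le \tilde V_D(x_*,y)+\gamma/2$. Using the controllability result of Section \ref{S:controlability}, join this to any starting point in a small ball around $x_*$ at negligible additional cost, and apply \eqref{eq:ldp-low} to obtain a single-attempt lower bound $\ge \exp(-(\tilde V_D(x_*,y)+\gamma)/\e)$ for the event that $X^\e$ exits within $\delta$ of $y$. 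Then organize the dynamics into cycles: between attempts, \eqref{eq:X-bound-sup}--\eqref{eq:X-big-zero-prob} and Assumption \ref{A:AttractionProperty} force the process back into a small neighborhood of $x_*$ in bounded time with overwhelming probability. By Theorem \ref{thm:meanExitAsymptotics} we can afford roughly $N_\e \asymp \exp((V(x_*,\partial D)+\gamma)/\e)$ such cycles before the typical exit time; the strong Markov property and a standard independence argument give $\Pro(|X^\e_x(\tau^\e_x)-y|_E<\delta)\gtrsim \exp(-(\tilde V_D(x_*,y)-V(x_*,\partial D)+2\gamma)/\e)$, and $\gamma\downarrow 0$ finishes.

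For the upper bound \eqref{eq:shape-LDP-upper}, introduce radii $\rho_0>\rho_1>0$ and stopping times $\sigma_0=0$, $\tau_n=\inf\{t\ge\sigma_n:X^\e(t)\in\bigcup_i\bar B(K_i,\rho_1)\cup D^c\}$ and $\sigma_{n+1}=\inf\{t\ge\tau_n: X^\e(t)\notin\bigcup_i B(K_i,\rho_0)\}$, together with the chain $Z_n$ recording which $K_i$ is entered (or the exit shape in $\partial D$, which is absorbing). Assumption \ref{A:K_equivalenceClassV} is what makes this chain well-posed: every $\omega$-limit point on $\partial D$ sits in some $K_i$. By the uniform upper bound \eqref{eq:ldp-up} applied on a bounded-time interval after each $\sigma_n$, combined with the controllability/continuity of the quasipotentials from Section \ref{S:controlability},
\[\e\log \Pro(Z_{n+1}\in\Gamma \mid Z_n=K_i)\le -\hat V_D(K_i,\Gamma)+o(1),\]
where $\hat V_D$ (rather than $\tilde V_D$) arises precisely because between $\sigma_n$ and $\tau_{n+1}$ the trajectory is allowed to brush past any of the $K_j$'s. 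Summing this Markov-chain bound over all graphs of transitions from $x_*$ to points $y^*\in\partial D\setminus \hat\Psi(s)^{(\delta)}$ and using $\hat V_D(x_*,y^*)\ge V(x_*,\partial D)+s$ on the complement of $\hat\Psi(s)$, the total probability is $\le \exp(-(V(x_*,\partial D)+s)/\e+o(1/\e))$; after dividing out the $V(x_*,\partial D)/\e$ that represents the cost of the first exit cycle (via Theorem \ref{thm:meanExitAsymptotics}), we obtain the upper bound $-s$.

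The main obstacle is the Markov-chain step in the upper bound: in infinite dimensions we cannot freely connect arbitrary elements of $E$ by finite-action trajectories, so the transition estimates rely essentially on the controllability result of Section \ref{S:controlability} to glue the control $u$ on $[\sigma_n,\tau_{n+1}]$ to near-$K_i$ starting points and to near-boundary targets. A secondary difficulty is showing that the chain reaches an exit in $O_\e(1)$ many steps with overwhelming probability, so that the sum over graphs has only sub-exponentially many terms; this is handled by combining \eqref{eq:X-bound-sup} and Corollary \ref{cor:exp-est-X} with Assumption \ref{A:AttractionProperty} (which forbids a controlled trajectory starting outside $\bar D$ from creeping back to $\partial D$ at zero cost), and by the finiteness of the collection $\{K_i\}$ in Assumption \ref{A:K_equivalenceClassV}.
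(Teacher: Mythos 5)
Your overall architecture (a Freidlin--Wentzell Markov chain on neighborhoods of the $K_i$ with $\partial D$ absorbing, one-step transition estimates from the uniform LDP, controllability to glue paths, a geometric-series accounting for the first factor $V(x_*,\partial D)$) is the same as the paper's, and the lower bound is essentially right up to one omission: following a tube around a path that merely \emph{ends} in $B(y,\delta/4)$ does not force the process to exit $D$; you must append a short controlled segment that pushes the trajectory strictly outside $\bar D$ while staying within $\delta$ of $y$ (this uses Theorem \ref{thm:control} together with Assumption \ref{A:AttractionProperty}, and is the content of Lemma \ref{lem:easy-exit-near-bdry}). That is a fixable gap. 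The upper bound, however, has two genuine problems. First, you misattribute where $\hat V_D$ enters. By the very definition of your stopping times, between $\sigma_n$ and $\tau_{n+1}$ the trajectory does \emph{not} ``brush past'' the other $K_j$ --- entering $B(K_j,\rho_1)$ is precisely the event $\tau_{n+1}$ --- so the correct one-step bound is $-\tilde V_D(K_i,K_j)+o(1)$, not $-\hat V_D(K_i,K_j)$. The quasipotential $\hat V_D$ is forced on you only at the telescoping stage: to sum the one-step bounds along a chain of transitions you need $\tilde V_D(K_i,K_j)\geq L_j-L_i$ with $L_i=\hat V_D(x_*,K_i)$ (Lemma \ref{lem:V-lower-bound}), and the proof of that inequality concatenates a near-minimizer of $\hat V_D(x_*,K_i)$ with a connector \emph{inside} $K_i$ (Lemma \ref{lem:paths-stay-near-K_i}) and a near-minimizer of $\tilde V_D(K_i,K_j)$; since $K_i\subset\partial D$, the connector may leave $D$, which is admissible for $\hat V_D$ but not for $\tilde V_D$. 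This is exactly why the upper bound is stated with $\hat J$ and why the two bounds need not match; your sketch papers over the one step that creates that asymmetry.

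Second, your mechanism for controlling the number of chain steps does not work. Because each $K_i\subset\partial D$, there is no positive distance between the sets $g_i$ and $\partial D$, so the classical graph lemmas of \cite{FWbook} yield only the trivial bounds $\inf_{x\in g_i}\Pro(Z_1\in g_j\mid Z_0=x)=0$ and $\sup_{x\in g_i}\Pro(Z_1\in\partial D\mid Z_0=x)=1$, and the tools you invoke (\eqref{eq:X-bound-sup}, Corollary \ref{cor:exp-est-X}, finiteness of the $K_i$) bound the process in norm but say nothing about how many times the chain can cycle among the $g_i$ before absorption. The ingredient that actually closes the geometric series is the \emph{lower} bound
\begin{equation*}
  \liminf_{\e\to 0}\inf_{x\in g_i}\e\log\Pro(Z_1\in\partial D\mid Z_0=x)\geq-\gamma,
\end{equation*}
i.e.\ from any point within $\rho$ of $K_i\subset\partial D$ one can exit $\bar D$ at arbitrarily small cost (proved from the compactness of $K_i$, Theorem \ref{thm:control}, and Assumption \ref{A:AttractionProperty}); this makes $\sum_k\bigl(1-e^{-\gamma/\e}\bigr)^k\leq e^{\gamma/\e}$ and replaces the graph-summation entirely (Lemma \ref{lem:Z_zeta} proceeds by induction on the ordering of the $L_i$ rather than over graphs). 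Without this estimate, or some substitute for it, your sum over transition histories is not controlled and the claimed exponent $-s$ does not follow.
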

The proof of Theorem \ref{thm:exit-shape} is in Section \ref{S:ExitShape}.

An immediate corollary of the exit shape upper bound \eqref{eq:shape-LDP-upper} is that the limiting distribution of the exit shape $X^\e(\tau^\e_x)$ is concentrated on the set $\hat{\Psi}(0) = \{y \in \partial D: \hat{V}_D(x_*,y) = V(x_*,\partial D)\}$. The following corollary is  analogous to the classical result about the exit place limiting distribution in the finite dimensional case (see Theorems 4.2.1 and 6.5.2 of \cite{FWbook} and Theorem 5.7.11(b) of \cite{DemboZeitouni1997}).
\begin{corollary} \label{cor:exit-shape-limiting-dist}
  For any $\delta>0$, $x \in D$,
  \begin{equation}
    \lim_{\e \to 0} \Pro(\dist_E(X^\e_x(\tau^\e_x),\hat{\Psi}(0))>\delta) = 0.
  \end{equation}
  In particular, if there is a unique minimizer $y_* \in \partial D$ such that $\hat{V}_D(x_*,y_*) = V(x_*,\partial D)$, then $\hat{\Psi}(0) = \{y_*\}$ and $X^\e_x(\tau^\e_x)$ converges to $y_*$ in probability.
\end{corollary}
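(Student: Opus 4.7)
The plan is to deduce the corollary directly from the exit shape upper bound \eqref{eq:shape-LDP-upper}. The essential issue is that \eqref{eq:shape-LDP-upper} controls distances to the level sets $\hat{\Psi}(s)$ for $s>0$, while the corollary asks for concentration on $\hat{\Psi}(0)$, so I need a ``shrinking of level sets'' argument that transfers the control as $s \downarrow 0$.

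First I would establish the following claim: for every $\delta>0$ there exists $s_\delta>0$ such that $\hat{\Psi}(s_\delta) \subset \{y \in \partial D : \dist_E(y, \hat{\Psi}(0)) \le \delta/2\}$. For this I will invoke the lower semicontinuity of $y \mapsto \hat{V}_D(x_*,y)$ and the compactness of some sublevel set $\hat{\Psi}(s_0)$, both of which are among the regularity properties of the quasipotentials developed in Section \ref{S:controlability}. Suppose, for contradiction, there exist $s_n \downarrow 0$ and $y_n \in \hat{\Psi}(s_n)$ with $\dist_E(y_n,\hat{\Psi}(0)) > \delta/2$. For all large $n$, $y_n \in \hat{\Psi}(s_0)$, so a subsequence converges in $E$ to some $y_\infty \in \partial D$. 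Lower semicontinuity gives $\hat{V}_D(x_*, y_\infty) \le \liminf_n \hat{V}_D(x_*,y_n) \le V(x_*,\partial D)$. Combined with the trivial comparison $\hat{V}_D(x_*,y) \ge V(x_*,y) \ge V(x_*,\partial D)$ valid for every $y \in \partial D$ (the former because $\hat{V}_D$ is an infimum over a restricted class of paths), this forces $\hat{V}_D(x_*, y_\infty) = V(x_*,\partial D)$, i.e., $y_\infty \in \hat{\Psi}(0)$, contradicting $\dist_E(y_\infty, \hat{\Psi}(0)) \ge \delta/2$.

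Given the claim, the triangle inequality yields, for any $y \in \partial D$ with $\dist_E(y, \hat{\Psi}(0)) > \delta$, that $\dist_E(y, \hat{\Psi}(s_\delta)) \ge \delta/2$, since $\hat{\Psi}(s_\delta)$ lies in the closed $\delta/2$-neighborhood of $\hat{\Psi}(0)$. Applying this at $y = X^\e_x(\tau^\e_x)$ gives
\[
  \Pro\bigl(\dist_E(X^\e_x(\tau^\e_x),\hat{\Psi}(0)) > \delta\bigr) \le \Pro\bigl(\dist_E(X^\e_x(\tau^\e_x), \hat{\Psi}(s_\delta)) \ge \delta/2\bigr),
\]
and the right-hand side tends to zero as $\e \to 0$ by \eqref{eq:shape-LDP-upper} applied with $s = s_\delta > 0$ (in fact it decays exponentially fast). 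The ``in particular'' statement is then automatic: a unique minimizer $y_*$ forces $\hat{\Psi}(0) = \{y_*\}$, and $\dist_E(y,\{y_*\}) = |y - y_*|_E$, so the first conclusion is exactly convergence of $X^\e_x(\tau^\e_x)$ to $y_*$ in probability.

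The main obstacle is the first step, i.e., showing that $\hat{\Psi}(s)$ collapses onto $\hat{\Psi}(0)$ as $s \downarrow 0$; everything else is an almost mechanical use of the triangle inequality and the upper bound. The needed lower semicontinuity and compactness of sublevel sets of $\hat{V}_D(x_*,\cdot)$ should be consequences of the compactness/continuity machinery for the quasipotentials built in Section \ref{S:controlability}, together with the compactness of level sets of $I^T_{x_*}$ coming from the uniform large deviations principle (Theorem \ref{thm:LDP}).
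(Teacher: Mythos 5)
Your proof is correct and is the natural way to make rigorous what the paper presents without proof as an ``immediate'' consequence of \eqref{eq:shape-LDP-upper}. The one genuinely non-trivial step — that $\hat{\Psi}(s)$ collapses into any neighborhood of $\hat{\Psi}(0)$ as $s \downarrow 0$ — is exactly the right thing to isolate, and your compactness-plus-lower-semicontinuity argument for it is supported by Theorems \ref{thm:V-hat-D-lsc} and \ref{thm:V-hat-compact-level-sets} together with the elementary comparison $\hat{V}_D(x_*,y) \geq V(x_*,y) \geq V(x_*,\partial D)$ for $y \in \partial D$.
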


We will prove that $\hat{\Psi}(0)$ is non-empty and compact in Theorem \ref{thm:V-hat-compact-level-sets} in the following section.

We end this section with two remarks, where we discuss Theorem \ref{thm:exit-shape} as well as the different notions of quasipotentials appearing in the metastability results.
\begin{remark}
It is instructive to comment here on the different notions of quasipotentials appearing in the metastability results of Theorems \ref{thm:meanExitAsymptotics} and \ref{thm:exit-shape}. We use $V$ in Section \ref{S:controlability} to establish certain continuity and lower semicontinuity properties. In addition, $V$ is also used for Theorem \ref{thm:meanExitAsymptotics}. Notice however that
\[
V(x_{*},\partial D)=\tilde{V}_{D}(x_{*},\partial D)=\hat{V}_{D}(x_{*},\partial D)
\]
so any of these quasipotentials can be used for Theorem \ref{thm:meanExitAsymptotics}. 
$\tilde{V}_{D}$ is used for the lower bound of the logarithmic asymptotics of the exit shape, whereas $\hat{V}_{D}$  is used for the upper bound of the logarithmic asymptotics of the exit shape.
\end{remark}

\begin{remark}
Notice that Theorem \ref{thm:exit-shape} does not give a large deviations principle for the exit shape of the random variable $X^\e_x(\tau^\e_x)$. The reason is that in general it is not clear whether, under our current assumptions, the functionals $\tilde{J}(y)$ and $\hat{J}(y)$ are equal. We will explore this possibility in Section \ref{S:ExitShape_AdditionalCond}.
\end{remark}

\section{Controllability properties and the quasipotential}\label{S:controlability}

In the case of finite dimensional diffusions considered by Freidlin and Wentzell \cite{FWbook}, the quasipotential (\ref{eq:quasipotential-def}) is jointly continuous. That is whenever $x_n \to x$ and $y_n \to y$, $V(x_n,y_n) \to V(x,y)$. This continuity, in the finite-dimensional case, is proven by defining a (straight-line) path with arbitrary small action that connects arbitrary close points $x$ and $y$. Unfortunately, $V(x,y)$ cannot be jointly continuous in $E \times E$ in the infinite dimensional case. In particular, the quasipotential $V(x,y)=+\infty$ whenever $y \not \in H^1$ and $x \not =y$. Recall the definition of $H^1$ from \eqref{eq:Sobolev-spaces}. 

It will not be possible in the setting of this paper to connect any two elements of $E$ with a controlled path. However, certain continuity properties of the quasipotential are still true. The purpose of this section is to explore such properties, which, as will be seen later on, are key in the development of a metastability theory in infinite dimensions.

One of the key concepts here is that of controllability. We show that if $X^{0,u}_x(t)$ is a controlled process for some $x \in E$ and $ u \in L^2([0,T]:H)$ and $y \in E$ is close to $x$, then we can build a new control $v \in L^2([0,T]:H)$ such that the controlled process $X^{0,v}_y(t)$ joins $X^{0,u}_x(t)$ after a short period of time. We can also guarantee that the action of $v$ is not much bigger than the action of $u$. This is particularly of interest when $u=0$. We can show that whenever $y \in E$ is close to $x$, we can construct a control $v$ such that $X^{0,v}_y(t) = X^0_x(t)$ after a short amount of time. This means that while it is usually impossible to make a path that connects $x$ and $y$, it is possible to build a path that connects $y$ to the unperturbed dynamical system that starts at $x$.

Useful auxiliary properties of the skeleton 
equation \eqref{eq:control} $X^{0,u}_x$ are presented in Subsection \ref{SS:LimitingControl}. The controllability result is shown in Subsection \ref{SS:controllability}. Subsection \ref{SS:ContinuityQuasipotential} discusses continuity properties of the quasipotential $V$, defined by (\ref{eq:quasipotential-def}). Then, in Subsection \ref{SS:RegularityQuasipotential} we discuss regularity properties of the quasipotential $V$, defined by (\ref{eq:quasipotential-def}). {In Subsection \ref{SS:ContinuityQuasipotentialHatTilde} we prove lower semi-continuity and compactness of level sets for $\tilde{V}_{D}$ and $\hat{V}_{D}$, defined in (\ref{eq:V-tilde-D}) and (\ref{eq:V-hat-D}) respectively. In Subsection \ref{SS:EquivalenceClasses} we discuss properties of the $V$ equivalence classes $K_{i}$.}

\subsection{Some properties of the 
skeleton equation \eqref{eq:control} $X^{0,u}_x$}\label{SS:LimitingControl}

At this point, we collect some useful results for the skeleton equation \eqref{eq:control} $X^{0,u}_x$.
\begin{theorem}[A priori bounds] \label{THM:CONTROL-BOUNDS}
  There exists $C >0$ such that for any $T>0$, $x \in E$, and $u \in  L^2([0,T]:H)$,
  \begin{equation} \label{eq:control-sup-time-bound}
    |X^{0,u}_x|_{C([0,T]:E)} \leq C (1+|x|_E +T^{\frac{1}{4}}|u|_{L^2([0,T]:H)})
  \end{equation}
  and for any $t>0$,
  \begin{equation} \label{eq:control-sup-x-bound}
    \sup_{x \in E}|X^{0,u}_x(t)|_E \leq C (1 + t^{-\frac{1}{\rho_{*}}} +  |u|_{L^2([0,t]:H)}).
  \end{equation}
\end{theorem}

The proof of Theorem \ref{THM:CONTROL-BOUNDS} is in Appendix \ref{App:MildSolution}.

\begin{theorem}[Continuity with respect to initial condition and control, Theorem 8.4 in \cite{BudhirajaDupuisSalins2018}] \label{thm:continuity}
  For any fixed $T>0$, if $x_n \to x \in E$  and $u_n \rightharpoonup u$ weakly in $L^2([0,T]:H)$, then
  \begin{equation*}
    \lim_{n \to \infty} |X^{0,u_n}_{x_n} - X^{0,u}_x|_{C([0,T]:E)} = 0.
\end{equation*}
\end{theorem}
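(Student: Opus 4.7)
The plan is to follow the standard weak-to-strong continuity strategy: (i) extract uniform bounds; (ii) upgrade the weak convergence of $u_n$ to strong convergence of the associated states $X^{0,u_n}_{x_n}$ via a compactness argument on the skeleton equation; (iii) pass to the limit and conclude using uniqueness of mild solutions. Write $X_n := X^{0,u_n}_{x_n}$. Since $\{x_n\}$ is bounded in $E$ (it converges) and $\{u_n\}$ is bounded in $L^2([0,T]:H)$ (weakly convergent sequences are bounded), Theorem \ref{thm:control-bounds} yields $R$ with $\sup_n |X_n|_{C([0,T]:E)} \le R$. On the ball of radius $R$ both $F$ and $G$ are uniformly Lipschitz.

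Next I would show that $\{X_n\}$ is pre-compact in $C([0,T]:E)$ by Ascoli-Arzel\`a. The key ingredient is that $S(t)$ is compact on $E$ for every $t>0$. Pre-compactness at each fixed $t \in (0,T]$ is obtained by writing, for small $\delta>0$,
\begin{equation*}
\int_0^t S(t-s) F(X_n(s))\,ds = S(\delta)\int_0^{t-\delta}\! S(t-\delta-s) F(X_n(s))\,ds + \int_{t-\delta}^t S(t-s) F(X_n(s))\,ds,
\end{equation*}
so that the first piece lies in the image of the compact operator $S(\delta)$ applied to a bounded set of $E$ and the second is of size $O(\delta)$; the analogous decomposition applies to the controlled term once we combine \eqref{eq:semigroup-regularity} with the uniform $L^2$ bound on $u_n$. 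Equicontinuity in $t$ follows from strong continuity of $S(\cdot)x$ (using $x_n \to x$) together with the standard estimates on the two Bochner integrals.

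Passing to a subsequence, $X_{n_k} \to Y$ in $C([0,T]:E)$. The terms $S(t)x_{n_k}$ and $\int_0^t S(t-s) F(X_{n_k}(s))\,ds$ converge strongly by Lipschitz continuity of $F$. For the controlled term I would split
\begin{equation*}
\int_0^t S(t-s)[G(X_{n_k}(s)) - G(Y(s))] u_{n_k}(s)\,ds + \int_0^t S(t-s) G(Y(s)) u_{n_k}(s)\,ds.
\end{equation*}
The first piece vanishes uniformly in $t$ by $|X_{n_k}-Y|_{C([0,T]:E)} \to 0$, the Lipschitz bound \eqref{eq:G-Lip}, and boundedness of $u_{n_k}$ in $L^2$. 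The main obstacle is the second piece. Here I would establish that the linear operator $\Lambda : L^2([0,T]:H) \to C([0,T]:E)$ defined by $(\Lambda u)(t) := \int_0^t S(t-s) G(Y(s)) u(s)\,ds$ is compact, so that weak convergence $u_{n_k} \rightharpoonup u$ is automatically promoted to $\Lambda u_{n_k} \to \Lambda u$ in $C([0,T]:E)$. Compactness of $\Lambda$ is obtained from compactness of $S(t)$ for $t>0$ via the same splitting argument plus the regularization estimate \eqref{eq:semigroup-regularity}. Passing to the limit in the mild equation then shows that $Y$ satisfies the equation defining $X^{0,u}_x$; uniqueness of mild solutions forces $Y = X^{0,u}_x$, and since every subsequence of $\{X_n\}$ has a further subsequence with this same limit, the full sequence converges.
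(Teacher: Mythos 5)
The paper does not prove this statement; it imports it verbatim as Theorem 8.4 of \cite{BudhirajaDupuisSalins2018}, so there is no in-paper proof to compare against. Your argument is correct and is essentially the standard proof underlying that cited result: the a priori bound from Theorem \ref{thm:control-bounds}, pre-compactness of the trajectories via compactness of $S(t)$ and the splitting at $t-\delta$ (with the tail controlled by \eqref{eq:semigroup-regularity} and Cauchy--Schwarz, giving an $O(\delta^{1/4})$ error for the control term), Ascoli--Arzel\`a, and the observation that the compact linear map $u \mapsto \int_0^\cdot S(\cdot-s)G(Y(s))u(s)\,ds$ turns weak $L^2$ convergence into strong convergence in $C([0,T]:E)$; uniqueness of the mild solution then identifies the limit and the subsequence principle upgrades to the full sequence. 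The only step you gloss over is equicontinuity: the clean way to get it is to note that your fixed-$t$ splitting shows the set $\{\Lambda u_n(t): n\in\NN,\ t\in[0,T]\}$ is totally bounded \emph{uniformly in $t$} (it lies within $C\delta^{1/4}$ of $S(\delta)$ applied to a single bounded ball), so that $\sup_{y\in K}|(S(h)-I)y|_E\to 0$ on the resulting compact set $K$ supplies the modulus of continuity for the increment $(S(h)-I)\Lambda u_n(t)$; with that spelled out, the proof is complete.
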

\begin{theorem}[Compactness, Theorem 8.5 in \cite{BudhirajaDupuisSalins2018}] \label{thm:compact}
  Let $R>0$,  $N>0$, and $T>0$. Whenever $|x_n|_E \leq R$ is a bounded sequence and $\{u_n\} \subset L^2([0,T]:H)$ with $|u_n|_{L^2([0,T]:H)}\leq N$, then there exists a subsequence such that for any $t\in (0,T]$, $X^{0,u_n}_{x_n}(t)$ converges in $E$.
\end{theorem}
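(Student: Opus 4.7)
The plan is to combine the weak compactness of the bounded control sequence with the compactness of the semigroup $S(t)$ for $t>0$. First I would extract a subsequence (not relabeled) along which $u_n \rightharpoonup u^*$ weakly in $L^2([0,T]:H)$. The initial conditions $x_n$ are only bounded in $E$ and need not converge, but once fed through $S(t)$ for any $t>0$ they land in a relatively compact subset of $E$, which is precisely what allows convergence at positive times despite the lack of any regularity on $\{x_n\}$.

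The central step is to show that for every fixed $t_0 \in (0,T]$ the family $\{X^{0,u_n}_{x_n}(t_0)\}_n$ is relatively compact in $E$. From the mild formulation \eqref{eq:control},
\begin{equation*}
X^{0,u_n}_{x_n}(t_0) = S(t_0)x_n + \int_0^{t_0} S(t_0-s)F(X^{0,u_n}_{x_n}(s))\,ds + \int_0^{t_0} S(t_0-s)G(X^{0,u_n}_{x_n}(s))u_n(s)\,ds.
\end{equation*}
The first term is relatively compact by compactness of $S(t_0)$ and boundedness of $\{x_n\}$. For each integral, I would split the integration at $t_0 - \delta$ for a small $\delta \in (0,t_0)$. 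The a priori bound \eqref{eq:control-sup-time-bound} makes $F(X^{0,u_n}_{x_n}(\cdot))$ uniformly bounded in $E$ and $G(X^{0,u_n}_{x_n}(\cdot))$ uniformly bounded in $\mathscr{L}(H)$. Using \eqref{eq:semigroup-regularity} and Cauchy-Schwarz on $(t_0-\delta-s)^{-1/2}$ against $|u_n(s)|_H$, the restriction of each integral to $[0,t_0-\delta]$ factors as $S(\delta)$ applied to an element whose $E$-norm is bounded uniformly in $n$, hence it lies in a precompact set. The tail from $t_0-\delta$ to $t_0$ is bounded in $E$ by $O(\delta)$ for the drift term and by $O(\delta^{1/4}N)$ for the control term, uniformly in $n$. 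Letting $\delta \downarrow 0$, total boundedness yields relative compactness of $\{X^{0,u_n}_{x_n}(t_0)\}_n$.

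Next I would pick a countable dense set $\{t_j\}_{j\geq 1} \subset (0,T]$ and, via a standard diagonal extraction, obtain a subsequence along which $X^{0,u_n}_{x_n}(t_j)$ converges in $E$ for every $j$. Given any $\delta$ in this dense set, along this subsequence $X^{0,u_n}_{x_n}(\delta) \to \bar x_\delta$ in $E$, and $u_n(\cdot+\delta) \rightharpoonup u^*(\cdot+\delta)$ in $L^2([0,T-\delta]:H)$. The semigroup restart identity $X^{0,u_n}_{x_n}(\delta + r) = X^{0,u_n(\cdot+\delta)}_{X^{0,u_n}_{x_n}(\delta)}(r)$ combined with Theorem \ref{thm:continuity} then forces uniform convergence on $[\delta,T]$ to $X^{0,u^*(\cdot+\delta)}_{\bar x_\delta}(\cdot - \delta)$. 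Choosing $\delta$ arbitrarily small within $\{t_j\}$ produces convergence at every $t \in (0,T]$, and the resulting limits are automatically consistent across overlapping intervals by uniqueness of solutions to \eqref{eq:control}.

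The main obstacle I anticipate is the precompactness step under only an $L^2$ bound on $u_n$, which requires a careful use of the singular integrability of $(t_0-s)^{-1/4}$ against $u_n$ via Cauchy-Schwarz to produce the uniform-in-$n$ tail estimate of order $\delta^{1/4}$. Once relative compactness at each positive time is in hand, diagonalization and Theorem \ref{thm:continuity} applied to the time-shifted equation close the argument, with the restart identity ensuring that the pointwise limits obtained by diagonal extraction coincide with the trajectory-wise limits produced for each $\delta$.
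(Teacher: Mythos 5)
The paper does not prove this statement at all: it is imported verbatim as Theorem 8.5 of \cite{BudhirajaDupuisSalins2018}, so there is no internal proof to compare against. Your argument is a correct, self-contained proof, and it is the standard mechanism one would expect behind the cited result. The key step --- relative compactness of $\{X^{0,u_n}_{x_n}(t_0)\}_n$ for each fixed $t_0>0$ --- is sound: the a priori bound \eqref{eq:control-sup-time-bound} together with \eqref{eq:F-growth} and \eqref{eq:G-bound} gives uniform control of the integrands, the factorization of the integrals over $[0,t_0-\delta]$ through the compact operator $S(\delta)$ places that piece in a precompact set, and the tail estimates ($O(\delta)$ for the drift, $O(\delta^{1/4}N)$ for the control term via \eqref{eq:semigroup-regularity} and Cauchy--Schwarz, exactly as in \eqref{eq:Y-bound}) give total boundedness. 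The passage from compactness at a dense set of times to convergence at every $t\in(0,T]$ via the restart identity, the weak convergence of the shifted controls, and Theorem \ref{thm:continuity} is also correct; note that the consistency of the limits across different choices of $\delta$ is automatic, since for a fixed $t$ they are all limits of the same convergent subsequence. Two cosmetic points: the exponent in your Cauchy--Schwarz step should be stated as pairing $(t_0-\delta-s)^{-1/4}$ against $|u_n(s)|_H$ (the $-1/2$ appears only after squaring inside the integral), and the boundedness of $S(r)$ on $E$ for small $r$ (needed for the drift tail) should be cited from the $C_0$ property of the semigroup. Neither affects the validity of the argument.
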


A useful corollary of Theorems \ref{THM:CONTROL-BOUNDS} and \ref{thm:compact} is the following compactness result.
\begin{corollary} \label{cor:pre-compact-all-init-cond}
  For any $t>0$ and $N >0$, the set
  \begin{equation*}
    \left\{ X^{0,u}_x(t): x \in E \text{ and } |u|_{L^2([0,t]:H)}\leq N \right\}
  \end{equation*}
  is pre-compact in $E$.
\end{corollary}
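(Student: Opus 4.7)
The plan is to remove the boundedness assumption on the initial data in Theorem \ref{thm:compact} by exploiting the instantaneous regularization of the skeleton equation encoded in the second a priori bound \eqref{eq:control-sup-x-bound}. Concretely, fix $t>0$ and $N>0$, and let $\{(x_n,u_n)\}\subset E \times L^2([0,t]:H)$ be any sequence with $|u_n|_{L^2([0,t]:H)}\leq N$. The goal is to show that $\{X^{0,u_n}_{x_n}(t)\}$ has a subsequence converging in $E$.

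First I would apply \eqref{eq:control-sup-x-bound} at the intermediate time $t/2$ to obtain
\[
\sup_{n} |X^{0,u_n}_{x_n}(t/2)|_E \leq C\left(1 + (t/2)^{-1/\rho} + (t/2)^{1/4}\, N\right) =: R,
\]
which is finite and, crucially, independent of $x_n$. Thus the values of the skeleton process at time $t/2$ are uniformly bounded in $E$ even though the initial data themselves are not.

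Next I would invoke the flow (semigroup) property of the skeleton equation \eqref{eq:control}, which follows from uniqueness of mild solutions: setting $y_n := X^{0,u_n}_{x_n}(t/2)$ and $\tilde u_n(s) := u_n(s + t/2)$ for $s \in [0,t/2]$, one checks directly from the mild formulation that
\[
X^{0,u_n}_{x_n}(t) = X^{0,\tilde u_n}_{y_n}(t/2).
\]
The shifted controls satisfy $|\tilde u_n|_{L^2([0,t/2]:H)} \leq |u_n|_{L^2([0,t]:H)} \leq N$, and we have just shown that $|y_n|_E \leq R$. Therefore the sequence $\{(y_n,\tilde u_n)\}$ meets the hypotheses of Theorem \ref{thm:compact} on the time interval $[0,t/2]$, and we can extract a subsequence for which $X^{0,\tilde u_n}_{y_n}(t/2)$, and hence $X^{0,u_n}_{x_n}(t)$, converges in $E$.

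I do not foresee a real obstacle; the argument is essentially a two-step bootstrap in which \eqref{eq:control-sup-x-bound} does the work of absorbing arbitrarily large initial data into a fixed bounded set after an arbitrarily short time, after which Theorem \ref{thm:compact} applies. The only technical point worth verifying carefully is the flow identity $X^{0,u}_x(t) = X^{0,u(\cdot + s)}_{X^{0,u}_x(s)}(t-s)$, which is immediate by rewriting the mild equation over $[s,t]$ using $S(t)=S(t-s)S(s)$ and invoking uniqueness.
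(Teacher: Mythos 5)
Your argument is correct and is essentially identical to the paper's proof: both use the a priori bound \eqref{eq:control-sup-x-bound} at time $t/2$ to place $y_n := X^{0,u_n}_{x_n}(t/2)$ in a fixed bounded set, then invoke the flow property with the shifted control $\tilde u_n$ and apply Theorem \ref{thm:compact} on $[0,t/2]$. No changes needed.
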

\begin{proof}
  Let $x_n \in E$ and $u_n \in L^2([0,t]:H)$ with $|u_n|_{L^2([0,t]:H)}\leq N$. Let $y_n = X^{0,u_n}_{x_n}(t/2)$. By \eqref{eq:control-sup-x-bound}, there exists $R>0$ such that $|y_n|_E\leq R$. We define $\tilde{u}_n$ to be the translation $\tilde{u}_n(s) = u_n(s+t/2)$ for $s \in [0,t/2]$. Then it is clear that $X^{0,u_n}_{x_n}(t) = X^{0,\tilde{u}_n}_{y_n}(t/2)$. By Theorem  \ref{thm:compact}, there is a convergent subsequence.
\end{proof}

Another useful result of Theorems \ref{thm:continuity} and \ref{thm:compact} is Lemma \ref{L:PositiveQuasipotnetial}.
\begin{lemma}\label{L:PositiveQuasipotnetial}
For all $x\in D$, we have that $V(x,\partial D)>0$.
\end{lemma}
\begin{proof}[Proof of Lemma \ref{L:PositiveQuasipotnetial}]
By assumption, $x_*$ is an asymptotically stable equilibrium. This means that for all small $\rho>0$ there exists $\delta>0$ such that $\{x: |x-x_*|_E < 2 \rho\} \subset D$ and  whenever $|x-x_*|_E<\delta$, $|X^0_x(t) - x_*|_E< \rho$ for all $t>0$ and $\lim_{t \to \infty} |X^0_x(t) - x_*|_E = 0$. Additionally, there exists $T_2>0$ such that for any $x \in E$ such that $|x-x_*|_E < \delta$, $|X^0_x(T_2) - x_*|_E< \frac{\delta}{2}$. This uniformity is due to the compactness of the set $\{X^0_x(1): |x-x_*|_E<\delta\}$ (see Theorem \ref{thm:compact}).
By Theorem \ref{thm:continuity}, there exists $\upsilon>0$ such that if $|u|_{L^2([0,T]:H)}<\upsilon$ and $|x-x_*|_E< \delta$, then
\[\sup_{t \in [0,T_2]}|X^{0,u}_x(t) - x_*|_E< 2\rho \text{ and } |X^{0,u}_x(T_2) - x_*|_E< \delta.\]
Now, given any $x \in D$, because $D$ is the domain of attraction to $x_*$, there exists $T_1>0$ such that $|X^0_x(T_1) - x_*|_E< \frac{\delta}{2}$. By Theorem \ref{thm:continuity}, by possibly decreasing $\upsilon$ we can guarantee that for all $|u|_{L^2([0,T_1]:H)}< \upsilon$, $X^{0,u}_x(t) \in D$ for $t \in [0,T_1]$, and $|X^{0,u}_x(T_1) - x_*|_E< \delta$.
Then we know that $|X^{0,u}_x(T_1 + t)-x_*|_E<\rho$ for $t \in [0,T_2]$ and $|X^{0,u}_x(T_1  + T_2)-x_*|_E<\delta$ because $|X^{0,u}_x(T_1) - x_*|_E<\delta$. We can repeat this argument to show that for any $k \in \mathbb{N}$, and $u$ satisfying $|u|_{L^2([0,T_1 + (k+1)T_2]:H)}< \upsilon$, it follows that
$|X^{0,u}_x(T_1 + kT_2 + t) - x_*|_E<\rho $ for $t \in T_2$ and $|X^{0,u}_x(T_1 + (k+1)T_2) - x_*|_E<\delta$.
Now we can guarantee that for any $|u|_{L^2([0,+\infty):H)}< \upsilon$,
\[X^{0,u}_x(t) \in D \text{ for all } t\in [0,+\infty).\]
In particular this proves that $V(x,\partial D)>\upsilon$.
\end{proof}

In Theorem \ref{T:OpenAttractionSets} we prove that the set of attraction $D$ of $x_{*}$, an asymptotically stable equilibrium of the unperturbed system, given in \eqref{Eq:DomainAttraction} are open. A similar result for the case of the Allen-Cahn equation can be also found in \cite{FarisLasinio1982}.
\begin{theorem}\label{T:OpenAttractionSets}
  The set of attraction $D$ is open.
\end{theorem}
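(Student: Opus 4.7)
The plan is to fix $x\in D$ and construct an $E$-ball around $x$ contained in $D$ by exploiting (i) the local nature of asymptotic stability of $x_*$ and (ii) the continuous dependence of the deterministic skeleton on initial conditions supplied by Theorem \ref{thm:continuity} (taken with $u_n=u=0$).

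First I would extract what asymptotic stability gives us. Since $x_*$ is asymptotically stable for $X^0$, there exist radii $0<r_1<r_0$ such that every trajectory with initial condition in $B(x_*,r_1)$ remains in $B(x_*,r_0)$ for all $t\ge 0$ and satisfies $|X^0_y(t)-x_*|_E\to 0$ as $t\to\infty$; in particular $B(x_*,r_1)\subset D$. (Strictly speaking the definition stated in the paper only guarantees attraction, but combined with the continuous dependence on initial data and the compactness properties of the semigroup one can pass from pointwise attraction to the uniform statement on a small enough ball; alternatively, Lyapunov stability is the standard content of ``asymptotically stable'' and we use it as such.)

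Next, fix $x\in D$. By definition of $D$ there exists $T>0$ with $|X^0_x(T)-x_*|_E<r_1/2$. Now apply Theorem \ref{thm:continuity} with the constant sequence of controls $u_n=0$: the map $y\mapsto X^{0}_y$ is continuous from $E$ into $C([0,T]:E)$. Hence there exists $\delta>0$ such that for every $y\in B(x,\delta)$,
\begin{equation*}
  |X^0_y - X^0_x|_{C([0,T]:E)} < r_1/2,
\end{equation*}
which yields $|X^0_y(T)-x_*|_E < r_1$, i.e.\ $X^0_y(T)\in B(x_*,r_1)$. By the choice of $r_1$ above, the trajectory starting from $X^0_y(T)$ converges to $x_*$, and by the semigroup property of the deterministic flow this means $X^0_y(t)\to x_*$ as $t\to\infty$. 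Therefore $y\in D$, and $B(x,\delta)\subset D$, proving that $D$ is open.

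The only delicate step is justifying the stable-ball $B(x_*,r_1)$ on which trajectories remain trapped in $B(x_*,r_0)$; the rest is a direct application of continuous dependence on initial data. If one wants to avoid invoking Lyapunov stability as part of the definition, this trapped ball can be produced from the attraction property together with Theorem \ref{thm:continuity} and the compactness statement of Theorem \ref{thm:compact} (applied with $u\equiv 0$), which together make $\{X^0_y(t):y\in\overline{B(x_*,r)},\,t\ge 0\}$ equicontinuous at $t=\infty$ for $r$ small enough.
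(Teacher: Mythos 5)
Your proposal is correct and is essentially the paper's own argument: extract a ball $B(x_*,\rho)\subset D$ from asymptotic stability, steer $X^0_x$ into $B(x_*,\rho/2)$ by some time $T$, and use Theorem \ref{thm:continuity} (with zero control) to keep nearby trajectories within $\rho/2$ on $[0,T]$, hence inside $D$. The extra ``trapping ball'' discussion is unnecessary --- all the argument requires is that some ball around $x_*$ lies in $D$, which is exactly what the paper reads off from asymptotic stability.
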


\begin{proof}
  Because $x_*$ is asymptotically stable, there exists $\rho>0$ such that the open ball $\{x \in E: |x-x_{*}|_E<\rho\} \subset D$.
  We need to show that if $y \in D$, then there exists $\delta>0$ such that $\{x \in E: |x-y|_E<\delta\}\subset D$. So, let $y \in D$. Because $X^0_y(t) \to x_*$, there exists $T_1 >0$ such that $|X^0_y(T_1)-x_{*}|_E < \frac{\rho}{2}$. By Theorem \ref{thm:continuity}, we can find $\delta>0$ such that for all $x$ satisfying $|x-y|_E < \delta$ it follows that $|X^0_x - X^0_y|_{C([0,T_{1}]:E)} < \frac{\rho}{2}$. Therefore, for all $x$ satisfying $|x-y|_E<\delta$, $|X^0_{x}(T_1) - x_{*}|< \rho$. Then because the $\rho$ ball around $x_{*}$ is in $D$, it follows that for all $x \in E$ such that $|x-y|_E<\delta$, $\lim_{t\to \infty} X^0_x(t) = x_{*}$. This proves that $D$ is open.
\end{proof}

\subsection{Controllability}\label{SS:controllability}

The main result of this subsection is the controlability Theorem \ref{thm:control}.
\begin{theorem} \label{thm:control}
  For any $\gamma>0$, $R>0$ and $N>0$, there exists $\delta>0$ such that whenever $|x-y|_E<\delta$, $T\geq \delta$,  $\frac{1}{2}|u|_{L^2([0,T]:H)}^2 \leq N$, and $|x|_E \leq R$, there exists $v \in L^2([0,T]:H)$ such that $X^{0,v}_y(t) = X^{0,u}_x(t)$ for all $t\in [\delta,T]$ and $\frac{1}{2}|v|_{L^2([0,T]:H)} \leq \frac{1}{2}|u|_{L^2([0,T]:H)}^2 + \gamma$.
\end{theorem}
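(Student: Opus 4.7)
My plan is to set $v \equiv u$ on $[\delta,T]$ and design $v$ on $[0,\delta]$ so that $X^{0,v}_y(\delta) = X^{0,u}_x(\delta)$; by uniqueness of mild solutions of \eqref{eq:control}, matching at time $\delta$ and using the same control afterwards force $X^{0,v}_y = X^{0,u}_x$ on $[\delta,T]$. The task therefore reduces to producing an additive correction $w\in L^2([0,\delta]:H)$ so that $v := u+w$ steers $y$ to $X^{0,u}_x(\delta)$ in time $\delta$ with $|w|_{L^2([0,\delta]:H)}$ small. The action bound then follows from
\[\tfrac{1}{2}|v|^2_{L^2([0,T]:H)} \leq \tfrac{1}{2}|u|^2_{L^2([0,T]:H)} + \sqrt{2N}\,|w|_{L^2([0,\delta]:H)} + \tfrac{1}{2}|w|^2_{L^2([0,\delta]:H)},\]
so $|w|_{L^2([0,\delta]:H)}$ small (as a function of $\gamma,N$) yields the conclusion.

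To construct $w$ I would build an explicit target path. Pick $\chi\in C^1([0,\delta])$ with $\chi(0)=1,\,\chi(\delta)=0$ (say $\chi(t)=1-t/\delta$) and set
\[\tilde\varphi(t) := X^{0,u}_x(t) + \chi(t)\,S(t)(y-x), \quad t\in[0,\delta],\]
so $\tilde\varphi(0)=y$ and $\tilde\varphi(\delta)=X^{0,u}_x(\delta)$. Using $\chi(t)-1 = \int_0^t \chi'(s)\,ds$ together with the semigroup property gives $\chi(t)S(t)(y-x) = S(t)(y-x) + \int_0^t S(t-s)\chi'(s)S(s)(y-x)\,ds$, so
\[\tilde\varphi(t) = S(t)y + \int_0^t S(t-s)\bigl[F(X^{0,u}_x(s)) + G(X^{0,u}_x(s))u(s) + \chi'(s)S(s)(y-x)\bigr]\,ds.\]
The uniform invertibility of $G$ (Assumption \ref{A:DiffusionCoeff}) now permits me to set $v := u+w$ with
\[w(s) := G^{-1}(\tilde\varphi(s))\bigl[F(X^{0,u}_x(s)) - F(\tilde\varphi(s)) + (G(X^{0,u}_x(s)) - G(\tilde\varphi(s)))u(s) + \chi'(s)S(s)(y-x)\bigr],\]
after which $\tilde\varphi$ satisfies the mild equation with initial data $y$ and control $v$; uniqueness of mild solutions identifies $\tilde\varphi$ with $X^{0,v}_y$ on $[0,\delta]$, giving the required matching at time $\delta$.

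The core quantitative step is $|w|_{L^2([0,\delta]:H)}\to 0$ as $\delta\to 0$. Theorem \ref{thm:control-bounds} yields a bound on $|X^{0,u}_x|_{C([0,\delta]:E)}$ depending only on $R$ and $N$ (restricting to $\delta\leq 1$); since $|\tilde\varphi - X^{0,u}_x|_E \leq M|y-x|_E$ with $M = \sup_{t\in[0,1]}|S(t)|_{\mathscr{L}(E)}$, the local Lipschitz constants of $F$ and $G$ apply on a bounded subset of $E$. Using $|G^{-1}|_{\mathscr{L}(H)}\leq 1/\kappa_0$, the embedding $|\cdot|_H\leq\sqrt L|\cdot|_E$, and the $H$-contractivity of $S(t)$, the three summands of $w(s)$ contribute to $|w|^2_{L^2([0,\delta]:H)}$ terms of order $|y-x|_E^2\,\delta$, $|y-x|_E^2\cdot N$, and $L|y-x|_E^2/\delta$; with $|y-x|_E < \delta$ this gives $|w|^2_{L^2([0,\delta]:H)} \leq C(R,N)(\delta^3 + N\delta^2 + L\delta) \to 0$. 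The main obstacle is precisely the third contribution, coming from $\chi'(s)S(s)(y-x)$: its pointwise $H$-norm is of order $|y-x|_E/\delta$, which blows up as $\delta\to 0$ for fixed $|y-x|_E$. It is exactly the hypothesis $|x-y|_E<\delta$, tying the smallness of the initial-condition error to the length of the transition interval, that tames this term so that after $L^2$-integration over $[0,\delta]$ its contribution is only $O(\sqrt{L\delta})$ and vanishes.
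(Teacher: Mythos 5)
Your proof is correct, but it takes a genuinely different route from the paper's. The paper constructs $v$ as a \emph{feedback} control: it adds to the transported control a term $-G^{-1}(X^{0,v}_y(t))\frac{X^{0,v}_y(t)-X^{0,u}_x(t)}{|X^{0,v}_y(t)-X^{0,u}_x(t)|_E}$ so that, by dissipativity of $A$ in $E$, the difference $|X^{0,v}_y(t)-X^{0,u}_x(t)|_E$ decreases at unit rate and the two trajectories coalesce by time $|x-y|_E\leq\delta$; the extra cost is then estimated exactly as you do, using \eqref{eq:G-inv-norm}, \eqref{eq:g-Lip} and the a priori bounds. You instead prescribe an explicit \emph{open-loop} target path $\tilde\varphi(t)=X^{0,u}_x(t)+\chi(t)S(t)(y-x)$ and read off the control that realizes it; the semigroup identity $\chi(t)S(t)(y-x)=S(t)(y-x)+\int_0^t S(t-s)\chi'(s)S(s)(y-x)\,ds$ is verified correctly, your $w$ is exactly the control forced by matching the mild formulations, and your three-term estimate (in particular the observation that the $\chi'(s)S(s)(y-x)$ term is $O(|y-x|_E^2/\delta)\leq O(\delta)$ in $L^2$ precisely because $|x-y|_E<\delta$) is sound. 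What each approach buys: your construction is fully explicit and sidesteps the (unaddressed in the paper) well-posedness of the closed-loop system defined by a control that depends discontinuously on $X^{0,v}_y$ itself, and it does not use dissipativity of $A$ for the coalescence; the paper's feedback construction is more robust in that it reuses the same mechanism elsewhere (e.g.\ in Lemmas \ref{lem:quasipotential-finite} and \ref{lem:quasipotential-to-0} with a modified decay rate) and yields coalescence at the possibly earlier time $|x-y|_E$. Either argument establishes the theorem.
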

\begin{proof}
  Fix $\gamma>0$, $R>0$, and $N>0$. Let $x,y \in E$ satisfying $|x|_E \leq R$ and $|y|_E \leq R$ and $u \in L^2([0,T]:H)$ satisfying $\frac{1}{2}|u|^2_{L^2([0,T]:H)}\leq N$. We will build a control $v$ such that $X^{0,v}_y(t) = X^{0,u}_x(t)$ after a certain time.

    Let $\varphi_n: [0,T] \to E$ solve the Cauchy problem
\[\begin{cases}
  \displaystyle{\frac{d}{dt} \varphi_n(t) = A \varphi_n(t) + F(X^{0,u}_x(t)) + G(X^{0,u}_x(t))u(t)}\\
   \displaystyle{\hspace{3cm}- \frac{\varphi_n(t) - X^{0,u}_x(t)}{|\varphi_n(t) - X^{0,u}_x(t)|_E \vee \frac{1}{n}},}\\
   \varphi_n(0) = y.
  \end{cases}
\]
Notice that a solution $\varphi_n$ exists and is unique because the mapping $\varphi \mapsto \frac{\varphi - X^{0,u}_x(t)}{|\varphi - X^{0,u}_x(t)|_E\vee \frac{1}{n}}$ is Lipschitz continuous.

Let $\tau_n = \inf\{t>0: |\varphi_n(t) - X^{0,u}_x(t)| \leq \frac{1}{n}\}$.
Notice that by uniqueness $\varphi_n(t) = \varphi_m(t)$ for $m>n$ and $t \in [0,\tau_n]$.
Furthermore, we can calculate that
\[\frac{d}{dt}\left(\varphi_n(t) - X^{0,u}_x(t)\right) = A\left(\varphi_n(t) - X^{0,u}_x(t)\right)- \frac{\varphi_n(t) - X^{0,u}_x(t)}{|\varphi_n(t) - X^{0,u}_x(t)|_E \vee \frac{1}{n}}. \]

For any $\delta_t \in \partial |\varphi_n(t) - X^{0,u}_x(t)|_E$,
\[\frac{d^{-}}{dt}|\varphi_n(t) - X^{0,u}_x(t)|_E \leq \left<A\left(\varphi_n(t) - X^{0,u}_x(t)\right), \delta_t \right>_{E,E_\star} - \frac{|\varphi_n(t) - X^{0,u}_x(t)|_E}{|\varphi_n(t) - X^{0,u}_x(t)|_E \vee \frac{1}{n}}.\]

For $t \in [0,\tau_n]$, therefore, it holds that
\[\frac{d^{-}}{dt} |\varphi_n(t) - X^{0,u}_x(t)|_E \leq -1\]

Because $|\varphi_n(0) - X^{0,u}_x(0)|_E = |y-x|_E$, it follows that for all $t \in [0,\tau_n]$
\[0\leq |\varphi_n(t) - X^{0,u}_x(t)|_E \leq |y-x|_E - t.\]

In particular, this proves that $\tau_n \leq |y-x|_E$ for all $n$. Because $\tau_n \leq \tau_{n+1}$, there exists
\[\tau = \lim_{n \to \infty} \tau_n \leq |y-x|_E.\]

Therefore, we can define
\[\varphi(t) := \begin{cases}
                     \varphi_n(t) & \text{ for } t \in [0,\tau_n],\\
                     X^{0,u}_x(t) & \text{ for } t > \tau
                \end{cases}.\]

This process solves the Cauchy problem
\begin{align*}
  \frac{d}{dt} \varphi(t) =   &A \varphi(t) + F(X^{0,u}_x(t)) + G(X^{0,u}_x(t))u(t)\\
                               & - \frac{\varphi(t) - X^{0,u}_x(t)}{|\varphi(t) - X^{0,u}_x(t)|_E}\mathbbm{1}_{\{|\varphi(t) - X^{0,u}_x(t)|_E >0\}},
                               \end{align*}

Define
\begin{align*}
  v(t) := &G^{-1}(\varphi(t))(F(X^{0,u}_x(t)) - F(\varphi(t))) + G^{-1}(\varphi(t)) G(X^{0,u}_x(t))u(t)\\
  &- G^{-1}(\varphi(t)) \left(\frac{\varphi(t) - X^{0,u}_x(t)}{|\varphi(t) - X^{0,u}_x(t)|_E}\right)\mathbbm{1}_{\{|\varphi(t) - X^{0,u}_x(t)|_E >0\}}
\end{align*}
and notice that $X^{0,v}_y(t) = \varphi(t)$.

This controlled system has the property that $X^{0,v}_y(t) = X^{0,u}_x(t)$ for all $t> \tau$. Because $\tau\leq |y-x|_E$, $X^{0,v}_y(t) = X^{0,u}_x(t)$ for all $t \geq |y-x|_E$.

  Now we analyze the $L^2([0,T]:H)$ norm of $v(t)$. First, observe that
  \[G^{-1}(X^{0,v}_y(t))G(X^{0,u}_x(t))u(t) = G^{-1}(X^{0,v}_y(t)) (G(X^{0,u}_x(t)) - G(X^{0,v}_y(t)))u(t) +u(t). \]

  By \eqref{eq:G-inv-norm} and \eqref{eq:g-Lip},
  \[|G^{-1}(X^{0,v}_y(t))G(X^{0,u}_x(t))u(t)|_H \leq \left( 1+ \frac{\kappa|X^{0,u}_x(t) - X^{0,v}_y(t)|_E}{\kappa_0}\right)|u(t)|_H.\]

  Therefore, due to the fact that $|X^{0,u}_x(t) - X^{0,v}_y(t)|_E \leq \delta \mathbbm{1}_{\{t\leq \delta\}}$
  \begin{equation} \label{eq:GGu-bound}
    |G^{-1}(X^{0,v}_y(t))G(X^{0,u}_x(t))u(t)|_H \leq  \left( 1+ \frac{\kappa \delta\mathbbm{1}_{\{t\leq \delta\}}}{\kappa_0}\right)|u(t)|_H.
  \end{equation}

  Using the fact that for any $x \in E$, $|x|_H \leq C |x|_E$,
  \begin{align} \label{eq:frac-bound}
    &\left|G^{-1}(X^{0,v}_y(t))\frac{X^{0,v}_y(t) - X^{0,u}_x(t)}{|X^{0,v}_y(t) - X^{0,u}_x(t)|_E} \right|_H\mathbbm{1}_{\{|X^{0,v}_y(t) - X^{0,u}_x(t)|_E>0\}} \nonumber \\
    &\leq \frac{1}{\kappa_0} \frac{|X^{0,v}_y(t) - X^{0,u}_x(t)|_H}{|X^{0,v}_y(t) - X^{0,u}_x(t)|_E} \mathbbm{1}_{\{t\leq \delta\}} \nonumber \\
    &\leq \frac{C}{\kappa_0}\mathbbm{1}_{\{t\leq \delta\}}.
  \end{align}

  By the a-priori bounds \eqref{eq:control-sup-time-bound}, there exists $\tilde{R}$ such that $|X^{0,u}_x(t)|_E, |X^{0,v}_y(t)|_E\leq \tilde{R}$ for all $t \in [0,T]$.
  Using the fact that $F$ is locally Lipschitz continuous (see Assumption \ref{assum:nonlinear}), there exists a $C=C(\tilde R)>0$ such that $|F(X^{0,u}_x(t)) - F(X^{0,v}_y(t))|_E \leq C|X^{0,u}_x(t)-X^{0,v}_y(t)|_E$. Therefore, also taking into account the continuous embedding of $E$ into $H$
 \begin{align} \label{eq:G-F-term}
    |G^{-1}(X^{0,v}_y(t))(F(X^{0,u}_x(t)) - F(X^{0,v}_y(t)))|_H &\leq \frac{C}{\kappa_0}|X^{0,u}_x(t) - X^{0,v}_y(t)|_E \nonumber\\
   & \leq \frac{C\delta}{\kappa_0}  \mathbbm{1}_{\{t \leq \delta\}}.
  \end{align}

  Combining \eqref{eq:GGu-bound}, \eqref{eq:frac-bound}, and \eqref{eq:G-F-term}, for any $t \in [0,T]$,
  \begin{equation*}
    |v(t)|_H \leq \left( 1+ \frac{\kappa \delta\mathbbm{1}_{\{t\leq \delta\}}}{\kappa_0}\right)|u(t)|_H + \frac{C(1+\delta)}{\kappa_0} \mathbbm{1}_{\{t\leq \delta\}}
  \end{equation*}

  Simple calculations show that for small $\delta$,
  \begin{align*}
    \frac{1}{2} \int_0^T |v(t)|_H^2 dt \leq \frac{1}{2} \int_0^T |u(t)|_H^2 dt  + C\delta^{\frac{1}{2}}(N + \sqrt{N} +1 ).
  \end{align*}
  We can choose $\delta_0$ small enough so that whenever $\delta = |x-y|_E< \delta_0$ the remainder is smaller than $\gamma$.
\end{proof}

\subsection{Continuity properties of the quasipotential  $V$}\label{SS:ContinuityQuasipotential}
We collect several results about the continuity, semicontinuity, and compactness of the quasipotential $V(\cdot,\cdot)$ defined in \eqref{eq:quasipotential-def}. The three main theorems in this section are Theorems \ref{thm:quasipoential-cont}, \ref{thm:quasipotential-comp}, and \ref{thm:quasipotential-lsc}. 

\begin{theorem} \label{thm:quasipoential-cont}
  The quasipotential $V(\cdot, \cdot)$ is a continuous function from $E\times H^1 \to [0,+\infty)$.
  That is, whenever $|x_n - x|_{E} \to 0$ and $|y_n-y|_{H^1} \to 0$,
  \begin{equation} \label{eq:quasi-cont}
    \lim_{n \to \infty} V(x_n,y_n) = V(x,y).
  \end{equation}
\end{theorem}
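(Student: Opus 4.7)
The plan is to establish upper semicontinuity $\limsup_n V(x_n,y_n) \le V(x,y)$ and lower semicontinuity $\liminf_n V(x_n,y_n) \ge V(x,y)$ separately, reducing both to three ingredients: (i) the controllability Theorem~\ref{thm:control}, (ii) the trivial subadditivity $V(a,c) \le V(a,b) + V(b,c)$, which follows from \eqref{eq:quasipotential-def} by concatenation of paths, and (iii) the following \emph{key lemma}: if $y_n \to y$ in $H^1$ then $V(y,y_n) \to 0$ and $V(y_n,y) \to 0$.

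For upper semicontinuity, fix $\eta>0$ and pick a near-optimal pair $(T,u)$ with $\varphi := X^{0,u}_x$ satisfying $\varphi(T)=y$ and $\tfrac{1}{2}|u|_{L^2([0,T]:H)}^2 \le V(x,y)+\eta$. Since $\{|x_n|_E\}$ is bounded, Theorem~\ref{thm:control} produces, for all large $n$, a control $v_n$ with $X^{0,v_n}_{x_n}(t)=\varphi(t)$ for $t\in[|x_n-x|_E,T]$ and $\tfrac{1}{2}|v_n|_{L^2}^2 \le \tfrac{1}{2}|u|_{L^2}^2 + \eta$. In particular $X^{0,v_n}_{x_n}(T)=y$, so $V(x_n,y)\le V(x,y)+2\eta$. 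Subadditivity and the key lemma then give $V(x_n,y_n)\le V(x_n,y)+V(y,y_n)\le V(x,y)+2\eta+o(1)$; let $n\to\infty$ and then $\eta\to 0$.

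For lower semicontinuity, I argue by contradiction. Suppose $\liminf_n V(x_n,y_n)=\ell<V(x,y)$ along some subsequence (not relabelled), and pick near-optimal controls $u_n \in L^2([0,T_n]:H)$ with $\tfrac{1}{2}|u_n|_{L^2}^2 \le V(x_n,y_n)+\eta$ (bounded by some $N$) and $X^{0,u_n}_{x_n}(T_n)=y_n$. Applying Theorem~\ref{thm:control} with the roles of the starting point and the nearby companion reversed---starting point $x_n$, companion $x$---yields $\tilde v_n$ with $X^{0,\tilde v_n}_x(t)=X^{0,u_n}_{x_n}(t)$ for $t \ge |x_n-x|_E$ and $\tfrac12|\tilde v_n|_{L^2}^2 \le V(x_n,y_n)+2\eta$; in particular $X^{0,\tilde v_n}_x(T_n)=y_n$, so $V(x,y_n)\le V(x_n,y_n)+2\eta$. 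Subadditivity and the key lemma give $V(x,y)\le V(x,y_n)+V(y_n,y) \le V(x_n,y_n)+2\eta+V(y_n,y) \to \ell+2\eta$, which for small $\eta$ contradicts $\ell<V(x,y)$.

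The main obstacle is the key lemma, which is the only place the $H^1$ hypothesis is used in an essential way. The task is to build a controlled path $\psi:[0,s]\to E$ with $\psi(0)=y$, $\psi(s)=y_n$, whose associated control $u=G^{-1}(\psi)(\psi'-A\psi-F(\psi))$ has vanishing $L^2(H)$ norm as $n\to\infty$. A naive linear interpolation fails because $A$ does not map $H^1$ into $H$. My plan is a three-piece construction: (i) run the free semigroup flow $\psi(t)=S(t)y$ on a short initial interval $[0,\tau]$, which uses zero control and places $S(\tau)y\in H^2$ by the smoothing of the analytic semigroup; (ii) linearly interpolate in $H^2$ between $S(\tau)y$ and $S(\tau)y_n$ on $[\tau,\tau+s]$, on which the control admits an $L^2$ bound of order $|y_n-y|_{H^1}$ via $|G^{-1}|_{\mathscr{L}(H)}\le 1/\kappa_0$ and the Lipschitz property of $F$ on bounded sets; and (iii) solve the linear controllability problem to carry $S(\tau)y_n$ back to $y_n$ using a minimal-energy control estimated via the Gramian of the heat semigroup in terms of a norm of the small residual $y_n-S(\tau)y_n$. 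The symmetric statement $V(y_n,y)\to 0$ is obtained analogously. Verifying uniformly that all three cost contributions vanish as $y_n\to y$ in $H^1$ is the essential technical work; the rest of the theorem then reduces to the controllability and subadditivity arguments above.
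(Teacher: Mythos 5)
Your top-level architecture coincides with the paper's: both reduce the theorem to (a) subadditivity, (b) continuity in the first argument obtained from the merging-of-controls Theorem~\ref{thm:control} (the paper's Lemma~\ref{lem:quasipotential-continuity-in-1st}), and (c) the key lemma $V(y,y_n)\to 0$ for $y_n\to y$ in $H^1$ (the paper's Lemma~\ref{lem:quasipotential-to-0}). Two small omissions in your reduction: you must separately handle the degenerate cases where the near-optimal times $T_n\to 0$ (then Lemma~\ref{lem:control-prob-cont} forces $y=x$ and the bound is trivial, since Theorem~\ref{thm:control} cannot be applied with $T<\delta$) and where $x=y$ (no positive-time near-optimal path exists a priori); the paper treats both explicitly. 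Also note the paper gets the second half of your key lemma, $V(y_n,y)\to 0$, for free from first-argument continuity, $V(y_n,y)\to V(y,y)=0$, rather than by a separate construction.

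The substantive divergence, and the place where your proposal is not yet a proof, is the key lemma itself. The paper's construction is: follow the \emph{nonlinear} unperturbed flow $X^0_{y_n}$ for time $\delta_n/2$ and then reverse it, which is a controlled path returning to $y_n$ whose cost is computed exactly by Theorem~\ref{thm:reversed} as $C\bigl(|y_n|_{H^1}^2-|S(\delta_n/2)y_n|_{H^1}^2\bigr)+C\delta_n(1+\cdots)^2$; this vanishes because $y_n\to y$ in $H^1$ and $S(t)$ is $C_0$ on $H^1$, and Theorem~\ref{thm:control} then splices $y$ onto this loop at negligible extra cost. Your three-piece alternative (linear semigroup, $H^2$ interpolation, minimal-energy steering) is plausible but has concrete soft spots: (i) the path $t\mapsto S(t)y$ does \emph{not} use zero control for the nonlinear equation --- the control must cancel $F$, i.e.\ $u=-G^{-1}(S(t)y)F(S(t)y)$, which costs $O(\tau)$, not $0$; (ii) the interpolation control contains $A\psi$, and $|AS(\tau)y|_H\sim \tau^{-1/2}|y|_{H^1}$, so the interpolation length $s$ must be $o(\tau)$ while still dominating $|y_n-y|_H^2$, forcing a coordinated choice $\tau=\tau_n\to 0$, $s=s_n\to 0$ whose consistency you have not checked; (iii) the terminal steering problem is nonlinear, and the Gramian estimate you invoke must be shown to produce a cost that vanishes \emph{as $n\to\infty$}, which again requires $\tau_n\to 0$ at a rate tied to $|y_n-y|_{H^1}$. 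The reversed-path device sidesteps all of this bookkeeping because the reversal cost is controlled by the single quantity $|y_n|_{H^1}^2-|S(\delta_n/2)y_n|_{H^1}^2$, which converges to $0$ by $H^1$-convergence alone. As written, your key lemma is a program rather than a proof, and it is precisely the step where the $H^1$ hypothesis and the infinite-dimensionality must be confronted.
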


\begin{theorem} \label{thm:quasipotential-comp}
  For any compact set $K \subset E$ and $s\geq 0$, the union of sublevel sets of the quasipotential
  \begin{equation} \label{eq:quasi-level-sets}
    \bigcup_{x \in K}\left\{y \in E: V(x,y) \leq s\right\}
  \end{equation}
  is a compact subset of $E$.
\end{theorem}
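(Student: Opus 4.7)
The plan is to show sequential compactness: any sequence $\{y_n\}\subset\bigcup_{x\in K}\{y\in E: V(x,y)\leq s\}$ admits a subsequence converging in $E$ to a point of the same set. For each $n$, pick $x_n\in K$ realizing $V(x_n,y_n)\leq s$; by compactness of $K$ pass to a subsequence with $x_n\to x^*\in K$. By the definition of $V$, select $T_n\geq 0$ and $u_n\in L^2([0,T_n]:H)$ with $X^{0,u_n}_{x_n}(T_n)=y_n$ and $\tfrac12|u_n|^2_{L^2([0,T_n]:H)}\leq s+\tfrac1n$. Extract a further subsequence so that $T_n\to T^*\in[0,\infty]$.

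If $T^*<\infty$, I extend each $u_n$ by zero to $\bar u_n\in L^2([0,T^*+1]:H)$, which is bounded by $\sqrt{2(s+1)}$, and pass to a subsequence with $\bar u_n\rightharpoonup u^*$ weakly. Theorem \ref{thm:continuity} yields $X^{0,\bar u_n}_{x_n}\to X^{0,u^*}_{x^*}$ uniformly on $[0,T^*+1]$. Since $\bar u_n=u_n$ on $[0,T_n]$ and $T_n\to T^*$, uniform convergence gives $y_n=X^{0,\bar u_n}_{x_n}(T_n)\to X^{0,u^*}_{x^*}(T^*)=:y^*$. Weak lower semicontinuity of the $L^2$ norm gives $\tfrac12|u^*|^2_{L^2([0,T^*]:H)}\leq s$, so $V(x^*,y^*)\leq I^{T^*}_{x^*}(X^{0,u^*}_{x^*})\leq s$, placing $y^*$ in the union.

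If $T^*=\infty$, I first establish pre-compactness of $\{y_n\}$ by a time-shift. For $n$ large, $T_n\geq 1$; set $z_n:=X^{0,u_n}_{x_n}(T_n-1)$ and $\tilde u_n(t):=u_n(T_n-1+t)$ on $[0,1]$, so $y_n=X^{0,\tilde u_n}_{z_n}(1)$ with $|\tilde u_n|^2_{L^2([0,1]:H)}\leq 2(s+1)$. Corollary \ref{cor:pre-compact-all-init-cond} with $t=1$ and $N=\sqrt{2(s+1)}$ then gives pre-compactness of $\{y_n\}$, and we extract $y_n\to y^*\in E$. Iterating the construction, define $z_n^k:=X^{0,u_n}_{x_n}(T_n-k)$ and $\tilde u_n^k(t):=u_n(T_n-k+t)$ on $[0,1]$ for each integer $k\geq 1$. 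Each $\{z_n^k\}_n$ is pre-compact by Corollary \ref{cor:pre-compact-all-init-cond} and $\{\tilde u_n^k\}_n$ is weakly pre-compact in $L^2([0,1]:H)$, so a diagonal extraction combined with Theorem \ref{thm:continuity} produces $z_*^k\in E$ and $\tilde u_*^k\in L^2([0,1]:H)$ for every $k$, with $z_*^0=y^*$ and $z_*^{k-1}=X^{0,\tilde u_*^k}_{z_*^k}(1)$. Summing actions and using weak lower semicontinuity yields $\sum_{k\geq 1}\tfrac12|\tilde u_*^k|^2_{L^2([0,1]:H)}\leq s$; in particular $|\tilde u_*^k|_{L^2}\to 0$.

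The main obstacle is to convert this infinite backward chain of unit-time steps $\dots\to z_*^k\to z_*^{k-1}\to\dots\to y^*$ of total action at most $s$ into a genuine finite-time controlled path from some point of $K$ to $y^*$ of action at most $s$. I would accomplish this by combining two ingredients. First, the controllability Theorem \ref{thm:control} applied to the initial portion of each $n$-th path replaces the starting point $x_n$ by $x^*$ at cost $\gamma_n\to 0$, so that the truncated path from $x^*$ to $z_n^k$ on $[0,T_n-k]$ has action at most $\tfrac12|u_n|^2_{L^2([0,T_n-k]:H)}+\gamma_n$. Second, the super-linear dissipativity of $F$ from Assumption \ref{assum:nonlinear}, together with the uniform pre-compactness statement of Corollary \ref{cor:pre-compact-all-init-cond}, confines the intermediate states $\{z_n^k\}$ to a set that is pre-compact uniformly in $n$ and $k$, allowing one to diagonalize a choice $k=k(n)\to\infty$ with $z_n^{k(n)}\to x^*$. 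The resulting finite-time paths from $x^*$ to $y^*$ have actions converging to at most $s$, and passing to the limit via Theorem \ref{thm:continuity} one more time gives $V(x^*,y^*)\leq s$, completing the proof. The delicate step is the joint diagonalization that simultaneously balances controllability errors, the tail-action decay $|\tilde u_*^k|_{L^2}\to 0$, and the requirement that $z_n^{k(n)}$ approaches $x^*$.
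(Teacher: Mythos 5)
Your argument for the case $T^*<\infty$ is correct, and your pre-compactness step for $\{y_n\}$ via the unit time-shift and Corollary \ref{cor:pre-compact-all-init-cond} is exactly what the paper does. The gap is in the case $T^*=\infty$, which is the essential one. Two things go wrong. First, your plan hinges on choosing $k(n)\to\infty$ with $z_n^{k(n)}\to x^*$, but nothing forces the intermediate states of a near-minimizing path to approach a prescribed point of $K$ (here $K$ is an arbitrary compact set, not even related to the dynamics, and even when $x^*$ is the stable equilibrium, low-action paths in the characteristic-boundary setting may loiter for arbitrarily long times near other $\omega$-limit sets at vanishing cost). So the backward chain $z_*^k$ need not accumulate at $x^*$, and the claimed diagonalization has no basis. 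Second, even granting some accumulation structure, you never actually convert the infinite backward chain of unit-time blocks with total action at most $s$ into a single finite-time admissible path from a point of $K$ to $y^*$ with action at most $s$; you only assert that a ``joint diagonalization'' would do it. That conversion is precisely the hard analytic content, and it is not supplied.

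The paper avoids this entirely. Having obtained $y_n\to y$ in $E$ (your time-shift argument), it additionally notes $y\in H^1$, using the uniform $H^1$ bound on $X^{0,\tilde u_n}_{z_n}(t_0)$ from Lemmas \ref{lem:conv-reg} and \ref{lem:semigroup-reg} and the lower semicontinuity of $|\cdot|_{H^1}$ under $E$-convergence. It then writes the triangle inequality $V(x_n,y)\le V(x_n,y_n)+V(y_n,y)$ and invokes Lemma \ref{lem:quasipotential-continuity-in-1st} (continuity of $V(\cdot,y)$ in the first argument when $y\in H^1$, itself a consequence of the controllability Theorem \ref{thm:control}) to conclude $V(x_n,y)\to V(x,y)$ and $V(y_n,y)\to V(y,y)=0$, whence $V(x,y)\le\liminf_n V(x_n,y_n)\le s$. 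No limit of the controlled paths themselves is ever taken, so the divergence $T_n\to\infty$ causes no difficulty. To repair your proof, replace the entire backward-chain construction with this triangle-inequality step.
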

\begin{theorem} \label{thm:quasipotential-lsc}
  The quasipotential is a lower-semicontinuous generalized function from $E\times E \to [0,+\infty]$ in the sense that whenever $|x_n - x|_E \to 0$ and $|y_n -y|_E \to 0$,
    \begin{equation} \label{eq:quasi-lsc}
      V(x,y) \leq \liminf_{n \to \infty} V(x_n,y_n).
    \end{equation}
\end{theorem}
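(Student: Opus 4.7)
The plan is to combine the sublevel-set compactness from Theorem \ref{thm:quasipotential-comp} (which gives lower semicontinuity of $V(x,\cdot)$ at fixed $x$) with the controllability Theorem \ref{thm:control} (which absorbs the variation in the first argument). This splits the joint lower semicontinuity assertion into two clean one-sided ingredients that are already available.

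First, Theorem \ref{thm:quasipotential-comp} applied with the compact singleton $K=\{x\}$ states that every sublevel set $\{y\in E:V(x,y)\leq s\}$ is compact in $E$, hence closed. Closed sublevel sets are equivalent to lower semicontinuity, so $V(x,\cdot)$ is lower semicontinuous on $E$, and therefore $V(x,y)\leq \liminf_n V(x,y_n)$. Second, I claim that for every $\gamma>0$ and all $n$ sufficiently large,
\[
V(x,y_n)\leq V(x_n,y_n)+\gamma+\tfrac{1}{n}.
\]
Granting this, $V(x,y)\leq \liminf_n V(x,y_n)\leq \liminf_n V(x_n,y_n)+\gamma$, and sending $\gamma\to 0$ proves the theorem.

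To establish the comparison, I may assume $\liminf_n V(x_n,y_n)=L<\infty$ and pass to a subsequence along which $V(x_n,y_n)\to L$. For each $n$, choose a near-optimal control $u_n\in L^2([0,T_n]:H)$ with $X^{0,u_n}_{x_n}(T_n)=y_n$ and $\tfrac{1}{2}|u_n|^2_{L^2}\leq V(x_n,y_n)+\tfrac{1}{n}\leq L+2$. Since $x_n\to x$, the quantities $|x_n|_E$ and $\tfrac{1}{2}|u_n|^2_{L^2}$ are uniformly bounded, so Theorem \ref{thm:control} furnishes a threshold $\delta=\delta(\gamma)>0$ and controls $v_n\in L^2([0,T_n]:H)$ such that $X^{0,v_n}_x(t)=X^{0,u_n}_{x_n}(t)$ on $[\delta,T_n]$ and $\tfrac{1}{2}|v_n|^2_{L^2}\leq \tfrac{1}{2}|u_n|^2_{L^2}+\gamma$. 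In particular $X^{0,v_n}_x(T_n)=y_n$, so $V(x,y_n)\leq V(x_n,y_n)+\tfrac{1}{n}+\gamma$.

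The one technicality is verifying the hypothesis $T_n\geq\delta$ of Theorem \ref{thm:control}. If some subsequence satisfies $T_n\to 0$, direct estimation of the mild formulation (using $|S(t)h|_E\leq C t^{-1/4}|h|_H$ together with Cauchy--Schwarz on the control integral, and the boundedness of $F$ on bounded sets) forces $y_n\to x$ in $E$, so $y=x$ and the inequality $V(x,x)=0\leq\liminf V(x_n,y_n)$ is trivial. Otherwise $T_n$ is bounded below, and for $n$ large the application of Theorem \ref{thm:control} is legitimate. The substance of the statement is packaged inside Theorems \ref{thm:quasipotential-comp} and \ref{thm:control}; what remains is essentially just this bookkeeping, which is the only place care is required.
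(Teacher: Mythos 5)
Your proof is correct, and it reaches the conclusion by a genuinely different factorization of the joint limit than the paper, while resting on the same two pillars (the controllability Theorem \ref{thm:control} and the compactness Theorem \ref{thm:quasipotential-comp}). The paper's proof simply reruns the argument inside the proof of Theorem \ref{thm:quasipotential-comp}: after the same case split on $T_n$, it identifies the limit of the endpoints $y_n$ as an element of $H^1$ (via the smoothing Lemmas \ref{lem:conv-reg} and \ref{lem:semigroup-reg} and lower semicontinuity of the $H^1$ norm under $E$-convergence) and then uses the triangle inequality $V(x_n,y)\leq V(x_n,y_n)+V(y_n,y)$ together with Lemma \ref{lem:quasipotential-continuity-in-1st} to send $V(y_n,y)\to 0$ and $V(x_n,y)\to V(x,y)$; that is, it shifts the \emph{second} argument first. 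You shift the \emph{first} argument first, applying Theorem \ref{thm:control} directly to the near-minimizing controls to get $V(x,y_n)\leq V(x_n,y_n)+\gamma+\tfrac{1}{n}$, and then handle the second argument softly: compactness of the sublevel sets of $V(x,\cdot)$ (Theorem \ref{thm:quasipotential-comp} with $K=\{x\}$) gives closedness and hence lower semicontinuity of $V(x,\cdot)$. This buys a cleaner conclusion --- you never need to know that $y\in H^1$ nor invoke Lemmas \ref{lem:quasipotential-to-0} and \ref{lem:quasipotential-continuity-in-1st} --- at the price of using Theorem \ref{thm:quasipotential-comp} as a black box, which is harmless since that theorem is proved before and independently of the present one, so there is no circularity. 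Your treatment of the degenerate case $T_n\to 0$ reproduces Lemma \ref{lem:control-prob-cont}, and the adjustment needed to ensure $T_n\geq\delta$ is the same ``possibly decrease $\delta$'' step the paper itself performs in the proof of Lemma \ref{lem:quasipotential-continuity-in-1st}.
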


The proofs of Theorems \ref{thm:quasipoential-cont}, \ref{thm:quasipotential-comp}, and \ref{thm:quasipotential-lsc} will follow after a few  preliminary results.

\begin{lemma} \label{lem:control-prob-cont}
  Assume that $T_n >0$ and $T_n \to 0$, $x_n$ is a convergent sequence in $E$ with limit $x$,  and $u_n \in L^2([0,T_n]:H)$ with $\sup_n |u_n|_{L^2([0,T_n]:H)} <+\infty$. Then
  \begin{equation*}
    \lim_{n \to \infty} \left|X^{0,u_n}_{x_n}(T_n) -x\right|_E = 0.
  \end{equation*}
\end{lemma}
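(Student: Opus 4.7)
The plan is to decompose the difference $X^{0,u_n}_{x_n}(T_n) - x$ using the mild formulation \eqref{eq:control} into three pieces:
\begin{equation*}
X^{0,u_n}_{x_n}(T_n) - x = \bigl(S(T_n)x_n - x\bigr) + \int_0^{T_n} S(T_n-s) F(X^{0,u_n}_{x_n}(s))\, ds + \int_0^{T_n} S(T_n-s) G(X^{0,u_n}_{x_n}(s)) u_n(s)\, ds,
\end{equation*}
and to show that each of the three pieces tends to zero in $E$. The first piece is handled by the triangle inequality $|S(T_n)x_n - x|_E \le |S(T_n)|_{\mathscr{L}(E)}\,|x_n-x|_E + |S(T_n)x - x|_E$: the operator norm $|S(t)|_{\mathscr{L}(E)}$ is uniformly bounded on $[0,1]$ since $S$ is a $C_0$ semigroup, so the first summand vanishes because $x_n\to x$, and the second vanishes by strong continuity of $S$ at $t=0$.

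For the deterministic nonlinear integral I would first invoke the a priori bound \eqref{eq:control-sup-time-bound} from Theorem \ref{thm:control-bounds}: since $|x_n|_E$ is bounded (as $x_n\to x$), $T_n$ is bounded, and $|u_n|_{L^2([0,T_n]:H)}$ is bounded by hypothesis, there exists $R$ independent of $n$ with $\sup_{s\in[0,T_n]}|X^{0,u_n}_{x_n}(s)|_E \le R$. The growth estimate \eqref{eq:F-growth} then gives a uniform bound $|F(X^{0,u_n}_{x_n}(s))|_E \le C$, so combined with the uniform operator-norm bound on $S(t)$ for small $t$ one gets
\begin{equation*}
\left|\int_0^{T_n} S(T_n-s) F(X^{0,u_n}_{x_n}(s))\, ds\right|_E \le C\, T_n \to 0.
\end{equation*}

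The control integral is the most delicate piece and is where the proof really uses something beyond a naive $L^\infty$ bound, since $u_n$ is only in $L^2$. The idea is to use the $H\hookrightarrow E$ smoothing estimate \eqref{eq:semigroup-regularity} together with the uniform bound $|G(x)h|_H \le \kappa_1 |h|_H$ from \eqref{eq:G-bound} and Cauchy–Schwarz:
\begin{equation*}
\left|\int_0^{T_n} S(T_n-s) G(X^{0,u_n}_{x_n}(s)) u_n(s)\, ds\right|_E \le C\kappa_1 \int_0^{T_n} (T_n-s)^{-1/4} |u_n(s)|_H\, ds \le C\kappa_1 \sqrt{2}\, T_n^{1/4} |u_n|_{L^2([0,T_n]:H)},
\end{equation*}
using $\int_0^{T_n}(T_n-s)^{-1/2}\,ds = 2 T_n^{1/2}$. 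Since $T_n \to 0$ and $\sup_n|u_n|_{L^2([0,T_n]:H)}<\infty$, this term vanishes. Collecting the three estimates yields the claim. The main (and only) genuine obstacle is checking that the control integral is well controlled when $u_n$ is merely $L^2$-bounded; the $t^{-1/4}$ smoothing of the semigroup is exactly what is needed, and the exponent $1/4$ is integrable after squaring, which is why Cauchy–Schwarz closes the argument.
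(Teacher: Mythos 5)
Your proof is correct and follows essentially the same route as the paper: the same three-term decomposition of $X^{0,u_n}_{x_n}(T_n)-x$, the a priori bound \eqref{eq:control-sup-time-bound} with \eqref{eq:F-growth} for the nonlinear integral, and the $t^{-1/4}$ smoothing estimate with Cauchy--Schwarz for the control integral (the paper simply cites its estimate \eqref{eq:Y-bound}, which is exactly the computation you carry out).
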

\begin{proof}
  Observe that
  \begin{align*}
    X^{0,u_n}_{x_n}(T_n) -x = &S(T_n)x_n - x + \int_0^{T_n} S(T_n-s)F(X^{0,u_n}_{x_n}(s))ds \\
    &+ \int_0^{T_n}S(T_n-s)G(X^{0,u_n}_{x_n}(s))u_n(s)ds.
  \end{align*}
  By \eqref{eq:control-sup-time-bound} and \eqref{eq:F-growth}, there exists $C>0$ such that $ | S(T_n-s)F(X^{0,u_n}_{x_n}(s))|_E \leq C$. With this estimate and \eqref{eq:Y-bound} it follows that
  \[|X^{0,u_n}_{x_n}(T_n) -x|_E \leq |S(T_n)x_n - x|_E + CT_n + CT_n^{\frac{1}{4}}|u_n|_{L^2([0,T_n]:H)}.\]
  The right hand side converges to zero because $x_n \to x$ and the semigroup is $C_0$ continuous.
\end{proof}

\begin{lemma} \label{lem:quasipotential-finite}
  $V(x,y)<+\infty$ if and only if $x =y$ or $y \in H^1$.
\end{lemma}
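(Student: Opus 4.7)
The plan is to prove the two implications separately; the trivial side is $x=y$ (just take $T=0$ in \eqref{eq:quasipotential-def}, giving $V(x,x)=0$), and the substantive side is producing an $L^2$-integrable control from $x$ to any $y\in H^1$. The analytic semigroup $S(t)$ together with its natural regularization in the eigenbasis $\{e_k\}$ with $Ae_k=-\alpha_k e_k$ will do all the work in both directions.

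For the ``only if'' direction, assume $V(x,y)<\infty$ with $y\neq x$. By the definition of the infimum there exist $T\geq 0$ and $u\in L^2([0,T]:H)$ with $X^{0,u}_x(T)=y$ and $\tfrac12\int_0^T|u(s)|_H^2\,ds$ finite. The case $T=0$ would force $y=x$, so $T>0$. From the mild formulation \eqref{eq:control},
\[
y = S(T)x+\int_0^T S(T-s)\bigl[F(X^{0,u}_x(s))+G(X^{0,u}_x(s))u(s)\bigr]ds.
\]
For $x\in E\subset H$ and $T>0$, analyticity of the semigroup yields
\[
|S(T)x|_{H^1}^2 = \sum_k \alpha_k e^{-2\alpha_k T}\langle x,e_k\rangle_H^2 \leq \tfrac{1}{2eT}|x|_H^2,
\]
so $S(T)x\in H^1$. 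For the convolution, the key estimate for any $\phi\in L^2([0,T]:H)$ with decomposition $\phi(s)=\sum_k\phi_k(s)e_k$ is, by Cauchy--Schwarz and $\int_0^T e^{-2\alpha_k(T-s)}ds=(1-e^{-2\alpha_k T})/(2\alpha_k)$,
\[
\left|\int_0^T S(T-s)\phi(s)\,ds\right|_{H^1}^2 = \sum_k\alpha_k\left|\int_0^T e^{-\alpha_k(T-s)}\phi_k(s)\,ds\right|^2 \leq \tfrac12\,|\phi|_{L^2([0,T]:H)}^2.
\]
The a priori bound \eqref{eq:control-sup-time-bound} keeps $X^{0,u}_x$ uniformly bounded in $E$, so \eqref{eq:F-growth} and \eqref{eq:G-bound} make the integrand lie in $L^2([0,T]:H)$. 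Hence $y\in H^1$.

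For the converse, fix $y\in H^1$ and any $T>0$. Since $S(T)x\in H^1$ the element $z:=y-S(T)x$ lies in $H^1$. Expanding $z=\sum_k z_k e_k$, set
\[
h(s) := \sum_k c_k e^{-\alpha_k(T-s)}e_k, \qquad c_k := \frac{2\alpha_k z_k}{1-e^{-2\alpha_k T}}.
\]
Direct computation gives $\int_0^T S(T-s)h(s)\,ds=z$ and, using that $\alpha_k\geq\alpha_1>0$,
\[
|h|_{L^2([0,T]:H)}^2 = \sum_k \frac{2\alpha_k z_k^2}{1-e^{-2\alpha_k T}} \leq \frac{2}{1-e^{-2\alpha_1 T}}|z|_{H^1}^2 <\infty.
\]
Define $\varphi(t):=S(t)x+\int_0^t S(t-s)h(s)\,ds$, so $\varphi(0)=x$ and $\varphi(T)=y$; the estimate \eqref{eq:semigroup-regularity} combined with Cauchy--Schwarz ensures $\varphi\in C([0,T]:E)$ is uniformly bounded. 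The feedback $u(t):=G^{-1}(\varphi(t))[h(t)-F(\varphi(t))]$ satisfies $X^{0,u}_x=\varphi$ by construction, and \eqref{eq:G-inv-norm} together with \eqref{eq:F-growth} yield $u\in L^2([0,T]:H)$. Therefore $V(x,y)\leq \tfrac12|u|_{L^2([0,T]:H)}^2<\infty$.

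The main obstacle is that we cannot rely on a straight-line path from $x$ to $y$: such a path would require $y\in H^2=\mathrm{Dom}(A)$ for $A\varphi$ to lie in $H$, which is strictly stronger than what is claimed. The explicit spectral choice of $c_k$ is what makes everything tight: the numerator $\alpha_k$ is precisely what is needed to reach an arbitrary $y\in H^1$, and the matching convolution bound in the ``only if'' direction shows that $H^1$ is exactly the regularity threshold that makes the quasipotential finite.
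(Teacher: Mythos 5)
Your proof is correct. The ``only if'' direction is essentially the paper's argument: both reduce to the fact that $S(T)x$ and the convolution $\int_0^T S(T-s)\phi(s)\,ds$ land in $H^1$ for $T>0$ and $\phi\in L^2([0,T]:H)$ (the paper isolates these as Lemmas \ref{lem:semigroup-reg} and \ref{lem:conv-reg}), applied to the integrand $F(X^{0,u}_x)+G(X^{0,u}_x)u$, which is in $L^2([0,T]:H)$ by the a priori bound \eqref{eq:control-sup-time-bound} together with \eqref{eq:F-growth} and \eqref{eq:G-bound}. The ``if'' direction, however, is genuinely different. The paper builds a target trajectory by running the unperturbed flow from $y$ forward and then time-reversing it (Theorem \ref{thm:reversed}), so that the reversed path ends at $y$ with finite energy $u(t)=-2G^{-1}(AX^0_y+F(X^0_y))$, and then glues the initial condition $x$ onto this trajectory with a feedback control in the spirit of Theorem \ref{thm:control}; this reuses machinery needed elsewhere (e.g.\ in Lemma \ref{lem:quasipotential-to-0}) but requires the auxiliary regularity statement $X^0_y\in L^2([0,T]:H^2)$ for $y\in H^1$. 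You instead write down the explicit spectral control $h(s)=\sum_k c_k e^{-\alpha_k(T-s)}e_k$ with $c_k=2\alpha_k z_k/(1-e^{-2\alpha_kT})$ steering $S(T)x$ to $y$, and absorb the nonlinearity via the feedback $u=G^{-1}(\varphi)[h-F(\varphi)]$; the energy bound $|h|^2_{L^2}\leq 2(1-e^{-2\alpha_1T})^{-1}|y-S(T)x|^2_{H^1}$ is exactly what is needed (note $h(T)$ itself may fail to be in $H$ when $y\notin H^2$, but only the $L^2$-in-time norm matters, and the identification $X^{0,u}_x=\varphi$ follows from uniqueness of mild solutions). Your route is more self-contained and makes transparent why $H^1$ is the exact threshold; the paper's route avoids spectral computations and recycles the reversed-path and controllability lemmas that it needs anyway.
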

\begin{proof}

  By the definition \eqref{eq:quasipotential-def}, $V(x,x) =0$ for all $x \in E$ because $X^0_x(0)=x$. If $x \not =y$ and
   $V(x,y)<+\infty$, then there exists $t>0$ and $u \in L^2([0,t]:H)$ such that $X^{0,u}_x(t)=y$. In particular,
  \[y = S(t)x + \int_0^t S(t-s)F(X^{0,u}_x(s))ds + \int_0^t S(t-s)G(X^{0,u}_x(s))u(s)ds.\]
  Because $t>0$, Lemmas \ref{lem:conv-reg} and \ref{lem:semigroup-reg} guarantee that $y \in H^1$.

  Now we show that whenever $y \in H^1$, $V(x,y)<+\infty$. Let $y \in H^1$ and 
  let
  \begin{equation*}
    \psi(t) := \begin{cases}
      X^0_y(t) & \text{ if } t \in [0,1]\\
      X^0_y(2-t) & \text{ if } t \in (1,2].
    \end{cases}
  \end{equation*}

  As we argue in the proof of Theorem \ref{thm:reversed}, $\psi = X^{0,u}_y$ where
  \begin{equation*}
    u(t) = \begin{cases}
      0 & \text{ if } t \in [0,1]\\
      -2G^{-1}(X^0_y(2-t))(AX^0_{y}(2-t) + F(X^0_{y}(2-t))) & \text{ if } t \in (1,2].
    \end{cases}
  \end{equation*}
  and $\frac{1}{2}\int_0^2 |u(s)|_H^2 ds <+\infty$.
     Similarly to the argument in the proof of   Theorem \ref{thm:control}, define $v \in L^2([0,T]:H)$ and $X^{0,v}_x(t)$ such that

  \begin{align*}
    v(t):= &G^{-1}(X^{0,v}_x(t))G(X^{0,u}_y(t))u(t) + G^{-1}(X^{0,v}_x(t))(F(X^{0,u}_y(t)) - F(X^{0,v}_x(t))) \nonumber\\
    &-G^{-1}(X^{0,v}_x(t)) \frac{(X^{0,v}_x(t) - X^{0,u}_y(t))|x-y|_E}{|X^{0,v}_x(t) - X^{0,u}_y(t)|_E}\mathbbm{1}_{\{|X^{0,v}_x(t)-X^{0,u}_y(t)|_E>0\}}.
  \end{align*}

  Notice that this is slightly different than the control in   Theorem \ref{thm:control}. We calculate that
  \begin{align*}
    \frac{d^{-}}{dt}|X^{0,v}_x(t) - X^{0,u}_y(t)|_E \leq -|x-y|_E \mathbbm{1}_{\{|X^{0,v}_x(t) - X^{0,u}_y(t)|_E>0\}}.
  \end{align*}
  Because $|X^{0,v}_x(0) - X^{0,u}_y(0)|_E = |x-y|_E$, these calculations show that $X^{0,v}_x(t) = X^{0,u}_y(t)$ for all $t\geq1$. In particular, $X^{0,v}_x(2) = y$.
  In this case, $\frac{1}{2}\int_0^2 |v(s)|_H^2ds$ is not necessarily small (if $|x-y|_E$ is big), but it is necessarily finite. Therefore,
  $V(x,y)<+\infty$.
\end{proof}

\begin{lemma} \label{lem:quasipotential-to-0}
  Whenever $y, y_n \in H^{1}$ and  $|y_n -y|_{H^1} \to 0$, we have
  \begin{equation*}
    \lim_{n \to \infty} V(y,y_n) = 0.
  \end{equation*}
\end{lemma}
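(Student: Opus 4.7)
The plan is to construct, for each sufficiently large $n$, a control $v_n \in L^2([0,T_n]:H)$ producing a controlled trajectory from $y$ to $y_n$ whose action vanishes as $n\to\infty$. I will concatenate three phases.

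\emph{Phase 1 (free forward flow).} On $[0,\eta]$ set $\psi_n(t) = X^0_y(t)$ with zero control; the trajectory lands at $X^0_y(\eta) \in D(A)$ by the smoothing of the analytic semigroup and contributes no action. \emph{Phase 2 (controllability bridge).} Since $|y_n-y|_{H^1}\to 0$ and $H^1\hookrightarrow E$ in one spatial dimension, $y_n\to y$ in $E$; Theorem \ref{thm:continuity} then gives $X^0_{y_n}(\eta)\to X^0_y(\eta)$ in $E$. Applying Theorem \ref{thm:control} with $u\equiv 0$, starting point $X^0_y(\eta)$, and target trajectory $X^0_{y_n}(\eta+\cdot)$ yields a control on an interval $[\eta,\eta+\delta]$ with action at most any prescribed $\gamma>0$, and satisfying $\psi_n(\eta+\delta)=X^0_{y_n}(\eta+\delta)$. \emph{Phase 3 (reversal to $y_n$).} Set $\psi_n(t) = X^0_{y_n}(2(\eta+\delta)-t)$ on $[\eta+\delta,2(\eta+\delta)]$, realized by the feedback control $u(t) = -2G^{-1}(\psi_n(t))\bigl(A\psi_n(t)+F(\psi_n(t))\bigr)$ exactly as in the proof of Lemma \ref{lem:quasipotential-finite}, landing at $\psi_n(2(\eta+\delta)) = X^0_{y_n}(0) = y_n$.

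The main obstacle is showing that the phase 3 action is small uniformly in $n$. After a change of variables this action is bounded by
\[
\frac{8}{\kappa_0^2}\int_0^{\eta+\delta}\bigl(|AX^0_{y_n}(s)|_H^2 + |F(X^0_{y_n}(s))|_H^2\bigr)\,ds.
\]
Using the mild formula $X^0_{y_n}(s) = S(s)y_n + \int_0^s S(s-\tau)F(X^0_{y_n}(\tau))\,d\tau$, I would estimate the leading piece via the spectral representation
\[
\int_0^T |AS(s)z|_H^2\,ds \;=\; \tfrac12\sum_k \alpha_k z_k^2\bigl(1-e^{-2\alpha_k T}\bigr) \;\leq\; \tfrac12 |z|_{H^1}^2,
\]
which tends to $0$ as $T\to 0$ by dominated convergence whenever $z\in H^1$. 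Writing $y_n = y + (y_n-y)$ yields $\int_0^T|AS(s)y_n|_H^2\,ds \leq 2|y_n-y|_{H^1}^2 + 2\int_0^T|AS(s)y|_H^2\,ds$, which is uniformly small as $n\to\infty$ and $T\to 0$. The nonlinear-convolution piece is controlled by maximal $L^2$-regularity for the analytic self-adjoint semigroup generated by $A$ on $H$, bounding it by $C\int_0^T|F(X^0_{y_n}(\tau))|_H^2\,d\tau = O(T)$ uniformly in $n$, since $X^0_{y_n}$ stays in a bounded set (by the a priori bounds of Theorem \ref{thm:control-bounds}) and $F$ maps bounded sets to bounded sets. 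The $F$-term inside the displayed integral is handled trivially by $|F(X^0_{y_n}(s))|_H \leq C$.

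Collecting the three phases and running a diagonal argument, I would pick first $n$ large enough that $|y_n-y|_{H^1}$ is small, then choose $\eta=\eta_n\to 0$ so that the phase 3 bound is at most some $\gamma_n\to 0$, and finally verify the hypothesis $|X^0_y(\eta_n)-X^0_{y_n}(\eta_n)|_E<\delta(\gamma_n,R,N)$ of Theorem \ref{thm:control} (which holds for $n$ large by continuous dependence on initial data and on $\eta_n$). The total action of the concatenated path $\psi_n$ is then bounded by $0 + \gamma_n + o_n(1)$, yielding $V(y,y_n)\to 0$ as desired.
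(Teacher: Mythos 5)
Your proof is correct and follows essentially the same route as the paper: the action of the time-reversal feedback control is bounded via the $H^1$ energy identity for $\int_0^T|AS(s)\cdot|_H^2\,ds$ together with the maximal regularity of the convolution (Lemmas \ref{lem:conv-reg} and \ref{lem:semigroup-reg}), and Theorem \ref{thm:control} is used to bridge from $y$ onto a trajectory ending at $y_n$. The only differences are cosmetic — the paper builds the forward-then-reversed loop based at $y_n$ and applies the bridge at time $0$ (so your Phase 1 is superfluous), and it obtains the uniform smallness of $|y_n|_{H^1}^2-|S(\cdot)y_n|_{H^1}^2$ by strong continuity of $S(t)$ on $H^1$ rather than by splitting $y_n=y+(y_n-y)$.
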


\begin{proof}[Proof of Lemma \ref{lem:quasipotential-to-0}]
  We construct trajectories $\varphi_n \in C([0,T_n]:E)$ satisfying $\varphi_n(0) = y$, $\varphi_n(T_n) = y_n$ and $I^{T_n}_{y}(\varphi_n) \to 0$. In the finite dimensional setting studied by Freidlin and Wentzell this could be accomplished using a straight line connecting $y$ and $y_n$. In our setting, however, the unboundedness of $A$ prevents straight lines from being helpful.

  Let $y_n,y \in H^1$ with $|y_n -y|_{H^1} \to 0$. Let $\delta_n =  |y-y_n|_{E}$ and note that $\delta_n \to 0$ by Sobolev embedding.
  Define the trajectory
  \begin{equation*}
    \psi_n(t) = \begin{cases}
      X^0_{y_n}(t) & \text{ if } t \in [0,\delta_n/2]\\
      X^0_{y_n}(\delta_n -t) & \text{ if } t \in (\delta_n/2,\delta_n].
    \end{cases}
  \end{equation*}

  Such a trajectory starts at $y_n \in H^{1}$, follows the unperturbed dynamics for $\delta_n/2$ time, and then reverses, retracing its trajectory until it reaches back to $y_n$.
  As discussed in the proof of Theorem \ref{thm:reversed}, $\psi_n= X^{0,u_n}_{y_n}$ where
  \begin{equation*}
    u_n(t) = \begin{cases}
      0 & \text{ if } t \in [0,\delta_n/2]\\
      -2G^{-1}(X^0_{y_n}(\delta_n -t))(AX^0_{y_n}(\delta_n-t) + F(X^0_{y_n}(\delta_n-t))) & \text{ if } t \in (\delta_n/2,\delta_n].
    \end{cases}
  \end{equation*}

  The energy of this control is
  \begin{equation*}
    \frac{1}{2}\int_0^{\delta_n} |u_n(s)|_H^2 ds = 2\int_0^{\delta_n/2} |G^{-1}(X^0_{y_n}(t))(AX^0_{y_n}(t) + F(X^0_{y_n}(t)))|_H^2 dt.
  \end{equation*}

  By   \eqref{eq:control-bound},
  \begin{equation*}
    \frac{1}{2}\int_0^{\delta_n} |u_n(s)|_H^2 ds \leq C \left(  |y_n|_{H^1}^2 - |S(\delta_n/2)y_n|_{H^1}^2 \right) + C\delta_n \left(1 + |X^0_{y_n}|_{C([0,\delta_n/2]:E)}^{1+\rho_{*}} \right)^2
  \end{equation*}

Because $|y_n - y|_{H^1}\to 0$, $\delta_n \to 0$, and $S(t)$ is a $C_0$ semigroup on $H^1$,
$$\lim_{n \to \infty} |S(\delta_n/2)y_n)|_{H^1} = \lim_{n \to \infty}|y_n|_{H^1} = |y|_{H^1}.$$

  Furthermore, by Lemma \ref{lem:control-prob-cont}, $|X^0_{y_n}|_{C([0,\delta_n/2]:E)} \to |y|_E$.
  From these calculations, we can see that
  \begin{equation*}
    \lim_{n \to \infty} \frac{1}{2}\int_0^{\delta_n} |u_n(s)|_H^2 ds =0.
  \end{equation*}

  Of course, these $\psi_n$ are not exactly the trajectories that we want for the proof because they do not connect $y$ and $y_n$. So we construct $v_n$ as in Theorem \ref{thm:control} so that $X^{0,v_n}_y(\delta_n) = X^{0,u_n}_{y_n}(\delta_n) = y_n$. By that theorem it is clear that
  \begin{equation*}
    \lim_{n \to \infty} \int_0^{\delta_n}|v_n(t)|_H^2 dt = 0.
  \end{equation*}
  By definition, $V(y,y_n) \leq \frac{1}{2}\int_0^{\delta_n}|v_n(t)|_H^2 dt\to 0.$
\end{proof}

\begin{lemma} \label{lem:quasipotential-continuity-in-1st}
  If $x_n \to x \in E$ and $y \in H^1$, then
  \begin{equation*}
    \lim_{n \to \infty} V(x_n, y) = V(x,y).
  \end{equation*}
\end{lemma}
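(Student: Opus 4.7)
The plan is to prove the two inequalities $\limsup_{n} V(x_n,y) \le V(x,y)$ and $V(x,y)\le \liminf_{n} V(x_n,y)$ by two symmetric applications of the controllability result Theorem \ref{thm:control}. Throughout the argument the uniform bound $R := \sup_n |x_n|_E < \infty$ holds because $x_n\to x$ in $E$.

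For the upper bound I would fix $\eta>0$ and select a near-optimal controlled trajectory $\varphi = X^{0,u}_x$ with $\varphi(T)=y$ and $\tfrac12 |u|_{L^2([0,T]:H)}^2 \le V(x,y)+\eta$. Apply Theorem \ref{thm:control} with $\gamma = \eta$, $R$ as above, and $N = V(x,y)+\eta$, producing a threshold $\delta_0>0$. For all sufficiently large $n$ we have $|x_n-x|_E < \delta_0$, and Theorem \ref{thm:control} yields a control $v_n$ such that $X^{0,v_n}_{x_n}(t) = \varphi(t)$ for $t \in [|x_n-x|_E,T]$ and $\tfrac12|v_n|^2 \le V(x,y)+2\eta$. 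In particular $X^{0,v_n}_{x_n}(T) = y$, so $V(x_n,y) \le V(x,y)+2\eta$; letting $\eta\to 0$ gives the upper bound. If the natural $T$ is smaller than $\delta_0$ I first extend $\varphi$ by a short retracing loop at $y$, exactly as in the construction in the proof of Lemma \ref{lem:quasipotential-to-0}; this is permissible because $y\in H^1$, and the added action is $O(\delta_0)$, which is negligible after $\eta\to 0$ because $\delta_0$ may be chosen small together with $\eta$.

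For the lower bound I may assume $\liminf_n V(x_n,y) =: L <\infty$ and extract a subsequence with $V(x_n,y)\to L$. Choose $T_n > 0$ and $u_n \in L^2([0,T_n]:H)$ with $\varphi_n := X^{0,u_n}_{x_n}$ satisfying $\varphi_n(T_n)=y$ and $\tfrac12|u_n|^2 \le V(x_n,y)+1/n$. If along some further subsequence $T_n\to 0$, then the energies $|u_n|$ are uniformly bounded and Lemma \ref{lem:control-prob-cont} forces $y = \lim_n \varphi_n(T_n) = x$, whence $V(x,y) = 0 \le L$. Otherwise $T_n$ is bounded below by some $\epsilon>0$, and for arbitrary $\eta>0$ I apply Theorem \ref{thm:control} with the roles of $x$ and $x_n$ swapped (the theorem's initial-condition bound $|x_n|_E \le R$ holds eventually), with $\gamma=\eta$ and $N = L+1$, to obtain a $\delta_\eta>0$ with $|x_n-x|_E<\delta_\eta$ and $T_n \ge \delta_\eta$ for large $n$. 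The resulting control $w_n$ satisfies $X^{0,w_n}_x(T_n) = \varphi_n(T_n)=y$ and $\tfrac12|w_n|^2 \le V(x_n,y) + 1/n + \eta$, hence $V(x,y) \le V(x_n,y) + 1/n + \eta$. Letting first $n\to\infty$ and then $\eta\to 0$ yields $V(x,y)\le L$.

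The principal obstacle is the time-horizon hypothesis $T\ge \delta$ in Theorem \ref{thm:control}, which must be met in both directions. On the upper-bound side this is handled by the retracing-loop extension at $y\in H^1$ inherited from Lemma \ref{lem:quasipotential-to-0}, which explains why the hypothesis $y\in H^1$ is essential; on the lower-bound side it is disposed of via the case analysis on $T_n$, where the short-time case collapses to the trivial equality $y=x$ by continuity of the skeleton equation in its initial condition.
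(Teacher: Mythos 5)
Your proposal is correct and follows essentially the same route as the paper: both directions are obtained by splicing a near-optimal control via Theorem \ref{thm:control}, with the degenerate short-time case reduced to $x=y$ through Lemma \ref{lem:control-prob-cont} and the time-horizon issue handled by a retracing loop at $y\in H^1$ as in Lemma \ref{lem:quasipotential-to-0}. Your treatment of the $T\ge\delta$ hypothesis is in fact slightly more explicit than the paper's, which simply notes that $\delta$ may be decreased.
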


\begin{proof}
  This is a consequence of Theorem \ref{thm:control}. Let $\eta>0$.  We showed in Lemma \ref{lem:quasipotential-finite} that $V(x,y)<+\infty$. If $x \not =y$, then it is clear from definition \eqref{eq:quasipotential-def} that there exists $T>0$ and $u \in L^2([0,T]:H)$ such that $X^{0,u}_x(T)=y$ and $\frac{1}{2}\int_0^T |u(s)|_H^2 ds < V(x,y) + \eta/2$. If $x=y \in H^1$, then arguing using a reversed process as in the proof of Lemma \ref{lem:quasipotential-to-0}, we can show that the same property holds.

  By Theorem \ref{thm:control}, there exists $N \in \mathbb{N}$ large enough so that for any $n >N$ there exists $v_n \in L^2([0,T]:H)$ such that $X^{0,v_n}_{x_n}(T) = X^{0,u}_x(T) = y$ and
  \[\frac{1}{2}\int_0^T |v_n(s)|_H^2 ds \leq \frac{1}{2}\int_0^T |u(s)|_H^2 ds + \frac{\eta}{2} < V(x,y) + \eta.\]

  This proves
  \begin{equation*} 
    \lim_{n \to \infty} V(x_n,y) \leq V(x,y).
  \end{equation*}

  We can use a similar argument to show that
  \begin{equation*} 
    \lim_{n \to \infty} V(x_n,y) \geq V(x,y).
  \end{equation*}

 By the definition of the quasipotential, there exists a sequence $T_n \in [0,\infty)$ and $u_n \in L^2([0,T_n]:H)$ such that $X^{0,u_n}_{x_n}(T_n) = y$ and
  \begin{equation*}
    \frac{1}{2}\int_0^{T_n}|u_n(s)|_H^2 ds < V(x_n,y) + \frac{1}{n}.
  \end{equation*}

  We now explore two cases: $\limsup_n T_n =0$ and $\limsup_n T_n >0$.   Assume first that $\limsup_{n \to \infty} T_n=0$. By Lemma \ref{lem:control-prob-cont} and the fact that $X^{0,u_n}_{x_n}(T_n)=y$,
  \begin{equation*} 
    |y-x|_E =0.
  \end{equation*}
  It is clear that $V(x,y) = V(y,y)=0$.

  On the other hand, if $\limsup_n T_n >0$, then we can find a subsequence of $T_n$ and a $\delta>0$ such that $T_n>\delta$ for all $n$. Fix $\eta>0$.
  By possibly decreasing $\delta$, Theorem \ref{thm:control} guarantees that whenever $|x_n-x|<\delta$, there exists a $v_n \in L^2([0,T_n]:H)$ such that $\frac{1}{2}\int_0^{T_n}|v_n(s)|_H^2 ds\leq \frac{1}{2}\int_0^{T_n}|u_n(s)|_H^2 ds + \frac{\eta}{2}$ and $X^{0,v_n}_{x}(T_n) = X^{0,u_n}_{x_n}(T_n) = y$. We can conclude that for large $n$,
  \[V(x,y) \leq V(x_n,y) + \eta.\]
\end{proof}

\begin{proof}[Proof of Theorem \ref{thm:quasipoential-cont}]
   By Lemma \ref{lem:quasipotential-finite}, $V(\cdot,\cdot)$ is finite on $E\times H^1$.

  To prove \eqref{eq:quasi-cont}, let  $x_n \to x$ in $E$ and $y_n \to y$ in $H^{1}$. The definition of the quasipotential guarantees that
  \begin{align}
    &V(x_n,y_n) \leq V(x_n,y) + V(y,y_n), \label{eq:quasipotential-bound1}\\
    &V(x_n,y) \leq V(x_n,y_n) + V(y_n,y). \label{eq:quasipotential-bound2}
  \end{align}

  Lemma \ref{lem:quasipotential-continuity-in-1st} guarantees that $V(x_n,y) \to V(x,y)$ and $V(y_n,y) \to V(y,y) = 0$. Lemma \ref{lem:quasipotential-to-0} guarantees that $V(y,y_n) \to 0$. Therefore \eqref{eq:quasi-cont} holds.
\end{proof}

\begin{proof}[Proof of Theorem \ref{thm:quasipotential-comp}]
  Let $K \subset E$ be compact and $s \geq 0$.
  Let $x_n \in K$ and $y_n \in E$ be any sequences such that $V(x_n,y_n) \leq s$. Because $K$ is compact, there exists a subsequence (relabeled $x_n$) such that $x_n \to x$ in $K$. By the definition of $V$ there exist $T_n>0$ and $u_n \in L^2([0,T_n]:H)$ such that $X^{0,u_n}_{x_n}(T_n) = y_n$ and $\frac{1}{2}\int_0^{T_n} |u_n(t)|_H^2 dt\leq s+ \frac{1}{n}$.

  We divide this argument into two cases: $\lim_n T_n =0$ and $\limsup_n T_n>0$.

  If $\lim_n T_n=0$, then by Lemma \ref{lem:control-prob-cont} it follows that
  \begin{equation*}
    \lim_{n \to \infty}|y_n-x|_E = 0
  \end{equation*}
  implying $y=x$. Since $V(x,x)=0$, $y=x$ is in the sublevel set of $V(x,\cdot)$.

On the other hand, if $\limsup_n T_n>0$ then there exists a subsequence of $T_n$ and a $t_0>0$ such that $T_n> 2 t_0$ for all $n \geq N$. Note that
\[y_n= X^{0, \tilde u_n}_{z_n}(t_0)\]
where $z_n = X^{0,u_n}_{x_n}(T_n - t_0)$ and $\tilde{u}_n(t) = u_n(t+t_0)$. By (\ref{eq:control-sup-x-bound}) we get that
\[|z_n|_E \leq C(1 + (T_n -t_0)^{-\frac{1}{\rho}} + |u_n|_{L^2([0,T_n]:H)}). \]
Because $T_n -t_0>t_0$ for $n \geq N$, we know that $\sup_n |z_n|_E <+\infty$. By (\ref{eq:control-sup-time-bound}), we get that
\[\sup_n \sup_{s \in [0,t_0]} |X^{0,\tilde{u}_n}_{z_n}(s)|_E < +\infty.\]
Because $f$ is continuous, we have
\[\sup_n \sup_{s \in [0,t_0]} |F(X^{0,\tilde{u}_n}_{z_n}(s))|_E<+\infty.\]
Now, we write
\begin{align}
y_n &= X^{0, \tilde u_n}_{z_n}(t_0) \nonumber\\
&= S(t_0)z_n + \int_0^{t_0} S(t_0-s) F(X^{0,\tilde{u}_n}_{z_n}(s))ds + \int_0^{t_0} S(t_0-s) G(X^{0,\tilde{u}_n}_x(s))\tilde u_n(s)ds.\nonumber
\end{align}
By Lemma \ref{lem:conv-reg},
\begin{align*}
  &\sup_n \left|\int_0^{t_0} S(t_0-s) F(X^{0,\tilde{u}_n}_{z_n}(s))ds \right|_{H^1}\leq  C\sup_n \left(\int_0^{t_0}|F(X^{0,\tilde{u}_n}_{z_n}(s))|_H^2ds\right)^{\frac{1}{2}} \\
  & \leq C\sup_n \sup_{s \in [0,t_0]}|F(X^{0,\tilde{u}_n}_{z_n}(s))|_E<+\infty.
\end{align*}
Also, by Lemma \ref{lem:conv-reg} and the boundedness of $G$,
\begin{align*}
  &\sup_n \left|\int_0^{t_0} S(t_0-s) G(X^{0,\tilde{u}_n}_{z_n}(s))\tilde u_n(s)ds \right|_{H^1} \leq  C\sup_n |\tilde{u}_n(s)|_{L^2([0,t_0]:H)}<+\infty.
\end{align*}
Finally, because $S(t)$ is an analytic semigroup,
\[\sup_n |S(t_0)z_n|_{H^1} \leq Ct_0^{-\frac{1}{2}}\sup_n |z_n|_H \leq C t_0^{-\frac{1}{2}}|z_n|_E < +\infty.\]
Therefore, we  obtain that
\[\sup_n |y_n|_{H^1} <+\infty.\]

  By the Sobolev embedding theorem, $H^1$ embeds compactly into $E$. Therefore, there exists $y \in E$ such that $|y_n -y|_E \to 0$.
  Because the mapping $x \mapsto |x|_{H^1}$ is lower semi-continuous in the $E$ norm, $|y|_{H^1} \leq \liminf_n |y_n|_{H^1}$ and $y \in H^1$. Arguing as in  \eqref{eq:quasipotential-bound2}, it holds that
  \begin{equation*}
    V(x_n,y) \leq V(x_n, y_n) + V(y_n,y).
  \end{equation*}

  By Lemma \ref{lem:quasipotential-continuity-in-1st}, $V(x_n,y) \to V(x,y)$ and $V(y_n,y) \to 0$. Therefore we can conclude that
  \begin{equation} \label{eq:quasi-lsc-proof}
    V(x,y) \leq \liminf_{n \to \infty} V(x_n,y_n) \leq s.
  \end{equation}

  This implies compactness of \eqref{eq:quasi-level-sets}.
\end{proof}

\begin{proof}[Proof of Theorem \ref{thm:quasipotential-lsc}]
  This is a consequence of Theorem \ref{thm:quasipotential-comp}. Let $x_n \to x$ and $y_n \to y$ in $E$. If $\liminf_n V(x_n,y_n) = +\infty$, then \eqref{eq:quasi-lsc} holds trivially. If $\liminf_n V(x_n,y_n)=:s<\infty$, then the arguments leading to \eqref{eq:quasi-lsc-proof} prove our result.
\end{proof}

\subsection{Quasipotential and its regularity properties}\label{SS:RegularityQuasipotential}

\begin{theorem} \label{thm:quasipotential-lower-bound}
  For any $\hat{x} \in E$ and $a>0$, there exists $R>0$ such that whenever $|x|_E>R$,
 \begin{equation*}
  V(\hat{x}, x) >a.
 \end{equation*}
\end{theorem}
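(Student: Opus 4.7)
The plan is to prove the contrapositive: show that there is a constant $R_0 = R_0(x_*, a)$ such that $V(x_*,x) \leq a$ forces $|x|_E \leq R_0$. Given any such $x$ and any $\eta > 0$, the definition \eqref{eq:quasipotential-def} furnishes a time $T \geq 0$ and a control $u \in L^2([0,T]:H)$ with $X^{0,u}_{x_*}(T) = x$ and $\tfrac{1}{2}\int_0^T |u(s)|_H^2\,ds \leq a + \eta$. The goal is to bound $|x|_E = |X^{0,u}_{x_*}(T)|_E$ by a quantity that depends only on $|x_*|_E$, $a$, and $\eta$ (and not on $T$), and then let $\eta \to 0$.

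The main obstacle is that the natural a priori bound \eqref{eq:control-sup-time-bound} from Theorem \ref{thm:control-bounds} introduces a factor $T^{1/4}$ which is useless if $T$ is large, while the sharper bound \eqref{eq:control-sup-x-bound} blows up as $t \to 0$ because of the $t^{-1/\rho}$ term. I would overcome this by splitting into cases according to the length $T$. If $T \leq 1$ (including the trivial case $T = 0$, where $x = x_*$), I apply \eqref{eq:control-sup-time-bound} directly: since $T^{1/4} \leq 1$, this yields
\[
|x|_E \leq C\bigl(1 + |x_*|_E + \sqrt{2(a + \eta)}\bigr).
\]

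If $T > 1$, I use a time-translation trick to get rid of the $T$ dependence. Set $z := X^{0,u}_{x_*}(T-1) \in E$ and define the shifted control $\tilde u(s) := u(s + T - 1)$ for $s \in [0,1]$. By uniqueness of the skeleton equation, $X^{0,\tilde u}_z(1) = X^{0,u}_{x_*}(T) = x$. Now I apply \eqref{eq:control-sup-x-bound} at the fixed time $t = 1$, which is uniform over initial conditions in $E$:
\[
|x|_E = |X^{0,\tilde u}_z(1)|_E \leq C\bigl(2 + |\tilde u|_{L^2([0,1]:H)}\bigr) \leq C\bigl(2 + \sqrt{2(a + \eta)}\bigr),
\]
where the last inequality uses $|\tilde u|_{L^2([0,1]:H)} = |u|_{L^2([T-1,T]:H)} \leq |u|_{L^2([0,T]:H)}$.

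Combining the two cases and sending $\eta \to 0$ gives the $T$-free bound
\[
|x|_E \leq C\bigl(2 + |x_*|_E + \sqrt{2a}\bigr) =: R_0.
\]
Choosing any $R > R_0$ completes the proof: if $|x|_E > R$ then no admissible path from $x_*$ to $x$ can have action $\leq a$, hence $V(x_*,x) > a$. The argument is essentially a deterministic a priori estimate; the only conceptually delicate point is the time-translation step, which exploits the autonomous structure of the controlled equation \eqref{eq:control} to convert the useless bound at time $T$ into a useful one at time $1$.
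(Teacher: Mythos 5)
Your proposal is correct and follows essentially the same route as the paper's proof: the identical case split $T\leq 1$ versus $T>1$, the same use of \eqref{eq:control-sup-time-bound} and \eqref{eq:control-sup-x-bound}, and the same time-translation trick $X^{0,u}_{x_*}(T)=X^{0,\tilde u}_{X^{0,u}_{x_*}(T-1)}(1)$; phrasing it as a contrapositive rather than as a direct lower bound on $|u|_{L^2}$ is only a cosmetic difference.
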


\begin{proof}
  Recall from the definition of the rate function \eqref{eq:rate-fct-def} that the quasipotential can be described in terms of controls as
  \begin{equation*}
    V(\hat{x},x) = \inf \left\{\frac{1}{2}\int_0^T|u(s)|_H^2 ds: u \in L^2([0,T]:H), X^{0,u}_{\hat{x}}(T) = x, T>0 \right\}.
  \end{equation*}

  To derive a lower bound on $V(\hat{x},x)$ for $|x|_E>R$, we need to find a lower bound on the $L^2([0,T]:H)$ norm for any controlled process satisfying $|X^{0,u}_{\hat{x}}(T)|_E>R$.

  We divide this into the cases where $T\leq1$ and $T>1$ and use estimates \eqref{eq:control-sup-time-bound} and \eqref{eq:control-sup-x-bound} respectively.

  If $T\leq1$, then it follows from \eqref{eq:control-sup-time-bound} that
  \[R \leq |X^{0,u}_{\hat{x}}(T)|_E \leq C(1 + |\hat{x}|_E + |u|_{L^2([0,T]:H)}).\]

  Therefore, it follows that
  \begin{equation} \label{eq:u-lower-1}
    \frac{R}{C} - |\hat{x}|_E \leq |u|_{L^2([0,T]:H)}.
  \end{equation}

  If $T>1$, then we use the fact that $X^{0,u}_{\hat{x}}(T) = X^{0,\tilde u}_{X^{0,u}_{\hat{x}}(T-1)}(1)$ when
  $\tilde u(t) = u(t+T-1)$ for $t \in [0,1]$. It follows from \eqref{eq:control-sup-x-bound},
  \[R \leq |X^{0,u}_{\hat{x}}(T)|_E \leq \sup_{x \in E} |X^{0,\tilde u}_{x}(1)|_E \leq C( 2 + |\tilde u|_{L^2([0,1]:H)}).\]

  It follows that
  \begin{equation} \label{eq:u-lower-2}
    \frac{R}{C} - 2 \leq |\tilde u|_{L^2([0,1]:H)} \leq |u|_{L^2([0,T]:H)}.
  \end{equation}

  Then depending on whether $T\leq1$ or $T>1$, \eqref{eq:u-lower-1} or \eqref{eq:u-lower-2} holds. Consequently we can choose $R$ big enough so that if $u \in L^2([0,T]:H)$ is such that $|X^{0,u}_{\hat{x}}(T)|_E>R$, then $\frac{1}{2}\int_0^T|u(s)|_H^2 ds > a$.
  This proves the result.
\end{proof}

\begin{theorem} \label{thm:V-D_R=V-D}
Let $D$ be defined by \eqref{Eq:DomainAttraction}. For any $R>0$, let $D_R = D \cap B_R$ where $B_R = \{x \in E: |x|_E<R\}$. If $V(x_*, \partial D)< +\infty$, then there exists $R>0$ such that $V(x_*, \partial D) = V(x_*, \partial D_R)$.
\end{theorem}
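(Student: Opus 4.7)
The plan is to leverage Theorem \ref{thm:quasipotential-lower-bound}, which says the quasipotential $V(x_*, y)$ grows unboundedly in $|y|_E$. Let me abbreviate $V_0 := V(x_*, \partial D)$, which is finite by hypothesis. Apply Theorem \ref{thm:quasipotential-lower-bound} with $a := V_0 + 1$ to obtain $R_0 > 0$ with the property that $|y|_E > R_0$ implies $V(x_*, y) > V_0 + 1$. The claim will be that any $R > R_0$ works. The main work is to show both inequalities; the openness of $D$ (Theorem \ref{T:OpenAttractionSets}) plays a quiet but essential role.

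For the inequality $V(x_*, \partial D_R) \geq V_0$, I would start by decomposing $\partial D_R$. Since $D$ and $B_R$ are open, so is $D_R = D\cap B_R$, hence $\partial D_R = \bar{D}_R \setminus D_R$. Any $y \in \partial D_R$ then lies in $\bar{D}\cap\bar{B}_R$ but not in $D\cap B_R$, so either $y \in \partial D$ or $|y|_E = R$. In the first case $V(x_*, y) \geq V(x_*, \partial D) = V_0$ by definition of the infimum over $\partial D$. In the second, $|y|_E = R > R_0$, so the choice of $R_0$ yields $V(x_*, y) > V_0$. Taking the infimum over $y \in \partial D_R$ produces $V(x_*, \partial D_R) \geq V_0$.

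For the reverse inequality $V(x_*, \partial D_R) \leq V_0$, I would exploit the definition of the infimum. Given $\gamma \in (0,1)$, pick $y_\gamma \in \partial D$ with $V(x_*, y_\gamma) < V_0 + \gamma < V_0 + 1$. By the choice of $R_0$, the lower bound theorem forces $|y_\gamma|_E \leq R_0 < R$, so $y_\gamma \in B_R$. Since $y_\gamma \in \partial D \subset \bar D$ and $B_R$ is open, $y_\gamma$ can be approximated by a sequence in $D$ that eventually lies in $B_R$; thus $y_\gamma \in \overline{D\cap B_R} = \bar{D}_R$. Since also $y_\gamma \notin D$ implies $y_\gamma \notin D_R$, we conclude $y_\gamma \in \partial D_R$, whence $V(x_*, \partial D_R) \leq V(x_*, y_\gamma) < V_0 + \gamma$. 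Sending $\gamma \downarrow 0$ finishes the proof.

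The argument is short modulo the preceding results; the one place where care is needed is the verification that a minimizing $y_\gamma \in \partial D$ actually lies in $\bar{D}_R$ (not merely in $\bar{D}\cap \bar{B}_R$), and this is precisely where openness of $D$ and strict inequality $|y_\gamma|_E < R$ enter. No further technical ingredients are required.
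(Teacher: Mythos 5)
Your proof is correct and follows essentially the same route as the paper's: both rely on Theorem \ref{thm:quasipotential-lower-bound} to push approximate minimizers of $V(x_*,\partial D)$ inside $B_R$ and to rule out exit through $\partial B_R \cap D$, via the decomposition of $\partial D_R$ into points of $\partial D$ and points with $|y|_E=R$. Your extra care in verifying that an approximate minimizer $y_\gamma\in\partial D\cap B_R$ genuinely belongs to $\partial D_R$ (using openness of $D$ and $B_R$) is a detail the paper leaves implicit, but it is the same argument.
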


\begin{proof}
The boundary $\partial D_R$ can be decomposed into
\[\partial D_R = (\partial D \cap B_R) \cup (\partial B_R \cap D).\]
Let $a := V(x_*, \partial D)$. By the lower bound of the quasipotential, Theorem \ref{thm:quasipotential-lower-bound}, there exists $R>0$ such that whenever $|x|\geq R$, $V(x_*,x)\geq a + 1.$
In particular, this proves that any approximate minimizing points $x \in \partial D$ with the property that $V(x_*,x)< V(x_*, \partial D) + \e$ for $\e \in (0,1)$, have the property that $|x|_E<R$ and therefore $x \in \partial D\cap B_R$. Furthermore, any $x \in \partial B_R \cap D$ has the property that $|x|_E=R$ and therefore $V(x_*,x)> a + 1$. This proves that any approximate minimizers of $V(x_*, \partial D)$ belong to $\partial D_R$ and any approximate minimizers of $V(x_*,\partial D_R)$ belong to $\partial D$. Consequently, $V(x_*, \partial D) = V(x_*, \partial D_R)$.
\end{proof}

Now we prove certain internal regularity properties of the quasipotential.  Define for $\rho>0,T>0$,
\begin{equation}
  D^{\rho,T} = \{x \in D: \sup_{t \geq T} |X^0_x(t) - x_*|_E< \rho\}.\label{Eq:DefDrhoT}
\end{equation}

 Because $D$ is a domain of attraction, for sufficiently small $\rho>0$, we shall have that
\begin{equation} \label{eq:union-of-D-rho-T}
  D = \bigcup_{T>0} D^{\rho,T}.
\end{equation}

\begin{theorem}[Internal regularity near the boundary]\label{L:InnerBoundaryRegularityQP}
  For any $\rho>0$
  \begin{equation*}
    \lim_{T \to \infty} V(x_*, \partial D^{\rho,T}) = V(x_*, \partial D).
  \end{equation*}
\end{theorem}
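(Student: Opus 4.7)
The target identity asserts that as $T \to \infty$, exit from the ``safer'' subdomain $D^{\rho,T}$ costs the same as exit from $D$ itself. I would prove it by establishing both inequalities separately, with a preparatory lemma that $D^{\rho,T}$ is open in $E$ when $\rho$ is small. For openness, suppose $y\in D^{\rho,T}$ with $\sup_{t\ge T}|X^0_y(t)-x_*|_E = \rho-\eta$ for some $\eta>0$. Using Lyapunov stability of the asymptotically stable equilibrium $x_*$, pick $\rho'>0$ such that $|z-x_*|_E<\rho'$ forces $\sup_{t\ge 0}|X^0_z(t)-x_*|_E<\eta/2$; choose $T_0\ge T$ with $|X^0_y(T_0)-x_*|_E<\rho'/2$; then Theorem \ref{thm:continuity} applied on the compact interval $[0,T_0]$ ensures that for $y'$ close enough to $y$ we have $|X^0_{y'}(T_0)-x_*|_E<\rho'$ and $\sup_{t\in[T,T_0]}|X^0_{y'}(t)-X^0_y(t)|_E<\eta/2$, so $\sup_{t\ge T}|X^0_{y'}(t)-x_*|_E<\rho$ and $y'\in D^{\rho,T}$.

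For the easy direction $V(x_*,\partial D^{\rho,T})\le V(x_*,\partial D)$, given $\varepsilon>0$ pick a control $u$ and a time $T_f$ with $X^{0,u}_{x_*}(T_f)\in\partial D$ and $\tfrac12\int_0^{T_f}|u(s)|_H^2\,ds<V(x_*,\partial D)+\varepsilon$. Since $x_*\in D^{\rho,T}$ (trivially) and $\partial D\cap D^{\rho,T}=\varnothing$ (as $D^{\rho,T}\subset D$ and $D$ is open by Theorem \ref{T:OpenAttractionSets}), the first exit time $\tau=\inf\{t:X^{0,u}_{x_*}(t)\notin D^{\rho,T}\}$ satisfies $\tau\le T_f$, and by openness of $D^{\rho,T}$ we have $X^{0,u}_{x_*}(\tau)\in\partial D^{\rho,T}$. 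Truncating $u$ at $\tau$ gives a path to $\partial D^{\rho,T}$ of action no larger than $V(x_*,\partial D)+\varepsilon$.

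For the harder direction $\liminf_{T\to\infty}V(x_*,\partial D^{\rho,T})\ge V(x_*,\partial D)$, I would argue by contradiction. Suppose along some sequence $T_n\to\infty$, $V(x_*,\partial D^{\rho,T_n})\to a < V(x_*,\partial D)$. Pick near-minimizers $y_n\in\partial D^{\rho,T_n}$ with $V(x_*,y_n)\le V(x_*,\partial D^{\rho,T_n})+1/n$, so $V(x_*,y_n)\to a$. By Theorem \ref{thm:quasipotential-comp}, the sublevel set $\{y:V(x_*,y)\le a+1\}$ is compact, so a subsequence $y_n\to y$ in $E$. Theorem \ref{thm:quasipotential-lsc} then gives $V(x_*,y)\le a<V(x_*,\partial D)$, hence $y\notin\partial D$. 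But $y_n\in\overline{D^{\rho,T_n}}\subset\overline D$ implies $y\in\overline D=D\cup\partial D$, forcing $y\in D$.

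Now I derive the contradiction using asymptotic stability. Since $D$ is open and $y\in D$, we have $y_n\in D$ for all large $n$. Choose $\rho'\le\rho$ by Lyapunov stability such that $|z-x_*|_E<\rho'$ implies $\sup_{s\ge 0}|X^0_z(s)-x_*|_E<\rho/2$. Since $X^0_y(t)\to x_*$, pick $T_0$ with $|X^0_y(T_0)-x_*|_E<\rho'/2$. By Theorem \ref{thm:continuity} applied on $[0,T_0]$, for $n$ large enough $|X^0_{y_n}(T_0)-x_*|_E<\rho'$, so $\sup_{t\ge T_0}|X^0_{y_n}(t)-x_*|_E<\rho/2$. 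Taking $n$ further large so that $T_n\ge T_0$, we conclude $\sup_{t\ge T_n}|X^0_{y_n}(t)-x_*|_E<\rho/2<\rho$, placing $y_n$ in $D^{\rho,T_n}$. Since $D^{\rho,T_n}$ is open, $y_n$ is an interior point, contradicting $y_n\in\partial D^{\rho,T_n}$. The main obstacle is the careful interplay between short-time continuous dependence (Theorem \ref{thm:continuity}, valid only on compact time intervals) and long-time Lyapunov stability; splicing them at the time $T_0$ is what makes the argument work and is also the core of the openness lemma.
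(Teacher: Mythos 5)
Your proof is correct and follows essentially the same route as the paper: the easy inequality via a near-minimizing path to $\partial D$ that must first cross $\partial D^{\rho,T}$, and the hard inequality via near-minimizers $y_n\in\partial D^{\rho,T_n}$, compactness of sublevel sets (Theorem \ref{thm:quasipotential-comp}), and lower semicontinuity (Theorem \ref{thm:quasipotential-lsc}). The only difference is that you carefully justify, via openness of $D^{\rho,T}$ and Lyapunov stability, the step the paper dismisses with ``Clearly $y\not\in D^{\rho,T}$ for any $T>0$,'' which is a welcome addition rather than a deviation.
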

\begin{proof}
  Fix $\rho>0$.
  First we prove  that for any $T>0$,
  \begin{equation*}
    V(x_*, \partial D^{\rho,T}) \leq V(x_*, \partial D).
  \end{equation*}

   Assume $V(x_*, \partial D)<\infty$. By the definition of $V$ there exist $T_0>0$ and $\varphi \in C([0,T_0]:E)$ such that $\varphi(0) =x_*$ and $\varphi(T_0) \in \partial D$. By continuity, there must exist $t \in [0,T_0]$ such that $\varphi(t) \in \partial D^{\rho,T}$. Then it is clear from the definition of $V$ that
  \begin{equation*}
    V(x_*, \partial D^{\rho,T}) \leq I^t_{x_*}(\varphi) \leq I^{T_0}_{x_*}(\varphi).
  \end{equation*}

  We can take the infimum over all $T_0>0$ and $\varphi \in C([0,T_0]:E)$ satisfying $\varphi(0)=x_*$ and $\varphi(T_0) \in \partial D$ to conclude
  \begin{equation*}
    V(x_*, \partial D^{\rho,T}) \leq V(x_*, \partial D).
  \end{equation*}

  Now we show that
  \begin{equation*}
    \liminf_{T \to \infty}V(x_*, \partial D^{\rho,T}) \geq V(x_*, \partial D).
  \end{equation*}

  Let $T_n \to \infty$ be a sequence such that
  \[\lim_{n \to \infty} V(x_*, \partial D^{\rho,T_n}) = \liminf_{T \to \infty} V(x_*, \partial D^{\rho,T}).\]

  There exist $y_n \in \partial D^{\rho,T_n}$ such that $V(x_*, y_n) \leq V(x_*, \partial D^{\rho,T_n}) + \frac{1}{T_n}.$
  By Theorem \ref{thm:quasipotential-comp}, there exists a subsequence such that $y_n$ converges to a limit $y$. Clearly $y \not \in D^{\rho,T}$ for any $T>0$. Therefore $y \not \in D$ by \eqref{eq:union-of-D-rho-T}. On the other hand, $y \in \bar{D}$ so it follows that $y \in \partial D$. Therefore, by Theorem \ref{thm:quasipotential-lsc}
  \begin{align*}
    &V(x_*, \partial D) \leq V(x_*, y) \leq \liminf_{n \to \infty} V(x_*, y_n) \leq \liminf_{n \to \infty} V(x_*, \partial D^{\rho,T_n}) \\
    &\leq \liminf_{T \to \infty} V(x_*, \partial D^{\rho,T}).
  \end{align*}

\end{proof}

Next for $R<\infty$, let us recall that  $D_{R}=\left\{x\in D: |x|_{E}\leq R\right\}$ and define
\begin{equation}
 D^{\rho,T}_{R}= D^{\rho,T}\cap B_{R} = \{x \in D_{R}: \sup_{t \geq T} |X^0_x(t) - x_*|_E< \rho\}.\label{Eq:DefDrhoT_R}
\end{equation}

A  consequence of Theorems \ref{thm:V-D_R=V-D} and \ref{L:InnerBoundaryRegularityQP} is the following Corollary.
\begin{corollary}\label{L:InnerBoundaryRegularityQP_R}
 For any $\rho>0$, there is $R$ large enough so that
  \begin{equation*}
    \lim_{T \to \infty} V(x_*, \partial D^{\rho,T}_{R}) = V(x_*, \partial  D).
  \end{equation*}
\end{corollary}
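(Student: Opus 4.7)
The plan is to combine the two ingredients that have just been established: Theorem \ref{thm:V-D_R=V-D}, which lets us restrict attention to a bounded piece of $D$ without changing the quasipotential, and Theorem \ref{L:InnerBoundaryRegularityQP}, whose argument for the unbounded version is almost exactly what we need. The one subtlety is that the boundary of $D^{\rho,T}_R = D^{\rho,T} \cap B_R$ contains a piece coming from $\partial B_R$, and we must prevent the approximate minimizers of $V(x_*,\partial D^{\rho,T}_R)$ from accumulating there.

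First, I would fix $R$ large enough that three things hold simultaneously: (i) $|x_*|_E < R$; (ii) $V(x_*,\partial D) = V(x_*,\partial D_R)$ via Theorem \ref{thm:V-D_R=V-D}; and (iii) $V(x_*,x) > V(x_*,\partial D)+1$ whenever $|x|_E \geq R$, via Theorem \ref{thm:quasipotential-lower-bound}. Condition (iii) is the crucial one, as it will guarantee that any near-minimizer for $V(x_*,\partial D^{\rho,T}_R)$ lies strictly in the interior of $B_R$, hence on the $\partial D^{\rho,T}$ part of the boundary.

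For the upper bound $V(x_*,\partial D^{\rho,T}_R) \leq V(x_*,\partial D)$, I would mimic the first half of the proof of Theorem \ref{L:InnerBoundaryRegularityQP}. Given any $\delta > 0$, by (ii) choose a finite-action trajectory $\varphi$ starting at $x_*$ and ending at some $y \in \partial D \cap B_R$ with $I(\varphi) < V(x_*,\partial D) + \delta$. Since $\varphi(0) = x_* \in D^{\rho,T}_R$ while $\varphi$ terminates at a point $y \notin D$ (hence not in $D^{\rho,T}_R$), continuity forces $\varphi$ to cross $\partial D^{\rho,T}_R$ at some earlier time $t$, and the truncated trajectory on $[0,t]$ witnesses the bound $V(x_*,\partial D^{\rho,T}_R) \leq I(\varphi)$; then let $\delta \to 0$.

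For the matching lower bound, I would pick a sequence $T_n \to \infty$ realizing $\liminf_T V(x_*,\partial D^{\rho,T}_R)$, choose $y_n \in \partial D^{\rho,T_n}_R$ with $V(x_*,y_n) \leq V(x_*,\partial D^{\rho,T_n}_R) + 1/T_n$, and use the just-proved upper bound together with (iii) to conclude $|y_n|_E < R$; thus $y_n$ lies on $\partial D^{\rho,T_n} \cap B_R$. By the compactness of level sets of $V(x_*,\cdot)$ (Theorem \ref{thm:quasipotential-comp}), a subsequence of $y_n$ converges in $E$ to some $y$, and precisely as in the proof of Theorem \ref{L:InnerBoundaryRegularityQP} the monotonicity of $\{D^{\rho,T}\}_T$ plus \eqref{eq:union-of-D-rho-T} force $y \notin D$, while $y_n \in \bar{D}$ gives $y \in \partial D$. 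Lower semicontinuity of $V$ (Theorem \ref{thm:quasipotential-lsc}) then yields $V(x_*,\partial D) \leq V(x_*,y) \leq \liminf_n V(x_*,y_n) \leq \liminf_T V(x_*,\partial D^{\rho,T}_R)$, completing the proof. The only real obstacle is ensuring that the artificial boundary piece $\partial B_R \cap D^{\rho,T}$ does not interfere, and this is handled cleanly by the quantitative lower bound (iii).
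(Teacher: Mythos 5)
Your proof is correct and rests on exactly the same ingredients as the paper's: Theorem \ref{thm:quasipotential-lower-bound} to ensure near-minimizers avoid the artificial boundary piece $\partial B_R \cap D^{\rho,T}$, combined with the two-sided argument of Theorem \ref{L:InnerBoundaryRegularityQP}. The paper merely packages this slightly differently, first noting (via the proof of Theorem \ref{thm:V-D_R=V-D}) that $V(x_*,\partial D^{\rho,T}_R)=V(x_*,\partial D^{\rho,T})$ uniformly in $T$ and then citing Theorem \ref{L:InnerBoundaryRegularityQP} as a black box, whereas you re-run the limit argument directly on the bounded sets; the content is the same.
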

\begin{proof}
An inspection of the proof of Theorem \ref{thm:V-D_R=V-D} shows that for any $\rho>0$, there is $R$ large enough so that uniformly in $T$ we have
  \begin{equation*}
 V(x_*, \partial D^{\rho,T}_{R}) = V(x_*, \partial  D^{\rho,T}).
  \end{equation*}

  Then, Theorem \ref{L:InnerBoundaryRegularityQP} concludes the proof of the corollary.
\end{proof}

\subsection{Continuity properties of the quasipotential  $\tilde{V}_{D}$ and $\hat{V}_{D}$}\label{SS:ContinuityQuasipotentialHatTilde}
In this section, we prove lower semi-continuity as well as compactness of level sets of the quasipotentials $\tilde{V}_{D}(\cdot,\cdot)$ and $\hat{V}_{D}(\cdot,\cdot)$ defined in (\ref{eq:V-tilde-D}) and (\ref{eq:V-hat-D}) respectively.

Let us first discuss the lower-semi-continuity properties of $\tilde{V}_D$.
\begin{theorem}[Lower semi-continuity of $\tilde{V}_D$] \label{thm:V-tilde-D-lsc}
  If $x_n \in \bar{D}$ converge in $E$ norm to $x \in \bar{D}$ and $y_n \in \bar{D}$ converge to $y \in \bar{D}$ in $E$ norm, then
  \begin{equation}
    \liminf_{n \to \infty} \tilde{V}_D(x_n,y_n) \geq \tilde{V}_D(x,y).
  \end{equation}
\end{theorem}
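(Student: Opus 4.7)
The plan is to show $\tilde V_D^\rho(x, y) \leq s$ for every $\rho > 0$, where $s := \liminf_n \tilde V_D(x_n, y_n)$; letting $\rho \to 0$ then yields the claim. Assume $s < \infty$ (else the inequality is trivial) and, after passing to a subsequence, $\tilde V_D(x_n, y_n) \to s$. If $x = y$, the trivial $T=0$ path gives $\tilde V_D(x, y) = 0$ and we are done. Otherwise, for each $n$ pick $\rho_n \to 0$ and a near-optimal controlled trajectory $\varphi_n = X^{0, u_n}_{x_n}$ on $[0, T_n]$ with $\varphi_n(T_n) = y_n$, $\frac{1}{2} |u_n|^2_{L^2([0,T_n]:H)} \leq s + 1/n$, and $\varphi_n(t) \in D \cup B(x_n, \rho_n) \cup B(y_n, \rho_n)$. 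A case split on $T_n$ first establishes $y \in H^1$: if $T_n \to 0$ on a subsequence, Lemma \ref{lem:control-prob-cont} forces $y = x$, a contradiction, so we may assume $T_n \geq t_0 > 0$; writing $y_n = X^{0, \tilde u_n}_{z_n}(t_0)$ with $z_n := \varphi_n(T_n - t_0)$ bounded in $E$ and $|\tilde u_n|_{L^2}$ bounded, the parabolic regularization of $S(t_0)$ (as used in the proof of Theorem \ref{thm:quasipotential-comp}) gives $\sup_n |y_n|_{H^1} < \infty$, so $y \in H^1$ by $E$-lower semi-continuity of the $H^1$ norm.

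Fix $\rho, \eta > 0$. The goal is to build a path from $x$ to $y$ of action at most $s + 2\eta$ lying entirely in $D \cup B(x, \rho) \cup B(y, \rho)$. Choose $n$ so large that $|x - x_n|_E, |y - y_n|_E < \rho/8$ and $\rho_n < \rho/8$. The controllability Theorem \ref{thm:control}, applied with starting points $(x_n, x)$, control $u_n$, and error $\eta/2$, yields $v_n \in L^2([0, T_n]:H)$ such that $X^{0, v_n}_x(t) = \varphi_n(t)$ for $t \geq \delta_n := |x - x_n|_E$ and $\frac{1}{2}|v_n|^2_{L^2} \leq s + \eta$. For $t \in [\delta_n, T_n]$ the path $X^{0, v_n}_x = \varphi_n$ lies in $D \cup B(x, \rho/4) \cup B(y, \rho/4)$. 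For $t \in [0, \delta_n]$, Theorem \ref{thm:control} provides $|X^{0, v_n}_x(t) - \varphi_n(t)|_E \leq \delta_n$, while the mild-formula bound
\[
|\varphi_n(t) - x_n|_E \leq |S(t) x_n - x_n|_E + C t + C t^{1/4} |u_n|_{L^2([0,t]:H)},
\]
the convergence $x_n \to x$, and the $C_0$-property of $S$ at $x$ make $|\varphi_n(t) - x_n|_E$ uniformly small for $t \leq \delta_n$ and $n$ large; combined with $|x_n - x|_E < \rho/8$ this places $X^{0, v_n}_x(t) \in B(x, \rho)$ throughout the initial stretch.

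To end exactly at $y$ rather than $y_n$, I patch the endpoint: since $y \in H^1$, the reversed-process construction of Lemma \ref{lem:quasipotential-to-0} (applied at the fixed point $y$) together with Theorem \ref{thm:control} (with varying initial condition $y_n$, as in the case $x=y$ of Lemma \ref{lem:quasipotential-continuity-in-1st}) produces, for $n$ large, a control $w_n$ on a short interval $[0, \tau_n]$ with $\tau_n \to 0$, satisfying $X^{0, w_n}_{y_n}(\tau_n) = y$ and $\frac{1}{2}|w_n|^2_{L^2} \leq \eta$. The same mild-formula estimate as above, together with $|y_n - y|_E < \rho/8$, keeps $|X^{0, w_n}_{y_n}(t) - y|_E < \rho$ for all $t \in [0, \tau_n]$. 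Concatenating $X^{0, v_n}_x$ on $[0, T_n]$ with $X^{0, w_n}_{y_n}$ on $[0, \tau_n]$ gives a path from $x$ to $y$ of total action at most $s + 2\eta$ contained in $D \cup B(x, \rho) \cup B(y, \rho)$. Hence $\tilde V_D^\rho(x, y) \leq s + 2\eta$; letting $\eta \to 0$ and then $\rho \to 0$ gives $\tilde V_D(x, y) \leq s$.

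The principal technical obstacle is the spatial containment of the two short join-segments inside $D \cup B(x, \rho) \cup B(y, \rho)$: Theorem \ref{thm:control} only bounds the $E$-distance between the patched and reference trajectories by $\delta_n$, so I must independently control $|\varphi_n(t) - x_n|_E$ on $[0, \delta_n]$ (and the analogous displacement near the endpoint) via the mild formulation and the uniform $L^2$-energy bound on the near-optimal controls. A secondary subtlety is deducing $y \in H^1$ from only $E$-convergence $y_n \to y$, which is dispatched by the parabolic regularization of $S(t_0)$ in the first paragraph.
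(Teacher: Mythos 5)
Your proof is correct and follows essentially the same route as the paper's: take near-minimizers of $\tilde{V}^\rho_D(x_n,y_n)$, re-root the start at $x$ via Theorem \ref{thm:control}, patch the endpoint to $y$ with a short reversed-process segment, concatenate, and send the auxiliary parameters to zero. Your treatment is if anything slightly more careful than the paper's, since you explicitly verify $y\in H^1$ before invoking the reversed-process construction and explicitly control the spatial containment of the initial join segment via the mild formulation.
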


\begin{proof}[Proof of Theorem \ref{thm:V-tilde-D-lsc}]
  The result is trivially true if $\liminf_n \tilde{V}_D(x_n,y_n) =+\infty$ or $\tilde{V}_D(x,y)=0$. Assume $\liminf_n \tilde{V}_D(x_n,y_n) < \infty$ and $\tilde{V}(x,y) >0$. Let $\gamma>0, \rho>0$ be arbitrary.

  By \eqref{eq:V-inequality}, $\tilde{V}_D(x_n,y_n) \geq \tilde{V}_D^{\rho/2}(x_n,y_n)$. Because we assumed that this is finite for large $n$, there exist $T_{n,1}>0$ and $\varphi_{n,1} \in C([0,T_{n,1}]:E)$ such that $\varphi_{n,1}(0) = x_n$, $\varphi_{n,1}(T_{n,1}) = y_n$, $\varphi_{n,1}(t) \in D \cup B(x_n,\rho/2)\cup B(y_n,\rho/2)$ for $t \in [0,T_{n,1}]$ and $I_{x_n}^{T_{n,1}}(\varphi_{n,1}) < \tilde{V}_D^{\rho/2}(x_n,y_n) + \gamma/3$.

  By Lemma \ref{lem:control-prob-cont}, if $\liminf_n T_{n,1}=0$, then $y=x$ and $\tilde{V}_D(x,y)=0$. Because the lemma is trivially true when $\tilde{V}_D(x,y)=0$, we can assume that
  $$t_0:=\min\left\{\liminf_n T_{n,1},\rho/2\right\}>0.$$

  For large $n$, we can guarantee that $|x_n-x|_E<t_0$. By possibly increasing $n$, Theorem \ref{thm:control} guarantees that there exists $\varphi_{n,2}\in C([0,T_{n,1}]:E)$ such that $\varphi_{n,2}(0)=x$, $\varphi_{n,2}(t) = \varphi_{n,1}(t)$ for $t \in [t_0,T_{n,1}]$, $|\varphi_{n,1}(t) - \varphi_{n,2}(t)|_E<\rho/2$ for $t \in [0,t_0]$ and $I_{x}^{T_{n,1}}(\varphi_{n,2}) < \tilde{V}^{\rho/2}_D(x_n,y_n) + 2\gamma/3.$

  By Theorem \ref{thm:reversed}, we can find $T_3>0$ small enough so that the revered process starting at $y$ has action $I_{X^0_y(T_3)}^{T_3}(X^0_y(T_3-\cdot)) < \gamma/6$. By possibly decreasing $T_3$, we can also guarantee that $|X^0_y(t)-y|_E< \rho/2$ for all $t \in [0,T]$. Then the process
  \[\varphi_3(t) = \begin{cases}
    X^0_y(t) & \text{ for } t \in [0,T_3/2]\\
    X^0_y(T_3-t) & \text{ for } t \in [T_3/2,T_3]
  \end{cases}\]
  has the properties that $|\varphi_3(t)-y|_E< \rho/2$ for all $t \in [0,T_3]$ and $I_y^{T_3}(\varphi_3)< \gamma/6$.

  Then Theorem \ref{thm:control} 
  guarantees that for $n$ large enough there exists $\varphi_{n,4} \in C([0,T_3]:E)$ such that $\varphi_{n,4}(0) = y_n$, $\varphi_{4,n}(T_3)=y$, $|\varphi_{n,4}(t) - y|_E< \rho$ for all $t \in [0,T_3]$ and $I_{y_n}^{T_3}(\varphi_{n,4}) < \gamma/3$.

  We concatenate setting
\[\varphi(t) = \begin{cases}
    \varphi_{n,2}(t) & \text{ for } t \in [0,T_{n,1}]\\
    \varphi_{n,4}(t-T_{n,1}) & \text{ for } t \in [T_{n,1},T_{n,1}+T_3],
  \end{cases}\]
  and set $T_n:=T_{n,1}+T_3$. Then for large $n$, $\varphi_n$ has the properties that $\varphi_n(0) = x$, $\varphi_n(T_n) = y$, $\varphi(t) \in D\cup B(x,\rho) \cup B(y,\rho) $ for $t \in [0,T_n]$, and
  \[I^{T_n}_x(\varphi_n) < \tilde{V}^{\rho/2}_D(x_n,y_n) + \gamma.\]

  Therefore, for large $n$
  \[\tilde{V}^\rho_D(x,y) \leq \tilde{V}^{\rho/2}_D(x_n,y_n) + \gamma.\]

  Because $\gamma$ was arbitrary and $\tilde{V}^{\rho/2}_D(x_n,y_n) \leq \tilde{V}_D(x_n,y_n)$,
  \[\tilde{V}^\rho_D(x,y) \leq  \liminf_{n \to \infty} \tilde{V}_D(x_n,y_n).\]

  Finally, we can take the limit as $\rho \to {0}$ to conclude that
  \[\tilde{V}_D(x,y) \leq \liminf_{n \to \infty} \tilde{V}_D(x_n,y_n).\]
\end{proof}

\begin{theorem}[Compactness of level sets for $\tilde{V}_D$] \label{thm:V-tilde-compact-level-sets}
  For any compact $K \subset \bar{D}$ and $s \geq 0$, the set
  \begin{equation} \label{eq:V_D-level-set}
    \bigcup_{x \in K} \{y \in \bar{D}: \tilde{V}_D(x,y) \leq s\}
  \end{equation}
  is compact.
\end{theorem}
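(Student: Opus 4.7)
The strategy is to combine the inequality $V \leq \tilde V_D$ on $\bar D \times \bar D$ (which is part of \eqref{eq:V-inequality}), the already-established compactness of level sets for the unconstrained quasipotential $V$ (Theorem \ref{thm:quasipotential-comp}), and the lower semi-continuity of $\tilde V_D$ that has just been proven (Theorem \ref{thm:V-tilde-D-lsc}). The level set in \eqref{eq:V_D-level-set} will be shown to be pre-compact because it sits inside the corresponding level set for $V$, and closed because of lower semi-continuity together with compactness of $K$ and closedness of $\bar D$.

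For pre-compactness, I will argue as follows. Suppose $y$ belongs to the set in \eqref{eq:V_D-level-set}, so there exists $x \in K$ with $\tilde V_D(x,y) \leq s$. Since $x, y \in \bar D$, the inequality $V(x,y) \leq \tilde V_D(x,y)$ from \eqref{eq:V-inequality} yields $V(x,y) \leq s$. Hence
\[
\bigcup_{x \in K}\{y \in \bar D : \tilde V_D(x,y) \leq s\} \subset \bigcup_{x \in K}\{y \in E : V(x,y) \leq s\},
\]
and the right-hand side is compact in $E$ by Theorem \ref{thm:quasipotential-comp}, so the left-hand side is pre-compact in $E$.

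For closedness, let $y_n$ be a sequence in the set in \eqref{eq:V_D-level-set} that converges in $E$ to some $y \in E$. Since $\bar D$ is closed, $y \in \bar D$. For each $n$, pick $x_n \in K$ with $\tilde V_D(x_n, y_n) \leq s$. By compactness of $K$ in $E$, along a subsequence we have $x_n \to x$ with $x \in K \subset \bar D$. Applying the lower semi-continuity result of Theorem \ref{thm:V-tilde-D-lsc} to $(x_n, y_n) \to (x,y)$ gives
\[
\tilde V_D(x,y) \leq \liminf_{n \to \infty} \tilde V_D(x_n, y_n) \leq s,
\]
so $y \in \{z \in \bar D : \tilde V_D(x,z) \leq s\}$, which is contained in the set in \eqref{eq:V_D-level-set}. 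Thus the set is closed.

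A closed subset of a pre-compact subset of $E$ is compact, which concludes the proof. I do not anticipate any serious obstacles here since the two main ingredients have already been established; the only delicate point is making sure both $x$ and $y$ stay in $\bar D$ when passing to the limit, which is handled automatically by the compactness of $K$ and the closedness of $\bar D$.
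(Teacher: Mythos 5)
Your proposal is correct and follows essentially the same route as the paper's proof: pre-compactness from the inclusion into the $V$-level set via \eqref{eq:V-inequality} and Theorem \ref{thm:quasipotential-comp}, then closedness from the lower semi-continuity of $\tilde{V}_D$ (Theorem \ref{thm:V-tilde-D-lsc}) together with the compactness of $K$. No gaps.
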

\begin{proof}
  By \eqref{eq:V-inequality}, $V(x,y) \leq \tilde{V}_D(x,y)$. This means that the set in \eqref{eq:V_D-level-set} is a subset of the set in (\ref{eq:quasi-level-sets}). By Theorem \ref{thm:quasipotential-comp}, therefore, we know that the set in \eqref{eq:V_D-level-set} is pre-compact. Then the compactness is an immediate consequence of Theorem \ref{thm:V-tilde-D-lsc}. Specifically, if $x_n \in K$ and $y_n \in \bar{D}$ such that $\tilde{V}_D(x_n,y_n) \leq s$. Then there exists a subsequence such that $x_n \to x \in K$  and $y_n \to y$ in $E$ norm. By Theorem \ref{thm:V-tilde-D-lsc}, $\tilde{V}_D(x,y) \leq \liminf_n \tilde{V}_D(x_n,y_n) \leq s.$
\end{proof}

Theorems analogous to (\ref{thm:V-hat-D-lsc}) and (\ref{thm:V-tilde-compact-level-sets}) are also true for $\hat{V}_D$. Since the proof of these results are identical to those for $\tilde{V}_D$ we only include the statements and not the proofs.
\begin{theorem}[Lower semi-continuity of $\hat{V}_D$] \label{thm:V-hat-D-lsc}
  If $x_n \in \bar{D}$ converge in $E$ norm to $x \in \bar{D}$ and $y_n \in \bar{D}$ converge to $y \in \bar{D}$ in $E$ norm, then
  \begin{equation}
    \liminf_{n \to \infty} \hat{V}_D(x_n,y_n) \geq \hat{V}_D(x,y).
  \end{equation}
\end{theorem}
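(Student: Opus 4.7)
The plan is to imitate the proof of Theorem \ref{thm:V-tilde-D-lsc} almost verbatim, with the enlarged region $\bigcup_{i=1}^N B(K_i,\rho)$ carried passively throughout. I reduce first to the nontrivial case $\liminf_n \hat{V}_D(x_n,y_n)<\infty$ and $\hat{V}_D(x,y)>0$, then fix arbitrary $\gamma,\rho>0$. Using the monotonicity $\hat{V}_D^{\rho/2}(x_n,y_n)\leq \hat{V}_D(x_n,y_n)$ together with the definition \eqref{eq:hat-V-D_rho}, I extract for each large $n$ a path $\varphi_{n,1}\in C([0,T_{n,1}]:E)$ from $x_n$ to $y_n$, lying in $D\cup B(x_n,\rho/2)\cup B(y_n,\rho/2)\cup \bigcup_{i=1}^N B(K_i,\rho/2)$, with action at most $\hat{V}_D^{\rho/2}(x_n,y_n)+\gamma/3$. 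If $\inf_n T_{n,1}=0$, Lemma \ref{lem:control-prob-cont} forces $x=y$ and the conclusion is trivial; otherwise set $t_0:=\min\{\inf_n T_{n,1},\rho/2\}>0$.

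For $n$ large enough that $|x_n-x|_E<t_0$, invoke Theorem \ref{thm:control} to produce $\varphi_{n,2}$ starting at $x$, agreeing with $\varphi_{n,1}$ on $[t_0,T_{n,1}]$, within $\rho/2$ of $\varphi_{n,1}$ on $[0,t_0]$, and with action at most $\gamma/3$ more than $I_{x_n}^{T_{n,1}}(\varphi_{n,1})$. Next, following the recipe from the $\tilde{V}_D$ proof, combine the reversed-process construction of Theorem \ref{thm:reversed} near $y$ with another application of Theorem \ref{thm:control} to build a short terminal segment $\varphi_{n,4}\in C([0,T_3]:E)$ from $y_n$ to $y$, staying in $B(y,\rho)$, with action less than $\gamma/3$. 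Concatenating gives $\varphi_n\in C([0,T_n]:E)$ from $x$ to $y$ with $I_x^{T_n}(\varphi_n)\leq \hat{V}_D(x_n,y_n)+\gamma$.

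The only genuinely new point relative to the $\tilde{V}_D$ argument is verifying containment in the enlarged tube $D\cup B(x,\rho)\cup B(y,\rho)\cup \bigcup_{i=1}^N B(K_i,\rho)$. On $[t_0,T_{n,1}]$, $\varphi_n=\varphi_{n,1}$ lives in the $\rho/2$-tube around $x_n,y_n,K_i$; since $|x_n-x|_E,|y_n-y|_E<\rho/2$ for large $n$, the triangle inequality gives $B(x_n,\rho/2)\subset B(x,\rho)$ and $B(y_n,\rho/2)\subset B(y,\rho)$, while the $K_i$ balls trivially enlarge from $\rho/2$ to $\rho$. On $[0,t_0]$, $\varphi_n$ is within $\rho/2$ of $\varphi_{n,1}$, which sits in the $\rho/2$-tube, so $\varphi_n$ lies in the $\rho$-tube; on the terminal window, $\varphi_{n,4}(t)\in B(y,\rho)$ by construction. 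Hence $\hat{V}_D^\rho(x,y)\leq \hat{V}_D(x_n,y_n)+\gamma$; sending $n\to\infty$, then $\gamma\to 0$, then $\rho\to 0$ via \eqref{eq:V-hat-D} delivers the claim. The work is essentially bookkeeping: the $K_i$ neighborhoods ride along passively because all path modifications occur in short windows near the endpoints $x$ and $y$, leaving the middle portion, which carries the $K_i$ excursions, untouched; so I do not anticipate any serious obstacle beyond keeping careful track of the $\rho/2$-to-$\rho$ absorption.
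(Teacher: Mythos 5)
Your proposal is exactly the paper's intended argument: the paper omits the proof of Theorem \ref{thm:V-hat-D-lsc} entirely, stating that it is identical to the proof of Theorem \ref{thm:V-tilde-D-lsc}, and your writeup is precisely that adaptation, with the only new content being the (correct) bookkeeping that the $\rho/2$-neighborhoods of the $K_i$ enlarge harmlessly to $\rho$-neighborhoods while all path surgery happens near the endpoints. No discrepancy with the paper's approach.
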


\begin{theorem}[Compactness of level sets for $\hat{V}_D$] \label{thm:V-hat-compact-level-sets}
  For any compact $K \subset \bar{D}$ and $s \geq 0$, the set
  \begin{equation} \label{eq:hat-V_D-level-set}
    \bigcup_{x \in K} \{y \in \bar{D}: \hat{V}_D(x,y) \leq s\}
  \end{equation}
  is compact.
\end{theorem}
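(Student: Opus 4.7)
The plan is to imitate the proof of Theorem \ref{thm:V-tilde-compact-level-sets} almost verbatim, using only two ingredients: the comparison $V(x,y)\leq \hat{V}_D(x,y)$ valid on $\bar D\times\bar D$, and the lower semicontinuity of $\hat{V}_D$ given by Theorem \ref{thm:V-hat-D-lsc}. The reason this strategy works here just as it did for $\tilde{V}_D$ is that the role of the restricted admissible set in the definition \eqref{eq:hat-V-D_rho}--\eqref{eq:V-hat-D} is simply to provide the desired lower semicontinuity; compactness of the level sets is then inherited from the analogous property of $V$.

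First I would record the comparison. Since $\hat{V}_D^\rho(x,y)$ is the infimum of $I_x^T(\varphi)$ over a strictly smaller class of paths than the one defining $V(x,y)$ in \eqref{eq:quasipotential-def}, we have $V(x,y)\leq \hat{V}_D^\rho(x,y)$ for every $\rho>0$, hence $V(x,y)\leq \hat{V}_D(x,y)$ after sending $\rho\to 0$. Consequently
\[
\bigcup_{x\in K}\bigl\{y\in \bar D:\hat{V}_D(x,y)\leq s\bigr\}\;\subseteq\;\bigcup_{x\in K}\bigl\{y\in E:V(x,y)\leq s\bigr\},
\]
and the right-hand side is compact by Theorem \ref{thm:quasipotential-comp} (since $K\subset\bar D\subset E$ is compact). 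This yields precompactness of the level set \eqref{eq:hat-V_D-level-set} in $E$.

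Next I would establish closedness. Let $y_n$ be a sequence in the set \eqref{eq:hat-V_D-level-set} converging in $E$ to some $y$, with witnesses $x_n\in K$ satisfying $\hat{V}_D(x_n,y_n)\leq s$. By compactness of $K$, extract a subsequence (not relabeled) such that $x_n\to x\in K$. Since $\bar D$ is closed, $y\in\bar D$. Then Theorem \ref{thm:V-hat-D-lsc} gives
\[
\hat{V}_D(x,y)\;\leq\;\liminf_{n\to\infty}\hat{V}_D(x_n,y_n)\;\leq\;s,
\]
so $y$ belongs to the level set, which is therefore closed. Combined with the precompactness above, the set is compact.

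There is essentially no hard step: both technical inputs (the compactness result for $V$ and the lower semicontinuity of $\hat{V}_D$) have already been secured. The only point requiring a moment's thought is that the expansion $\bigcup_{i=1}^N B(K_i,\rho)$ in the definition of $\hat{V}_D$ does not interfere with the comparison $V\leq \hat{V}_D$—it only enlarges the admissible tube and so further constrains the infimum from below by $V$—and does not interfere with the lower-semicontinuity argument either, since Theorem \ref{thm:V-hat-D-lsc} has already absorbed those geometric considerations.
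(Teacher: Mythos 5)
Your proposal is correct and matches the paper's argument: the paper proves Theorem \ref{thm:V-tilde-compact-level-sets} by exactly this two-step route (precompactness via $V\leq\tilde{V}_D$ and Theorem \ref{thm:quasipotential-comp}, then closedness via lower semicontinuity) and states that the proof for $\hat{V}_D$ is identical, which is what you have written out, including the correct observation that $V\leq\hat{V}_D$ because the defining infimum for $\hat{V}_D^\rho$ ranges over a restricted class of paths.
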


\subsection{Properties of the $V$ equivalence classes $K_i$}\label{SS:EquivalenceClasses}

In Assumption \ref{A:K_equivalenceClassV}, we assumed there there exist a finite collection $K_1, ..., K_N$ of $V$ equivalence classes that contain all the $\omega$-limit points of the deterministic dynamical system $X^0_x(t)$ for $x \in \partial D$.

A Corollary of Theorem \ref{thm:quasipotential-comp} is that each $K_i$ $i \in \{1,...,N\}$ is a compact subset of $E$.
\begin{corollary} \label{cor:equivalence-class}
  If $K$ is a $V$ equivalence class then $K$ is compact.
\end{corollary}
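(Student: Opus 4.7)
The plan is to show that $K$ is closed and contained in a compact set; combining these yields compactness of $K$.

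First I would fix any reference point $x_{0}\in K$. Since every $y\in K$ satisfies $V(x_{0},y)=0$, we immediately have the inclusion
\[
K\subset \bigl\{y\in E:V(x_{0},y)\leq 0\bigr\}.
\]
Applying Theorem \ref{thm:quasipotential-comp} with the (trivially compact) singleton $\{x_{0}\}$ and level $s=0$ shows that the right-hand side is a compact subset of $E$. Thus $K$ sits inside a compact set, so it is at worst pre-compact.

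Next I would argue that $K$ is closed in $E$. Suppose $y_{n}\in K$ and $y_{n}\to y$ in $E$. Since each $y_{n}$ is $V$-equivalent to the reference point $x_{0}$, we have $V(x_{0},y_{n})=0$ and $V(y_{n},x_{0})=0$ for all $n$. The joint lower semi-continuity of the quasipotential (Theorem \ref{thm:quasipotential-lsc}) applied to the constant sequence $x_{0}$ together with $y_{n}\to y$ yields
\[
V(x_{0},y)\leq \liminf_{n\to\infty} V(x_{0},y_{n})=0,
\qquad
V(y,x_{0})\leq \liminf_{n\to\infty} V(y_{n},x_{0})=0.
\]
Hence $y\sim x_{0}$, so $y\in K$, proving $K$ is closed.

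Combining the two steps, $K$ is a closed subset of the compact set $\{y:V(x_{0},y)\leq 0\}$, hence compact. There is no real obstacle here beyond invoking the two structural results of Subsection \ref{SS:ContinuityQuasipotential}; the only mild subtlety is that the definition of the equivalence class controls $V$ in both directions, but only one direction is actually needed to get pre-compactness, while the lower semi-continuity applied in both directions is what secures closedness.
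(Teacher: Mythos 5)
Your proof is correct and follows essentially the same route as the paper: precompactness from the compactness of the sublevel set $\{y: V(x_0,y)\leq 0\}$ (Theorem \ref{thm:quasipotential-comp}), plus closedness via a limiting argument on a sequence in $K$. The only cosmetic difference is that you invoke lower semicontinuity (Theorem \ref{thm:quasipotential-lsc}) for both $V(x_0,y)$ and $V(y,x_0)$, whereas the paper uses the continuity statement for one of the two directions; your variant is equally valid since $V\geq 0$ forces both quantities to vanish.
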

\begin{proof}
  Let $x \in K$. Then it is clear that $K$ is a subset of the sublevel set $\{y \in E: V(x,y)=0\}$, which is compact by Theorem \ref{thm:quasipotential-comp}. Therefore $K$ is precompact. We can prove that $K$ is compact by proving that it is also closed. Suppose $y_n \in K$ with $y_n \to y \in E$. Theorems \ref{thm:quasipoential-cont} and \ref{thm:quasipotential-lsc} guarantee that
  \begin{align*}
    V(y,x) = \lim_{n \to \infty} V(y_n,x) = 0 \ \text{ and } \
    0 \leq V(x,y) \leq \liminf_{n \to \infty} V(x,y_n) =0.
  \end{align*}

    Therefore $x \sim y$ and $y \in K$.
\end{proof}

Because of the way we defined $\hat{V}$ in \eqref{eq:hat-V-D_rho}-\eqref{eq:V-hat-D}, we will need the following result
\begin{lemma} \label{lem:paths-stay-near-K_i}
  For any any $\rho>0$, there exists $\gamma>0$ such that whenever $T>0$ and $\varphi \in C([0,T]:E)$ has the properties that for some $i \in \{1,...,N\}$, $\varphi(0) \in K_i$, $\varphi(T) \in K_i$, and $I_{\varphi(0)}^T(\varphi) < \gamma$, it follows that
  \begin{equation} \label{eq:stay-near-K_i}
    \dist_E(\varphi(t), K_i) < \rho \text{ for all } t \in [0,T].
  \end{equation}
\end{lemma}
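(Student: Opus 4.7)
The plan is to argue by contradiction, using finiteness of the collection $\{K_1,\dots,K_N\}$, compactness of $V$-level sets, and the lower semicontinuity of $V$.

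Suppose the lemma fails. Then there exist $\rho>0$ and sequences $\gamma_n\downarrow 0$, $T_n>0$, indices $i_n\in\{1,\dots,N\}$, paths $\varphi_n\in C([0,T_n]:E)$ and times $t_n\in[0,T_n]$ such that $\varphi_n(0),\varphi_n(T_n)\in K_{i_n}$, $I_{\varphi_n(0)}^{T_n}(\varphi_n)<\gamma_n$, and $\dist_E(\varphi_n(t_n),K_{i_n})\geq\rho$. Since $N$ is finite, after passing to a subsequence we may assume $i_n\equiv i$ is fixed. Write $x_n:=\varphi_n(0)$, $y_n:=\varphi_n(T_n)$, $z_n:=\varphi_n(t_n)$. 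Splitting the rate function at time $t_n$ gives
\begin{equation*}
V(x_n,z_n)\leq I_{x_n}^{t_n}\bigl(\varphi_n|_{[0,t_n]}\bigr)\leq \gamma_n,\qquad V(z_n,y_n)\leq I_{z_n}^{T_n-t_n}\bigl(\varphi_n(t_n+\cdot)\bigr)\leq \gamma_n.
\end{equation*}

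Next I pass to convergent subsequences. By Corollary \ref{cor:equivalence-class}, $K_i$ is compact, so $x_n\to x\in K_i$ and $y_n\to y\in K_i$ along a subsequence. Because $V(x_n,z_n)\leq\gamma_n\leq\gamma_1$, the points $z_n$ lie in the set $\bigcup_{x'\in K_i}\{z\in E:V(x',z)\leq\gamma_1\}$, which is compact by Theorem \ref{thm:quasipotential-comp} applied with the compact set $K_i$. Extract a further subsequence with $z_n\to z\in E$. The lower semicontinuity of $V$ (Theorem \ref{thm:quasipotential-lsc}) then gives
\begin{equation*}
V(x,z)\leq\liminf_{n\to\infty}V(x_n,z_n)=0,\qquad V(z,y)\leq\liminf_{n\to\infty}V(z_n,y_n)=0.
\end{equation*}

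To finish, I use that $K_i$ is a single $V$-equivalence class. Since $x,y\in K_i$ we have $V(x,y)=V(y,x)=0$, and concatenating controlled trajectories yields the triangle inequality $V(z,x)\leq V(z,y)+V(y,x)=0$. Combined with $V(x,z)=0$, this gives $z\sim x$, and since $K_i$ is a full $V$-equivalence class we conclude $z\in K_i$. But $\dist_E(z_n,K_i)\geq\rho$ passes to the limit by closedness of $K_i$, yielding $\dist_E(z,K_i)\geq\rho>0$, a contradiction.

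The main technical point is not any single step but rather ensuring that all three pieces fit together: we need the level set $\{V(x',\cdot)\leq\gamma_1\}$ union over $x'\in K_i$ to be compact (which is exactly the content of Theorem \ref{thm:quasipotential-comp}, and uses compactness of $K_i$ from Corollary \ref{cor:equivalence-class}), we need the splitting inequality for the rate function which is straightforward from the definition of $V$ via concatenation of controls, and we need the lower semicontinuity of $V$ on $E\times E$. Once these ingredients are lined up the argument is essentially Freidlin–Wentzell-type, and the pigeonhole reduction over the finite index set $\{1,\dots,N\}$ is what makes the conclusion uniform in the choice of $K_i$.
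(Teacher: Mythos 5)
Your proof is correct and follows essentially the same route as the paper's: both extract convergent subsequences using the compactness of $K_i$ and of the $V$-level sets (Theorem \ref{thm:quasipotential-comp}), apply lower semicontinuity of $V$ to conclude $V(x,z)=V(z,y)=0$, and use the equivalence-class structure to place the limit $z$ in $K_i$, contradicting $\dist_E(z,K_i)\geq\rho$. The only cosmetic differences are that you phrase it as a contradiction with an explicit pigeonhole over the finite index set (the paper fixes $i$ and states the conclusion as a uniform limit), and you spell out the triangle-inequality step $V(z,x)\leq V(z,y)+V(y,x)=0$ needed to get $z\sim x$, which the paper leaves implicit.
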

\begin{proof}
  Fix $i \in \{1,...,N\}$. Let $T_n>0$, $\varphi_n \in C([0,T_n]:E)$ be arbitrary sequences such that $x_n:=\varphi_n(0) \in K_i$, $y_n:=\varphi_n(T_n) \in K_i$,  $I_{x_n}^{T_n}(\varphi_n)  \to 0$.  Let $t_n \in [0,T_n]$ be an arbitrary sequence and let $z_n = \varphi_n(t_n)$.

  By Theorem \ref{thm:quasipotential-comp} and the fact that $K_i$ is compact, there exists a subsequence such that $x_n \to x \in K_i$ and $z_n \to z \in E$ and $y_n \to y \in K_i$. Furthermore, by Theorem \ref{thm:quasipotential-lsc},
  \[V(x,z) + V(z,y) =0,\]
  implying that $z \sim x \sim y$. Because $K_i$ is a $V$ equivalence class and by Lemma \ref{lem:K_i-in-boundary}, $z \in K_i$. Because the sequences $\varphi_n$ and $t_n$ were arbitrary, we have shown that
  \[\lim_{\gamma \to 0} \sup_{T>0}\sup_{\substack{\{\varphi \in C([0,T]:E):\\ I_{\varphi(0)}^T(\varphi)<\gamma,\\ \varphi(0) \in K_i, \varphi(T) \in K_i\}}}\ \sup_{t \in [0,T]} \dist_E(\varphi(t), K_i)=0. \]
\end{proof}

\section{Proof of Theorem \ref{thm:meanExitAsymptotics}: Logarithmic asymptotic for the exit time}\label{S:ExitTimeAsymptotics}
The proof of Theorem \ref{thm:meanExitAsymptotics} is given in Subsections \ref{SS:UpperBoundExitTime} and \ref{SS:LowerBoundExitTime}, where the upper and lower bounds respectively are being established.

\subsection{Upper bound of the exit time}\label{SS:UpperBoundExitTime}
In this section we prove an upper bound for the exponential growth rate of the exit time as $\e \to 0$, see Theorem \ref{thm:upper-bound}.
\begin{theorem} \label{thm:upper-bound}
  For any $x \in D$,
  \begin{equation} \label{eq:upper-bound-exp}
    \limsup_{\e \to 0} \e \log \E \tau^\e_x \leq V(x_*, \partial D)
  \end{equation}
  and for any $\gamma>0$,
  \begin{equation} \label{eq:upper-bound-prob}
    \lim_{\e \to 0} \Pro( \e \log \tau^\e_x > V(x_*, \partial D) + \gamma) =0.
  \end{equation}
\end{theorem}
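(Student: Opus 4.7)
The plan is to adapt the classical Freidlin--Wentzell scheme to this infinite--dimensional characteristic--boundary setting, using the controllability and regularity of $V$ established in Section \ref{S:controlability} in lieu of compactness of $\bar D$ and boundary regularity. The core idea is to design a recurrent Markov--chain type argument: at regularly spaced stopping times we try to push the process out of $D$ along a near--optimal controlled path originating from a small neighborhood of $x_*$. Since the success probability of each attempt is, by the LDP lower bound, at least $\exp(-(V(x_*,\partial D)+\gamma)/\e)$, the expected number of attempts before exit is at most $\exp((V(x_*,\partial D)+\gamma)/\e)$, and the exit time estimate \eqref{eq:upper-bound-exp}--\eqref{eq:upper-bound-prob} follows.

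Concretely, I would proceed as follows. Fix $\gamma>0$. First, using Theorem \ref{thm:V-D_R=V-D} replace $D$ by $D_R$ for some large $R$ so that $V(x_*,\partial D)=V(x_*,\partial D_R)$, and use Corollary \ref{L:InnerBoundaryRegularityQP_R} to pick $\rho>0$ and $T_0$ large enough that $V(x_*,\partial D_R^{\rho,T_0}) > V(x_*,\partial D) - \gamma/4$; the inner set $D_R^{\rho,T_0}$ is compact and relatively attracting by definition (see \eqref{Eq:DefDrhoT_R}). Next, by the definition of the quasipotential together with Theorem \ref{thm:control}, construct $T_1>0$ and a control $u\in L^2([0,T_1]:H)$ with $\tfrac12 |u|_{L^2}^2 < V(x_*,\partial D) + \gamma/4$ whose controlled path $\varphi = X^{0,u}_{x_*}$ starts at $x_*$, exits $D$ (i.e.\ $\dist_E(\varphi(T_1), D^c) < \delta_0$ for some $\delta_0>0$). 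By the uniform LDP lower bound \eqref{eq:ldp-low}, for all sufficiently small $\e$ and all $y\in B(x_*,\delta_1)$ (with $\delta_1$ obtained from controllability Theorem \ref{thm:control}),
\begin{equation*}
  \Pro\bigl(X^\e_y \notin D \text{ for some } t\in[0,T_1+1]\bigr) \geq \exp\bigl(-(V(x_*,\partial D)+\gamma/2)/\e\bigr) =: p_\e.
\end{equation*}

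Second, I would show that from any starting point in $D_R^{\rho,T_0}$, the process returns to $B(x_*,\delta_1)$ within a deterministic time $T_2$ with probability bounded below uniformly in $\e$. By Theorem \ref{thm:continuity} and the attraction property defining $D^{\rho,T_0}_R$, the unperturbed paths from $D_R^{\rho,T_0}$ all reach $B(x_*,\delta_1/2)$ by time $T_0$, and the LDP upper bound \eqref{eq:ldp-up} (together with the exponential tail bound Corollary \ref{cor:exp-est-X} controlling the non--compact tails of $E$) gives that $X^\e_y$ is uniformly close to $X^0_y$ on $[0,T_0]$ with probability $\to 1$ as $\e\to 0$. Now define inductively $\sigma_0=0$, $\sigma_{k+1}= \inf\{t\geq \sigma_k + T_1+T_2\colon X^\e_x(t)\in B(x_*,\delta_1)\}\wedge \tau^\e_x$. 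At each cycle of length $T:=T_1+T_2+1$, the strong Markov property together with the two previous estimates yields $\Pro(\tau^\e_x \leq \sigma_{k+1} \mid \mathcal F_{\sigma_k}, \sigma_k<\tau^\e_x) \geq \tfrac12 p_\e$ for all small $\e$. Hence
\begin{equation*}
  \E\tau^\e_x \leq T\cdot \E(\min\{k\colon \tau^\e_x\leq \sigma_k\}) \leq T \cdot 2/p_\e \leq 2T\exp\bigl((V(x_*,\partial D)+\gamma)/\e\bigr),
\end{equation*}
which gives \eqref{eq:upper-bound-exp}, and a geometric--tail estimate on the number of attempts gives \eqref{eq:upper-bound-prob} via Chebyshev.

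The main obstacle is the "return to $B(x_*,\delta_1)$" step, which in the finite--dimensional Freidlin--Wentzell setup is trivial because the domain of attraction is bounded and uniformly attracting. Here $D$ is neither bounded nor uniformly attractive, so I cannot just invoke continuous dependence on initial data on a single compact set. The argument must therefore combine three ingredients: (i) the tail bound \eqref{eq:X-big-zero-prob} and Corollary \ref{cor:exp-est-X} to confine the process to $D_R$ with high probability; (ii) the internal regularity Theorem \ref{L:InnerBoundaryRegularityQP} and its Corollary \ref{L:InnerBoundaryRegularityQP_R} to replace the attraction of $D$ by uniform attraction on $D^{\rho,T_0}_R$; and (iii) a uniform LDP argument based on Theorem \ref{thm:LDP} applied to the compact set $D^{\rho,T_0}_R$. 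Stringing these together, and in particular showing that the "bad" event that the process escapes $D^{\rho,T_0}_R$ before being close to $x_*$ but without actually exiting $D$ has probability $\to 0$, is where the infinite--dimensional complications enter and where the regularity and compactness results of Section \ref{S:controlability} are indispensable.
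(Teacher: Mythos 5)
Your overall architecture is the right one and matches the paper's: repeated exit attempts of fixed duration, a uniform LDP lower bound applied to a near-optimal controlled exit path to get a per-attempt success probability of order $e^{-(V(x_*,\partial D)+\gamma)/\e}$, the tail estimate \eqref{eq:X-big-zero-prob} to confine the process to $D_R$ at the start of each attempt, a geometric series for $\E\tau^\e_x$, and Chebyshev for \eqref{eq:upper-bound-prob}. However, your ``return to $B(x_*,\delta_1)$'' step has a genuine gap. You establish the uniform return estimate only for initial points in $D_R^{\rho,T_0}$, but at the start of each cycle (after a failed attempt) the process can be anywhere in $D_R$, including points of $D_R\setminus D_R^{\rho,T_0}$ from which the deterministic flow takes arbitrarily long to approach $x_*$; so the per-cycle lower bound $\tfrac12 p_\e$ is not justified for such starting points. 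Moreover, your bound $\E\tau^\e_x\leq T\cdot\E(\min\{k:\tau^\e_x\leq\sigma_k\})$ implicitly assumes $\sigma_{k+1}-\sigma_k\leq T$, which your definition of $\sigma_{k+1}$ as a hitting time of $B(x_*,\delta_1)$ does not provide. The tools you cite to fix this (Theorem \ref{thm:V-D_R=V-D}, Corollary \ref{L:InnerBoundaryRegularityQP_R}) are the ones the paper uses for the \emph{lower} bound of the exit time; they do not address the return problem.

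The paper closes exactly this gap differently, and purely at the level of deterministic controlled paths rather than a stochastic return step. The key mechanism is Lemma \ref{lem:close-to-unif-attract}: by Corollary \ref{cor:pre-compact-all-init-cond} the set $\{X^0_x(1):x\in D\}$ is pre-compact, so it is covered by finitely many $h$-balls centered in $D$, and since $D=\bigcup_{T>0}D^{\rho,T}$ a single finite $T$ works for all centers; hence \emph{every} $x\in D$ has $\dist_E(X^0_x(1),D^{\rho,T})<h$. Controllability (Theorem \ref{thm:control}) then absorbs the $h$-error at cost $<\gamma/3$, yielding Lemma \ref{lem:control-to-rho}: for every $x\in D$ there is a path of \emph{uniformly bounded} duration from $x$ into $B(x_*,\rho)$ with action $<\gamma/3$. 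Concatenating with the descent to $x_*$ (Lemma \ref{lem:control-to-x+}) and the near-optimal escape from $x_*$ past $\partial D$ into $(\bar D)^c$ (Lemma \ref{lem:control-x+-to-outside}, which needs Assumption \ref{A:AttractionProperty} to exit strictly), one gets a single path $\varphi^x$ of common length $T$ with $I_x^T(\varphi^x)\leq V(x_*,\partial D)+\gamma$ for every $x\in D$, and one application of the uniform LDP lower bound over $D_R$ gives Lemma \ref{lem:exit-prob-low-bound}. The iteration is then performed over \emph{deterministic} windows $[k(T+1),(k+1)(T+1)]$ via the Markov property, which sidesteps any need to bound random return times. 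To repair your proof you would need to import this compactness-plus-controllability step (or an equivalent), since without it the uniform per-cycle success probability fails on the non-uniformly-attracting part of $D$.
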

We will prove Theorem \ref{thm:upper-bound} at the end of this section. First we introduce several lemmas.

Notice that the domain $D$ is (possibly) unbounded, which leads to some complications in the analysis of exit times because we can only prove uniform large deviations principles over bounded domains (see Theorem \ref{thm:LDP}).

\begin{lemma} \label{lem:close-to-unif-attract}
  For $0<\rho,T<\infty$ let $D^{\rho,T}$ be given by (\ref{Eq:DefDrhoT}). For any $h>0$, and $\rho >0$,  there exists $T>0$ such that for all $x \in D$
  \begin{equation*}
    \dist_E(X^0_x(1), D^{\rho,T}) < h.
  \end{equation*}
  \end{lemma}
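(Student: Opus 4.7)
The plan is to argue by contradiction, combining a compactness result for $\{X^0_x(1):x\in D\}$ with the density of $\bigcup_T D^{\rho,T}$ in $\bar D$.

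First I would record three preliminary observations. (i) The set $D^{\rho,T}$ is nondecreasing in $T$, since $\sup_{t\ge T}|X^0_x(t)-x_*|_E$ is a nonincreasing function of $T$. (ii) Because every $x\in D$ satisfies $X^0_x(t)\to x_*$, for each $x$ there is some $T_x$ with $x\in D^{\rho,T_x}$; hence $\bigcup_{T>0} D^{\rho,T}=D$ for every $\rho>0$, and consequently $\overline{\bigcup_{T>0} D^{\rho,T}}=\bar D$. (iii) By Corollary~\ref{cor:pre-compact-all-init-cond} applied with $u\equiv 0$ and $t=1$, the set $\{X^0_x(1):x\in E\}$ is pre-compact in $E$; in particular $\{X^0_x(1):x\in D\}$ is pre-compact and any limit point lies in $\bar D$.

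Next I would suppose, for contradiction, that the lemma fails: there exist $h_0,\rho_0>0$ and sequences $T_n\to\infty$, $x_n\in D$ with $\dist_E(X^0_{x_n}(1),D^{\rho_0,T_n})\ge h_0$. By (iii), after passing to a subsequence, $z_n:=X^0_{x_n}(1)\to z$ in $E$ for some $z\in\bar D$. By observation (ii), I would produce $T^*>0$ and $w\in D^{\rho_0,T^*}$ with $|w-z|_E<h_0/2$: if $z\in D$ simply take $w=z$ and $T^*$ large enough that $|X^0_z(t)-x_*|_E<\rho_0$ for $t\ge T^*$; if $z\in\partial D$, exploit the density $\overline{\bigcup_T D^{\rho_0,T}}=\bar D$ to approximate $z$ by such a $w$.

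Finally, for $n$ sufficiently large one has $T_n\ge T^*$, so by the monotonicity in (i), $w\in D^{\rho_0,T^*}\subset D^{\rho_0,T_n}$, while also $|z_n-z|_E<h_0/2$; therefore
\[
\dist_E(z_n,D^{\rho_0,T_n})\le|z_n-w|_E\le|z_n-z|_E+|z-w|_E<h_0,
\]
contradicting $\dist_E(z_n,D^{\rho_0,T_n})\ge h_0$. The main obstacle is handling the boundary case $z\in\partial D$, where $z$ itself need not belong to any $D^{\rho_0,T}$; the density observation (ii) is what makes the argument go through uniformly over the two cases.
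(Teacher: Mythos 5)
Your argument is correct and rests on the same two ingredients as the paper's proof — the precompactness of $\{X^0_x(1):x\in D\}$ from Corollary \ref{cor:pre-compact-all-init-cond} and the exhaustion $D=\bigcup_{T>0}D^{\rho,T}$ together with the monotonicity of $D^{\rho,T}$ in $T$ — merely recast as a proof by contradiction with a convergent subsequence, where the paper instead takes a finite $h$-net of the precompact image with centers in $D$ and chooses $T$ large enough to absorb all the (finitely many) centers. Both versions are sound; your explicit treatment of a limit point landing on $\partial D$ is a detail the paper's finite-cover phrasing avoids by placing the net centers inside $D$ itself.
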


\begin{proof}
Corollary \ref{cor:pre-compact-all-init-cond} guarantees that the set
  \begin{equation*}
    K^1 = \{X^0_x(1): x \in D\}
  \end{equation*}
  is pre-compact in $E$. For any $h>0$ there exists a finite collection $\{z_1,...,z_N\} \subset D$ such that
  \[\overline{K^1} \subset \bigcup_{k=1}^N B(z_k, h).\]
  Because $D = \bigcup_{T>0} D^{\rho,T}$ and there are only a finite number of $z_k$, there exists $T>0$ such that $z_k \in D^{\rho,T}$ for all $k \in \{1,...,N\}$. This proves that for all $x \in D$,
  \[\dist_E(X^{0}_{x}(1),D^{\rho,T})<h.\]
\end{proof}

\begin{lemma} \label{lem:control-to-rho}
  For any $\gamma>0$ and $\rho>0$, there exists $T>0$ such that for every $x \in D$ there exists $\varphi^x \in C([0,T]:E)$ satisfying $\varphi^x(0) = x$, $\varphi(T) \in B(x_*,\rho)$, and $I_x^T(\varphi)<\gamma$.
\end{lemma}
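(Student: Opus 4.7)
The plan is to build $\varphi^x$ by concatenating three pieces, each of which can be controlled uniformly in $x \in D$. First I would run the unperturbed flow on $[0,1]$, which costs zero action and deposits $x$ into a uniformly bounded region by Theorem \ref{thm:control-bounds}. Second, I would perform a short controlled junction that patches the endpoint $y_x := X^0_x(1)$ onto the unperturbed trajectory starting from a nearby point $z_x$ in the uniformly attracting set $D^{\rho/2,T_1}$; this costs at most $\gamma/2$ by Theorem \ref{thm:control}. Third, the free unperturbed flow from $z_x$ is guaranteed, by definition of $D^{\rho/2,T_1}$, to lie in $B(x_\ast,\rho/2) \subset B(x_\ast,\rho)$ at time $T_1$.

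To execute the plan, Theorem \ref{thm:control-bounds} supplies a constant $R_0$ independent of $x$ with $|X^0_x(1)|_E \leq R_0$ for every $x \in E$. Applying Theorem \ref{thm:control} with parameters $(\gamma/2, R_0+1, N=0)$, I extract the threshold $\delta > 0$. I then pick $h \in (0,\delta)$ and invoke Lemma \ref{lem:close-to-unif-attract} with this $h$ and radius $\rho/2$, obtaining a single $T_1 > 0$ so that for every $x \in D$ one can choose $z_x \in D^{\rho/2,T_1}$ with $|y_x - z_x|_E < h$. Since $D^{\rho/2,T}$ is nondecreasing in $T$, I may enlarge $T_1$ to guarantee $T_1 \geq \delta$ without losing this property. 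Because $|z_x|_E \leq R_0 + h \leq R_0+1$, Theorem \ref{thm:control} (applied with $``x"\,=\,z_x$, $``y"\,=\,y_x$, $``u"\,=\,0$, horizon $T_1$) delivers $v_x \in L^2([0,T_1]:H)$ with $\tfrac{1}{2}|v_x|_{L^2([0,T_1]:H)}^2 \leq \gamma/2$ and $X^{0,v_x}_{y_x}(t) = X^0_{z_x}(t)$ for $t \in [\delta,T_1]$, in particular at $t = T_1$.

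Setting $T := 1 + T_1$, which is independent of $x$, I then define
\[
  \varphi^x(t) := \begin{cases} X^0_x(t), & t \in [0,1], \\ X^{0,v_x}_{y_x}(t-1), & t \in [1,T], \end{cases}
\]
and verify the three requirements: $\varphi^x(0)=x$ is immediate; $\varphi^x(T) = X^0_{z_x}(T_1) \in B(x_\ast, \rho/2) \subset B(x_\ast, \rho)$ by the defining property of $D^{\rho/2,T_1}$; and the concatenated control (zero on $[0,1]$, $v_x(\cdot-1)$ on $[1,T]$) has total quadratic cost at most $\gamma/2$, yielding $I_x^T(\varphi^x) \leq \gamma/2 < \gamma$.

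The main delicate point will be arranging $T$ to be uniform in $x$, since $D$ is generally unbounded and the deterministic convergence rate $X^0_x(t) \to x_\ast$ is not itself uniform on $D$. This is handled upstream by Lemma \ref{lem:close-to-unif-attract}, which rests on the precompactness of $\{X^0_x(1) : x \in D\}$ provided by Corollary \ref{cor:pre-compact-all-init-cond}: once the first unit of deterministic flow puts every orbit into a common compact set, a single horizon $T_1$ suffices simultaneously. The controllability theorem then operates opaquely, needing only the uniform bounds $|z_x|_E \leq R_0+1$ and $|y_x - z_x|_E < \delta$.
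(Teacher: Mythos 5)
Your proposal is correct and follows essentially the same route as the paper: a unit of free flow into the compact set $\{X^0_x(1):x\in D\}$, a cheap controlled junction via Theorem \ref{thm:control} onto the orbit of a nearby point of $D^{\rho/2,T_1}$ supplied by Lemma \ref{lem:close-to-unif-attract}, and then free flow into $B(x_*,\rho)$. The only (harmless) quibble is the application of Theorem \ref{thm:control} with $N=0$, which is stated for $N>0$; taking $N=1$ with $u=0$ fixes this trivially.
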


\begin{proof}
  By \eqref{eq:control-sup-x-bound}, there exists $R>0$ such that $\sup_{x \in E} |X^0_x(1)|_E \leq R$. By Theorem \ref{thm:control}, there exists $h\in (0,1)$ such that whenever $x , y \in E$ with $|x|_E, |y|_E\leq R$ and  $|x-y|_E< h$, there exists a control $u_1 \in L^2([0,h]:H)$ such that $X^0_y(t) = X^{0,u_1}_x(t)$ for all $t\geq h$ and $\frac{1}{2}\int_0^h |u_1(s)|_H^2 ds < \gamma$.
  By Lemma \ref{lem:close-to-unif-attract} there exists a time $T_1$ such that
  for all $x \in D$
  \[
    \dist_E(X^0_x(1), D^{\rho,T_1}) < h.
  \]
  This means that for any $x \in D$, we can find $y \in D^{\rho,T_1}$ such that $|X^0_x(1)-y|_E <h$. By Theorem \ref{thm:control} and our choice of $h$ there is also a control $u_1 \in L^2([0,h]:H)$ such that $X^{0,u_1}_{X^0_x(1)}(h) = X^0_y(h)$ and $\frac{1}{2}\int_0^{h} |u_1(t)|_H^2 dt < \gamma$. Let $T:=1 +  T_1$ and define the control $u \in L^2([0,T]:H)$ by
  \[u(t) = \begin{cases}
     0 & \text{ if } t \in [0,1)\\
     u_1(t-1) & \text{ if } t \in [1, 1+h)\\
     0 & \text{ if } t \in [1+h,1+T_1].
  \end{cases}\]

  Note that by time homogeneity $X^{0,u}_x(1+h) = X^{0,u_1}_{X^0_x(1)}(h)$ and we observed above that $X^{0,u_1}_{X^0_x(1)}(h) = X^0_y(h)$. Because $u(t)= 0$ for $t>1+h$ it follows that $X^{0,u}_x(t) = X^0_y(t - 1)$ for $t\in [1+h,1+T_1]$. In particular, because $X^0_y(T_1) \in B(x_*, \rho)$, it follows that $X^{0,u}_x(T) \in B(x_*,\rho)$.

  The result follows by setting $\varphi:= X^{0,u}_x$.
\end{proof}

\begin{lemma} \label{lem:control-to-x+}
  For any $\gamma>0$, there exists $\rho >0$ such that for any $x \in B(x_*,\rho)$ there exists a path $\varphi \in C([0,\rho]:E)$ such that $\varphi(0) = x$, $\varphi(\rho) = x_*$ and  $I^\rho_x(\varphi)<\gamma$.
\end{lemma}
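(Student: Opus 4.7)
The plan is to apply Theorem \ref{thm:control} directly, taking the constant trajectory at $x_*$ as the reference trajectory. Since $x_*$ is an equilibrium of the unperturbed system, $X^0_{x_*}(t) = x_*$ for all $t \geq 0$, and this is produced by the zero control with zero action. So I would cast the problem as: starting from a nearby point $x$, construct a control $v$ that forces the trajectory to join the constant trajectory $t \mapsto x_*$ after a short time, with small action.

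Concretely, given $\gamma > 0$, I would apply Theorem \ref{thm:control} with parameters $\gamma/2$, $R := |x_*|_E + 1$, and $N := 0$, producing some $\delta^* > 0$. Then I would set $\rho := \delta^*$. The dual role of $\rho$ (radius of the ball around $x_*$ and length of the time interval) is handled by choosing both equal to $\delta^*$: this simultaneously ensures the distance hypothesis $|x - x_*|_E < \rho = \delta^*$ and the time hypothesis $T = \rho \geq \delta^*$ of Theorem \ref{thm:control}.

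For any $x \in B(x_*, \rho)$, Theorem \ref{thm:control} applied with $u \equiv 0$, reference starting point $x_*$ (noting $|x_*|_E \leq R$), new starting point $x$, and horizon $T = \rho$ produces $v \in L^2([0,\rho]:H)$ such that
\begin{equation*}
  X^{0,v}_x(t) = X^0_{x_*}(t) = x_* \quad \text{for all } t \in [\delta^*, T] = \{\rho\},
\end{equation*}
together with $\tfrac{1}{2}|v|_{L^2([0,\rho]:H)}^2 \leq 0 + \gamma/2$. Setting $\varphi := X^{0,v}_x$ yields a path in $C([0,\rho]:E)$ with $\varphi(0) = x$, $\varphi(\rho) = x_*$, and $I^\rho_x(\varphi) \leq \tfrac{1}{2}|v|_{L^2([0,\rho]:H)}^2 \leq \gamma/2 < \gamma$, as required.

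There is no real obstacle: Theorem \ref{thm:control} does all the heavy lifting, and the only bookkeeping is the identification of the radius of the ball with the time length, which is forced by our desire to have $\varphi$ be defined on $[0,\rho]$ while also wanting $x \in B(x_*,\rho)$ to guarantee the controllability hypothesis. Using $\gamma/2$ rather than $\gamma$ in the application gives the strict inequality $I^\rho_x(\varphi) < \gamma$.
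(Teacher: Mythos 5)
Your proposal is correct and is essentially the paper's own proof, which simply observes that the lemma is an immediate consequence of Theorem \ref{thm:control} applied to the constant trajectory $X^0_{x_*}(t)=x_*$; you have merely spelled out the bookkeeping of taking $\rho=\delta^*$ to serve as both the ball radius and the time horizon. (The only cosmetic nit is that Theorem \ref{thm:control} is stated for $N>0$, so take e.g.\ $N=1$ with $u\equiv 0$ rather than $N=0$.)
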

\begin{proof}
    This is an immediate consequence of Theorem \ref{thm:control} because $X^0_{x_*}(t) = x_*$ for all $t>0$.
\end{proof}

\begin{lemma} \label{lem:control-x+-to-outside}
  For any $\gamma>0$ there exists a $T>0$ and a path $\varphi \in C([0,T]:E)$ such that $\varphi(0) = x_*$, $\varphi(T) \in (\overline{D})^c$ and $I_{x_{*}}^T(\varphi) \leq V(x_*,\partial D) + \gamma$.
\end{lemma}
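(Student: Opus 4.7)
Fix $\gamma>0$ and assume without loss of generality that $V(x_*,\partial D)<+\infty$, as otherwise the lemma is vacuous. The plan is to first produce a near-minimising controlled trajectory from $x_*$ to a point of $\partial D$, and then to append a short extension, obtained via the controllability Theorem \ref{thm:control}, that pushes the endpoint just past $\partial D$ into $(\bar D)^c$ while adding at most $\gamma/2$ to the action.

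By the definitions \eqref{eq:rate-fct-def}, \eqref{eq:quasipotential-def} and \eqref{eq:quasipotential-def-sets}, choose $T_0>0$, $y\in\partial D$, $\varphi_0\in C([0,T_0]:E)$ and $u_0\in L^2([0,T_0]:H)$ with $\varphi_0=X^{0,u_0}_{x_*}$, $\varphi_0(T_0)=y$, and $\tfrac12|u_0|_{L^2([0,T_0]:H)}^2<V(x_*,\partial D)+\gamma/2$. Since $y\in\partial D$ and $\partial D$ plays the role of a separatrix (Assumption \ref{A:AttractionProperty} and Remark \ref{rem:stay-away}), we may select $z_0\in(\bar D)^c$ with $|z_0-y|_E<\delta$ for $\delta>0$ to be fixed. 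By Remark \ref{rem:stay-away}, $\dist_E(X^0_{z_0}(t),\partial D)>0$ for every $t\ge 0$; combined with $z_0\in(\bar D)^c$, openness of $(\bar D)^c$, and continuity of $t\mapsto X^0_{z_0}(t)$ in $E$, a connectedness-in-$t$ argument gives $X^0_{z_0}(t)\in(\bar D)^c$ for every $t\ge 0$.

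Next, apply Theorem \ref{thm:control} with source point $z_0$, trivial control $u\equiv 0$ (so e.g.\ $N=1$), $R:=|y|_E+1$, target starting point $y$, terminal time $T=\delta$, and excess-action tolerance $\gamma/2$. For $\delta$ sufficiently small (in particular $\delta\le 1$, so $|z_0|_E\le R$) the theorem yields $v\in L^2([0,\delta]:H)$ with $\tfrac12|v|_{L^2([0,\delta]:H)}^2\le\gamma/2$ and $X^{0,v}_y(\delta)=X^0_{z_0}(\delta)\in(\bar D)^c$. Setting $T:=T_0+\delta$ and
\[
\varphi(t)=\begin{cases}\varphi_0(t), & t\in[0,T_0],\\ X^{0,v}_y(t-T_0), & t\in[T_0,T_0+\delta],\end{cases}
\]
we obtain $\varphi\in C([0,T]:E)$ with $\varphi(0)=x_*$ and $\varphi(T)\in(\bar D)^c$. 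The control $w$ equal to $u_0$ on $[0,T_0]$ and $v(\cdot-T_0)$ on $[T_0,T]$ generates $\varphi$, so by \eqref{eq:rate-fct-def}
\[
I_{x_*}^{T}(\varphi)\le \tfrac12|w|_{L^2([0,T]:H)}^2 = \tfrac12|u_0|_{L^2([0,T_0]:H)}^2 + \tfrac12|v|_{L^2([0,\delta]:H)}^2 < V(x_*,\partial D)+\gamma,
\]
which is the desired bound.

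The only delicate step is the existence of $z_0\in(\bar D)^c$ with $|z_0-y|_E<\delta$, i.e.\ the topological fact $\partial D\subseteq\overline{(\bar D)^c}$. In the Freidlin--Wentzell finite-dimensional setting with smooth boundary this is immediate, and in the metastability setup of the paper I expect it to follow from $\partial D$ being a separatrix with $(\bar D)^c$ a union of other open attraction basins. If needed, one may first replace the near-minimiser by one whose endpoint lies in $\partial\bar D\subseteq\partial D$ (truncating the path at the first time it meets $\partial\bar D$, which does not increase the action), after which the selection of $z_0$ becomes automatic.
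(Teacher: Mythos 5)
Your proof is correct and follows essentially the same route as the paper's: a near-minimizing controlled path from $x_*$ to a boundary point, followed by an application of Theorem \ref{thm:control} to splice onto the unperturbed trajectory of a nearby point of $(\bar D)^c$, which remains outside $\bar D$ by Assumption \ref{A:AttractionProperty} and Remark \ref{rem:stay-away}. The topological point you flag (that points of $(\bar D)^c$ can be found arbitrarily close to the chosen boundary point) is treated no more carefully in the paper, which simply asserts the existence of such a point by appeal to Assumption \ref{A:AttractionProperty}, so your argument is at least as complete as the original.
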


\begin{proof}
 By the definition of $V(x_*, \partial D)$, there exists $z \in \partial D$, $T_1>0$, and $\varphi_1 \in C([0,T_1])$ such that $\varphi_1(0) = x_*$, $\varphi_1(T_1) = z$ and $I^{T_1}_{x_{*}}(\varphi_1) \leq V(x_*, \partial D) + \frac{\gamma}{3}$.

  By Assumptions \ref{assum:D-boundary} and \ref{A:AttractionProperty}, we can find $y\notin\bar{D}$, close enough to $z\in\partial D$, say $|y-z|_{E}<\delta/2$, such that by Theorem \ref{thm:control} there exists a path $\varphi_{2}\in C([0,\delta],E)$ with $\varphi_{2}(0)=z$ and $\varphi_{2}(\delta)\notin\bar{D}$ and $I^{\delta}_{z}(\varphi_{2})\leq \frac{\gamma}{3}$. 

Let $T:=T_1 +  \delta$. The path
  \[\varphi(t) :=\begin{cases}
    \varphi_1(t) & \text{ if } t \in [0,T_1)\\
    \varphi_2(t-T_1) & \text{ if } t \in [T_1, T_1 + \delta]
  \end{cases}\]
  has the properties that $\varphi(0) = x_*$, $\varphi(T) \in (\overline{D})^c$, and $I_{x_{*}}^T(\varphi) \leq V(x_*, \partial D) + \gamma$.
\end{proof}

\begin{lemma} \label{lem:exit-prob-low-bound}
  For any $\gamma>0$ and $R>0$, there exists $T>0$ such that
  \begin{equation*}
    \liminf_{\e \to 0} \inf_{x \in D_{R}} \e \log \Pro(\tau^\e_{x} \leq T) \geq -V(x_*, \partial D) - \gamma,
  \end{equation*}
{where we recall that $D_R= \{x \in D: |x|_E \leq R\}$.  }
\end{lemma}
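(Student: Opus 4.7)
The plan is to combine Lemmas \ref{lem:control-to-rho}, \ref{lem:control-to-x+}, and \ref{lem:control-x+-to-outside} to build, for every $x \in D_R$, a controlled trajectory of common duration $T$ that starts at $x$, ends at a single point $y_0$ outside $\bar D$, and has action below $V(x_*,\partial D)+\gamma/2$. The uniform LDP lower bound from Theorem \ref{thm:LDP} applied on the bounded set $D_R$ will then produce a neighborhood of this trajectory whose probability is bounded below by $\exp(-(V(x_*,\partial D)+\gamma)/\e)$, and staying in this neighborhood forces exit by time $T$.

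More concretely, first fix $\gamma>0$. Using Lemma \ref{lem:control-x+-to-outside}, choose $T_3>0$ and $\psi\in C([0,T_3]:E)$ with $\psi(0)=x_*$, $y_0:=\psi(T_3)\notin \bar D$, and $I^{T_3}_{x_*}(\psi)\le V(x_*,\partial D)+\gamma/6$. Because $E\setminus\bar D$ is open (Theorem \ref{T:OpenAttractionSets} makes $D$ open, and similar reasoning shows the complement of $\bar D$ is open), there exists $\delta_0>0$ with $B(y_0,\delta_0)\subset E\setminus\bar D$. Next, using Lemma \ref{lem:control-to-x+}, pick $\rho>0$ so that any point in $B(x_*,\rho)$ can be steered to $x_*$ in time $\rho$ with action below $\gamma/6$. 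Finally, using Lemma \ref{lem:control-to-rho} for this $\rho$, obtain $T_1>0$ such that for every $x\in D$ there is $\varphi_1^x\in C([0,T_1]:E)$ with $\varphi_1^x(0)=x$, $\varphi_1^x(T_1)\in B(x_*,\rho)$, and $I^{T_1}_x(\varphi_1^x)<\gamma/6$. Concatenating these three segments gives, for each $x\in D_R$, a path $\varphi^x\in C([0,T]:E)$ with $T:=T_1+\rho+T_3$, $\varphi^x(0)=x$, $\varphi^x(T)=y_0$, and
\begin{equation*}
I^T_x(\varphi^x)\le V(x_*,\partial D)+\tfrac{\gamma}{2}.
\end{equation*}

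Set $s_0:=V(x_*,\partial D)+\gamma/2$ and $\delta:=\delta_0/2$. Since $D_R$ is a bounded subset of $E$, the uniform LDP lower bound \eqref{eq:ldp-low} in Theorem \ref{thm:LDP} gives
\begin{equation*}
\liminf_{\e\to 0}\inf_{x\in D_R}\Bigl(\e\log \Pro\bigl(|X^\e_x-\varphi^x|_{C([0,T]:E)}<\delta\bigr)+I^T_x(\varphi^x)\Bigr)\ge 0,
\end{equation*}
since $\varphi^x\in\Phi^T_x(s_0)$ for every $x\in D_R$. Consequently, for all sufficiently small $\e$ and all $x\in D_R$,
\begin{equation*}
\e\log \Pro\bigl(|X^\e_x-\varphi^x|_{C([0,T]:E)}<\delta\bigr)\ge -I^T_x(\varphi^x)-\tfrac{\gamma}{2}\ge -V(x_*,\partial D)-\gamma.
\end{equation*}
On the event $\{|X^\e_x-\varphi^x|_{C([0,T]:E)}<\delta\}$ we have $|X^\e_x(T)-y_0|_E<\delta_0/2$, hence $X^\e_x(T)\notin\bar D$, which forces $\tau^\e_x\le T$. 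Taking $\liminf$ yields the desired bound.

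The main obstacle is the uniformity in $x\in D_R$: we must construct paths whose shared endpoint and shared duration are independent of $x$ so that the uniform LDP gives a single $\delta$ and a single exceptional-probability bound. This uniformity is achieved because all three building blocks (Lemmas \ref{lem:control-to-rho}, \ref{lem:control-to-x+}, \ref{lem:control-x+-to-outside}) are applied so that only the very first leg (from $x$ into $B(x_*,\rho)$) depends on $x$, after which every trajectory follows the same deterministic steering to $x_*$ and then the same near-optimal escape path $\psi$; the uniform LDP over bounded sets then closes the argument.
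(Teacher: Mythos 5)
Your proposal is correct and follows essentially the same route as the paper: both concatenate the three controlled legs from Lemmas \ref{lem:control-to-rho}, \ref{lem:control-to-x+}, and \ref{lem:control-x+-to-outside} into a single path of common duration ending at a fixed point outside $\bar{D}$, then invoke the uniform LDP lower bound \eqref{eq:ldp-low} over the bounded set $D_R$. The only differences are cosmetic (allocation of $\gamma$ among the segments and the order in which the building blocks are fixed).
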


\begin{proof}

Let $\gamma>0$. By Lemma \ref{lem:control-to-x+} there exists $\rho >0$ such that whenever $|z-x_*|_E < \rho$, there exists $\varphi_2^z \in C([0,\rho]:E)$ such that $\varphi^z_2(0) = z$, $\varphi^z_2(\rho) = x_*$ and $I^\rho_z(\varphi^z_2) < \frac{\gamma}{3}$.

By Lemma \ref{lem:control-to-rho}, there exists $T_1>0$ such that for every $x \in D$, there exists $\varphi_1^x \in C([0,T_1]:E)$ such that $\varphi^x_1(0) = x$, $|\varphi^x_1(T_1) -x_*|_E< \rho$, and $I_x^{T_1}(\varphi_1^x) < \frac{\gamma}{3}$.

  By Lemma \ref{lem:control-x+-to-outside}, there exists $T_2>0$ and $\varphi_3 \in C([0,T_2]:E)$ such that $\varphi_3(0) = x_*$, $\varphi_3(T_2) \not \in \overline{D}$, and $I_{x_*}^{T_2}(\varphi_3) \leq V(x_*, \partial D) + \frac{\gamma}{3}$.

  Let $T := T_1+\rho+ T_2$ and define the path
  \begin{equation*}
    \varphi^x(t) = \begin{cases}
      \varphi_1^x(t) & \text{ if } t \in [0,T_1)\\
      \varphi_2^{\varphi_1^x(T_1)}(t - T_1) & \text{ if } t \in [T_1, T_1 + \rho) \\
      \varphi_3(t-T_1 - \rho) & \text{ if } t \in [T_1 + \rho, T]
    \end{cases}
  \end{equation*}

    This is a continuous path satisfying $\varphi^x(0) = x$, $\varphi^x(T)=\varphi_3(T_2)=:y \not \in \overline{D}$, and $I_x^T(\varphi^x) \leq V(x_*, \partial D) + \gamma$.

  Because $y \not \in \overline{D}$ there exists $\delta>0$ such that $B(y, \delta) \subset (\overline{D})^c$. Consequently, for any $x \in D$,
  \[\Pro(\tau^\e_{x} \leq T) \geq \Pro(|X^\e_x - \varphi^x|_{C([0,T]:E)}< \delta).\]

Recall that the LDP lower bound \eqref{eq:ldp-low} is only proven to be uniform over bounded subsets of initial conditions. For this reason we prove uniformity over the set $D_R$ for any $R>0$. By the LDP lower bound \eqref{eq:ldp-low},
  \begin{align*}
    &\liminf_{\e \to 0} \inf_{x \in D_{R}} \e \log\Pro(\tau^\e_{x} \leq T) \geq \liminf_{\e \to 0} \inf_{x \in D_{R}} \e \log\Pro(|X^\e_x - \varphi^x|_{C([0,T]:E)}<\delta)\\
    &\geq -V(x_*, \partial D) - \gamma.
  \end{align*}
\end{proof}

\begin{proof}[Proof of Theorem \ref{thm:upper-bound}]
  For $R>0$, $T>0$ to be specified later and any $x \in D$ and $\e>0$, we decompose the probability $\Pro(\tau^\e_x\geq T+1)$ as
  \begin{equation*}
    \Pro(\tau^\e_x \geq T+1) \leq \Pro(X^\e_x(1) \in D_R, \tau^\e_x \geq T+1) + \Pro(|X^\e_x(1)|_E>R, \tau^\e_x\geq T+1).
  \end{equation*}

  By the Markov property,
  \begin{equation*}
    \Pro(\tau^\e_x \geq T+1) \leq \Pro(X^\e_x(1) \in D_R) \sup_{y \in D_R} \Pro(\tau^\e_y \geq T) + \Pro(|X^\e_x(1)|_E>R).
  \end{equation*}

  This is less than
  \begin{align*}
    &\Pro(\tau^\e_x \geq T+1) \\
    &\leq (1- \Pro(|X^\e_x(1)|_E >R)) \sup_{y \in D_R} \Pro(\tau^\e_y \geq T) + \Pro(|X^\e_x(1)|_E>R).
  \end{align*}

  By the Markov property, for any $T>0$, $R>0$, $\e>0$, $x \in D$,  and $k \in \mathbb{N}$,
  \begin{align} \label{eq:Markov-exit}
    &\Pro(\tau^\e_{x} >k(T+1)) \nonumber\\
    &\leq \left(\sup_{x\in D} \left((1- \Pro(|X^\e_x(1)|_E >R)) \sup_{y \in D_R} \Pro(\tau^\e_y \geq T) + \Pro(|X^\e_x(1)|_E>R) \right)\right)^k
  \end{align}

  For any $x \in D$ and $\e>0$,
  \begin{equation*}
    \E\left(\tau^\e_{x} \right) \leq (T+1)\left(1+ \sum_{k=1}^\infty \Pro(\tau^\e_{x} >k(T+1))\right).
  \end{equation*}

  By \eqref{eq:Markov-exit} we see that
  \begin{align} \label{eq:E-tau-upper-bound}
  &\E\left(\tau^\e_{x} \right) \leq (T+1)\times\nonumber\\
   &\times\left(1+\sum_{k=1}^\infty \left(\sup_{x \in D} \left((1- \Pro(|X^\e_x(1)|_E >R)) \sup_{y \in D_R} \Pro(\tau^\e_y \geq T) + \Pro(|X^\e_x(1)|_E>R) \right)\right)^k\right)\nonumber\\
   &\leq \frac{T+1}{1 -\left(\sup_{x \in D} \left((1- \Pro(|X^\e_x(1)|_E >R)) \sup_{y \in D_R} \Pro(\tau^\e_y \geq T) + \Pro(|X^\e_x(1)|_E>R)\right)\right)}.
   \end{align}

The denominator of \eqref{eq:E-tau-upper-bound} can be written as
  \begin{align*}
    &1 -\sup_{x \in D}\left((1- \Pro(|X^\e_x(1)|_E >R)) \sup_{y \in D_R} \Pro(\tau^\e_y \geq T) + \Pro(|X^\e_x(1)|_E>R)\right) \\
    &=\inf_{x \in D}\left(1 - \Pro(|X^\e_x(1)|_E>R)\right) \left(1 - \sup_{y \in D_R} \Pro(\tau^\e_y \geq T) \right)\\
    &= \inf_{x \in D} (1 - \Pro(|X^\e_x(1)|_E>R))\inf_{y \in D_R} \Pro(\tau^\e_y \leq T)
   \end{align*}

  By \eqref{eq:X-big-zero-prob}, we can choose $R>0$ big enough and $\e_0>0$ small enough so that for $\e\in (0,\e_0)$
  \begin{equation} \label{eq:bigger-than-R-prob}
    \sup_{x \in E}\Pro(|X^\e_x(1)|_E>R)<\frac{1}{2}.
  \end{equation}

With this choice, the denominator of \eqref{eq:E-tau-upper-bound} is bounded by
  \begin{align*}
    &1 -\sup_{x \in D}\left((1- \Pro(|X^\e_x(1)|_E >R)) \sup_{y \in D_R} \Pro(\tau^\e_y \geq T) + \Pro(|X^\e_x(1)|_E>R)\right) \\
    &\geq \frac{1}{2} \inf_{y \in D_R} \Pro(\tau^\e_y \leq T)
  \end{align*}

  Let $\gamma>0$. By Lemma \ref{lem:exit-prob-low-bound}, there exists $T>0$ such that
   \begin{equation} \label{eq:exit-prob-low-bound}
     \liminf_{\e \to 0} \inf_{y \in D_{R}} \e \log \Pro(\tau^\e_{y} \leq T) \geq -V(x_*, \partial D) - \gamma.
   \end{equation}

   Then applying \eqref{eq:E-tau-upper-bound}, it follows that
  \begin{align}
  \limsup_{\e \to 0} \e \log \E(\tau^\e_{x}) &\leq \limsup_{\e \to 0}  \e \log (2(T+1)) - \liminf_{\e \to 0} \inf_{y \in D_{R}} \e \log \Pro(\tau^\e_{y} \leq T)\nonumber\\
  & \leq V(x_*,\partial D_{R}) + \gamma.\nonumber
  \end{align}

  We conclude the proof of the lemma by noticing that \eqref{eq:upper-bound-exp} follows because $\gamma$ was arbitrary and that \eqref{eq:upper-bound-prob} is a consequence of \eqref{eq:upper-bound-exp} and Chebyshev inequality. In particular, we have that
  \begin{align*}
    \Pro(\e \log \tau^\e_{x} > V(x_*, \partial D) + \gamma) &= \Pro\left(\tau^\e_x> \exp \left(\e^{-1} \left(V(x_*, \partial D_R) + \gamma \right) \right)\right)\\
    &\leq \E(\tau^\e_{x})\exp \left(-\e^{-1} \left(V(x_*, \partial D) + \gamma \right) \right),
  \end{align*}
  which converges to zero by \eqref{eq:upper-bound-exp}.
\end{proof}

\subsection{Lower bound of the exit time} \label{SS:LowerBoundExitTime}

Let us recall the definition of $D^{\rho,T}_{R}$ from (\ref{Eq:DefDrhoT_R}). The following lemma says that for each $R,T<\infty$, $D^{\rho,T}_R$ is uniformly attractive for $\rho$ small enough (recall the definition of uniformly attractive \eqref{eq:unif-attr-def}).
\begin{lemma}\label{L:UniformAttractiveD}
For each $R,T<\infty$, $D^{\rho,T}_R$ is uniformly attractive for $\rho$ small enough.
\end{lemma}
\begin{proof} 
Fix some $R,T<\infty$.  Because $D$ is open we can choose $\rho$ small enough so that $\{x \in E: |x-x_*|_E\leq \rho\} \subset D$. Let $0< \rho'<\rho$.
Assume by contradiction that there exists $x_n \in D^{\rho,T}_R$ and $T_n \uparrow + \infty$ such that
\[\inf_{t \in [0,T_n]}|X^0_{x_n}(t) - x_*|_E > \rho'.\]

By the Corollary 5.4, and the fact that
\[\sup_{x \in D^{\rho,T}_R} |X^{0}_x(T +t) - x_*|_E <\rho, \text{ for }   t \geq 0\]
the family $\{X^0_{x_n}(T + 1) : n \in \mathbb{N}\}$ is compact. There exists a subsequence such that $X^0_{x_n}(T+1) \to y \in \{x \in E: |x-x_*|_E \leq \rho\} \subset D$. Because $y \in D$, there exists $\tilde{T}>0$ such that
\[|X^0_{y}(\tilde{T}) - x_*|_E < \frac{\rho'}{2}.\]

For large enough $n$,  $|X^0_{x_n}(T+1+\tilde{T}) -X^0_y(\tilde{T})|_E< \frac{\rho'}{2}.$ But this contradicts our assumption that the $T_n \to \infty$. Therefore, $D^{\rho,T}_R$ must be uniformly attractive.
\end{proof}

 For a given $x\in D^{\rho,T}_R$, let us define the exit time
\[
\tau^{\e}_{D^{\rho,T}_{R}}=\inf\left\{t>0: X^{\e}_{x}(t)\notin D^{\rho,T}_{R}\right\}.
\]

 We have the following result for the lower bound of the exit time if the initial point $x \in D^{\rho,T}_R$.
\begin{lemma} \label{L:lower-bound-restricted}
  Let $\rho>0$ be small enough so that $D^{\rho,T}_R$ is uniformly attractive. Then for $R,T>0$ large enough and for any $x\in D^{\rho,T}_R$
  \begin{equation*} 
    \liminf_{\e \to 0} \e \log \E \tau^\e_x \geq V(x_{*},\partial D).
  \end{equation*}

  Furthermore for any $\gamma>0$
  \begin{equation*} 
    \lim_{\e \to 0} \Pro( \e \log \tau^\e_x < V(x_{*}, \partial D) - \gamma) = 0.
  \end{equation*}
\end{lemma}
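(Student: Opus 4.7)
My plan is to adapt the classical Freidlin--Wentzell cycle argument to the bounded, uniformly attracting set $D^{\rho,T}_R$, using the controllability and quasipotential regularity results of Section~\ref{S:controlability} together with the uniform large deviations bounds of Theorem~\ref{thm:LDP}. Set $V_0 := V(x_*,\partial D)$ and fix $\gamma \in (0,V_0)$. First I would choose $\rho_0>0$ small enough that $\overline{B(x_*,2\rho_0)} \subset D^{\rho,T}_R$ (possible because $x_*$ is asymptotically, hence Lyapunov, stable) and that
\[
V(y,\partial D) \geq V_0 - \gamma/4 \qquad \text{for every } y \in \overline{B(x_*,2\rho_0)}.
\]
This quasipotential bound follows from Theorems~\ref{thm:quasipotential-comp} and \ref{thm:quasipotential-lsc}: if it failed there would exist $y_n \to x_*$ and $z_n \in \partial D$ with $V(y_n,z_n) \leq V_0 - \gamma/4$, and compactness of sublevel sets combined with lower semicontinuity would yield the contradiction $V(x_*,z) \leq V_0 - \gamma/4 < V_0$ for some $z \in \partial D$.

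In Phase~1 I exploit uniform attractivity: there exists $T_0>0$ with $|X^0_z(T_0) - x_*|_E < \rho_0/2$ for every $z \in D^{\rho,T}_R$, and by Corollary~\ref{cor:pre-compact-all-init-cond} the trajectory set $\{X^0_z(t) : z \in D^{\rho,T}_R,\,t\in[0,T_0]\}$ is precompact in $E$, hence has positive distance from the closed set $\partial D$. The uniform LDP lower bound \eqref{eq:ldp-low} applied to the zero-action path $X^0_z$ then gives
\[
\lim_{\e\to 0}\,\inf_{z\in D^{\rho,T}_R}\Pro\bigl(X^\e_z(T_0) \in B(x_*,\rho_0),\ \tau^\e_z > T_0\bigr) = 1.
\]

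In Phase~2 I define stopping times $\sigma_0 = 0$, $\theta_n = \inf\{t \geq \sigma_n : X^\e_x(t) \notin B(x_*,2\rho_0)\}$, and $\sigma_{n+1} = \inf\{t \geq \theta_n : X^\e_x(t) \in \overline{B(x_*,\rho_0)}\}$, run until $X^\e_x$ exits $D$. The essential per-cycle estimate, for a fixed $T^*>0$,
\[
\limsup_{\e\to 0}\,\sup_{y \in \overline{B(x_*,\rho_0)}} \e \log \Pro(\tau^\e_y \leq T^*) \leq -(V_0 - \gamma/2),
\]
would come from \eqref{eq:ldp-up}: any $\varphi$ reaching $\partial D$ from $y \in \overline{B(x_*,\rho_0)}$ has $I^{T^*}_y(\varphi) \geq V(y,\partial D) \geq V_0 - \gamma/4$, while compactness of controlled trajectories (Theorem~\ref{thm:compact}) provides a single $\delta>0$ separating the level sets $\bigcup_{y} \Phi^{T^*}_y(V_0 - \gamma/2)$ from all paths that touch $\partial D$. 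I would also need a cycle-length lower bound $\Pro(\theta_n - \sigma_n \geq T_*) \geq 1/2$ uniform in the starting point, obtained by a further application of \eqref{eq:ldp-low} to the zero-action trajectory on a short interval.

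The strong Markov property at the $\sigma_n$ renders the per-cycle exit probabilities independent and each bounded by $q_\e := \exp(-(V_0-\gamma/2)/\e)$, so the number of completed cycles before exit dominates a geometric random variable with parameter $q_\e$, yielding $\E\tau^\e_x \geq (T_*/4)\, q_\e^{-1}$, which gives the first claim as $\gamma\downarrow 0$. For the probability statement, taking $T = \exp((V_0-\gamma)/\e)$ and $N = \lceil 2T/T_*\rceil$, a union bound shows $\Pro(\tau^\e_x \leq T)$ is at most the probability that some cycle among the first $N$ results in exit, plus the probability that fewer than $N$ cycles have elapsed by time $T$ (handled via Chebyshev on the i.i.d.-dominating cycle lengths); the sum is $O(\exp(-\gamma/(2\e)))\to 0$. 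The main technical obstacle is extracting the uniform $\delta>0$ in the per-cycle upper bound: without the uniform attractivity of $D^{\rho,T}_R$ (for general $y$ near $\partial D$ the quantity $V(y,\partial D)$ can be arbitrarily small) and without the compactness of controlled trajectories from Section~\ref{S:controlability}, the LDP upper bound cannot be converted into a uniform-in-$y$ estimate.
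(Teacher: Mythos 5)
Your preliminary steps (the quasipotential lower bound $V(y,\partial D)\ge V_0-\gamma/4$ near $x_*$ via Theorems \ref{thm:quasipotential-comp} and \ref{thm:quasipotential-lsc}, and Phase~1) are sound, but the cycle argument itself targets the exit time from the full domain $D$, and that is where it breaks. The bound you extract from \eqref{eq:ldp-up} controls only $\Pro(\tau^\e_y\le T^*)$ for a \emph{fixed} horizon $T^*$, whereas a cycle $[\sigma_n,\sigma_{n+1}]$ has unbounded random length: once the process leaves $B(x_*,2\rho_0)$ it can be anywhere in $D$, in particular arbitrarily close to the saddles and limit cycles sitting on the characteristic boundary, where the deterministic return time to $B(x_*,\rho_0)$ is unbounded and $V(\cdot,\partial D)$ is not bounded below. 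To turn your fixed-time estimate into the per-cycle bound $q_\e$ (needed both for the geometric domination of the number of cycles and for the union bound in the probability statement) you must also show that, uniformly over where the process can be mid-excursion, it either returns to $\overline{B(x_*,\rho_0)}$ or exits $D$ within time $T^*$ up to a super-exponentially small probability. For the full $D$ with characteristic boundary this ``no long excursions'' estimate is exactly what is \emph{not} available — handling sojourns near the $K_i\subset\partial D$ is the content of the entire Markov-chain machinery of Section \ref{S:ExitShape}. A secondary soft spot: the ``single $\delta>0$'' separating $\bigcup_{y}\Phi^{T^*}_y(V_0-\gamma/2)$ from paths touching $D^c$ cannot come directly from compactness, since the ball $\overline{B(x_*,\rho_0)}$ of initial data is not compact in $E$; one must split off a short initial time interval and treat it by the a priori bounds.

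The paper sidesteps all of this by never running the cycle argument on $D$. It uses the almost sure inequality $\tau^\e_x\ge\tau^\e_{D^{\rho,T}_R}$ and applies the standard exit-time lower bound to the \emph{bounded, uniformly attracting} set $D^{\rho,T}_R$ — where the no-long-excursion estimate does hold uniformly (Lemmas 8.8--8.10 of \cite{BudhirajaDupuisSalins2018}, argued as in Theorem 5.7.11 of \cite{DemboZeitouni1997}) — obtaining the rate $V(x_*,\partial D^{\rho,T}_R)$, and then lets $T\to\infty$ using the internal regularity of the quasipotential, Corollary \ref{L:InnerBoundaryRegularityQP_R}, to replace $V(x_*,\partial D^{\rho,T}_R)$ by $V(x_*,\partial D)$. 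Your proposal never invokes Corollary \ref{L:InnerBoundaryRegularityQP_R}, which is the actual new ingredient this lemma rests on; without that reduction the argument you outline cannot be closed.
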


\begin{proof}[Proof of Lemma \ref{L:lower-bound-restricted}]
Due to the uniform attractiveness and boundedness of $D^{\rho,T}_R$, Lemmas 8.8, 8.9 and 8.10 of \cite{BudhirajaDupuisSalins2018} remain valid here as far as exit properties of the process from $D^{\rho,T}_R$ are concerned. Then, arguing as in Theorem 5.7.11 of \cite{DemboZeitouni1997} we obtain that for $x\in D^{\rho,T}_R$ and any $\gamma>0$
  \begin{equation*}
    \lim_{\e \to 0} \Pro( \e \log \tau^{\e}_{D^{\rho,T}_{R}} < V(x_{*}, \partial D^{\rho,T}_{R}) - \gamma) = 0.
  \end{equation*}

Then, Corollary \ref{L:InnerBoundaryRegularityQP_R} and  the almost sure  estimate $\tau^\e_x \geq \tau^{\e}_{D^{\rho,T}_R} $ gives for $T>0$ sufficiently large
 \begin{align} 
 \Pro( \e \log \tau^{\e}_{x} < V(x_{*}, \partial D) - 2\gamma) &\leq \Pro( \e \log \tau^{\e}_{x} < V(x_{*}, \partial D^{\rho,T}_R) - \gamma)\nonumber\\
 &\leq \Pro( \e \log \tau^{\e}_{D^{\rho,T}_R} < V(x_{*}, \partial D^{\rho,T}_R) - \gamma),\nonumber
   \end{align}
from which the result follows. The lower bound for the expectation of $\tau^\e_x$ follows by the last display and Chebyshev's inequality.
\end{proof}

Next we prove the full lower bound of the exit time for an initial point $x \in D$.
\begin{theorem} \label{thm:lower-bound}
  For any  $x \in D$ we have
  \begin{equation} \label{eq:exit-time-lower-exp-general}
    \liminf_{\e \to 0} \e \log \E \tau^\e_x \geq V(x_{*},\partial D).
  \end{equation}

  Furthermore for any $\gamma>0$ and for any  $x \in D$
  \begin{equation} \label{eq:exit-time-lower-prob-general}
    \lim_{\e \to 0} \Pro( \e \log \tau^\e_x \leq V(x_{*}, \partial D) - \gamma) = 0.
  \end{equation}
\end{theorem}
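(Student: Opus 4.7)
The plan is to bootstrap Lemma \ref{L:lower-bound-restricted} (which treats initial data in the uniformly attractive truncation $D^{\rho,T}_R$) to arbitrary $x\in D$ by driving $X^\e_x$ into $D^{\rho,T}_R$ at a deterministic time without first exiting $D$, and then applying the Markov property. Fix $\gamma>0$. Using Corollary \ref{L:InnerBoundaryRegularityQP_R}, choose $\rho>0$ and $R,T<\infty$ so that Lemma \ref{L:lower-bound-restricted} applies on $D^{\rho,T}_R$ and $V(x_*,\partial D^{\rho,T}_R)>V(x_*,\partial D)-\gamma/2$. By Lyapunov stability of $x_*$ (combined with continuous dependence, Theorem \ref{thm:continuity}, to control the flow on $[0,T]$), the point $x_*$ is an interior point of $D^{\rho,T}_R$, so fix $\rho_0>0$ with $B(x_*,\rho_0)\subset D^{\rho,T}_R$. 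Using asymptotic attraction, pick $t_1>0$ with $|X^0_x(t_1)-x_*|_E<\rho_0/2$; the compact curve $\{X^0_x(s):s\in[0,t_1]\}\subset D$ has positive distance $d>0$ from $\partial D$ by Remark \ref{rem:stay-away}. Set $\eta:=\min(d/2,\rho_0/2)$.

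Define the closed set
\[
A_\eta := \bigl\{\varphi\in C([0,t_1]:E):|\varphi-X^0_x|_{C([0,t_1]:E)}\geq\eta\bigr\}.
\]
Since $X^0_x$ is the unique zero of $I^{t_1}_x$, compactness of the level sets $\Phi^{t_1}_x(s)$ (Theorem \ref{thm:compact}) together with closedness of $A_\eta$ yields $s_\eta:=\inf_{\varphi\in A_\eta}I^{t_1}_x(\varphi)>0$: indeed, for any $s<s_\eta$, $A_\eta$ and $\Phi^{t_1}_x(s)$ are disjoint at strictly positive distance $\delta_s$, so
\[
\Pro(X^\e_x\in A_\eta)\leq \Pro\bigl(\dist_{C([0,t_1]:E)}(X^\e_x,\Phi^{t_1}_x(s))\geq \delta_s\bigr).
\]
Applying the uniform LDP upper bound \eqref{eq:ldp-up} with singleton $E_0=\{x\}$ and letting $s\uparrow s_\eta$ gives $\limsup_{\e\to 0}\e\log \Pro(X^\e_x\in A_\eta)\leq -s_\eta<0$, hence $\Pro(X^\e_x\in A_\eta)\to 0$. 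On its complement, $\tau^\e_x>t_1$ and $X^\e_x(t_1)\in B(x_*,\rho_0)\subset D^{\rho,T}_R$.

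Finally, set $t_\e:=\exp((V(x_*,\partial D)-\gamma)/\e)$. The Markov property at time $t_1$ gives
\[
\Pro(\tau^\e_x\leq t_\e)\leq \Pro(X^\e_x\in A_\eta)+\sup_{y\in D^{\rho,T}_R}\Pro(\tau^\e_y\leq t_\e).
\]
The first term vanishes by the previous step, and since $t_\e\leq\exp((V(x_*,\partial D^{\rho,T}_R)-\gamma/2)/\e)$ for $\e$ small, Lemma \ref{L:lower-bound-restricted} makes the supremum vanish; this yields \eqref{eq:exit-time-lower-prob-general}. The expectation bound \eqref{eq:exit-time-lower-exp-general} then follows from the Chebyshev-type inequality $\E\tau^\e_x\geq t_\e \Pro(\tau^\e_x>t_\e)=t_\e(1-o(1))$ and the arbitrariness of $\gamma$. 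The main technical point is verifying that $x_*$ lies in the interior of the restricted attractive set $D^{\rho,T}_R$ and extracting the strictly positive $s_\eta$ via compactness of level sets; the Markov bridging argument itself mirrors the finite-dimensional strategy of \cite{FWbook}, made to work in infinite dimensions thanks to the uniformity of the LDP upper bound \eqref{eq:ldp-up} over the singleton $\{x\}$.
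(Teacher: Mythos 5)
Your proof is essentially correct, but it takes a noticeably longer route than the paper's. The paper's own argument is a one-liner: since $x\in D$, the trajectory $X^0_x(t)$ converges to $x_*$, so choosing $R\geq |x|_E+1$ and $T_0$ large enough that $\sup_{t\geq T_0}|X^0_x(t)-x_*|_E<\rho_0$, the point $x$ \emph{itself} already belongs to $D^{\rho_0,T_0}_R$ as defined in \eqref{Eq:DefDrhoT_R}; Lemma \ref{L:lower-bound-restricted} then applies verbatim to $x$, and no Markov-property bridging, no choice of $t_1$, and no LDP upper-bound estimate on the tube around $X^0_x$ are needed. Your approach instead transplants the standard Freidlin--Wentzell bridging step (drive the process into $B(x_*,\rho_0)$ at a deterministic time, then restart); this is a legitimate alternative and is in fact the device the paper uses elsewhere (e.g.\ in Section \ref{S:ExitShape} to reduce general $x\in D$ to $x\in g_0$), but here it buys nothing, since the membership $x\in D^{\rho_0,T_0}_R$ is automatic from the definition of $D^{\rho,T}$.

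One point in your argument deserves attention: after the Markov step you need
$\sup_{y\in D^{\rho,T}_R}\Pro(\tau^\e_y\leq t_\e)\to 0$ (or at least the supremum over $y\in B(x_*,\rho_0)$), whereas Lemma \ref{L:lower-bound-restricted} as stated is pointwise in the initial condition. The uniform version over the bounded, uniformly attractive set $D^{\rho,T}_R$ does follow from the ingredients cited in its proof (the exit-time lemmas of \cite{BudhirajaDupuisSalins2018} and the argument of Theorem 5.7.11 of \cite{DemboZeitouni1997} are uniform over such domains), but you should either invoke that uniformity explicitly or restrict the supremum to $B(x_*,\rho_0)$ and justify it there; as written, the step quietly strengthens the lemma you are citing. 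The paper's direct route avoids this issue entirely because it only ever applies the restricted lemma at the single point $x$.
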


\begin{proof}[Proof of Theorem \ref{thm:lower-bound}]
Let $x \in D$. Set $R=|x|_E + 1$ so that $x \in D_R$.
Because $D_{R}$ is open, there exists $\rho_0$ such that $B(x_*,\rho_0) \subset D_{R}$. Because $x \in D$, there exists $T_0>0$ such that for all $T\geq T_0$, $X^0_x(T) \in B(x_*, \rho_0)$. Therefore, $x \in D^{\rho_0,T_0}_R$.

Thus the theorem 
follows directly from Lemma \ref{L:lower-bound-restricted}.
\end{proof}

\section{Proof of Theorem \ref{thm:exit-shape}: Large deviations lower and upper bounds for exit shape}\label{S:ExitShape}

In this section we analyze the limiting distribution of the exit shape $X^\e_x(\tau^\e_x)$ and prove Theorem \ref{thm:exit-shape}.

Let $\Theta \subset \bar{D}$ be any set that contains all $\omega-$limit points of the unperturbed process $X^0_x$ for all $x \in \bar D$. We demonstrate that it is exponentially unlikely for  $X^\e_x(t)$ to stay away from $\Theta$ for long periods of time.
As a first step here we prove that for any small $\rho>0$, there is a time $T_0>0$, independent of $x \in D$ (but depending on $\rho$) such that the unperturbed process has the property that $\dist_E(X^0_x(T_0), \Theta)< \rho/2$ for all $x \in D$.
\begin{lemma} \label{lem:unif-attract}
  For any small $\rho>0$, there exists $T_0>0$ such that
  \begin{equation*}
    \sup_{x \in D} \dist_E\left(X^0_x(T_0), \Theta\right) < \rho/2.
  \end{equation*}
\end{lemma}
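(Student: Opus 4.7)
The plan is to combine precompactness of the time-$1$ image of $D$ with the convergence of each orbit to $\Theta$ and a finite-covering argument. First, by Corollary \ref{cor:pre-compact-all-init-cond} applied with zero control, the set
\[
K := \overline{\{X^0_x(1) : x \in D\}}
\]
is a compact subset of $\bar D$ (using forward invariance of $D$ under the unperturbed flow, which follows from the definition of $D$). For any $y \in K$, the uniform bound \eqref{eq:control-sup-x-bound} together with compactness of $S(t)$ implies that the full orbit $\{X^0_y(t) : t \geq 0\}$ is precompact in $E$, so the $\omega$-limit set $\omega(y)$ is nonempty and invariant; by the defining property of $\Theta$, $\omega(y) \subseteq \Theta$, and precompactness forces $\dist_E(X^0_y(t), \Theta) \to 0$ as $t \to \infty$ for every fixed $y \in K$.

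Second, fix a small auxiliary parameter $\eta \in (0, \rho/4)$ and define, for each $T > 0$,
\[
A_T := \{y \in K : \dist_E(X^0_y(T), \Theta) < \eta\},
\]
which is relatively open in $K$ by Theorem \ref{thm:continuity}. The pointwise convergence above gives $\bigcup_{T > 0} A_T = K$; compactness then yields a finite subcover $K \subseteq A_{T_1} \cup \cdots \cup A_{T_N}$, and I set $T^* := \max_j T_j$. To pass from the $y$-dependent time $T_{j(y)}$ to the common time $T^*$, I use the invariance of $\Theta$: for $y \in A_{T_j}$ one picks $\theta(y) \in \Theta$ with $|X^0_y(T_j) - \theta(y)|_E < \eta$, and since $\Theta$ is invariant (as a union of invariant $\omega$-limit sets), $X^0_{\theta(y)}(s) \in \Theta$ for all $s \geq 0$. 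Theorem \ref{thm:continuity} applied on the interval $[0, T^* - T_j]$ then bounds $|X^0_y(T^*) - X^0_{\theta(y)}(T^* - T_j)|_E$ uniformly over the compact family of pairs $(X^0_y(T_j),\theta(y))$, provided $\eta$ is small enough relative to the continuity modulus of the flow on $[0, T^*]$. Choosing $\eta$ accordingly yields $\dist_E(X^0_y(T^*), \Theta) < \rho/2$, and setting $T_0 := 1 + T^*$ together with $X^0_x(1) \in K$ for $x \in D$ concludes the argument.

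The principal obstacle is the order of quantifiers between $\eta$ and $T^*$: the required smallness of $\eta$ depends on $T^*$, while $T^*$ itself depends on $\eta$ through the cover. I would resolve this by selecting any preliminary $\eta_0 > 0$ to obtain a preliminary bound on $T^*$, then shrinking $\eta$ to match the continuity modulus of the flow on $[0, T^*]$ before performing the final covering. This circularity, together with the fact that propagation crucially requires $\Theta$ to be invariant under the flow (guaranteed when $\Theta$ is the union of $\omega$-limit sets), is the key subtlety in converting pointwise attraction to $\Theta$ into uniform attraction at a single deterministic time.
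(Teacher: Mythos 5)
Your overall strategy (precompactness of the time-$1$ image, pointwise attraction to the $\omega$-limit sets, then a finite covering) starts the same way as the paper, but the mechanism you use to produce a single common time has a genuine gap. The propagation from the individual times $T_j$ to $T^*=\max_j T_j$ requires that $|z-\theta|_E<\eta$ imply $|X^0_z(s)-X^0_\theta(s)|_E<\rho/4$ for all $s\in[0,T^*]$, i.e.\ it requires $\eta$ to beat a modulus of continuity of the flow over $[0,T^*]$ --- and that modulus degrades as $T^*$ grows, while $T^*$ itself is produced by the finite subcover of the sets $A_T$, which shrink as $\eta$ shrinks. Your two-pass resolution does not close this loop: after you shrink $\eta$ to match the modulus on $[0,T^*_{\mathrm{prelim}}]$, the ``final covering'' performed with the smaller $\eta$ may require strictly larger times, so the new $T^*$ can exceed $T^*_{\mathrm{prelim}}$ and the continuity estimate you prepared no longer applies; there is no a priori bound on $T^*(\eta)$ as $\eta\to 0$, and obtaining one is essentially the uniformity being proved. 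A secondary, repairable issue: $\Theta$ is only assumed to \emph{contain} the $\omega$-limit points, so it need not be invariant; the propagation must be run with $\Theta_0:=\bigcup_{x\in\bar D}\omega(x)\subseteq\Theta$ (which is forward invariant, and suffices because $\dist_E(\cdot,\Theta)\leq\dist_E(\cdot,\Theta_0)$).

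The paper avoids the circularity entirely by a compactness--contradiction argument: supposing no uniform time exists, it takes $x_n\in\bar D$ and $T_n\to\infty$ with $\dist_E(X^0_{x_n}(t),\Theta)\geq\rho/2$ for $t\in[0,T_n]$, extracts a subsequence with $X^0_{x_n}(1)\to y$, and uses that $\dist_E(X^0_y(t),\Theta)<\rho/4$ for \emph{all} $t\geq\tilde T_0$, where $\tilde T_0$ is a single fixed time depending only on the single limit point $y$. Theorem \ref{thm:continuity} then only needs to be invoked at the one fixed time $\tilde T_0$, so no uniform-in-$T^*$ modulus is ever required. You correctly flagged the quantifier circularity as the key subtlety, but to fix your direct covering you would need open sets of the form $\left\{z:|X^0_z(t)-X^0_y(t)|_E<\rho/4 \text{ for all } t\in[0,M]\right\}$ with $M$ already at least as large as the eventual common time, which presupposes the bound you are constructing; I recommend switching to the contradiction argument.
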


\begin{proof}
  In the finite dimensional setting considered by Friedlin and Wentzell, this uniform convergence was a consequence of the compactness of $\bar D$. In our setting, we do not assume that $\bar D$ is compact and we do not even assume that $D$ is bounded. Even though the initial conditions are not in a compact set, we know that $\{X^0_x(1), x \in \bar D\}$ is a pre-compact set by Corollary \ref{cor:pre-compact-all-init-cond}.

  Suppose by contradiction that there exists $x_n \in \bar{D}$ and $T_n \to +\infty$ such that $\dist_E(X^0_{x_n}(t), \Theta)\geq \rho/2$ for all $t \in [0,T_n]$. By compactness, there exists a subsequence (relabeled $(x_n, T_n)$) along which $X^0_{x_n}(1) \to y$ in $\bar{D}$ for some $y \in \bar{D}$.
   Because of the definition of $\Theta$ we are guaranteed that $\lim_{t \to \infty}\dist_E( X^0_y(t), \Theta) = 0$, which means that
  there exists $\tilde T_0>0$ such that
  $$\dist_E\left(X^0_y(T), \Theta\right) < \rho/4$$
   for any $T\geq \tilde{T}_0$. By continuity in initial conditions (Theorem \ref{thm:continuity}), for large enough $n$, $|X^0_{X^0_{x_n}(1)}(\tilde T_0) - X^0_y(\tilde T_0)|_E < \rho/4$. Therefore for large $n$,
  \[
   \dist_E\left(X^0_{x_n}(\tilde T_0 + 1), \Theta\right) = \dist_E \left(X^0_{X^0_{x_n}(1)}(\tilde T_0), \Theta\right) < \rho/2,
   \]
  contradicting the assumption.
\end{proof}

\begin{lemma} \label{lem:control-lower-bound}
  Let $\rho,T_0>0$ be as in Lemma \ref{lem:unif-attract}. There exists $a>0$ such that whenever $X^{0,u}_x$ is a controlled process with $x \in \bar D$, $u \in L^2([0,T_0+1]:H)$ and the properties that $X^{0,u}_x(t) \in \bar{D}$ for $t \in [0,T_0]$ and
  \begin{equation*}
    \dist_E\left(X^{0,u}_x(t), \Theta\right) \geq 3\rho/4 \text{ for } t \in [0,T_0+1],
  \end{equation*}
  the control $u$ must have the lower bound
  \begin{equation*}
    \frac{1}{2} \int_0^{T_0+1} |u(s)|_H^2 ds > a.
  \end{equation*}
\end{lemma}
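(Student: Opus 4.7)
The plan is to argue by contradiction. Suppose no such $a$ exists; then there are sequences $x_n \in \bar D$ and $u_n \in L^2([0,T_0+1]:H)$ with $\frac{1}{2}|u_n|_{L^2}^2 \to 0$ while $\dist_E(X^{0,u_n}_{x_n}(t), \Theta) \geq 3\rho/4$ for every $t \in [0,T_0+1]$. In particular $u_n \rightharpoonup 0$ weakly in $L^2$. The aim is to extract a limiting deterministic trajectory starting in $\bar D$ that stays $\geq 3\rho/4$ away from $\Theta$ throughout $[0,T_0]$, contradicting Lemma \ref{lem:unif-attract}.

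First, Corollary \ref{cor:pre-compact-all-init-cond} (which places no restriction on the initial datum) applied at $t=1$ shows that both $\{X^{0,u_n}_{x_n}(1)\}_n$ and $\{X^0_{x_n}(1)\}_n$ are pre-compact in $E$. Passing to a common subsequence I may assume $y_n := X^{0,u_n}_{x_n}(1) \to y$ and $z_n := X^0_{x_n}(1) \to z$ in $E$. Since $\bar D$ is forward invariant under the unperturbed flow and closed, $z \in \bar D$.

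The central step is to prove $y=z$, hence $y \in \bar D$. Using the mild formulation the difference $y_n - z_n$ splits as $I_n+J_n$, where $J_n := \int_0^1 S(1-s)\,G(X^{0,u_n}_{x_n}(s))\,u_n(s)\,ds$ and $I_n$ is the corresponding $F$-integral. By \eqref{eq:semigroup-regularity} and \eqref{eq:G-bound}, $|J_n|_E \leq C\kappa_1 |u_n|_{L^2} \to 0$. For $I_n$, I would use the pointwise dissipativity \eqref{eq:f-dissip} together with the $x$-uniform a priori bound \eqref{eq:control-sup-x-bound} to derive an energy inequality for $|\eta_n(s)|_H^2$, where $\eta_n := X^{0,u_n}_{x_n} - X^0_{x_n}$, obtaining $\sup_{s \in [0,1]} |\eta_n(s)|_H \to 0$; the $L^2 \to E$ smoothing \eqref{eq:semigroup-regularity} then upgrades this to $|I_n|_E \to 0$, giving $y=z\in\bar D$.

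Granting $y \in \bar D$, the argument closes by a time shift: for $s \in [0,T_0]$ set $\tilde u_n(s) := u_n(s+1)$, so that $X^{0,u_n}_{x_n}(1+s) = X^{0,\tilde u_n}_{y_n}(s)$. Since $y_n \to y$ and $\tilde u_n \rightharpoonup 0$, Theorem \ref{thm:continuity} yields $X^{0,\tilde u_n}_{y_n} \to X^0_y$ in $C([0,T_0]:E)$. Passing to the limit in $\dist_E(X^{0,u_n}_{x_n}(1+s),\Theta) \geq 3\rho/4$ gives $\dist_E(X^0_y(T_0),\Theta) \geq 3\rho/4 > \rho/2$, contradicting Lemma \ref{lem:unif-attract} applied at $y \in \bar D$.

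The main obstacle is the uniform-in-$x_n$ stability estimate $|y_n-z_n|_E \to 0$. Because $\bar D$ need not be bounded, $|x_n|_E$ may diverge, and \eqref{eq:control-sup-x-bound} degrades as $s \to 0^+$, so a naive Gronwall argument using only local Lipschitz of $F$ fails. The resolution works in $H$-norm first: the dissipativity \eqref{eq:f-dissip} of $F$, combined with the dissipativity of $A$ and the $H$-boundedness of $G$, produces a closed energy inequality for $|\eta_n|_H^2$ whose forcing is integrable in time uniformly in $x_n$; the regularization \eqref{eq:semigroup-regularity} then converts smallness in $H$ to smallness in $E$ at the fixed positive time $s=1$.
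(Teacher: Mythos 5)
Your overall route is the paper's: argue by contradiction with $|u_n|_{L^2}\to 0$, use Corollary \ref{cor:pre-compact-all-init-cond} to extract a limit $y$ of $X^{0,u_n}_{x_n}(1)$ along a subsequence, shift time and invoke Theorem \ref{thm:continuity} to get $X^{0,u_n}_{x_n}(1+\cdot)\to X^0_y$ in $C([0,T_0]:E)$, and contradict Lemma \ref{lem:unif-attract}. The one place you diverge is that the paper simply asserts that the limit $y$ lies in $\bar D$, whereas you correctly observe that this needs justification (the controlled trajectory need not remain in $\bar D$, so precompactness alone only gives $y\in E$) and you try to supply it by proving $y=z:=\lim_n X^0_{x_n}(1)\in\bar D$.

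That inserted step is where your proposal has a genuine gap. The estimate $|J_n|_E\leq C|u_n|_{L^2([0,1]:H)}\to 0$ is fine, but the energy inequality you invoke for $\eta_n=X^{0,u_n}_{x_n}-X^0_{x_n}$ does not close. The dissipativity \eqref{eq:f-dissip} is not a one-sided Lipschitz condition: applied pointwise with $h=\eta_n(s,\xi)$ it yields
$\left< F(X^0_{x_n}+\eta_n)-F(X^0_{x_n}),\eta_n\right>_H\leq -\lambda\int_0^L|\eta_n|^{2+\rho}d\xi+C\int_0^L|\eta_n|\bigl(1+|X^0_{x_n}|^{1+\rho}\bigr)d\xi$,
and the additive error $C\bigl(1+|X^0_{x_n}(s)|_E^{1+\rho}\bigr)$ neither vanishes as $\eta_n\to 0$ nor is small in $u_n$; moreover the only uniform-in-$x_n$ control available, \eqref{eq:control-sup-x-bound}, bounds it by a constant times $s^{-(1+\rho)/\rho}$ near $s=0$, which is not integrable. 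So the forcing is not ``integrable in time uniformly in $x_n$,'' and no Gronwall argument from the cited ingredients produces $\sup_{s\in[0,1]}|\eta_n(s)|_H\to 0$. A genuine one-sided Lipschitz bound $\left(f(a)-f(b)\right)\cdot(a-b)\leq C|a-b|^2$ would make your scheme work (and does hold for the polynomial examples), but it is not among the stated hypotheses and you do not derive it. As written, then, your proof establishes the lemma only modulo the claim $y\in\bar D$ -- which is exactly the point the paper's own proof also leaves unjustified -- so you should either prove such a one-sided estimate, or find a different argument placing $y$ in $\bar D$.
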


\begin{proof}
  Assume by contradiction that there is a sequence $|u_n|_{L^2([0,T_0+1]:H)} \to 0$ and $x_n \in \bar{D}$ such that
  \begin{equation*}
    \dist_E(X^{0,u_n}_{x_n}(t), \Theta) \geq 3\rho/4 \text{ for } t \in [0,T_0+1].
  \end{equation*}

  By Corollary \ref{cor:pre-compact-all-init-cond}, a subsequence of this has the property that $X^{0,u_n}_{x_n}(1) \to y \in \bar{D}$. By continuity with respect to control and initial condition (Theorem \ref{thm:continuity}),
  \[\lim_{n \to \infty}|X^{0,u_n}_{x_n}(1+T_0) - X^0_y(T_0)|_E = 0. \]

  Lemma \ref{lem:unif-attract} guarantees that
  \[\dist_E\left(X^0_y(T_0), \Theta\right)< \rho/2,\]
  contradicting the assumption that
  \[\dist_E\left(X^{0,u_n}_{x_n}(1+T_0), \Theta\right) \geq 3\rho/4.\]
\end{proof}

\begin{lemma} \label{lem:impossible-to-stay-away}
For small $\rho>0$,
  \begin{equation}
    \lim_{T\to \infty} \limsup_{\e\to 0} \sup_{x \in D} \e \log \Pro\left(X^\e_x(t) \in D, \dist_E\left(X^\e_x(t),\Theta\right)\geq \rho, t \in [0,T]\right) = -\infty.\nonumber
  \end{equation}
\end{lemma}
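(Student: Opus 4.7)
The plan is to exploit the Markov property to chop the interval $[0,T]$ into blocks of length $\sigma := T_0 + 2$, and to bound the probability of staying $\rho$-away from $\Theta$ on each block by a factor that is exponentially small in $\e^{-1}$, uniformly in the starting point. Because the LDP upper bound of Theorem \ref{thm:LDP} is uniform only over bounded initial conditions, while $D$ is not bounded, the key subtlety is to absorb the unbounded starting data by first waiting one unit of time for the process to relax into a large ball $B_R$ via Corollary \ref{cor:exp-est-X}, and only then applying the uniform LDP.

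First, I would fix $\rho,T_0$ and $a$ as in Lemmas \ref{lem:unif-attract} and \ref{lem:control-lower-bound}, pick a small $\eta \in (0,a/2)$, and define
\[
q(\e,y) := \Pro\Bigl(\dist_E(X^\e_y(t),\Theta) \ge \rho,\ t \in [0,\sigma]\Bigr).
\]
For any $y\in E$ the elementary inclusion gives
\[
q(\e,y) \le \Pro\bigl(|X^\e_y(1)|_E > R\bigr) + \sup_{|z|_E \le R}\Pro\Bigl(\dist_E(X^\e_z(t),\Theta) \ge \rho,\ t\in[0,T_0+1]\Bigr),
\]
after applying the Markov property at time $t=1$. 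By Corollary \ref{cor:exp-est-X} (or directly by Theorem \ref{T:BoundednessPropOfX} combined with Theorem \ref{thm:exp-est-outside-ball}), choosing $R$ large enough makes the first term bounded by $e^{-2a/\e}$ uniformly in $y$ for all sufficiently small $\e$.

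For the second term, the main input is Lemma \ref{lem:control-lower-bound}: any element of the level set $\Phi_z^{T_0+1}(a)$ must come within $3\rho/4$ of $\Theta$ at some time in $[0,T_0+1]$, so if $X^\e_z$ stays $\rho$-away from $\Theta$ on that interval then
\[
\dist_{C([0,T_0+1]:E)}\bigl(X^\e_z,\Phi_z^{T_0+1}(a)\bigr) \ge \rho/4.
\]
The uniform LDP upper bound \eqref{eq:ldp-up} applied over the bounded set $\{|z|_E \le R\}$ and the level $s=a$ with $\delta = \rho/4$ then yields $\sup_{|z|_E\le R}\Pro(\cdots) \le e^{-(a-\eta)/\e}$ for small $\e$. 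Combining the two estimates,
\[
\sup_{y\in E} q(\e,y) \le 2\,e^{-(a-\eta)/\e} \qquad \text{for all sufficiently small } \e.
\]

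Finally, for $N := \lfloor T/\sigma\rfloor$, iterating the Markov property at the times $k\sigma$ and using the conditional version of the preceding bound gives
\[
\Pro\bigl(\dist_E(X^\e_x(t),\Theta) \ge \rho,\ t\in[0,T]\bigr) \le \bigl(\sup_{y\in E} q(\e,y)\bigr)^N \le \bigl(2\,e^{-(a-\eta)/\e}\bigr)^N,
\]
uniformly in $x \in D$. Taking $\e\log$ and letting $\e\to 0$ gives $\limsup_{\e\to 0}\sup_{x\in D}\e\log \Pro(\cdots) \le -N(a-\eta)$, and sending $T\to\infty$ drives $N\to\infty$, yielding the claim. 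The main technical obstacle, the lack of a global uniform LDP over all of $E$, is resolved precisely by the one-unit-of-time relaxation step that reduces the problem to bounded initial data at the price of an exponentially small tail coming from Corollary \ref{cor:exp-est-X}.
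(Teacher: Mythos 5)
Your proposal is correct and rests on the same three ingredients as the paper's proof: the relaxation step via Corollary \ref{cor:exp-est-X} to reduce to bounded initial data, the energy lower bound of Lemma \ref{lem:control-lower-bound}, and the uniform LDP upper bound \eqref{eq:ldp-up}. The one structural difference is how the exponent is accumulated: you chop $[0,T]$ into $N$ blocks of length $T_0+2$, apply the LDP at the \emph{fixed} level $a$ on each block, and multiply the per-block bounds via the Markov property, whereas the paper applies Lemma \ref{lem:control-lower-bound} $k$ times along a single long interval to conclude that any path avoiding $\Theta$ on $[0,k(T_0+1)]$ has action at least $ka$, and then invokes the uniform LDP once at level $ka$ with a single relaxation step. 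Your version only ever uses the LDP on a fixed horizon and at a fixed level, at the price of restarting at every block boundary. One point to tighten: after the relaxation step the Markov property hands you $\sup_{|z|_E\le R}$ over the full ball, but Lemma \ref{lem:control-lower-bound} is stated only for initial conditions in $\bar D$, so for $z\in B_R\setminus\bar D$ the containment of the avoidance event in $\bigl\{\dist_{C([0,T_0+1]:E)}(X^\e_z,\Phi_z^{T_0+1}(a))\ge\rho/4\bigr\}$ is not justified --- the uncontrolled flow from such a $z$ may converge to an attractor outside $\Theta$. The paper's write-up carries the mirror image of the same imprecision (it restricts the supremum to $D_R$ without arguing that $X^\e_x(1)\in D$ on the relevant event), so this is a shared loose end rather than a defect of your route, but the restriction to $\bar D$ is doing real work in Lemma \ref{lem:control-lower-bound} and should be preserved explicitly.
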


\begin{proof}
  Let $T_0>0$ and $a>0$ be as in  Lemma \ref{lem:control-lower-bound}. By applying Lemma \ref{lem:control-lower-bound} $k$ times, for any $k \in \mathbb{N}$, we know that whenever
  $X^{0,u}_x$ is a controlled process with $x \in \bar D$, $u \in L^2([0,k(T_0+1)]:H)$ and the properties that $X^{0,u}_x(t) \in D$ for $t \in [0,k(T_0 + 1)]$ and
  \begin{equation*}
    \dist_E\left(X^{0,u}_x(t), \Theta\right) \geq 3\rho/4 \text{ for } t \in [0,k(T_0+1)],
  \end{equation*}
  the control $u$ must have the lower bound
  \begin{equation*}
    \frac{1}{2} \int_0^{k(T_0+1)} |u(s)|_H^2 ds > ka.
  \end{equation*}

  This means that we have the inclusion that for any $x \in D$
  \begin{align}
    &\Bigg\{\varphi \in C([0,k(T_0+1)]:E): \varphi(0)=x, \varphi(t) \in D,\nonumber \\
    &\qquad\qquad\dist_E\left(\varphi(t), \Theta\right) \geq \rho, t \in [0,k(T_0+1)] \Bigg\}\nonumber\\
    &\subset
    \Bigg\{\varphi \in C([0, k(T_0+1):E]): \varphi(0)=x, \nonumber\\
     &\qquad\qquad \dist_{C([0,k(T_0+1)]:E)} \left(\varphi, \Phi^{k(T_0+1)}_x(ka) \right) \geq \rho/4 \Bigg\}.\nonumber
  \end{align}

  The uniform large deviations principle (Theorem \ref{thm:LDP}) was proven over bounded subsets of $E$. Unfortunately, $D$ may be an unbounded set. To deal with this situation, we use the fact that Corollary \ref{cor:exp-est-X} guarantees that $X^\e_x$ is exponentially unlikely to remain in an unbounded part of the set after positive amounts of time. In particular, for any $V>0$ we can find $R>0$ such that
  \begin{equation} \label{eq:bounded-set-less-V}
    \limsup_{\e \to 0} \sup_{x \in D} \e \log \Pro(|X^\e_x(1)|_E \geq R) \leq -V.
  \end{equation}

  By the Markov property, letting $D_R = \{x \in D: |x|_E \leq R\}$,
  \begin{align} \label{eq:Markov-bounded-set}
    &\sup_{x \in D}\Pro\left(X^\e_x(t) \in D, \dist_E\left(X^\e_x(t),\Theta\right) \geq \rho,  t \in [0,1+ k(T_0+1)] \right)\nonumber \\
    &\leq \sup_{x \in D}\Pro(|X^\e_x(1)| \geq R) + \sup_{x \in D_R}   \Pro\left(X^\e_x(t) \in D, \dist_E\left(X^\e_x(t), \Theta\right) \geq \rho,  t \in [0,k(T_0+1)] \right).
  \end{align}

  By the uniform large deviation upper bound \eqref{eq:ldp-up},
  \begin{align*}
    &\lim_{\e \to 0} \sup_{x \in D_R} \e \log \Pro\left( X^\e_x(t) \in D, \dist_E\left(X^\e_x(t), \Theta\right) \geq \rho,  t \in [0,k(T_0+1)] \right)\\
    & \leq \lim_{\e \to 0} \sup_{x \in D_R} \e \log \Pro \left( \dist_{C([0,k(T_0+1)]:E)} \left(X^\e_x, \Phi^{k(T_0+1)}_x(ka) \right)\geq \rho/4\right)\\
    &\leq -ka.
  \end{align*}

  By \eqref{eq:Markov-bounded-set} and \eqref{eq:bounded-set-less-V},
  \begin{align}
    &\limsup_{\e \to 0} \sup_{x \in D} \e \log\Pro\left(X^\e_x(t) \in D, \dist_E\left(X^\e_x(t), \Theta\right) \geq \rho,  t \in [0,1+ k(T_0+1)] \right)\nonumber\\
    &\leq -\min\{V, ka\}.\nonumber
  \end{align}

  Both $V>0$ and $k \in \mathbb{N}$ are arbitrary, implying the result.
\end{proof}

\subsection{Proof of upper bound \eqref{eq:shape-LDP-upper}} \label{SS:Shape-LDP-up}

In this subsection we prove the upper bound \eqref{eq:shape-LDP-upper}. For presentation purposes, we first define and discuss the properties of an appropriate Markov chain $Z_n$, in subsection \ref{SS:Markov-transitions}. The actual proof of the upper bound \eqref{eq:shape-LDP-upper} follows then in subsection \ref{SSS:Shape-LDP-up}.

\subsubsection{Transitions for a related Markov chain} \label{SS:Markov-transitions}
Fix $s \geq 0$. By the definition of $\hat{V}_D$ and Lemma \ref{lem:paths-stay-near-K_i}, $\hat{J}(y_1)=\hat{J}(y_2)$ for any $i \in \{1,...,N\}$ and  $y_1, y_2 \in K_i$. We can call this quantity $\hat{J}(K_i)$. We reorder the $V$ equivalence classes $K_i$ and possibly lower the value of $N$ so that $K_1,...,K_N$ is the set of $V$ equivalence classes that are subsets of $\partial D$ and satisfy $\hat{J}(K_i) \leq s$.

With $\hat{\Psi}$ defined by  \eqref{eq:shape-LDP-upper} and for a given $\delta>0$ define $K_{N+1}$ by
\begin{equation*} 
  K_{N+1} := \left\{ x \in \partial D: \dist_E(x, \hat{\Psi}(s)) \geq \delta \right\}.
\end{equation*}

In addition, let us set $K_0=\{x_*\}$.

By possibly decreasing $\delta>0$ we can guarantee that $K_{N+1}$ contains all of the $V$ equivalence classes $K_j$ satisfying $ K_j \subset \partial D$ and $\hat{J}(K_j)>s$. By Assumption \ref{A:K_equivalenceClassV}, the set $\bigcup_{i=0}^{N+1} K_i$ contains all of the $\omega$-limit points of $X^0_x$ for $x \in \bar D$ and therefore Lemma \ref{lem:impossible-to-stay-away} holds for this collection, i.e., we may set $\Theta=\bigcup_{i=0}^{N+1} K_i$.

For small $\rho>0$ and for $i\in\{0,\cdots, N+1\}$ let $g_i$, $G_i$ be defined as follows
\begin{align}
  g_i: &= \left\{x \in D: \dist_E(x, K_i)<\rho \right\}\nonumber\\
  G_i:&= \left\{x \in D: \dist_E(x, K_i) = 2\rho \right\}.\nonumber
\end{align}

With these definitions, define the stopping times
\begin{align}
  &\tau_0 = 0,\nonumber\\
  &\sigma_k = \inf\{t\geq\tau_k: X^\e_x(t) \in \bigcup_{i=0}^{N} G_i\cup \partial D \},\nonumber\\
  &\tau_{k+1} = \inf\{t\geq\sigma_k: X^\e_x(t) \in \bigcup_{i=0}^{N} g_i \cup \partial D\}.\nonumber 
\end{align}

We consider the Markov chain $Z_n = X^\e_x(\tau_n)$ on $\bigcup_{i=0}^{N+1} g_i \cup \partial D$. Notice that if $Z_n \in \partial D$, then the Markov chain stops and $Z_m=Z_n$ for all $m \geq n$.

\begin{remark}
We remark here that the sequences of stopping times $\{\sigma_{k}\}_{k \geq 1}$ and $\{\tau_{k}\}_{k\geq 0}$ is different than the ones used in \cite{Day1990} or \cite{FWbook}. In the case of finite dimensions, \cite{Day1990} used a reflected process in order to deal with the characteristic boundary. It is unclear how to do that in infinite dimensions. These considerations led us to this choice of stopping times and then to the corresponding Markov chain $Z_n$.
\end{remark}

\begin{lemma}[Transition probabilities] \label{lem:g0-to-sets}
  For any $\gamma>0$, there exists $\rho_0>0$ such that for all $\rho \in (0,\rho_0)$ and $j \in \{1,...,N +1\}$
    \begin{equation} \label{eq:g0-to-bdry-low}
    \liminf_{\e \to 0} \inf_{x \in g_0} \e \log \Pro(Z_1 \not \in g_0 | Z_0=x) \geq -V(x_*,\partial D) - \gamma.
  \end{equation}

  \begin{equation} \label{eq:g0-to-gi-up}
    \limsup_{\e \to 0} \sup_{x \in g_0} \e \log \Pro(Z_1 \in g_j | Z_0 =x) \leq -\tilde{V}_{D}(x_*,K_j) + \gamma.
  \end{equation}

  For any $\gamma>0$, there exists $\rho_0>0$ such that for all $\rho \in (0, \rho_0)$ and $i \not =j $ with $i \in \{1,....,{N}\}$  and $j \in \{1,....,{N+1}\}$,
  \begin{equation} \label{eq:gi-to-bdry-low}
    \liminf_{\e \to 0} \inf_{x \in g_i} \e \log \Pro(Z_1 \in \partial D | Z_0=x) \geq - \gamma
  \end{equation}
  \begin{equation} \label{eq:gi-to-gj-up}
   \limsup_{\e \to 0} \sup_{x \in g_i} \e \log \Pro(Z_1 \in g_j | Z_0 =x) \leq -\tilde{V}_{D}(K_i,K_j) + \gamma.
  \end{equation}
\end{lemma}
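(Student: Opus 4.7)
My plan separates the four estimates into two lower bounds (\eqref{eq:g0-to-bdry-low} and \eqref{eq:gi-to-bdry-low}), proved by constructing explicit low-action paths and applying the LDP lower bound \eqref{eq:ldp-low}, and two upper bounds (\eqref{eq:g0-to-gi-up} and \eqref{eq:gi-to-gj-up}), proved by combining a time-horizon cutoff via Lemma \ref{lem:impossible-to-stay-away} with the LDP upper bound \eqref{eq:ldp-up}.

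For \eqref{eq:g0-to-bdry-low}, since $K_0=\{x_*\}$ and hence $g_0\subset B(x_*,\rho)$, I concatenate a path from $x\in g_0$ to $x_*$ of action below $\gamma/3$ (Lemma \ref{lem:control-to-x+}) with a path from $x_*$ to a fixed $y_0\notin\bar D$ of action below $V(x_*,\partial D)+\gamma/3$ (Lemma \ref{lem:control-x+-to-outside}). The concatenation $\varphi^x$ satisfies $I_x^T(\varphi^x)<V(x_*,\partial D)+\gamma$, and any trajectory within $\delta:=\tfrac12\dist_E(y_0,\bar D)>0$ of $\varphi^x$ in the sup norm must cross $\partial D$, hence exit $g_0$. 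Uniform application of \eqref{eq:ldp-low} on the bounded set $g_0$ then concludes the argument. For \eqref{eq:gi-to-bdry-low}, since $K_i\subset\partial D$ by Lemma \ref{lem:K_i-in-boundary}, I choose $z\in K_i$ with $|x-z|_E<\rho$ and apply Theorem \ref{thm:control} to build a control of action below $\gamma$ whose trajectory reaches a point of $K_i\subset\partial D$ within $g_i$ in a time of order $\rho$; sufficiently close trajectories hit $\partial D$ before touching any $G_j\cup\bigcup_{k\neq i}g_k$, so $Z_1\in\partial D$.

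For the upper bounds, Lemma \ref{lem:impossible-to-stay-away} applied to $\Theta=\bigcup_{i=0}^{N+1}K_i$ lets me fix $T$ so large that $\sup_{x\in D}\Pro(\tau_1>T)\leq e^{-M/\e}$ for any prescribed $M$, because during $[\sigma_0,\tau_1]$ the process stays at distance at least $\rho$ from every $K_k$. On the complementary event $\{\tau_1\leq T,\,Z_1\in g_j\}$, the sample path $X^\e_x$ restricted to $[0,\tau_1]$ is a continuous curve in $\bar D$ joining $g_0$ (resp.\ $g_i$) to $g_j$ and contained in $D\cup B(x_*,2\rho)\cup B(K_j,2\rho)$ (resp.\ $D\cup B(K_i,2\rho)\cup B(K_j,2\rho)$). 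Concatenation at vanishingly small additional cost with controllability-supplied paths reaching $x_*$ (resp.\ $K_i$) and $K_j$ promotes this to an admissible path in \eqref{eq:V-D_rho}, so that its rate functional is at least $\tilde V_D^{2\rho}(x_*,K_j)-\gamma/2$, which by monotonicity \eqref{eq:V-inequality} and lower semicontinuity (Theorem \ref{thm:V-tilde-D-lsc}) exceeds $\tilde V_D(x_*,K_j)-\gamma$ when $\rho$ is small. Applying \eqref{eq:ldp-up} to the complement of a $\delta$-neighborhood of $\Phi_x^T(\tilde V_D(x_*,K_j)-\gamma)$, together with truncation to $D_R$ via Theorem \ref{thm:V-D_R=V-D} and disposal of escape-to-infinity events via Corollary \ref{cor:exp-est-X}, delivers the upper bound.

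The main obstacle is this last matching step: given a random trajectory whose endpoints lie in $g_0$ (or $g_i$) and $g_j$ rather than exactly at $x_*$ (or in $K_i$) and in $K_j$, one must relate its action to the quasipotential $\tilde V_D(x_*,K_j)$ (resp.\ $\tilde V_D(K_i,K_j)$) by absorbing the endpoint excursions into the $B(\cdot,2\rho)$ terms in \eqref{eq:V-D_rho}, then passing to the limit $\rho\downarrow 0$ via Theorem \ref{thm:V-tilde-D-lsc} with a compactness argument based on Theorem \ref{thm:V-tilde-compact-level-sets}, and ordering the quantifiers so the time cutoff $T$ and the domain truncation $R$ do not interfere with the limit.
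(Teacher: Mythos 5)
Your overall architecture matches the paper's: explicit low-action paths plus the LDP lower bound \eqref{eq:ldp-low} for the two lower bounds, and a time cutoff via Lemma \ref{lem:impossible-to-stay-away} combined with \eqref{eq:ldp-up} and a $\rho\downarrow 0$ stability statement for $\tilde V_D$ (the paper isolates this as Lemma \ref{lem:V-cont-with-G-rho}, proved exactly with Theorems \ref{thm:V-tilde-D-lsc} and \ref{thm:V-tilde-compact-level-sets} as you propose) for the two upper bounds. The argument for \eqref{eq:g0-to-bdry-low} and the plan for \eqref{eq:g0-to-gi-up}, \eqref{eq:gi-to-gj-up} are essentially the paper's.

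There is, however, a genuine gap in your proof of \eqref{eq:gi-to-bdry-low}. You build a path of action below $\gamma$ from $x\in g_i$ that ``reaches a point of $K_i\subset\partial D$'' and then claim that sufficiently close trajectories hit $\partial D$. That implication is false: the event $\{Z_1\in\partial D\}$ requires the process to actually leave the open set $D$, and a trajectory within $\delta$ of a path whose endpoint merely lies \emph{on} $\partial D$ can remain in $D$ for all time, so the inclusion $\{|X^\e_x-\varphi^x|_{C}<\delta\}\subset\{Z_1\in\partial D\}$ fails and the LDP lower bound gives nothing. This is precisely why you (correctly) aimed at a point $y_0\notin\bar D$ in the proof of \eqref{eq:g0-to-bdry-low}; the same device is needed here and is what the paper does: cover the compact $K_i$ by finitely many $\rho$-balls centered at $x_1,\dots,x_k\in K_i$, pick $y_l\notin\bar D$ with $|x_l-y_l|_E<\rho$, use Assumption \ref{A:AttractionProperty} to get $z_l:=X^0_{y_l}(3\rho)\notin\bar D$ with a \emph{uniform} positive distance $a$ to $D$ (finiteness of the cover matters for uniformity over $x\in g_i$), and connect $x$ to the flow from $y_l$ via Theorem \ref{thm:control}; then an $a/2$-tube around the resulting path genuinely forces exit. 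A second, smaller omission: in \eqref{eq:g0-to-bdry-low} the target event is $Z_1\notin g_0$, not merely ``the process exits $D$''; since $\tau_1$ is the first return to $\bigcup_i g_i\cup\partial D$ after $\sigma_0$, you must also arrange (as the paper does by shrinking $\rho$ relative to the fixed path $\varphi_2$) that the reference path does not re-enter $g_0$ after leaving $G_0$, so that trajectories in its tube cannot produce $Z_1\in g_0$.
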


Before proving the lemma we make the following remark.
\begin{remark}
The fact that $K_i \subset \partial D$ for $i \geq 1$ implies that there are points in $g_i$ that are arbitrarily close to $\partial D$. This is fundamentally different from the situation in Chapter 6.3 of \cite{FWbook} where they assume that there is a positive distance between the $K_i$ and $\partial D$. 
By beginning at $x \in g_i$ arbitrarily close to $\partial D$ we see that only the trivial bounds
\begin{equation}
  \inf_{x \in g_i} \Pro(Z_1 \in g_j| Z_0 =x) =  0 \text{ for }j \not = i
\end{equation}
and
\begin{equation}
  \sup_{x \in g_i} \Pro(Z_1 \in \partial D | Z_0 = x) = 1.
\end{equation}
hold.
\end{remark}

We will use the following preliminary results in the proof of Lemma \ref{lem:g0-to-sets}. 

\begin{lemma} \label{lem:V-cont-with-G-rho}
  We emphasize the dependence of $G_i$ on $\rho$ with the notation $G_i^{(\rho)}$. For any $i \in \{0,..,N\}$ and $j \in \{1,...,N+1\}$
  \begin{equation}
    \liminf_{\rho \to 0} \tilde{V}_{D}(G^{(\rho)}_i, G^{(\rho)}_j) \geq \tilde{V}_{D}(K_i,K_j).
  \end{equation}
\end{lemma}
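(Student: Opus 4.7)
The plan is to combine compactness of the $K_i$ (and closedness of $K_{N+1}$) with the lower semicontinuity of $\tilde{V}_D$ (Theorem \ref{thm:V-tilde-D-lsc}) and the compactness of its sublevel sets (Theorem \ref{thm:V-tilde-compact-level-sets}). Fix any sequence $\rho_n \to 0$. We may assume $\liminf_n \tilde{V}_D(G_i^{(\rho_n)}, G_j^{(\rho_n)}) =: s < \infty$, since otherwise the claim is trivial. For each $n$ select $x_n \in G_i^{(\rho_n)}$ and $y_n \in G_j^{(\rho_n)}$ with $\tilde{V}_D(x_n, y_n) \leq \tilde{V}_D(G_i^{(\rho_n)}, G_j^{(\rho_n)}) + 1/n$, so the values $\tilde{V}_D(x_n, y_n)$ are eventually bounded by $s+1$.

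First I would extract a subsequence along which $x_n \to x \in K_i$. For $i=0$, $K_0 = \{x_*\}$, so $|x_n - x_*|_E = 2\rho_n \to 0$ automatically. For $i \in \{1,\dots,N\}$, $K_i$ is compact by Corollary \ref{cor:equivalence-class}, so one can pick $\tilde{x}_n \in K_i$ realizing $|x_n - \tilde{x}_n|_E = 2\rho_n$, extract $\tilde{x}_n \to x \in K_i$, and then $x_n \to x$.

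Next, because $\tilde{V}_D(x_n, y_n) \leq s+1$ and $K := \{x\} \cup \{x_n\}_{n \geq 1}$ is a compact subset of $\bar D$, Theorem \ref{thm:V-tilde-compact-level-sets} implies that $\bigcup_{z \in K}\{y \in \bar D : \tilde{V}_D(z,y) \leq s+1\}$ is compact, and it contains every $y_n$. Pass to a further subsequence with $y_n \to y \in \bar D$. Since $\dist_E(y_n, K_j) = 2\rho_n \to 0$, we have $y \in \overline{K_j}$. For $j \in \{1,\dots,N\}$ the class $K_j$ is compact, hence closed, so $y \in K_j$. For $j = N+1$, $\hat{\Psi}(s)$ is closed by the lower semicontinuity of $\hat{J}$ (Theorem \ref{thm:V-hat-D-lsc}) and $\partial D$ is closed, so $K_{N+1} = \{x \in \partial D : \dist_E(x, \hat{\Psi}(s)) \geq \delta\}$ is closed and again $y \in K_{N+1}$.

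Finally, applying the lower semicontinuity of $\tilde{V}_D$ (Theorem \ref{thm:V-tilde-D-lsc}) and the definition \eqref{eq:V-tilde-of-sets} gives
\begin{equation*}
  \tilde{V}_D(K_i, K_j) \leq \tilde{V}_D(x, y) \leq \liminf_{n \to \infty} \tilde{V}_D(x_n, y_n) = \liminf_{n \to \infty} \tilde{V}_D(G_i^{(\rho_n)}, G_j^{(\rho_n)}).
\end{equation*}
Since $\rho_n \to 0$ was arbitrary, the desired inequality follows. The only subtle point is the second step for $j = N+1$: here $K_{N+1}$ is not a $V$-equivalence class and so is not automatically compact, and the convergence of $y_n$ is obtained not from compactness of $K_{N+1}$ but from the uniform bound on $\tilde{V}_D(x_n, y_n)$ combined with Theorem \ref{thm:V-tilde-compact-level-sets}; closedness of $K_{N+1}$ is then enough to conclude $y \in K_{N+1}$.
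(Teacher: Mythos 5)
Your proof is correct and follows essentially the same route as the paper's: extract approximate minimizers $x_n\in G_i^{(\rho_n)}$, $y_n\in G_j^{(\rho_n)}$, use compactness of $K_i$ to get $x_n\to x\in K_i$, use Theorem \ref{thm:V-tilde-compact-level-sets} to get $y_n\to y\in K_j$, and conclude by lower semicontinuity (Theorem \ref{thm:V-tilde-D-lsc}). Your extra care with the case $j=N+1$ (where $K_{N+1}$ is merely closed, not a compact $V$-equivalence class, so the convergence of $y_n$ must come from the sublevel-set compactness rather than from $K_{N+1}$ itself) correctly fills in a detail that the paper's proof passes over silently.
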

\begin{proof}
  Assume that $\liminf_{\rho \to 0} \tilde{V}_{D}(G^{(\rho)}_i, G^{(\rho)}_j)$ is finite because the result is trivial otherwise.
  Let $\rho_n \downarrow 0 $ be a sequence of positive numbers such that
  \[\lim_{n \to \infty} \tilde{V}_{D}(G^{(\rho_n)}_i, G^{(\rho_n)}_j) = \liminf_{\rho \to 0} \tilde{V}_{D}(G^{(\rho)}_i, G^{(\rho)}_j).\]

  By the definition of $\tilde V_D$ \eqref{eq:V-tilde-of-sets}, there exist $x_n \in  G^{(\rho_n)}_i$ and $y_n \in  G^{(\rho_n)}_j$ such that $\tilde{V}_{D}(x_n,y_n) \leq  \tilde{V}_{D}(G^{(\rho_n)}_i, G^{(\rho_n)}_j)+ \frac{1}{n}$. By the compactness of $K_i$ (Corollary \ref{cor:equivalence-class}) and the definition of $G^{(\rho)}_i$, there exists a subsequence such that $x_n \to x \in K_i$. Because the set $\{x\} \cup \bigcup_n\{x_n\}$ is compact, Theorem \ref{thm:V-tilde-compact-level-sets} guarantees that there exists a subsequence such that $y_n \to y$. By the definitions of $G^{(\rho)}_j$, we know that $y \in K_j$. By Theorem \ref{thm:V-tilde-D-lsc},
  \[\tilde{V}_{D}(K_i, K_j) \leq \tilde{V}_{D}(x,y) \leq \liminf_{n \to \infty} \tilde{V}_{D}(x_n,y_n) \leq \liminf_{\rho \to 0} \tilde{V}_{D}(G^{(\rho)}_i, G^{(\rho)}_j).\]
\end{proof}

\begin{lemma}\label{L:ExitTrajectory}
Assume  $V(x_*, \partial D)<+\infty$. Then for any $\eta>0$, there exists some $T>0$ and  $\varphi\in C([0,T]:E)$ with the property that $\varphi(0)=x_*$, $\varphi(T) \in \partial D$, $\varphi(t) \not \in \partial D $ for $t \in [0,T)$  and $I_{x_*}(\varphi)<V(x_*,\partial D) + \eta$. In addition, by decreasing $\rho$ one can guarantee that $\varphi(t)$ does not hit $g_0$ after hitting $G_0$.
\end{lemma}
\begin{proof}[Proof of Lemma \ref{L:ExitTrajectory}]
Let $\eta>0$. Let $T>0$ and $\psi \in C([0,T]:E)$ such that $\psi(0) = x_*$, $\psi(T) \in \partial D$ and
\[I_{x_*}^T(\psi) < V(x_*, \partial D) + \frac{\eta}{3}.\]

By Theorem \ref{thm:control}, there exists $\delta_1>0$ such that we can connect $x_*$ to any controlled path starting in the $\delta_1$-ball around $x_*$ without increasing the rate function by more than $\eta/3$. Similarly, by the local Lipschitz continuity of $f$, we can find $\delta_2>0$ such that whenever $|x-x_*|_E<\delta_2$, it follows that $|F(x) - F(x_*)|_E< \frac{1}{4}$. Find a time $t_0>0$ such that $|\psi(t_0) - x_*|_E< \min\{\delta_1,\delta_2\}$. Let
\[\psi_2(t) = \psi(t+t_0) \text{ for } t \in [0,T-t_0]. \]
Clearly
\[I_{\psi(t_0)}^{T-t_0}(\psi_2) \leq I_{x_*}^T(\psi) < V(x_*, \partial D) + \frac{\eta}{3}.\]

Now we apply Theorem  \ref{thm:control} to get a path $\psi_3 \in C([0,T-t_0]:E)$ such that $\psi_3(0) = x_*$, $\psi_3(t) = \psi_2(t)$ for $t \geq |\psi(t_0) - x_*|_E$, and $I_{x_*}^{T-t_0}(\psi_3) < V(x_*,\partial D) + \frac{2\eta}{3}. $

We can assume that $\psi_3(t) \not = x_*$ for all $t \in (0,T-t_0]$. If this were not the case, then we could just begin $\psi_3$ at the last time that $\psi_3(t) = x_*$. This would only decrease its rate function.

Now we bound the upper and lower derivatives of the real-valued function $t \mapsto |\psi_3(t) - x_*|_E$. At $t=0$ $\psi_3(0) = x_*$. We calculate that for any $\xi \in \mathcal{O}$,
\begin{align*}
  & \liminf_{h \downarrow 0} \frac {|\psi_3(h) - x_*|_E}{h} \\
  &\geq \liminf_{h \downarrow 0} \frac{|\psi_3(h,\xi) - x_*(\xi)|}{h}\\
  &  \geq \left|\frac{\partial \psi_3}{\partial t}(0,\xi)\right|\\
  & \geq \left|A x_*(\xi) + f(\psi_2(0,\xi)) - \frac{\psi_3(t,\xi) - \psi_2(t,\xi)}{|\psi_3(t) - \psi_2(t)|_E} \mathbbm{1}_{\{|\psi_3(t) - \psi_2(t)|_E>0\}} \right|.
\end{align*}

Observe that because $x_*$ is an equilibrium of the unperturbed system, $A x_* = F(x_*)$. Choose $\xi \in \mathcal{O}$ such that $|\psi_3(0,\xi) - \psi_2(0,\xi)| = |\psi_3(0) - \psi_2(0)|_E$. By the triangle inequality,
\begin{align*}
  & \liminf_{h \downarrow 0} \frac {|\psi_3(h) - x_*|_E}{h}\geq 1 - |F(\psi_2(0)) - F(x_*)|_E  > \frac{3}{4}.
\end{align*}

This means that there exists $t_1 \in (0,t_0)$ such that
\begin{equation}
  |\psi_3(t) - x_*|_E > \frac{3}{4}t \text{ for } t \in (0,t_1).
\end{equation}

Looking at upper-left derivatives, for any $t \in (0,t_1)$, and $\xi \in \mathcal{O}$ such that $|\varphi(t,\xi) - x_*(\xi)| = |\varphi(t) - x_*|_E$,
\begin{align*}
  & \limsup_{h \downarrow 0} \frac{|\psi_3(t) - x_*|_E - |\psi_3(t-h) - x_*|_E}{h}\\
  &\leq \limsup_{h \downarrow 0} \frac{(\psi_3(t,\xi) - \psi_3(t-h,\xi))\sgn(\psi_3(t,\xi)-x_*(
  \xi))}{h} \\
  &= \frac{\partial \psi_3}{\partial t}(t,\xi) \sgn(\psi_3(t,\xi)- x_*(\xi))
  \\
  &\leq \left(A \psi_3(t,\xi) + f(\psi_2(t,\xi)) - \frac{\psi_3(t,\xi) - \psi_2(t,\xi)}{|\psi_3(t) - \psi_2(t)|_E}  \right)\sgn(\psi_3(t,\xi) - x_*(\xi))\\
  &\leq A (\psi_3(t,\xi) - x_*(\xi))\sgn(\psi_3(t,\xi) - x_*(\xi)) \\
  &\qquad+ (f(\psi_2(t,\xi)) - f(x_*(\xi)))\sgn(\psi_3(t,\xi) - x_*(\xi))\\
   &\qquad- \frac{\psi_3(t,\xi) - \psi_2(t,\xi)}{|\psi_3(t) - \psi_2(t)|_E}\sgn(\psi_3(t,\xi) - x_*(\xi)).
\end{align*}

The previous line follows because $A x_* = F(x_*)$. Because $A$ is a second-order elliptic differential operator,
\[A (\psi(t,\xi) - x_*(\xi))\sgn(\psi_3(t,\xi) - x_*(\xi)) \leq 0.\]

By our original choices of $\delta_1, \delta_2$, and for sufficiently small $t>0$,
\[(f(\psi_2(t,\xi)) - f(x_*(\xi)))\sgn(\psi_3(t,\xi) - x_*(\xi)) \leq |F(\psi_2(t)) - F(x_*)|_E < \frac{1}{4}. \]

It is clear that
\[-\frac{\psi_3(t,\xi) - \psi_2(t,\xi)}{|\psi_3(t) - \psi_2(t)|_E}\sgn(\psi_3(t,\xi) - x_*(\xi)) \leq  1.\]

Therefore, we have that
\begin{equation}
  \limsup_{h \downarrow 0} \frac{|\psi_3(t) - x_*|_E - |\psi_3(t-h) - x_*|_E}{h} < \frac{5}{4}.
\end{equation}

There exists $t_2 \in (0,t_1)$ such that
\begin{equation}
  \frac{3h}{4} < |\psi_3(h) - x_*|_E < \frac{5h}{4} \text{ for } h \in (0,t_2).
\end{equation}

There cannot be a $\rho>0$ and $0<h_1< h_2<t_2$ such that $\psi_3(h_1) \geq 2\rho$ and $\psi_3(h_2) \leq \rho$ because this would imply that
\begin{equation}
  2 \rho < \frac{5}{4}h_1 \text{ and } \rho> \frac{3}{4}h_2,
\end{equation}
which is a contradiction. Finally, because $\psi_3$ is continuous and $|\psi_3(t) - x_*|_E>0$ for all $t> t_2$, there exists $\rho>0$ such that
\[\rho < \min_{t \in [t_2, T-t_0]} |\psi_3(t) - x_*|_E\]

For this value of $\rho>0$ we can conclude that $\psi_3(t)$ never hits $\gamma_\rho$ after hitting $\Gamma_\rho$. The trajectory $\psi_3(t)$ is the claimed $\varphi(t)$ concluding the proof of the lemma.
\end{proof}

\begin{proof}[Proof of Lemma \ref{lem:g0-to-sets}]

  We begin with the lower bounds.

  Let us first prove \eqref{eq:g0-to-bdry-low}.  Using Lemma \ref{lem:control-to-x+} and the fact that $X^0_{x_*}(t)=x_*$ for all $t>0$, by choosing $\rho>0$ small enough we can guarantee that for any $x \in g_0$ there exists $\varphi_1^x \in C([0,\rho]:E)$ such that $\varphi_1^x(0)=x$, $\varphi_1^x(\rho)=x_*$, and $I_x^\rho(\varphi^x_1)< \gamma/3$.

  If $V(x_*, \partial D)<+\infty$, then there exists some $T>0$,  $\varphi_2\in C([0,T]:E)$ with the property that $\varphi_2(0)=x_*$, $\varphi_2(T) \in \partial D$, $\varphi_2(t) \not \in \partial D $ for $t \in [0,T)$. By decreasing $\rho$ we can guarantee that $\varphi_2(t)$ does not hit $g_0$ after hitting $G_0$, see Lemma \ref{L:ExitTrajectory}.

  By Assumption \ref{assum:D-boundary} and the fact that $\varphi_2(T) \in \partial D$, there exists $z \not \in \bar{D}$ such that $|z-\varphi_2(T)|<\rho$. 
  By Assumption \ref{A:AttractionProperty} $X^0_z(t) \not \in \bar{D}$ for all $t>0$. By Theorem \ref{thm:control}, there exists $\varphi_3 \in C([0,\rho]:E)$ such that $\varphi_3(0) = \varphi_2(T)$, $\varphi_3(\rho) \not \in \bar{D}$, and $I^\rho_{\varphi_2(T)}(\varphi_3) < \frac{\gamma}{3}.$

  Concatenate these paths letting
  \[\varphi^{x}(t) :=\begin{cases}
    \varphi_1^{x}(t) & \text{ if } t \in [0,\rho)\\
    \varphi_2(t-\rho) & \text{ if } t \in [\rho, T+\rho)\\
    \varphi_3(t-T-\rho) & \text{ if } t \in [T+\rho, T+2\rho).
  \end{cases}
  \]

  Then $\varphi^x(0) = x$, $\varphi^x(T + 2\rho) \not \in \bar{D}$, $\varphi^x(t)$ does not hit $g_0$ after leaving $G_0$, and $I_x^{T+2\rho}(\varphi^x)< V(x_*, \partial D) + \gamma$. 
  We can choose $a>0$ small enough so that any $\psi \in C([0,T]:E)$ with the property $|\psi - \varphi^x|_{C([0,T+2\rho]:E)}<a/2$ will not hit $g_0$ after hitting $G_0$ and will reach one of the $G_i$ $i \in\{0,...,N+1\}$ or the boundary $\partial D$.
  Then
  \begin{equation*}
    \Pro(Z_1 \not \in g_0 | Z_0 =x) \geq \Pro(|X^\e_x - \varphi^x|_{C([0,T+2\rho]:E)}< a/2).
  \end{equation*}

  By the LDP lower bound, \eqref{eq:g0-to-bdry-low} follows.

  Now we prove \eqref{eq:gi-to-bdry-low}. Let $\delta$ respond to the $\gamma$ in Theorem \ref{thm:control}. Let $\rho< \delta/4$. $K_i$ is  compact by Corollary \ref{cor:equivalence-class}. Therefore there exists a finite collection $\{y_1,...,y_l\}$ such that $K_i \subset \bigcup_{j=1}^l B(y_j,\rho/8)$. By Assumption \ref{assum:D-boundary}, there exists a finite collection of $\{z_1,...,z_l\}$ such that $|y_i - z_i|< \rho/8$ and $z_i \not \in \bar{D}$. Therefore $K_i \subset \bigcup_{i=1}^l B(z_i, \rho/4)$. Additionally, by the continuity of the unperturbed equation with respect to the initial condition (Theorem \ref{thm:continuity}), we can choose these $z_1,...,z_l$ such that
  \begin{equation} \label{eq:zj-close-to-Ki}
     \sup_{t \in[0,\rho/4]} \max_{j \in \{1,...,l\}} \dist_E(X^0_{z_j}(t), K_i)< \rho/4.
   \end{equation}

     Let $x\in g_{i}$ and let $y\in K_{i}$ be such that $|x-y|_{E}<\rho$. By Theorem \ref{thm:control} there exists a path $\phi_{1}^{x}\in C([0,\rho];E)$ such that $\phi_{1}^{x}(0)=x$, $\phi_{1}^{x}(\rho)=X^{0}_{y}(\rho)$ and $I^{\rho}_x(\phi_{1}^x)<\gamma/2$. By definition of $K_{i}$ we have that $X^{0}_{y}(t)\in K_{i}$  and $\dist(\phi_1^x(t), K_i)< \rho$ for all $t \in [0,\rho]$. There exists $z_j\in\{z_1, \cdots, z_l\}$ such that $|X^{0}_{y}(\rho) - z_j|_E< \rho/4$. 
    By Theorem \ref{thm:control} there exists a path $\phi_{2}^{X^{0}_{y}(\rho)}\in C([0,\rho/4];E)$ such that $\phi_{2}^{X^{0}_{y}(\rho)}(0)=X^{0}_{y}(\rho)$, $\phi_{2}^{X^{0}_{y}(\rho)}(\rho/4)=X^{0}_{z_{j}}(\rho/4)$, $|\phi_2^{X^0_y}(t) - X^0_{z_j}(t)| \leq \rho/4$ for $t \in [0,\rho/4]$, and $I^{\rho/4}_{X^{0}_{y}(\rho)}(\phi_{2}^{X^{0}_{y}(\rho)})<\gamma/2$. We next concatenate the two paths $\phi_{1}$ and $\phi_2$ to get the path  $\phi^{x}\in C([0,5\rho/4];E)$ such that $\phi^{x}(0)=\phi_{1}^{x}(0)$, $\phi^{x}(\rho)=\phi_{2}^{X^{0}_{y}(\rho)}(0)$, $\phi^{x}(5\rho/4)=\phi_{2}^{X^{0}_{y}(\rho)}(\rho/4) = X^0_{z_j}(\rho/4)$ and $I^{5\rho/4}_{x}(\phi^{x})<\gamma$. By Theorem \ref{thm:control}, and \eqref{eq:zj-close-to-Ki}, $\dist_E(\phi^x(t),K_i)< \rho$ for all $t \in [0, 5\rho/4]$. Consequently, $\dist(\phi^x(t), G_i)>\rho$ for $t \in [0, 5\rho/4]$. Since $z_1,...,z_l \not \in \bar D$,  there exists $\beta \in (0, \rho)$ such that $\dist_E(X^0_{z_i}(\rho/4), D)>\beta$ for all $j \in \{1,...,l\}$.

Therefore, we have the inclusion
\[
\left\{|X^\e_x - \varphi^x|_{C([0,5\rho/4]:E)}< \frac{\beta}{2}\right\}\subseteq\left\{X^\e_x(\tau_1) \in \partial D\right\}.
\]
We can conclude from the LDP lower bound \eqref{eq:ldp-low} that
  \begin{align}
    & \liminf_{\e \to 0} \inf_{x \in g_i} \e \log \Pro(X^\e_x(\tau_1) \in \partial D) \geq\nonumber\\
     &\qquad\geq\liminf_{\e \to 0} \inf_{x \in g_i} \e \log \Pro(|X^\e_x - \varphi^x|_{C([0,5\rho/4]:E)}< \frac{\beta}{2}) \geq -\gamma.\nonumber
  \end{align}

  Now we do the  arguments for the upper bounds.  By Lemma \ref{lem:V-cont-with-G-rho}, for any given $\gamma>0$, there exists $\rho_0$ such that for all $\rho \in (0, \rho_0)$, {$\tilde{V}_{D}(G^{(\rho)}_i, G^{(\rho)}_j) \geq \tilde{V}_{D}(K_i,K_j)-\gamma$}. A consequence is that any path $\varphi \in C([0,T]:E)$ with the property that $\varphi(0) \in  G_i$ and $\varphi(T) \in G_j$ has the property that $I_{\varphi(0)}^T(\varphi) \geq \tilde{V}_{D}(K_i,K_j)-\gamma$. 
  Therefore we have the containment for any $x \in G_i$ and $T>0$,
  \begin{align} \label{eq:claim}
    &\left\{ \varphi \in C([0,T]:E): \varphi(0) =x, \varphi(T) \in g_j, \varphi(t) \in D, t \in [0,T)\right\}\nonumber\\
     &\subset \left\{\varphi \in C([0,T]:E): \dist_{C([0,T]:E)}(\varphi, \Phi^T_x(\tilde{V}_{D}(K_i, K_j) - \gamma))\geq\rho \right\}.
  \end{align}

  We will be able to get the positive distance $\rho$ in the set on the right-hand side because  Lemma \ref{lem:V-cont-with-G-rho}  states that paths $\varphi \in \Phi^T_x(\tilde{V}_{D}(K_i, K_j) - \gamma)$ cannot reach $G_j = \{y \in D : \dist_E(y,K_j)=2\rho\}$. The set on the left-hand side of the above expression is a set of paths that reach $g_j = \{y \in D: \dist_E(y, K_j)<\rho\}$. This means that such paths need to be at least distance $\rho$ from the level sets.

  By the strong Markov property we can restart the process at $\sigma_1$ when the process first reaches $G_i$ to see that
  \begin{equation*}
    \sup_{x \in g_i} \Pro(X^\e_x(\tau_1) \in g_j) = \sup_{x \in G_i} \Pro(X^\e_x(\tau_1) \in g_j).
  \end{equation*}

  For $T>0$ to be chosen later, we can decompose the events
  \begin{equation*}
    \sup_{x \in G_i} \Pro(X^\e_x(\tau_1) \in g_j) \leq \sup_{x \in G_i} \Pro(\tau_1>T) + \sup_{x \in G_i} \Pro(X^\e_x(\tau_1) \in g_j, \tau_1\leq T).
  \end{equation*}

  Let $T>0$ be big enough so that Lemma \ref{lem:impossible-to-stay-away} guarantees that
  \begin{equation} \label{eq:big-time}
    \limsup_{\e \to 0} \sup_{x \in G_i} \e \log \Pro(\tau_1>T) < - \tilde{V}_{D}(K_i, K_j).
  \end{equation}

  By \eqref{eq:claim} along with the LDP upper bound \eqref{eq:ldp-up} it follows that
  \begin{align} \label{eq:small-time}
    &\limsup_{\e \to 0} \sup_{x \in G_i} \e \log \Pro(X^\e_x(\tau_1) \in g_j, \tau_1<T) \nonumber\\
    &\leq \limsup_{\e \to 0} \sup_{x \in G_i} \e \log \Pro(\dist_{C([0,T]:E)}(X^\e_x, \Phi_x^T(\tilde{V}_{D}(K_i, K_j) - \gamma)) \geq \rho)\nonumber\\
     &\leq -\tilde{V}_{D}(K_i, K_j) +\gamma.
  \end{align}

  It follows from \eqref{eq:big-time} and \eqref{eq:small-time} that
  \begin{equation*}
    \limsup_{\e \to 0} \sup_{x \in g_i} \e \log \Pro(X^\e_x(\tau_1) \in g_i) \leq -\tilde{V}_{D}(K_i, K_j) + \gamma,
  \end{equation*}
  proving \eqref{eq:gi-to-gj-up}. The proof of \eqref{eq:g0-to-gi-up} is similar and thus omitted.
\end{proof}

Now we define some stopping times for the Markov chain. Let
\begin{equation*} 
  \eta_i  = \min\{n \in \mathbb{N}: Z_n \not \in g_i\}.
\end{equation*}

\begin{lemma}
  Given $\gamma>0$ there exists $\rho_0>0$ such that for all $\rho \in (0, \rho_0)$, for any $i \not=j$, $i \in\{1,\cdots,N\}$, $j \in\{1,\cdots,N+1\}$
  \begin{equation} \label{eq:Z-eta-upper-gi}
    \limsup_{\e \to 0} \sup_{x \in g_i} \e \log \Pro(Z_{\eta_i} \in g_j | Z_0 = x) \leq - (\tilde{V}_{D}(K_i,K_j) - \gamma).
  \end{equation}
  \begin{equation} \label{eq:Z-eta-lower-gi}
    \liminf_{\e \to 0} \inf_{x \in g_i} \e \log \Pro(Z_{\eta_i} \in \partial D | Z_0 = x) \geq -\gamma.
  \end{equation}
    and for any $j \in \{1,...,N+1\}$,
  \begin{equation} \label{eq:Z-eta-upper-g0}
    \limsup_{\e \to 0} \sup_{ x \in g_0} \e \log \Pro(Z_{\eta_0} \in g_j | Z_0 =x) \leq -(\tilde{V}_{D}(x_*, K_j) - V(x_*, \partial D) - \gamma).
  \end{equation}
\end{lemma}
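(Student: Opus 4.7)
All three estimates follow from the one-step transition bounds of Lemma~\ref{lem:g0-to-sets} combined with the strong Markov property for the chain $Z_{n}$. The central point is that between starting in $g_{i}$ and exiting it, the chain may revisit $g_{i}$ many times, so one must multiply the per-step probability of landing in $g_{j}$ by the expected number of returns to $g_{i}$; the latter is controlled by a geometric series whose rate is the uniform lower bound on the one-step exit probability from $g_{i}$.

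For \eqref{eq:Z-eta-upper-gi}, I would expand according to the step at which the exit occurs. By the strong Markov property applied to $Z_{n}$,
\[
\Pro(Z_{\eta_{i}}\in g_{j}\mid Z_{0}=x)=\sum_{n=1}^{\infty}\E\left[\prod_{k=1}^{n-1}\mathbbm{1}_{\{Z_{k}\in g_{i}\}}\,\Pro(Z_{n}\in g_{j}\mid Z_{n-1})\,\Big|\,Z_{0}=x\right].
\]
The transition bound \eqref{eq:gi-to-gj-up} gives, for $\e$ sufficiently small and uniformly in $y\in g_{i}$, the estimate $\Pro(Z_{1}\in g_{j}\mid Z_{0}=y)\leq e^{-(\tilde{V}_{D}(K_{i},K_{j})-\gamma)/\e}$. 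The lower bound \eqref{eq:gi-to-bdry-low}, applied via the inclusion $\{Z_{1}\in\partial D\}\subset\{Z_{1}\notin g_{i}\}$, gives $\Pro(Z_{1}\in g_{i}\mid Z_{0}=y)\leq 1-e^{-\gamma/\e}$ uniformly in $y\in g_{i}$, whence iterating the Markov property yields $\Pro(Z_{1},\dots,Z_{n-1}\in g_{i}\mid Z_{0}=x)\leq(1-e^{-\gamma/\e})^{n-1}$. Summing the resulting geometric series contributes a factor $e^{\gamma/\e}$, and combining both estimates gives $\Pro(Z_{\eta_{i}}\in g_{j}\mid Z_{0}=x)\leq e^{-(\tilde{V}_{D}(K_{i},K_{j})-2\gamma)/\e}$. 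Taking $\e\log$, the $\limsup$, and using that $\gamma>0$ is arbitrary finishes \eqref{eq:Z-eta-upper-gi}.

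The lower bound \eqref{eq:Z-eta-lower-gi} is immediate from the first-step estimate: since $\eta_{i}\geq 1$ and any state in $\partial D$ is absorbing for the chain, the event $\{Z_{1}\in\partial D\}$ entails $\{Z_{\eta_{i}}\in\partial D\}$, so $\Pro(Z_{\eta_{i}}\in\partial D\mid Z_{0}=x)\geq\Pro(Z_{1}\in\partial D\mid Z_{0}=x)$ and \eqref{eq:gi-to-bdry-low} gives the claim directly.

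For \eqref{eq:Z-eta-upper-g0} the argument is formally identical to the one for \eqref{eq:Z-eta-upper-gi}, with two substitutions: the transition bound $\tilde{V}_{D}(K_{i},K_{j})$ is replaced by $\tilde{V}_{D}(x_{*},K_{j})$ via \eqref{eq:g0-to-gi-up}, and the cheap bound $1-e^{-\gamma/\e}$ on the staying probability is replaced by the sharper $1-e^{-(V(x_{*},\partial D)+\gamma)/\e}$ coming from \eqref{eq:g0-to-bdry-low}. The associated geometric series then contributes a factor $e^{(V(x_{*},\partial D)+\gamma)/\e}$, and the two rates subtract on the exponential scale to produce exactly $\tilde{V}_{D}(x_{*},K_{j})-V(x_{*},\partial D)$, as desired. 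There is no serious obstacle in any of the three arguments; they amount to a routine strong-Markov / geometric-series computation. The only point one must be careful to invoke is that the one-step bounds from Lemma~\ref{lem:g0-to-sets} hold uniformly in the starting point within $g_{i}$ (respectively $g_{0}$), which is precisely what permits the iteration of the Markov property to close in the geometric sum.
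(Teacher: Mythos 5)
Your argument is correct and coincides with the paper's own proof: both decompose over the value of $\eta_i$ via the strong Markov property, bound the resulting geometric series using the one-step exit estimate $\Pro(Z_1\notin g_i\mid Z_0=x)\geq\Pro(Z_1\in\partial D\mid Z_0=x)$ from \eqref{eq:gi-to-bdry-low} (respectively \eqref{eq:g0-to-bdry-low} for $g_0$), and combine with the one-step upper bounds \eqref{eq:gi-to-gj-up} and \eqref{eq:g0-to-gi-up}; the lower bound \eqref{eq:Z-eta-lower-gi} is obtained in both cases from the trivial inclusion $\{Z_1\in\partial D\}\subset\{Z_{\eta_i}\in\partial D\}$.
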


\begin{proof}
Fix $i \in \{0,...,N\}, j \in \{1,....,N+1\}$. We decompose the event over all of the possible values of $\eta_i$. By the strong Markov property,
  \begin{align*}
    &\sup_{x \in g_i}\Pro(Z_{\eta_i} \in g_j | Z_0=x) = \sup_{x \in g_i} \sum_{k=1}^\infty \Pro(\eta_i =k, Z_k \in g_j | Z_0=x) \\
    &\leq\sum_{k=1}^\infty \left( \sup_{x \in g_i} \Pro(Z_1 \in g_i | Z_0 = x) \right)^{k-1} \left(\sup_{x \in g_i} \Pro(Z_1 \in g_j | Z_0=x) \right)\\
    &\leq \frac{\sup_{x \in g_i} \Pro(Z_1 \in g_j | Z_0=x)}{1 - \sup_{x \in g_i} \Pro(Z_1 \in g_i | Z_0 = x)}
    \leq \frac{\sup_{x \in g_i} \Pro(Z_1 \in g_j | Z_0=x)}{\inf_{x \in g_i} \Pro(Z_1 \not \in g_i | Z_0 = x) }.
  \end{align*}

 Then \eqref{eq:Z-eta-upper-gi} follows from \eqref{eq:gi-to-gj-up} and \eqref{eq:gi-to-bdry-low} using the estimate
  \[
  \Pro(Z_1 \not \in g_i | Z_0 = x)\geq \Pro(Z_1  \in \partial D | Z_0 = x).
  \]
  \eqref{eq:Z-eta-upper-g0} follows from \eqref{eq:g0-to-gi-up} and \eqref{eq:g0-to-bdry-low}.

  \eqref{eq:Z-eta-lower-gi} is a simple consequence of \eqref{eq:gi-to-bdry-low} and the fact that
  \begin{equation*}
    \inf_{x \in g_i}\Pro(Z_{\eta_i} \in \partial D | Z_0=x) \geq \inf_{x \in g_i} \Pro(Z_1 \in \partial D | Z_0=x).
  \end{equation*}
\end{proof}

\subsubsection{Proof of upper bound \eqref{eq:shape-LDP-upper}} \label{SSS:Shape-LDP-up}

Fix $s>0$ $\delta>0$. Let $K_i$, $i \in \{0,...,N+1\}$ be defined as in Section \ref{SS:Markov-transitions}. We reorder these sets so that
\begin{align}
  L_0&:= V(x_*, \partial D)\nonumber\\
    L_i&:=\hat{V}_{D}(x_*, K_i)\nonumber 
\end{align}
are in increasing order $L_0 \leq L_1 \leq L_2 \leq ... \leq L_{N+1}$.
Let us recall
\begin{equation}
  K_{N+1}:= \left\{y \in \partial D: \dist_E(y, \hat\Psi(s))\geq \delta \right\}.\nonumber
\end{equation}

Set $L_{N+1}:={\hat{V}_{D}}(x_*, K_{N+1}) = \inf_{y \in K_{N+1}}  \hat{J}(y) + V(x_*, \partial D)$. Notice that
\begin{equation} \label{eq:L_N+1-bound}
  L_{N+1} \geq s + L_0.
\end{equation}

First we derive a lower bound on $\tilde V_D(K_i,K_j)$.
\begin{lemma} \label{lem:V-lower-bound}
    For any $i \in \{1,2, ..., N\}$ and $j \in \{1,...,N+1\}$,
  \begin{equation}
   \tilde{V}_{D} (K_i, K_j) \geq L_j - L_i. \label{eq:V-lower-bound}
  \end{equation}
\end{lemma}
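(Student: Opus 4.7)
The plan is to recast the claim as $L_j \leq L_i + \tilde{V}_D(K_i,K_j)$ and prove this inequality by concatenating three trajectories. Assuming without loss of generality that both $L_i$ and $\tilde{V}_D(K_i,K_j)$ are finite (otherwise the bound is trivial), I fix $\gamma>0$ and choose $y\in K_i$ with $\hat{V}_D(x_*,y)<L_i+\gamma/4$ together with $x'\in K_i$, $y'\in K_j$ satisfying $\tilde{V}_D(x',y')<\tilde{V}_D(K_i,K_j)+\gamma/4$. The goal is to build, for every sufficiently small $\rho>0$, an $\hat{V}^\rho_D$-admissible path from $x_*$ to $y'$ whose total action is below $L_i+\tilde{V}_D(K_i,K_j)+\gamma$.

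For each such $\rho$, the monotonicity $\hat{V}^\rho_D\leq\hat{V}_D$ and $\tilde{V}^\rho_D\leq \tilde{V}_D$ together with the definitions of these quasipotentials as infima yield a path $\varphi_1$ from $x_*$ to $y$ of action less than $L_i+\gamma/3$ contained in $D\cup B(x_*,\rho)\cup B(y,\rho)\cup\bigcup_{\ell=1}^N B(K_\ell,\rho)$, and a path $\varphi_2$ from $x'$ to $y'$ of action less than $\tilde{V}_D(K_i,K_j)+\gamma/3$ contained in $D\cup B(x',\rho)\cup B(y',\rho)$. The third, connecting piece $\varphi_0$ from $y$ to $x'$ is supplied by the $V$-equivalence $y\sim x'$ inside $K_i$: since $V(y,x')=0$, paths of arbitrarily small action exist, and Lemma \ref{lem:paths-stay-near-K_i} guarantees that whenever such a path's action lies below the threshold $\gamma_0(\rho)$ provided by that lemma, it is trapped inside $B(K_i,\rho)$. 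Taking its action to be less than $\min(\gamma_0(\rho),\gamma/3)$ produces $\varphi_0$.

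Concatenating $\varphi_1$, $\varphi_0$, $\varphi_2$ gives a continuous trajectory from $x_*$ to $y'$ whose image lies inside $D\cup B(x_*,\rho)\cup B(y',\rho)\cup\bigcup_{\ell=1}^N B(K_\ell,\rho)$, because $y,x'\in K_i$ force $B(y,\rho)\cup B(x',\rho)\subset B(K_i,\rho)$, and $\varphi_0\subset B(K_i,\rho)$ by construction. Its action is strictly less than $L_i+\tilde{V}_D(K_i,K_j)+\gamma$, so $\hat{V}^\rho_D(x_*,y')\leq L_i+\tilde{V}_D(K_i,K_j)+\gamma$. Passing to $\rho\to 0$ and using $L_j=\hat{V}_D(x_*,K_j)\leq\hat{V}_D(x_*,y')$ gives $L_j\leq L_i+\tilde{V}_D(K_i,K_j)+\gamma$, and sending $\gamma\to 0$ closes the argument.

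The main technical obstacle is that a $V$-equivalence path between two points of $K_i$ is only guaranteed to have small action, not to remain spatially close to $K_i$; without the containment provided by Lemma \ref{lem:paths-stay-near-K_i}, the bridge $\varphi_0$ could wander outside the $\hat{V}_D$-admissible corridor and invalidate the concatenation. The hypothesis $i\in\{1,\ldots,N\}$ is equally essential, since it is precisely this range of indices for which $K_i$ figures in the union $\bigcup_{\ell=1}^N B(K_\ell,\rho)$ appearing in the definition of $\hat{V}_D$; for $i=0$ (i.e.\ $K_0=\{x_*\}$) the analogous concatenation would have to be admissible as a $\tilde{V}_D$-path rather than an $\hat{V}_D$-path, and a different argument would be needed.
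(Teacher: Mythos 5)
Your proof is correct and follows essentially the same route as the paper's: a three-piece concatenation (near-optimal $\hat{V}^\rho_D$-path from $x_*$ into $K_i$, a low-action bridge inside $K_i$ kept within $B(K_i,\rho)$ by Lemma \ref{lem:paths-stay-near-K_i}, then a near-optimal $\tilde{V}^\rho_D$-path from $K_i$ to $K_j$), followed by letting $\rho\to 0$ and $\gamma\to 0$. The only difference is that you work with $\gamma$-approximate minimizers while the paper first invokes the compactness of the level sets (Theorems \ref{thm:V-tilde-compact-level-sets} and \ref{thm:V-hat-compact-level-sets}) to extract exact minimizers $y_1,y_2\in K_i$, $y_3\in K_j$; both variants are sound.
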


{
\begin{remark} \label{rem:Why-hat}
Notice that $L_i$, $ i\in \{1,...,N\}$, were defined in terms of $\hat{V}_{D}$ instead of $\tilde{V}_{D}$. Validity of this Lemma \ref{lem:V-lower-bound} with the $L_i$ defined in terms of $\tilde{V}_{D}$ would actually imply that lower and upper bounds for the exit shape distribution match in Theorem \ref{thm:exit-shape}, which would yield a large deviations principle for the exit shape distribution. We believe that Lemma \ref{lem:V-lower-bound} is not true when the $L_i$ are defined in terms of $\tilde{V}_D$ unless we impose extra assumptions on our domain $D$. This is the reason that we introduced $\hat{V}_D$ in \eqref{eq:V-hat-D} and defined the $L_i$ in terms of this quasipotential. We will come back to this issue in Section \ref{S:ExitShape_AdditionalCond}.
\end{remark}

\begin{proof}[Proof of Lemma \ref{lem:V-lower-bound}]

 Fix $i \in \{1,...,N\},j \in \{1,...,N+1\}$.
  The result is trivial if $\tilde{V}_D(K_i,K_j)=+\infty$ or $L_i=+\infty.$ We assume that $\tilde{V}_D(K_i,K_j)<+\infty$ and $L_i<+\infty.$

  Because $K_i$  is compact, Theorems \ref{thm:V-tilde-compact-level-sets} and \ref{thm:V-hat-compact-level-sets}  imply that there exist minimizers $y_1 \in K_i, y_2 \in K_i$, and $y_3 \in K_j$ such that
  \begin{align}
    &\hat{V}_D(x_*, K_i) = \hat{V}_D(x_*, y_1)
    &\tilde{V}_D(K_i,K_j) = \tilde{V}_D(y_2,y_3).
  \end{align}

  We also observe that for any $\rho>0$,
  \begin{align}
    &\hat{V}^\rho_D(x_*, y_1) \leq \hat{V}_D(x_*, y_1)
    &\tilde{V}^\rho_D(y_2,y_3) \leq \tilde{V}_D(y_2,y_3).
  \end{align}

  Let $\rho>0$ and $\gamma>0$ be given.   Because $\hat{V}_{D}(x_*,K_i)< +\infty$, there exists $T_1>0$ and $\varphi_1 \in C([0,T_1]:E)$ such that $\varphi_1(0)=x_*, \varphi_1(T_1) =y_1$, $\varphi_1(t) \in D \cup B(y_1,\rho) \cup \bigcup_{l=1}^N B(K_l,\rho)$ for all $t \in [0,T_1]$ and
  \begin{equation}
    I_{x_*}^{T_1}(\varphi_1) < \hat{V}^\rho_D(x_*, y_1) +\gamma/3 \leq \hat{V}_D(x_*, y_1) + \gamma/3 \leq \hat{V}_D(x_*, K_i) + \gamma/3.
  \end{equation}

  Because $K_i$ is a $V$ equivalence class and $y_1, y_2 \in K_i$, Lemma \ref{lem:paths-stay-near-K_i} guarantees that there exists $T_2>0$ and $\varphi_2 \in C([0,T_2]:E)$ such that $\varphi_2(0)=y_1, \varphi_2(T_2) = y_2$, $\dist_E(\varphi(t), K_i)<\rho$ for all $t \in [0,T_2]$ and
  \begin{equation}
    I_{y_1}^{T_2}(\varphi_2) < \gamma/3.
  \end{equation}

  Because $\tilde{V}_{D}(K_i,K_j)<\infty$, there exists $T_3>0$ and $\varphi_3 \in C([0,T_3]:E)$ such that $\varphi_3(0)=y_2$, $\varphi_3(T_3) = y_3$, {$\varphi(t) \in D \cup B(y_2,\rho) \cup B(y_3,\rho) $} for all $t \in [0,T_3]$ and
  \begin{equation}
    I^{T_3}_{y_2}(\varphi_3) < \tilde V_D^\rho(y_2,y_3) + \gamma/3 \leq \tilde{V}_D(y_2,y_3) + \gamma/3 \leq \tilde{V}_D(K_i,K_j) + \gamma/3.
  \end{equation}

  Now we concatenate by setting $T:=T_1+T_2+T_3$ and
  \[
     \varphi(t) :=\begin{cases}
    \varphi_1(t) & \text{ if } t \in [0,T_1)\\
    \varphi_2(t-T_1) & \text{ if } t \in [T_1, T_1 + T_2)\\
    \varphi_3(t-T_1-T_2) & \text{ if } t \in [T_1+T_2, T_1 + T_2+T_3),
  \end{cases}
  \]

   Notice that $\varphi \in C([0,T]:E)$ and has the properties that $\varphi(0) = x_*$, $\varphi(T) = y_3 \in K_j$, $\varphi(t) \in D \cup B(y_3,\rho) \cup \bigcup_{l=1}^N B(K_l,\rho)$ for all $t \in [0,T]$ and
  \begin{equation}
    I^T_{x_*}(\varphi) \leq \hat{V}_D(x_*, K_i) + \tilde{V}_D(K_i,K_j) + \gamma.
  \end{equation}

  By the definition of $\hat{V}_D^\rho$,
  \begin{equation}
    \hat{V}_D^\rho(x_*, y_3) \leq I^T_{x_*}(\varphi).
  \end{equation}

  Therefore,
  \begin{equation}
    \hat{V}_D^\rho(x_*, y_3) \leq \hat{V}_D(x_*, K_i) + \tilde{V}_D(K_i,K_j) + \gamma.
  \end{equation}

  Because $\gamma$ and $\rho$ were arbitrary, we can conclude that
  \begin{equation}
    \hat{V}_D(x_*, K_j) \leq \hat{V}_D(x_*, y_3) \leq \hat{V}_D(x_*, K_i) + \tilde{V}_D(K_i,K_j),
  \end{equation}
  proving the theorem.

\end{proof}

Define the stopping times for $0 \leq i \leq N$
\begin{equation*} 
  \zeta_i : = \min\left\{n : Z_n \not \in \bigcup_{j=i}^N g_j\right\}.
\end{equation*}

Unfortunately our set-up does not satisfy the assumptions of  Lemma 6.3.3 in \cite{FWbook} because there is no distance between the $g_i$ and $\partial D$. We now establish a similar result in our setting.
\begin{lemma} \label{lem:Z_zeta}
  For any $\gamma>0$, there exists $\rho_0>0$ such that for all $\rho \in (0, \rho_0)$ and for any $0 \leq j \leq i \leq N$
  \begin{equation} \label{eq:Z_zeta-result}
    \limsup_{\e \to 0} \sup_{x \in g_i} \e \log \Pro(Z_{\zeta_j} \in g_{N+1} | Z_0 = x) \leq -(L_{N+1} - L_i -\gamma).
  \end{equation}
\end{lemma}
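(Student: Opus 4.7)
The plan is to decompose the event $\{Z_{\zeta_j} \in g_{N+1}\}$ via the strong Markov property applied at the successive stopping times $\eta_k$, and then bound the resulting sum over index sequences using the transition estimates already established. Concretely, for $x \in g_i$ one has
\begin{equation*}
\Pro(Z_{\zeta_j} \in g_{N+1} \mid Z_0 = x) \le \sum_{m \ge 1} \sum_{(i_0, \ldots, i_m)} \prod_{l=0}^{m-1} \tilde p_{i_l, i_{l+1}},
\end{equation*}
the inner sum being over index sequences with $i_0 = i$, $i_m = N+1$, $i_l \in \{j, \ldots, N\}$ and $i_{l+1} \ne i_l$; here $\tilde p_{kl} := \sup_{y \in g_k} \Pro(Z_{\eta_k} \in g_l \mid Z_0 = y)$.

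To bound each summand, I would combine \eqref{eq:Z-eta-upper-gi} (or \eqref{eq:Z-eta-upper-g0} when the initial $g$-class is $g_0$, noting that $L_0 = V(x_*, \partial D)$ is exactly the offset appearing there, while $\hat V_D \le \tilde V_D$ gives $\tilde V_D(x_*, K_l) \ge L_l$) with the inequality $\tilde V_D(K_k, K_l) \ge \max\{L_l - L_k, 0\}$, which follows from Lemma \ref{lem:V-lower-bound} together with the non-negativity of $\tilde V_D$. This yields $\tilde p_{kl} \le \exp(-(\max\{L_l - L_k, 0\} - \gamma_1)/\e)$ for any $\gamma_1 > 0$ and $\e$ small enough. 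Since $\sum_l (L_{i_{l+1}} - L_{i_l}) = L_{N+1} - L_i$ telescopes and positive parts dominate the terms, each path contributes at most $\exp(-(L_{N+1} - L_i - m\gamma_1)/\e)$. To truncate in $m$ I would invoke \eqref{eq:gi-to-bdry-low}: since every embedded step transitions to $\partial D$ with probability at least $\exp(-\gamma_2/\e)$, the event $\{\zeta_j > M\}$ has probability at most $(1 - \exp(-\gamma_2/\e))^M$, which for $M := \lceil \exp(2\gamma_2/\e) \rceil$ is doubly exponentially small in $\e^{-1}$ and therefore negligible compared with $\exp(-(L_{N+1}-L_i)/\e)$.

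The main obstacle is the combinatorial factor $(N-j)^{m-1}$ in the number of paths of length $m$: multiplied by $\exp(m\gamma_1/\e)$ and summed up to $M \sim \exp(2\gamma_2/\e)$, a naive bound would diverge and swamp the target rate. To handle this I would follow the graph-theoretic argument of \cite[Ch.\ 6, Lemma 3.3]{FWbook}, recasting $\sup_{x \in g_i}\Pro(Z_{\zeta_j} \in g_{N+1} \mid Z_0 = x)$ on the finite-state absorbing chain over $\{j, \ldots, N, N+1\} \cup \{\text{exit}\}$ as a ratio of sums over spanning arborescences, each weighted by $\prod \tilde p_{kl}$. Only finitely many tree topologies occur, so the combinatorial factor is bounded independently of $\e$; Lemma \ref{lem:V-lower-bound} together with the telescoping identity then forces the dominant arborescence containing the edge into $N+1$ to have cost $L_{N+1} - L_i$, and collecting the $\gamma_1, \gamma_2$ and tree-level errors into a single $\gamma$ produces the bound \eqref{eq:Z_zeta-result}.
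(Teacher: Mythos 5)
Your preliminary steps are sound and coincide with the ingredients the paper actually uses: the decomposition over sequences of visited classes, the per-step bound $\tilde p_{kl}\le \exp\left(-(\max\{L_l-L_k,0\}-\gamma_1)/\e\right)$ obtained from \eqref{eq:Z-eta-upper-gi}, \eqref{eq:Z-eta-upper-g0} and Lemma \ref{lem:V-lower-bound}, and the telescoping of $\sum_l(L_{i_{l+1}}-L_{i_l})$ are all correct. The gap is precisely at the step you flag as the main obstacle. The graph-theoretic Lemma 6.3.3 of \cite{FWbook} cannot be invoked here: to reduce the chain on $\bigcup_i g_i$ to a finite-state chain on the index set, and to keep the denominator in the ratio-of-arborescences formula away from zero, that lemma needs \emph{two-sided} bounds, i.e.\ also $\inf_{x\in g_k}\Pro(Z_1\in g_l\mid Z_0=x)\ge \exp(-(\tilde V_D(K_k,K_l)+\gamma)/\e)$. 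In the present setting this lower bound fails completely: since $K_k\subset\partial D$ for $k\ge 1$, the set $g_k$ contains points arbitrarily close to $\partial D$, from which the process exits $D$ at once with overwhelming probability, so $\inf_{x\in g_k}\Pro(Z_1\in g_l\mid Z_0=x)=0$ for $l\neq k$. The paper states exactly this in the remark following Lemma \ref{lem:g0-to-sets} and again immediately before Lemma \ref{lem:Z_zeta} ("our set-up does not satisfy the assumptions of Lemma 6.3.3 in \cite{FWbook}"); with the lower weights equal to zero the arborescence bound degenerates and yields nothing.

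The paper's replacement is an induction on the minimal index $l$ visited by the chain, which is the correct way to exploit the one lower bound that does survive, namely the uniform absorption estimate \eqref{eq:gi-to-bdry-low}. The base case $j=i=N$ is \eqref{eq:Z-eta-upper-gi} combined with \eqref{eq:V-lower-bound}. For the inductive step, the event $\{Z_{\zeta_l}\in g_{N+1}\}$ is split according to the number $k$ of excursions from $g_l$ into $\bigcup_{i>l}g_i$ and back to $g_l$ before the final escape (see \eqref{eq:Z_zeta-decomposition}); by \eqref{eq:gi-to-bdry-low} each such excursion avoids absorption in $\partial D$ with probability at most $1-e^{-\gamma/3\e}$, so the geometric series over $k$ contributes only a factor $e^{\gamma/3\e}$, and the final escape is controlled by \eqref{eq:Z-eta-upper-gi} together with the inductive hypothesis. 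This bypasses the combinatorial explosion entirely: the error accumulates only over the $N$ levels of the induction, not over the (doubly exponentially many) paths. Replacing your arborescence step with this hierarchical excursion decomposition would make the argument correct.
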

\begin{proof}
  We prove this using an induction-like argument. The case $j=i=N$ is an immediate consequence of \eqref{eq:Z-eta-upper-gi} and \eqref{eq:V-lower-bound} because $\eta_N = \zeta_N$ { and because by construction $\hat{V}_{D}\leq \tilde{V}_{D}$.}

  Now assume that we have already proven the result for $j = l+1$ and for all $ i \in\{l+1,... ,N\}$. Now we prove it for $j =i=l$.
  There are several ways that  our Markov chain can satisfy $Z_0=x \in g_l$ and $Z_{\zeta_{l}} \in g_{N+1}$. The Markov chain can go from $g_l$ to $\bigcup_{i=l+1}^N g_i$ and back to $g_l$ any number of times $k=0,1,2,3,...$. Then the process can go directly from $g_l$ to $g_{N+1}$ or it can go  from $g_l$ directly to any of the $g_i$ $i \in\{ l+1, ...,N\}$ and then hit $g_{N+1}$ before hitting the boundary $\partial D$, $g_l$ again or $g_m$ for any $m<l$.
  \begin{align} \label{eq:Z_zeta-decomposition}
    &\sup_{x \in g_l} \Pro(Z_{\zeta_l} \in g_{N+1})\leq\nonumber\\
    & \sum_{k=0}^\infty \left(\left(\sup_{x \in g_l} \Pro\left(Z_{\eta_l} \in \bigcup_{i=l+1}^N g_i | Z_0=x\right) \right) \left(\sup_{x \in \bigcup_{i=l+1}^N g_i} \Pro\left(Z_{\zeta_{l+1}} \in g_l | Z_0 =x \right) \right) \right)^k\nonumber\\
    &\quad\times \Bigg(\sup_{x \in g_l} \Pro(Z_{\eta_l} \in g_{N+1} | Z_0=x)\nonumber\\
    &\qquad+ \sum_{i=l+1}^N \left(\sup_{x \in g_l} \Pro(Z_{\eta_l} \in g_i | Z_0=x) \right) \left(\sup_{x \in g_i} \Pro\left(Z_{\zeta_{l+1}} \in g_{N+1} | Z_0=x \right) \right)\Bigg)
  \end{align}

  For small enough $\rho>0$, \eqref{eq:gi-to-bdry-low} guarantees that for small $\e>0$
  \begin{equation}
    \inf_{x \in \bigcup_{i=l+1}^Ng_i} \Pro( Z_{\zeta_{l+1}} \in \partial D) \geq \inf_{x \in \bigcup_{i=l+1}^N g_i} \Pro(Z_1 \in \partial D) \geq e^{-\frac{\gamma}{3\e}}.\nonumber
  \end{equation}

  Then the infinite sum in \eqref{eq:Z_zeta-decomposition} can be bounded by
  \begin{align}
    &\sum_{k=0}^\infty \left(\left(\sup_{x \in g_l} \Pro\left(Z_{\eta_l} \in \bigcup_{i=l+1}^N g_i | Z_0=x\right) \right) \left(\sup_{x \in \bigcup_{i=l+1}^N g_i} \Pro\left(Z_{\zeta_{l+1}} \in g_l | Z_0 =x \right) \right) \right)^k\nonumber\\
    &\leq \sum_{k=0}^\infty \left(\sup_{x \in \bigcup_{i=l+1}^N g_i} \Pro\left(Z_{\zeta_{l+1}} \in g_l | Z_0 =x \right) \right)^k\nonumber\\
    &\leq \sum_{k=0}^\infty \left( 1 - \inf_{x \in \bigcup_{i=l+1}^N g_i} \Pro\left(Z_{\zeta_{l+1}} \in \partial D | Z_0 =x \right)\right)^k\nonumber\\
    &\leq \sum_{k=0}^\infty \left( 1 - e^{-\frac{\gamma}{3\e}}\right)^k \leq e^{\frac{\gamma}{3\e}}.\label{eq:inf-sum-bound}
  \end{align}

  As for the finite sum in \eqref{eq:Z_zeta-decomposition},  equations \eqref{eq:Z-eta-upper-gi} and \eqref{eq:V-lower-bound} (or \eqref{eq:Z-eta-upper-g0} if $l=0$) tell us that for small enough $\rho$ and $\e$ and any $i \in \{l+1,...,N+1\}$,
  \begin{equation} \label{eq:eta-bound}
   \sup_{x \in g_l} \Pro(Z_{\eta_l} \in g_i | Z_0=x) 
   \leq e^{-(L_i - L_l - \gamma/3)/\e}.
  \end{equation}
  By the inductive step, for $i \in \{l+1, ..., N\}$ and small enough $\rho$ and $\e$,
  \begin{equation} \label{eq:inductive-hyp}
    \sup_{x \in g_i} \Pro\left(Z_{\zeta_{l+1}} \in g_{N+1} | Z_0=x \right) \leq e^{-(L_{N+1}-L_i - \gamma/3)/\e}.
  \end{equation}

  Then by \eqref{eq:Z_zeta-decomposition}, \eqref{eq:inf-sum-bound}, \eqref{eq:eta-bound}, and \eqref{eq:inductive-hyp},
  \begin{equation*}
    \sup_{x \in g_l} \Pro(Z_{\zeta_l} \in g_{N+1} | Z_0=x) \leq \sum_{i=l+1}^N e^{-(L_{N+1} - L_l - \gamma)/\e}
  \end{equation*}
  and consequently
  \begin{equation*}
    \limsup_{\e \to 0} \sup_{x \in g_l} \e\log\Pro(Z_{\zeta_l} \in g_{N+1} | Z_0=x) \leq -(L_{N+1}-L_l - \gamma).
  \end{equation*}

  We are almost done with the proof. We have proven \eqref{eq:Z_zeta-result} for $j=i=l$. It remains to prove the result for $j=l$ and  $i \in \{l+1, ..., N\}$. For such $i$ we observe that
  \begin{align*}
    &\sup_{x \in g_i} \Pro(Z_{\zeta_l} \in g_{N+1} | Z_0=x) \\
    &\leq \sup_{x \in g_i} \Pro(Z_{\zeta_{l+1}} \in g_{N+1}|Z_0=x) \\
    &\quad+ \left(\sup_{x \in g_i} \Pro(Z_{\zeta_{l+1}} \in g_l | Z_0=x)\right) \left(\sup_{x \in g_l} \Pro(Z_{\zeta_l} \in g_{N+1} | Z_0=x ) \right).
  \end{align*}

  By choosing $\rho$ and $\e$ small enough, we can bound the above expression by
  \begin{equation*}
    e^{-(L_{N+1}-L_i - \gamma)/\e} + e^{-(L_{N+1}-L_l - \gamma)/\e}.
  \end{equation*}

  Therefore,
  \begin{equation*}
    \limsup_{\e \to 0} \sup_{x \in g_i} \e\log \Pro(Z_{\zeta_l} \in g_{N+1} | Z_0=x) \leq -(L_{N+1}-L_i - \gamma).
  \end{equation*}
\end{proof}

Now we can complete the proof of \eqref{eq:shape-LDP-upper}.
\begin{proof}[Proof of \eqref{eq:shape-LDP-upper}]
  If the initial condition $x \in g_0$, then the probability of the exit shape belonging to $K_{N+1}$ is less than the probability of the Markov chain $Z_n$ reaching $g_{N+1}$ before reaching $\partial D$. In particular, if $x \in g_0$,
  \begin{align}
    &\Pro(\dist_E(X^\e_x(\tau^\e_x), \hat{\Phi}(s)) \geq \delta) = \Pro(X^\e_x(\tau^\e_x) \in K_{N+1}) \nonumber\\
    &\leq \Pro(Z_n \in g_{N+1} \text{ for some } n) = \Pro(Z_{\zeta_0} \in g_{N+1}).\nonumber
  \end{align}

  By Lemma \ref{lem:Z_zeta},
  \begin{align}
    &\limsup_{\e \to 0} \sup_{x \in g_0} \e \log \Pro(\dist_E(X^\e_x(\tau^\e_x), \hat{\Phi}(s)) \geq \delta)\nonumber\\
    &\leq \limsup_{\e \to 0} \sup_{x \in g_0} \e \log \Pro(Z_{\zeta_0} \in g_{N+1} | Z_0=x) \leq - (L_{N+1} - L_0 - \gamma).\nonumber
  \end{align}

  If $x \in D \setminus g_0$, then because of the fact that $D$ is the domain of attraction of $x_*$, there exists a small $\rho>0$ (depending on $x$) such that
  \begin{equation*}
    \limsup_{\e \to 0} \Pro(X^\e_x(\tau_1) \in g_0) = 1.
  \end{equation*}

  By the strong Markov property, for any $x \in D$
  \begin{align*}
    &\limsup_{\e \to 0 } \e \log \Pro(\dist_E(X^\e_x, \hat\Phi(s))\geq \delta) \\
    &\leq \limsup_{\e \to 0 } \e \log \left(\Pro(X^\e_x(\tau_1) \in g_0) \sup_{y \in g_0}\Pro(\dist_E(X^\e_y, \hat\Phi(s))\geq \delta)   \right)\\
    &\leq -(L_{N+1}-L_0 - \gamma).
  \end{align*}
  By \eqref{eq:L_N+1-bound} the above expression is less than $-(s-\gamma)$ and the result follows because $\gamma$ was arbitrary.

\end{proof}

\subsection{Proof of lower bound \eqref{eq:shape-LDP-lower}} \label{SS:Shape-LDP-low}

Let $r>0$ and $\rho>0$ be given. For $y \in \partial D$ let us set
\[
K_{N+1}^{r}=\left\{x\in\partial D: |x-y|_{E}\leq r\right\}
\]
and
\begin{align}
 g^{r}_{N+1}&= \left\{x \in D: \text{dist}(x,K_{N+1}^{r})<\rho\right\}\nonumber\\
 G^{r}_{N+1}&=\left\{x \in D: \text{dist}(x,K_{N+1}^{r})=2 \rho\right\}\nonumber
\end{align}

Let $K_1,...,K_N$ be given as in Assumption \ref{A:K_equivalenceClassV} and $K_0 = \{x_*\}$. Then Lemma \ref{lem:impossible-to-stay-away} holds with $\Theta=\bigcup_{i=0}^{N}K_i\cup K_{N+1}^r$. With only a slight modification of what we did earlier in this section we define
\begin{align*}
  &g_i:=\{x \in D: \dist_E(x,K_i)<\rho\} \setminus g^r_{N+1}\\
  &G_i:=\{x \in D: \dist_E(x,K_i)=2\rho\} \setminus G^r_{N+1}.
\end{align*}
With these definitions $g_{i}\cap g^{r}_{N+1}=\emptyset$ and $G_{i}\cap G^{r}_{N+1}=\emptyset$ for $i\in\{1,\cdots, N\}$.

Similarly to our construction of the Markov chain $Z_n$ defined earlier in this section, we now we define the  Markov chain $Z_n$  taking values on $\bigcup_{i=0}^{N} g_i \cup g^{r}_{N+1} \cup \partial D$. Notice now that the set $g_{N+1}$ used for the upper bound has been replaced by $g^{r}_{N+1}$ for the lower bound.

\begin{lemma}[Transition probabilities] \label{lem:g0-to-sets-lower-bound}
  For any {$\gamma,\delta>0$} there exists $\rho_0\in(0,\delta)$ such that for all $r,\rho \in (0,\rho_0)$ with $r+\rho<\rho_{0}$
    \begin{equation} \label{eq:g0-to-gi-low}
   \liminf_{\e \to 0} \inf_{x \in g_0} \e \log \Pro(Z_1 \in g^{r}_{N+1} | Z_0=x) \geq -\tilde{V}^{r}_D(x_*,y) - \gamma.
  \end{equation}

In addition, we have
 \begin{equation} \label{eq:g0-to-bdry-up}
    \limsup_{\e \to 0} \sup_{x \in g_0} \e \log \Pro(Z_1 \not \in g_0 | Z_0=x) \leq -V(x_*,\partial D) + \gamma.
  \end{equation}
\end{lemma}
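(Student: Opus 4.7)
The argument closely follows the template of Lemma \ref{lem:g0-to-sets}, adapted to a target that is a neighborhood of the $r$-ball $K^r_{N+1}$ around the specific boundary point $y$, rather than of a full $V$-equivalence class.

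For the lower bound \eqref{eq:g0-to-gi-low}, the plan is to exhibit, uniformly in $x \in g_0$, a path $\varphi^x \in C([0,T]:E)$ that ends inside $g^r_{N+1}$ with action close to $\tilde{V}^r_D(x_*, y)$, and then to apply the uniform LDP lower bound \eqref{eq:ldp-low}. I would construct $\varphi^x$ by concatenating three legs. First, Theorem \ref{thm:control} together with the identity $X^0_{x_*}(t) \equiv x_*$ provides, for $\rho_0$ small enough, a short path $\varphi^x_1$ from $x$ to $x_*$ of action below $\gamma/3$. Second, by the definition \eqref{eq:V-D_rho} of $\tilde{V}^r_D(x_*, y)$, I would choose $\varphi_2$ from $x_*$ to $y$ that stays in $D \cup B(x_*, r) \cup B(y, r)$ and has action below $\tilde{V}^r_D(x_*, y) + \gamma/3$. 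Third, using the time-reversal device of Lemma \ref{lem:quasipotential-finite} combined with Theorem \ref{thm:control}, I would append a short leg that perturbs the endpoint from $y$ to some $z \in B(y, \min(r,\rho)) \cap D \subset g^r_{N+1}$ with extra action below $\gamma/3$. Choosing $\rho_0$ small enough relative to the lengths of the second and third legs, a sufficiently small $C([0,T]:E)$-tube around $\varphi^x$ forces any perturbation to terminate in $g^r_{N+1}$ and prevents it from reentering $g_0$ once it has exited. The uniform LDP lower bound \eqref{eq:ldp-low} then produces \eqref{eq:g0-to-gi-low}.

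For the upper bound \eqref{eq:g0-to-bdry-up}, the plan is to use the union bound
\[
\Pro(Z_1 \notin g_0 \mid Z_0 = x) \leq \Pro(Z_1 \in \partial D \mid Z_0=x) + \sum_{j=1}^N \Pro(Z_1 \in g_j \mid Z_0=x) + \Pro(Z_1 \in g^r_{N+1} \mid Z_0=x),
\]
and to bound each term by $\exp(-(V(x_*,\partial D)-\gamma)/\e)$, emulating the proof of \eqref{eq:gi-to-gj-up}. Each such event forces the sample path to travel from the $\rho_0$-neighborhood of $x_*$ to $\partial D$ itself, or to within $2\rho_0$ of some point of $\partial D$. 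A continuity-of-quasipotential argument analogous to Lemma \ref{lem:V-cont-with-G-rho} shows that the minimum action of such paths is at least $V(x_*,\partial D)-\gamma/2$ for $\rho_0$ small enough. Applying the LDP upper bound \eqref{eq:ldp-up}, using Lemma \ref{lem:impossible-to-stay-away} to truncate to a finite time horizon, and using Corollary \ref{cor:exp-est-X} to confine the initial condition to a bounded subset exactly as in the proof of \eqref{eq:gi-to-gj-up}, gives each summand the desired bound; since there are only finitely many summands, the sum absorbs into the $\gamma$-term.

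The main obstacle I anticipate lies in the lower bound: ensuring that the constructed path actually lands inside $g^r_{N+1}$, rather than exiting $D$ through some boundary point of $\partial D \setminus K^r_{N+1}$ or triggering the stopping time $\tau_1$ via an earlier hit on $\partial D$ or on some other $g_j$. Because $\tilde V^r_D$ only confines the minimizing path to $D \cup B(x_*, r) \cup B(y, r)$, excursions near the endpoint $y$ must be carefully steered back into $D$ close enough to $y$ to land in $g^r_{N+1}$. I would address this by first fixing $r$ small, then choosing $\rho_0$ so that the perturbation of the third leg terminates within $B(y, \min(r,\rho)) \cap D$, which is contained in $g^r_{N+1}$ by the definition of the latter.
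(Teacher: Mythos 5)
Your overall strategy coincides with the paper's: for \eqref{eq:g0-to-gi-low} concatenate a cheap Theorem~\ref{thm:control} leg from $x\in g_0$ to $x_*$ with a near-minimizer $\varphi_2$ of $\tilde V^r_D(x_*,y)$ and apply the uniform LDP lower bound to a small tube; for \eqref{eq:g0-to-bdry-up} repeat the argument of \eqref{eq:gi-to-gj-up}/\eqref{eq:g0-to-gi-up} (the paper in fact omits this half with exactly that remark), and your use of a Lemma~\ref{lem:V-cont-with-G-rho}-type continuity statement, Lemma~\ref{lem:impossible-to-stay-away}, and Corollary~\ref{cor:exp-est-X} is the right template.

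There is, however, one point in the lower bound that you correctly identify as the main obstacle but do not actually resolve: the possibility that the tube around $\varphi_2$ triggers $\tau_1$ by entering some $g_j$, $j\in\{1,\dots,N\}$, before it reaches $g^r_{N+1}$. Your proposed fix (the third leg steering the endpoint into $B(y,\min(r,\rho))\cap D$) only controls where the path terminates; it does nothing about an intermediate passage of $\varphi_2$ within $\rho$ of some $K_j$, which is perfectly consistent with $\varphi_2$ staying in $D\cup B(x_*,r)\cup B(y,r)$. The paper's resolution is to truncate $\varphi_2$ at its first contact time $T_2^*$ with $\partial D$, observe that (i) this contact point lies in $K^r_{N+1}$ because the path is confined to $D\cup B(y,r)$, and (ii) since each $K_j\subset\partial D$ while $\varphi_2(t)\in D$ for $t<T_2^*$, the compact arc $\varphi_2([0,T_2^*])$ stays at positive distance from every $K_j$; one then shrinks $\rho$ \emph{after} fixing $\varphi_2$ so that the arc, and hence a tube of radius $a<\rho$ around it, avoids every $G_j$ and $g_j$. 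This quantifier order ($\rho$ chosen after the path, i.e.\ after $r$) needs to be made explicit. Once (i) is in hand your third leg is also superfluous: a tube of radius $a<\rho$ around the truncated path already enters $g^r_{N+1}=\{x\in D:\dist_E(x,K^r_{N+1})<\rho\}$ while still inside $D$, so $Z_1\in g^r_{N+1}$; as an aside, the time-reversal device of Lemma~\ref{lem:quasipotential-finite} produces a loop at $y$ rather than a path from $y\in\partial D$ into $D$, so even if you kept the third leg it should be built from Theorem~\ref{thm:control} applied to a nearby initial point in $D$ rather than from time reversal.
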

\begin{proof}[Proof of Lemma \ref{lem:g0-to-sets-lower-bound}]
  Using Theorem \ref{thm:control} and the fact that $X^0_{x*}(t)=x_*$ for all $t>0$, by choosing $\rho>0$ small enough we can guarantee that for any $x \in g_0$ there exists $\varphi_1^x \in C([0,\rho]:E)$ such that $\varphi_1^x(0)=x$, $\varphi_1^x(\rho)=x_*$, and $I_x^\rho(\varphi^x_1)< \gamma/3$.

  If $\tilde{V}^{r}_D(x_*, y) =+\infty$, then \eqref{eq:g0-to-gi-low} holds trivially. If $\tilde{V}^{r}_D(x_*, y)<+\infty$, then there exists $T_{2}>0$ and $\varphi_2 \in C([0,T_{2}]:E)$ such that $\varphi_2(0) = x_*$, $\varphi_2(T_{2}) =y$, $\varphi_2(t) \in D\cup B(y,r) $ for all $t \in [0,T_{2}]$, and $I_{x_*}^{T_{2}}(\varphi_2)<\tilde{V}^{r}_D(x_*, y) + \gamma/3$.

 Let us now set $T^{*}_{2}=\inf\{t\in[0,T_{2}]: \varphi_{2}(t)\in\partial D\}$, i.e., the first time that $\varphi_{2}$ reaches $\partial D$.   Because $\varphi_2(t)\in D\cup B(y,r)$, we have that $\varphi_{2}(T^{*}_{2})\in K^{r}_{N+1}$.  In addition, since for all $i=1,\cdots, N$, $K_{i}\subset \partial D$, we get that for all $i=1,\cdots, N$ and all $t\in[0,T^{*}_{2}]$, $\varphi_{2}(t)\notin K_{i}$. Thus, for $\rho$ sufficiently small we can  also have that for all $i=1,\cdots, N$ and all $t\in[0,T^{*}_{2}]$, $\varphi_{2}(t)\notin G_{i}$.

 Let us set now $T=\rho+T_{2}$ and for $x\in g_{0}$ define
 \[\varphi^x(t) :=\begin{cases}
    \varphi_1^x(t) & \text{ if } t \in [0,\rho)\\
    \varphi_2(t-\rho) & \text{ if } t \in [\rho, \rho+T_{2}],
  \end{cases}\]
 and note that
 \[
 I_{x_*}^{T}(\varphi^x)<\tilde{V}^{r}_D(x_*, y) + 2\gamma/3.
 \]

  There exists $a\in(0,\rho)$ sufficiently small such that  any path $\psi \in C([0,T]:E)$ with the property that $|\psi - \varphi^x|_{C([0,T]:E)}<a$ has the property that $\psi(t)\notin g_i$ for $t\in[0,T]$ and $i=1,\cdots, N$. Such a $\psi$ will enter $g^{r}_{N+1}$ before touching any other $g_i$. Thus, the large deviations lower bound \eqref{eq:ldp-low} yields
\begin{align}
    &\liminf_{\e \to 0} \inf_{x \in g_0} \e \log \Pro(Z_1 \in g^r_{N+1} | Z_0 = x)  \nonumber\\
    &\qquad\geq \liminf_{\e \to 0} \inf_{x \in g_0} \e \log \Pro(|X^{\e}_{x}-\varphi^x|_{C([0,T]:E)}<a)\nonumber\\
    &\qquad  \geq  -(\tilde{V}^{r}_D(x_*, y) + 2\gamma/3)\nonumber
  \end{align}
  proving \eqref{eq:g0-to-gi-low}.

   The argument for \eqref{eq:g0-to-bdry-up} is basically the same as that of \eqref{eq:gi-to-gj-up} and \eqref{eq:g0-to-gi-up} and thus we omit its proof.
\end{proof}

\begin{lemma} \label{lem:easy-exit-near-bdry}
  For any $\gamma,\delta>0$, there exists $\rho_0\in(0,\delta)$ such that for all $r,\rho>0$ with $r+\rho<\rho_{0}$,
  \begin{equation} \label{eq:easy-exit-near-bdry}
    \liminf_{\e \to 0} \inf_{x \in g^{r}_{N+1}}\e \log  \Pro(|X^\e_x(\tau^\e_x) - y|< \delta) \geq -\gamma.
  \end{equation}
\end{lemma}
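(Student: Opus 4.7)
The plan is to construct a controlled path $\varphi^x$ from $x$ that exits $D$ at a point within $\delta$ of $y$ with action strictly less than $\gamma$, and then apply the uniform large deviations lower bound \eqref{eq:ldp-low}. Fixing $\gamma,\delta>0$, I would first pick a reference point $y'\notin\bar D$ with $|y'-y|_E<\delta/8$; such a $y'$ exists in the generic situation where $y\in\partial D$ is accessible from outside $\bar D$. By Assumption \ref{A:AttractionProperty} together with Remark \ref{rem:stay-away}, the unperturbed trajectory $X^0_{y'}$ remains in $(\bar D)^c$ for every $t\geq 0$, and because $t\mapsto X^0_{y'}(t)$ is $E$-continuous, for $T>0$ small enough one has $|X^0_{y'}(t)-y'|_E<\delta/8$ on $[0,T]$ and $a:=\dist_E(X^0_{y'}(T),\bar D)>0$.

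Next I would invoke the controllability result, Theorem \ref{thm:control}, with reference path $X^0_{y'}$ (taking $u\equiv 0$), radius $R=|y'|_E+1$, $N=0$, and action tolerance $\gamma$, to obtain $\delta_1>0$. I would then choose $\rho_0\in(0,\delta)$ small enough that $\rho_0+\delta/8<\min(\delta_1,\delta/4)$, possibly enlarging $T$ so that $T>\delta_1$ while preserving the bounds of the previous paragraph. For any $x\in g^r_{N+1}$ with $r+\rho<\rho_0$, picking $z\in K^r_{N+1}$ with $|x-z|_E<\rho$ gives
\[
|x-y'|_E\leq |x-z|_E+|z-y|_E+|y-y'|_E<\rho+r+\delta/8<\delta_1,
\]
so Theorem \ref{thm:control} produces $v\in L^2([0,T]:H)$ with $\tfrac{1}{2}|v|^2_{L^2([0,T]:H)}<\gamma$ and $X^{0,v}_x(t)=X^0_{y'}(t)$ for $t\in[|x-y'|_E,T]$. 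Setting $\varphi^x:=X^{0,v}_x$, the feedback construction in the proof of Theorem \ref{thm:control} enforces $|\varphi^x(t)-X^0_{y'}(t)|_E\leq |x-y'|_E$ for all $t\in[0,T]$, which combined with the bound on $|X^0_{y'}(t)-y'|_E$ gives $|\varphi^x(t)-y|_E<\delta/2$ throughout $[0,T]$, while $\varphi^x(T)=X^0_{y'}(T)$ sits at distance $a$ from $\bar D$.

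Finally, since $g^r_{N+1}\subseteq B(y,r+\rho)$ is bounded in $E$ and $\varphi^x\in\Phi_x^T(\gamma)$, the uniform lower bound \eqref{eq:ldp-low} yields
\begin{equation*}
\liminf_{\e\to 0}\inf_{x\in g^r_{N+1}}\e\log\Pro\bigl(|X^\e_x-\varphi^x|_{C([0,T]:E)}<a/4\bigr)\geq -\gamma.
\end{equation*}
On this event $X^\e_x(T)$ lies at least $3a/4$ from $\bar D$, forcing $\tau^\e_x\leq T$, and
\[
|X^\e_x(\tau^\e_x)-y|_E\leq |X^\e_x(\tau^\e_x)-\varphi^x(\tau^\e_x)|_E+|\varphi^x(\tau^\e_x)-y|_E<a/4+\delta/2<\delta
\]
after further shrinking $T$ (and hence $a$) so that $a<2\delta$. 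The claim then follows from the inclusion of events. The main technical point is ensuring all estimates are uniform in $x\in g^r_{N+1}$, which works because $\delta_1$, $T$, $y'$, and the action bound depend only on $(y,\gamma,\delta)$ and not on the particular $x$; the most delicate issue, were $y$ an interior boundary point of $\bar D$ with no $(\bar D)^c$-points nearby, would require a different controlled construction, but for the generic $y$ at which the lower bound \eqref{eq:shape-LDP-lower} is meaningful the argument above is sufficient.
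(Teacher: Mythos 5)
Your proposal is correct and follows essentially the same route as the paper: pick a point $y'\notin\bar D$ near $y$, note its unperturbed flow stays outside $\bar D$ by Assumption \ref{A:AttractionProperty}, connect any $x\in g^r_{N+1}$ to that flow via Theorem \ref{thm:control} with action below $\gamma$, and apply the uniform LDP lower bound on a tube of radius smaller than half the distance from the path's endpoint to $\bar D$. The only cosmetic difference is that the paper bounds $|\varphi^x(t)-y|_E<\delta/2$ via Lemma \ref{lem:control-prob-cont} whereas you use the contraction property of the feedback control directly; both work, and the existence of $y'\notin\bar D$ near $y$ that you flag as delicate is asserted in the paper at the same level of rigor.
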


\begin{proof}
    Let $\gamma>0, \delta>0$. By Lemma \ref{lem:control-prob-cont}, there exist $\rho_0>0$ and $T_0>0$ such that whenever $X^{0,u}_x$ is a controlled trajectory with $\frac{1}{2}\int_0^{T_0}|u(s)|_H^2ds< \gamma$, $|x-y|_E<\rho_0$, then whenever $t \in [0,T_0]$,
  \begin{equation} \label{eq:close-to-y}
    |X^{0,u}_x(t) - y|_E < \delta/2.
  \end{equation}

  We can choose $\rho_0<T_0$. Furthermore, Theorem \ref{thm:control} guarantees that by possibly decreasing $\rho_0$ and letting $r+\rho<\rho_0$, we can construct a path connecting any $x \in g^{r}_{N+1}$ to $X^0_z(2\rho_0)$ as long as $|x-z|_E< 2\rho_0$. By Assumption \ref{assum:D-boundary} and the fact that $y \in \partial D$, there exists $z \not \in \bar D$ such that $|y-z|_E<\rho_0$. By Theorem \ref{thm:control}, for any $x \in g^{r}_{N+1}$ there exists $\varphi^x$ such that $\varphi^x(0)=x$, $\varphi^x(2\rho_0) = X^0_z(2\rho_0)$ and $I_x^{2\rho_0}(\varphi^x) <\gamma$. By \eqref{eq:close-to-y} we also know that $|\varphi^x(t) - y|_E<\delta/2$ for all $t \in [0,2\rho_0]$. By Assumption \ref{A:AttractionProperty} and Remark \ref{rem:stay-away}, $\varphi^x(2\rho_0) = X^0_z(2\rho_0) \not \in \bar D$.

   Let $0<a< \frac{1}{2} \min\{\dist_E(X^0_z(2\rho_{0}), D), \delta\}$.  Then, we get the estimate
   \begin{equation*}
     \Pro(|X^\e_x(\tau^\e_x) - y|_E<\delta) \geq \Pro(|X^\e_x - \varphi^x|_{C([0,2\rho_{0}]:E)} < a).
   \end{equation*}

   If $X^\e_x$ follows near $\varphi^x$, then it definitely exits $D$ and it stays within $\delta$ of $y$ the whole time. By the LDP lower bound,
   \begin{equation*}
     \liminf_{\e \to 0} \inf_{x \in g^{r}_{N+1}} \e \log \Pro(|X^\e_x(\tau^\e_x) - y|< \delta) \geq -\gamma.
   \end{equation*}
\end{proof}

Let us recall now the definition
\begin{equation*} 
  \eta_0  = \min\{n \in \mathbb{N}: Z_n \not \in g_0\}.
\end{equation*}

\begin{lemma}
For any $\gamma,\delta>0$, there exists $\rho_0\in(0,\delta)$ such that for all $r,\rho>0$ with $r+\rho<\rho_{0}$,
 \begin{equation} \label{eq:Z-eta-lower}
    \liminf_{\e \to 0} \inf_{x \in g_0} \e \log \Pro(Z_{\eta_0} \in g^{r}_{N+1} | Z_0 =x) \geq -(\tilde{V}^{r}_D(x_*, y) - V(x_*, \partial D) + \gamma).
  \end{equation}
\end{lemma}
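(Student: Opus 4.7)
The plan is to decompose the event $\{Z_{\eta_0} \in g^r_{N+1}\}$ according to the value of $\eta_0 \in \{1, 2, 3, \ldots\}$ and use the strong Markov property of $Z_n$ (which follows from that of $X^\e$ at the stopping times $\tau_n$) to obtain a geometric-series lower bound. The key observation is that from any $x \in g_0$ the chain has probability at least
\[p_\e := \inf_{x \in g_0} \Pro(Z_1 \in g^r_{N+1} \,|\, Z_0 = x)\]
of jumping directly into $g^r_{N+1}$, while it escapes $g_0$ with total probability at most
\[q_\e := \sup_{x \in g_0} \Pro(Z_1 \notin g_0 \,|\, Z_0 = x).\]
Summing over the (geometric) number of trials that occur before the chain leaves $g_0$ boosts the single-step estimate by a factor of $1/q_\e$, which on the exponential scale contributes exactly the gain of $V(x_*, \partial D)$ compared with the bare bound \eqref{eq:g0-to-gi-low}.

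First I would apply Lemma \ref{lem:g0-to-sets-lower-bound} with $\gamma$ replaced by $\gamma/2$ to select $\rho_0 \in (0, \delta)$ so that for all $r, \rho > 0$ with $r + \rho < \rho_0$ both
\[\liminf_{\e \to 0} \e \log p_\e \geq -\tilde{V}^r_D(x_*, y) - \gamma/2, \qquad \limsup_{\e \to 0} \e \log q_\e \leq -V(x_*, \partial D) + \gamma/2\]
hold; the common threshold is obtained by taking the minimum of the two thresholds produced by Lemma \ref{lem:g0-to-sets-lower-bound}. If $\tilde V^r_D(x_*, y) = +\infty$ the conclusion is vacuous, so I assume it is finite; then $p_\e > 0$ for all small $\e$, and $q_\e \in (0, 1)$ as well, since for small $\rho$ the sets $g_0$ and $g^r_{N+1}$ are disjoint (giving $p_\e \leq q_\e$) and escape from $g_0$ eventually happens almost surely.

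Next, for any $x \in g_0$, I would partition and use the strong Markov property at the stopping times $\tau_n$ to write
\[\Pro(Z_{\eta_0} \in g^r_{N+1} \,|\, Z_0 = x) = \sum_{n=1}^{\infty} \Pro\bigl(Z_1, \ldots, Z_{n-1} \in g_0,\ Z_n \in g^r_{N+1} \,\big|\, Z_0 = x\bigr).\]
A straightforward induction on $n$ using $\Pro(Z_{k+1} \in g_0 \,|\, Z_k = x') \geq 1 - q_\e$ for every $x' \in g_0$ gives
\[\Pro(Z_1, \ldots, Z_{n-1} \in g_0 \,|\, Z_0 = x) \geq (1 - q_\e)^{n-1},\]
and conditioning on $Z_{n-1} \in g_0$ contributes a further factor of at least $p_\e$ to the last step. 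Summing the geometric series yields the uniform lower bound
\[\inf_{x \in g_0} \Pro(Z_{\eta_0} \in g^r_{N+1} \,|\, Z_0 = x) \geq \sum_{n=1}^{\infty} p_\e (1 - q_\e)^{n-1} = \frac{p_\e}{q_\e}.\]

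Finally, taking $\e \log$ of this inequality and using $\liminf_\e(a_\e - b_\e) \geq \liminf_\e a_\e - \limsup_\e b_\e$, the two asymptotic bounds on $p_\e$ and $q_\e$ combine to give
\[\liminf_{\e \to 0} \inf_{x \in g_0} \e \log \Pro(Z_{\eta_0} \in g^r_{N+1} \,|\, Z_0 = x) \geq -\tilde V^r_D(x_*, y) + V(x_*, \partial D) - \gamma,\]
which is exactly \eqref{eq:Z-eta-lower}. The argument is short and the main (minor) technical obstacle is ensuring that a single threshold $\rho_0$ works simultaneously for the two estimates of Lemma \ref{lem:g0-to-sets-lower-bound} and that the strong Markov property is applied to $Z_n$ with care; conceptually, the content is the standard Freidlin–Wentzell idea that conditioning on escape from a metastable set converts a sum of rare events into the quotient of their exponential scales.
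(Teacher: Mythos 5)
Your proposal is correct and follows essentially the same argument as the paper: decompose over the value of $\eta_0$, apply the strong Markov property to bound each term below by $(1-q_\e)^{n-1}p_\e$, sum the geometric series to get $p_\e/q_\e$, and conclude from the two one-step estimates \eqref{eq:g0-to-gi-low} and \eqref{eq:g0-to-bdry-up} of Lemma \ref{lem:g0-to-sets-lower-bound}. The extra care you take with the infinite-quasipotential case and with choosing a single $\rho_0$ for both estimates is fine but not needed beyond what the paper already does.
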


\begin{proof}
Decomposing the event of interest over all possible values of $\eta_{0}$ gives,
 \begin{align*}
    &\inf_{x \in g_0}\Pro(Z_{\eta_0} \in g^{r}_{N+1} | Z_0=x) = \inf_{x \in g_0} \sum_{k=1}^\infty \Pro(\eta_0 =k, Z_k \in g^r_{N+1} | Z_0=x) \\
    &\geq\sum_{k=1}^\infty \left( \inf_{x \in g_0} \Pro(Z_1 \in {g_0} | Z_0=x)\right)^{k-1} \left(\inf_{x \in g_0} \Pro(Z_1 \in g_{N+1}^r | Z_0=x) \right)\\
    &\geq \frac{\inf_{x \in g_0} \Pro(Z_1 \in g^r_{N+1} | Z_0=x)}{\sup_{x \in g_0} \Pro(Z_1 \not \in g_0 | Z_0 = x) }
  \end{align*}

  Then \eqref{eq:Z-eta-lower} follows from \eqref{eq:g0-to-gi-low} and \eqref{eq:g0-to-bdry-up}.
\end{proof}

Now, we can finally proceed with the proof of the lower bound \eqref{eq:shape-LDP-lower} for general $x \in D$.
\begin{proof}[Proof of \eqref{eq:shape-LDP-lower}]
Fix  $y \in \partial D$, $\delta>0$, and $\gamma>0$. We observe that for any $z \in g_0$,
\begin{equation*}
  \left\{|X^\e_z(\tau^\e_z) -y|<\delta  \right\} \supset \left\{ Z_{\eta_0} \in g^{r}_{N+1} \text{ and } |X^\e_{z}(\tau^\e_z)-y|<\delta \right\}.
\end{equation*}
By the strong Markov property,
\begin{align*}
  &\inf_{z \in g_0}\Pro\left( |X^\e_z(\tau^\e_z) - y|<\delta\right) \nonumber\\
  &\geq  \inf_{z\in g_0}\Pro \left(Z_{\eta_0} \in g^{r}_{N+1} |Z_0=z \right) \inf_{z_2 \in g^{r}_{N+1}} \Pro(|X^\e_{z_2}(\tau^\e_{z_2})-y|<\delta).
\end{align*}

{It follows from \eqref{eq:Z-eta-lower} and \eqref{eq:easy-exit-near-bdry}, that for small enough $\rho,r>0$,}
\begin{equation} \label{eq:exit-shape-lower-g_0}
  \lim_{\e \to 0} \inf_{z \in g_0} \e \log \Pro\left( |X^\e_z(\tau^\e_z) - y|<\delta\right) \geq -(\tilde{V}^{r}_D(x_*, y) - V(x_*, \partial D)) - \gamma.
\end{equation}

We can use the strong Markov property to show that \eqref{eq:exit-shape-lower-g_0} holds for any initial condition $x \in D$, and not just for $x \in g_0$. Let $x \in D \setminus \{x_*\}$ and let $\rho,r>0$ be small enough for \eqref{eq:exit-shape-lower-g_0} to hold. Possibly decrease the value of $\rho$ to guarantee that $\dist_E\left(x, \partial D \cup \{x_*\}\right) > 2\rho$, guaranteeing in particular that $x \not \in g_i$ for $i \in \{0,1,...,N+1\}$.
Because $x \in D$ and $D$ is a domain of attraction for $x_*$,
\[\lim_{\e \to 0} \Pro(X^\e_x(\tau_1) \in g_0)=1.\]
Then by \eqref{eq:exit-shape-lower-g_0} and the strong Markov property,
\begin{align}
  &\liminf_{\e \to 0}  \e \log \Pro(|X^\e_x(\tau^\e_x)-y|_E < \delta) \nonumber \\
  &\geq \liminf_{\e \to 0} \e \log\left( \Pro(X^\e_x(\tau_1) \in g_0)\inf_{z \in g_0} \Pro(|X^\e_z(\tau^\e_z)-y|_E < \delta)\right)\nonumber\\
  &\geq -(\tilde{V}^{r}_D(x_*, y) - V(x_*, \partial D)) - \gamma.\nonumber
\end{align}

The result follows because $\tilde{V}^{r}_D(x_*, y)\leq \tilde{V}_D(x_*, y)$ for all $r>0$ and because $\gamma>0$ was arbitrarily chosen.
\end{proof}
\section{LDP for exit shape under extra assumptions}\label{S:ExitShape_AdditionalCond}

In Section \ref{S:ExitShape} we proved Theorem \ref{thm:exit-shape}, which characterizes the large deviations lower and upper bounds for the exit shape $X^\e_x(\tau^\e_x)$. In this section, we show that under additional conditions the lower and upper bounds match, leading to a large deviations principle for {the exit shape $X^\e_x(\tau^\e_x)$.} 

As we mentioned in Remark \ref{rem:Why-hat}, Lemma \ref{lem:V-lower-bound} is the only step of the proof of the LDP upper bound for the exit time \eqref{eq:shape-LDP-upper} that requires us to use $\hat{V}_D$ instead of $\tilde{V}_D$.  Analogously to our setup section \ref{SSS:Shape-LDP-up}, but defining sets in terms of $\tilde{V}_D$ instead of $\hat{V}_D$, for $s\geq 0$ and $\delta>0$ let
\[K_{N+1}:=\left\{y \in \partial D: \dist_E(y, \tilde{\Psi}(s))\geq \delta \right\}\]
and set
\begin{align*}
  \tilde L_0&:=V(x_*,\partial D)\\
  \tilde L_i&:= \tilde{V}_D(x_*, K_i), \ \  i \in \{1,...,N+1\}
\end{align*}
ordered so that $L_i$ is increasing.

\begin{assumption} \label{assum:extra}
  Assume that we can prove the following strengthened version of Lemma \ref{lem:V-lower-bound}. For any $i \in \{1,...,N\}$ and $j \in \{1,....,N+1\}$,
  \begin{equation} \label{eq:suff-cond-for-shape-ldp}
    \tilde{V}_D(K_i,K_j) \geq \tilde L_j - \tilde L_i. 
\end{equation}
\end{assumption}
Under Assumption \ref{assum:extra}, we can prove the following strengthened version of Lemma \ref{lem:Z_zeta} using the same proof.

\begin{lemma} \label{lem:Z_zeta-extra}
  For any $\gamma>0$ there exists $\rho_0>0$ such that for all $\rho\in (0,\rho_0)$ and for any $0 \leq j \leq i \leq N$
  \begin{equation}
    \limsup_{\e \to 0} \sup_{x \in g_i} \e \log \Pro(Z_{\zeta_j} \in g_{N+1} | Z_0 =x) \leq -(\tilde{L}_{N+1} - \tilde{L}_i - \gamma).
  \end{equation}
\end{lemma}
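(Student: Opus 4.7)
The plan is to repeat the induction-style argument used in the proof of Lemma \ref{lem:Z_zeta} verbatim, with two substitutions: replace $L_i$ (defined via $\hat V_D$) by $\tilde L_i$ (defined via $\tilde V_D$), and in every place where the old proof invoked the key inequality \eqref{eq:V-lower-bound}, substitute Assumption \ref{assum:extra}, i.e.\ \eqref{eq:suff-cond-for-shape-ldp}. The point is that the only role of the quasipotential $\hat V_D$ in the proof of Lemma \ref{lem:Z_zeta} was to guarantee the inequality $\tilde V_D(K_i,K_j)\ge L_j-L_i$; once this inequality is available with $\tilde L$ in place of $L$, the rest of the argument carries through unchanged.

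First I would handle the base case $j=i=N$. Since $\zeta_N=\eta_N$, the uniform bound \eqref{eq:Z-eta-upper-gi} gives
\[
\limsup_{\e\to 0}\sup_{x\in g_N}\e\log\Pro(Z_{\zeta_N}\in g_{N+1}\mid Z_0=x)\le -\tilde V_D(K_N,K_{N+1})+\gamma,
\]
and Assumption \ref{assum:extra} with $i=N$, $j=N+1$ yields $\tilde V_D(K_N,K_{N+1})\ge \tilde L_{N+1}-\tilde L_N$, proving the claim. For the inductive step, assume the bound holds for $j=l+1$ and all $i\in\{l+1,\dots,N\}$, and prove it for $j=i=l$. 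Decompose the event $\{Z_{\zeta_l}\in g_{N+1}\}$ by counting the number $k=0,1,2,\dots$ of excursions from $g_l$ into $\bigcup_{i=l+1}^N g_i$ and back, followed by either a direct transition $g_l\to g_{N+1}$ or a transition $g_l\to g_i$ for some $i>l$ followed by $\zeta_{l+1}$ hitting $g_{N+1}$. This produces exactly the decomposition \eqref{eq:Z_zeta-decomposition}.

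The geometric series in $k$ is bounded, as in \eqref{eq:inf-sum-bound}, by $e^{\gamma/(3\e)}$, using \eqref{eq:gi-to-bdry-low} to show that each $\zeta_{l+1}$-excursion has probability at least $e^{-\gamma/(3\e)}$ of landing in $\partial D$ rather than returning to $g_l$. The finite sum is bounded termwise using \eqref{eq:Z-eta-upper-gi} (or \eqref{eq:Z-eta-upper-g0} when $l=0$), which gives
\[
\sup_{x\in g_l}\Pro(Z_{\eta_l}\in g_i\mid Z_0=x)\le e^{-(\tilde V_D(K_l,K_i)-\gamma/3)/\e},
\]
and by Assumption \ref{assum:extra} the exponent is at least $(\tilde L_i-\tilde L_l-\gamma/3)/\e$. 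Combining with the inductive hypothesis on $g_i$ and multiplying, each term is bounded by $\exp(-(\tilde L_{N+1}-\tilde L_l-\gamma)/\e)$, which gives the desired rate. The case $j=l$, $i\in\{l+1,\dots,N\}$ follows by the identical decomposition: the process either reaches $g_{N+1}$ before entering $g_l$ (inductive hypothesis at $\zeta_{l+1}$) or first hits $g_l$ and then applies the bound just proved for $j=i=l$.

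The arithmetic works out because Assumption \ref{assum:extra} is precisely the ingredient needed to convert transition-probability exponents $\tilde V_D(K_i,K_j)$ into differences $\tilde L_j-\tilde L_i$ that telescope along the paths of the Markov chain. The main potential obstacle — a mismatch between the quantity controlled by the uniform LDP upper bound (namely $\tilde V_D$ via \eqref{eq:Z-eta-upper-gi}) and the quantity entering the level-set rate (namely $\tilde L_i$) — is exactly what \eqref{eq:suff-cond-for-shape-ldp} removes, so there is no further difficulty beyond the bookkeeping already present in the proof of Lemma \ref{lem:Z_zeta}.
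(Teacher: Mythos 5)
Your proposal is correct and is essentially the paper's own argument: the paper simply states that Lemma \ref{lem:Z_zeta-extra} follows "using the same proof" as Lemma \ref{lem:Z_zeta}, with Assumption \ref{assum:extra} playing the role that Lemma \ref{lem:V-lower-bound} played there, which is exactly the substitution you carry out. Your write-up just makes explicit the base case, the inductive decomposition, and where \eqref{eq:suff-cond-for-shape-ldp} replaces \eqref{eq:V-lower-bound}, all of which match the paper's intended argument.
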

Then we can prove under Assumption \ref{assum:extra} that the exit shape will satisfy a large deviations principle with respect to the rate function $\tilde{J}$.

\begin{theorem} \label{thm:exit-shape_LDP}
  In addition to the assumptions for Theorem \ref{thm:exit-shape} let Assumption \ref{assum:extra} hold. Then the exit shape $X^\e_x(\tau^\e_x)$ satisfies a large deviations principle with rate function  $\tilde{J}: \partial D \to [0,\infty]$ as defined in  Theorem \ref{thm:exit-shape}.
\end{theorem}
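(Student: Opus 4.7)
The plan is to establish the large-deviations upper bound with rate function $\tilde J$; the matching lower bound is already contained in Theorem \ref{thm:exit-shape}. Writing $\tilde\Psi(s):=\{y\in\partial D:\tilde J(y)\le s\}$, the target inequality is
\begin{equation*}
  \limsup_{\e\to 0}\e\log\Pro(\dist_E(X^\e_x(\tau^\e_x),\tilde\Psi(s))\ge\delta)\le -s
\end{equation*}
for each fixed $s,\delta>0$ and $x\in D$.

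First I would rerun the construction of Section \ref{SSS:Shape-LDP-up} with $\tilde V_D$ in place of $\hat V_D$ throughout. Concretely, reorder the $V$-equivalence classes from Assumption \ref{A:K_equivalenceClassV} according to the values $\tilde L_i:=\tilde V_D(x_*,K_i)$ in increasing order, set $\tilde L_0:=V(x_*,\partial D)$ and $K_{N+1}:=\{y\in\partial D:\dist_E(y,\tilde\Psi(s))\ge\delta\}$, so that $\tilde L_{N+1}:=\tilde V_D(x_*,K_{N+1})\ge \tilde L_0+s$. Build the neighborhoods $g_i,G_i$, the stopping times $\sigma_k,\tau_k,\eta_i,\zeta_i$, and the Markov chain $Z_n=X^\e_x(\tau_n)$ exactly as before.

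Next, note that the transition estimates of Lemma \ref{lem:g0-to-sets} and their consequences \eqref{eq:Z-eta-upper-gi}, \eqref{eq:Z-eta-upper-g0}, \eqref{eq:Z-eta-lower-gi} are already phrased in terms of $\tilde V_D$, so they transfer without modification. The only moment in Section \ref{SSS:Shape-LDP-up} where one was forced to replace $\tilde V_D$ by $\hat V_D$ was Lemma \ref{lem:V-lower-bound}, whose conclusion was precisely what fed the inductive bookkeeping proof of Lemma \ref{lem:Z_zeta}. Assumption \ref{assum:extra} asserts exactly the analogous inequality $\tilde V_D(K_i,K_j)\ge \tilde L_j-\tilde L_i$ directly; plugging this into the same induction as in the proof of Lemma \ref{lem:Z_zeta} yields Lemma \ref{lem:Z_zeta-extra}. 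I would then conclude exactly as at the end of Section \ref{SSS:Shape-LDP-up}: for $x\in g_0$ the event $\{\dist_E(X^\e_x(\tau^\e_x),\tilde\Psi(s))\ge\delta\}$ is contained in $\{Z_{\zeta_0}\in g_{N+1}\}$, so Lemma \ref{lem:Z_zeta-extra} with $i=j=0$ gives a $\limsup$ bounded by $-(\tilde L_{N+1}-\tilde L_0-\gamma)\le -(s-\gamma)$; for a general $x\in D\setminus g_0$, since $D$ is the domain of attraction of $x_*$ we have $\Pro(X^\e_x(\tau_1)\in g_0)\to 1$, and the strong Markov property at $\tau_1$ extends the bound. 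Letting $\gamma\to 0$ finishes the argument.

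The deduction above is essentially bookkeeping once Assumption \ref{assum:extra} is granted, so the genuine difficulty is hidden inside that assumption rather than in the derivation of the LDP. The inequality \eqref{eq:suff-cond-for-shape-ldp} requires one to concatenate an almost-optimal path for $\tilde V_D(x_*,K_i)$ with an almost-optimal path for $\tilde V_D(K_i,K_j)$ while remaining in $D\cup B(x_*,\rho)\cup B(K_j,\rho)$, but the intermediate visit to $K_i\subset\partial D$ is inadmissible for $\tilde V_D^\rho(x_*,K_j)$ without room to pass through a neighborhood of $K_i$\,---\,which is exactly why the looser $\hat V_D$, carrying the extra $B(K_i,\rho)$-tolerance in \eqref{eq:hat-V-D_rho}, appears in the unconditional theorem. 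The main obstacle, therefore, is isolating geometric or controllability conditions on $\partial D$ and on neighborhoods of the $K_i$ that allow such concatenations to be rerouted without the extra $B(K_i,\rho)$ freedom; under any such conditions \eqref{eq:suff-cond-for-shape-ldp} holds and the LDP follows from the transfer argument above.
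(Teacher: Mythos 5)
Your proposal is correct and follows essentially the same route as the paper: the paper's own (very terse) proof likewise observes that the only place $\hat V_D$ was needed in Section \ref{SSS:Shape-LDP-up} was Lemma \ref{lem:V-lower-bound}, substitutes Assumption \ref{assum:extra} for it to obtain Lemma \ref{lem:Z_zeta-extra} by the identical induction, and then repeats the concluding Markov-chain argument, with the lower bound already supplied by Theorem \ref{thm:exit-shape}. Your closing discussion of why the concatenation through $K_i\subset\partial D$ forces the extra $B(K_i,\rho)$-tolerance in the unconditional result also matches the paper's own commentary.
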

Theorem \ref{thm:exit-shape_LDP} follows from Lemma \ref{lem:Z_zeta-extra} using the arguments from section \ref{SSS:Shape-LDP-up}.

We conclude by discussing what is required for Assumption \ref{assum:extra} to hold.
In the finite dimensional situations considered by \cite{FWbook} and \cite{Day1990}, the authors assumed that the boundary of the attracting set was sufficiently smooth. Up to this point in this paper we have not made any assumptions about the regularity of the $\partial D$ and we were able to prove Theorems \ref{thm:meanExitAsymptotics} and \ref{thm:exit-shape} without assuming boundary regularity. One necessary ingredient for Assumption \ref{assum:extra} to hold, however,  is that the boundary is sufficiently regular near the $K_i$. Interestingly, our formulation of $\tilde{V}_D$ makes it unnecessary to impose any regularity assumptions on the rest of $\partial D$. Of course, characterizing boundary regularity is quite a challenge in infinite dimensional settings.

If one can formulate a necessary condition for boundary regularity near the $K_i$, then we would also need a version of Theorem \ref{thm:control} that can connect initial conditions near $K_i$ to controlled trajectories that start near $K_i$ in a way where those trajectories do not leave $D$ and do not increase the rate function by much. We proved Lemma \ref{lem:V-lower-bound} by building a trajectory that connects approximately minimizing trajectories of $\hat{V}_D(x_*, K_i)$ and $\tilde{V}_D(K_i,K_j)$ in such a way that the connecting trajectory has an arbitrarily low rate function and stays arbitrarily close to $K_i$ (but possibly exits $D$). In order for Assumption \ref{assum:extra} to hold, we need to be able to build similar connecting trajectories with arbitrarily low rate function that do not leave $D$.

In the case of a stochastic Allen-Cahn equation studied in \cite{FarisLasinio1982}, Faris and Jona-Lasinio characterize the $K_i$ sets, which are all singleton sets of unstable equilibria. Furthermore,  Faris and Jona-Lasinio characterize the stable and unstable eigenfunctions of the second derivative  operator of the dynamics at these equilibria. We believe that in this specific case, one may be able to use these eigenfunctions to prove that the boundary of $D$ is sufficiently well-behaved near the $K_i$ and to prove that Assumption \ref{assum:extra} is valid for the stochastic Allen-Cahn equation. Ultimately such calculations are outside the scope of the present paper and we leave this for future work.

\appendix

\section
{Proof of Theorems \ref{T:BOUNDEDNESSPROPOFX} and \ref{THM:CONTROL-BOUNDS}} \label{App:MildSolution}

Let us first present the proof of Theorem \ref{T:BOUNDEDNESSPROPOFX}. In preparation for that let us prove some intermediate results. Let us  consider $\Phi \in C([0,T]:E)$. We study the solution $v^\Phi$ to
\[v^\Phi(t) = \int_0^t S(t-s) F(v^\Phi(s) + \Phi(s))ds.\]
\begin{theorem} \label{thm:v-properties}
  For any $\Phi \in C([0,T]:E)$, $v^\Phi$ is well-defined.
  \begin{enumerate}
    \item There exists $C>0$ such that for any $T>0$,
    \begin{equation} \label{eq:v-bound-1}
      |v^\Phi|_{C([0,T]:E)} \leq C( 1 + |\Phi|_{C([0,T]:E)}).
    \end{equation}
    \item For any $T>0, R>0, p\in [1,\infty]$ there exists $C= C(T,R,p)$ such that for any $\Phi_1, \Phi_2 \in C([0,T]:E)$ satisfying $|\Phi_i|_{C([0,T]:E)} \leq R$ for $i=1,2$,
    \begin{equation*}
      |v^{\Phi_1} - v^{\Phi_2}|_{C([0,T]:E)} \leq C |\Phi_1 - \Phi_2|_{L^p([0,T]:E)}.
    \end{equation*}
  \end{enumerate}
\end{theorem}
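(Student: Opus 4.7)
The plan is to handle the two parts separately, first establishing local existence and uniqueness by a standard Banach fixed-point argument, then promoting local to global via the a priori bound (1), and finally reading off the continuity estimate (2) from local Lipschitz continuity of $F$ and Gronwall. For local existence, I would set up the map $\mathcal{T}v(t) := \int_0^t S(t-s)F(v(s)+\Phi(s))\,ds$ on a closed ball in $C([0,T^*]:E)$: since the semigroup $S(t)$ is a contraction on $E$ (the maximum principle applies to the elliptic operator with Dirichlet boundary conditions, so $|S(t)|_{\mathscr{L}(E)}\leq 1$) and $F$ is locally Lipschitz, choosing the ball radius in terms of $|\Phi|_{C([0,T]:E)}$ and then $T^*$ small enough makes $\mathcal{T}$ a strict contraction.

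The core of the argument is the a priori bound \eqref{eq:v-bound-1}, which simultaneously produces a global solution by continuation. The strategy is to combine the dissipativity of $F$ in \eqref{eq:F-dissip} with the dissipativity of $A$ in $E$. Formally, if $v=v^\Phi$ were strong, then for $\delta_t \in \partial|v(t)|_E$ one has
\[\frac{d^-}{dt}|v(t)|_E \leq \bigl\langle Av(t)+F(v(t)+\Phi(t)),\delta_t\bigr\rangle,\]
the $A$-term is non-positive by dissipativity, and applying \eqref{eq:F-dissip} with $x=\Phi(t)$, $h=v(t)$ together with the growth bound \eqref{eq:F-growth} applied to $F(\Phi(t))$ yields
\[\frac{d^-}{dt}|v(t)|_E \leq -\lambda|v(t)|_E^{1+\rho} + C\bigl(1+|\Phi|_{C([0,T]:E)}^{1+\rho}\bigr).\]
Comparison with the scalar ODE $\dot u = -\lambda u^{1+\rho} + \text{const}$ (which has a globally attracting equilibrium of size $\leq C'(1+|\Phi|_{C([0,T]:E)})$) then gives \eqref{eq:v-bound-1} with a constant independent of $T$. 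Since $v^\Phi$ is only a mild solution, the subdifferential computation has to be legitimised; the standard route is to replace $A$ by its Yosida approximants $A_n = nA(nI-A)^{-1}$, carry out the dissipative estimate for the resulting strong solutions $v_n$ (which live in the domain of $A$), and pass to the limit $n\to\infty$ using the mild form and continuity of $F$ on bounded sets. This passage to the limit is the main technical point and I expect it to be the main obstacle; analogous justifications appear in \cite{CerraiRDEAveraging1}.

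For part 2, given $R>0$ and $|\Phi_i|_{C([0,T]:E)}\leq R$, part 1 bounds $|v^{\Phi_i}|_{C([0,T]:E)}\leq C(1+R)$, so $v^{\Phi_i}+\Phi_i$ stays in a fixed ball $\{|x|_E \leq M(R)\}$. Denoting by $\kappa=\kappa(M(R))$ the Lipschitz constant of $F$ on this ball and subtracting the two mild equations, the contraction property of $S(t)$ on $E$ gives
\[|v^{\Phi_1}(t)-v^{\Phi_2}(t)|_E \leq \kappa\int_0^t |v^{\Phi_1}(s)-v^{\Phi_2}(s)|_E\,ds + \kappa\int_0^t |\Phi_1(s)-\Phi_2(s)|_E\,ds.\]
Gronwall's inequality applied to the first integral, followed by H\"older's inequality on the $\Phi$-integral (converting the $L^1$ in time into an $L^p$ norm via $T^{1-1/p}$, with the convention that $T^{1-1/p}=T$ when $p=\infty$), yields the desired estimate with $C(T,R,p)$ of the form $\kappa T^{1-1/p}e^{\kappa T}$.
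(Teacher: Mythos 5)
Your proposal is correct and follows essentially the same route as the paper: the a priori bound comes from the subdifferential inequality $\frac{d^-}{dt}|v(t)|_E \leq -\lambda|v(t)|_E^{1+\rho}+C(1+|\Phi(t)|_E^{1+\rho})$ (dissipativity of $A$ plus \eqref{eq:F-dissip} and \eqref{eq:F-growth}) together with a comparison argument from $|v(0)|_E=0$, and part 2 comes from local Lipschitz continuity of $F$, Gronwall, and H\"older, with well-posedness concluded by a contraction mapping argument. Your additional remarks — that $|S(t)|_{\mathscr{L}(E)}\leq 1$ underlies the fixed-point setup, and that the formal differential inequality for the mild solution should be legitimised via Yosida approximants — are refinements of rigor on points the paper treats formally, not a different method.
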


\begin{proof}
   Let us set $v(t) := v^\Phi(t)$. Let $\delta_{v(t)} \in \partial |v(t)|_E$  such that
  \begin{align*}
    &\frac{d^-}{dt}|v(t)|_E \leq \left< Av(t) + F(v(t) + \Phi(t)), \delta_{v(t)} \right>_{E,E^\star}\\
    &\leq \left< A v(t) + F(v(t) + \Phi(t)) - F(\Phi(t)), \delta_{v(t)} \right>_{E,E^\star} + |F(\Phi(t))|_E
  \end{align*}

  By assumptions \eqref{eq:F-dissip} and \eqref{eq:F-growth}, and the dissipativity of $A$ we get
  \begin{equation*} 
    \frac{d^-}{dt}|v(t)|_E \leq - \lambda |v(t)|_E^{1 + \rho_{*}} + C(1 + |\Phi(t)|_E^{1+\rho_{*}}).
  \end{equation*}

   Let $\delta>0$ and let $$t_\delta = \inf\left\{t \in [0,T]: |v(t)|_E^{1 + \rho_*} \geq \frac{C}{\lambda} \left(1 + \sup_{s \in [0,T]}|\Phi(s)|_E^{1 + \rho_*} \right) + \delta\right\}.$$ Because $|v(0)|_E = 0$, $t_\delta>0$. Because  $t_\delta$ is, by definition, the first time that $|v(t)|_E$ achieves this level, $t \mapsto |v(t)|_E$ must be increasing at $t=t_\delta$. But at $t=t_\delta$
   \[\frac{d^-}{dt} |v(t_\delta)|_E \leq - \lambda |v(t)|_E^{1 + \rho_*} + C(1 + |\Phi|_{C([0,T]:E)}^{1 + \rho_*}) \leq -\delta, \]
   contradicting the fact that $t \mapsto |v(t)|_E$ is increasing. This contradiction proves that $t_\delta \not \in (0,T)$ and therefore, there exists a constant $C>0$ such that
   \[|v|_{C([0,T]:E)} \leq C \left( 1 +  |\Phi|_{C([0,T]:E)} \right) + \delta^{\frac{1}{1 + \rho_*}}.\]
   Because $\delta>0$ was arbitrary,
%
  there is a constant $C>0$ independent of $x$, $v$, $\Phi$, and $t$ such that
    \[|v(t)|_E \leq C \left( 1 +  |\Phi|_{C([0,t]:E)} \right).\]
 for all $t>0$.

   For the Lipschitz continuity, note that $F$ is locally Lipschitz.
   \begin{align}
    \frac{d^-}{dt}|v^{\Phi_1}(t) - v^{\Phi_2}(t)|_E &\leq \left< A(v^{\Phi_1}(t) - v^{\Phi_2}(t)) +  F(v^{\Phi_1}(t) + \Phi_1(t))\right.\nonumber\\
     &\quad \left.- F(v^{\Phi_2}(t) + \Phi_2(t)), \delta_t \right>_{E,E^{*}}\nonumber
    \end{align}
where $\delta_t \in \partial|v^{\Phi_1}(t) - \partial |v^{\Phi_2}(t)|_E$. Because $E$ is the space of continuous functions, $\delta_t$ is a convex combination of delta measures at maximizing $\xi  \in [0,L]$ and negative delta measures of minimizing $\xi \in [0,L]$. Without loss of generality, assume that $\delta_t$ is the delta measure at $\xi$ such that $v^{\Phi_1}(t,\xi) - v^{\Phi_2}(t,\xi) = |v^{\Phi_1}(t) - v^{\Phi_2}(t)|_E$. Then, we get that $\Delta (v^{\Phi_1}(t,\xi) - v^{\Phi_2}(t,\xi)) \leq 0$ because $\xi$ is a maximizer. By the local Lipschitz continuity of $F$,

   \begin{align}
   \frac{d^-}{dt}|v^{\Phi_1}(t) - v^{\Phi_2}(t)|_E &\leq C |v^{\Phi_1}(t) + \Phi_1(t) - v^{\Phi_2}(t) - \Phi_2(t)|_E \nonumber\\
   &\leq C|v^{\Phi_1}(t) - v^{\Phi_2}(t)|_E + C|\Phi_1(t) - \Phi_2(t)|_E.\nonumber
   \end{align}

   Therefore,
   \[|v^{\Phi_1}(t) - v^{\Phi_2}(t)|_E \leq C \int_0^t e^{C(t-s)} |\Phi_1(s) - \Phi_2(s)|_E ds.\]

   By H\"older's inequality, for any $t \in [0,T]$,
   \[|v^{\Phi_1}(t) - v^{\Phi_2}(t)|_E \leq C \left(e^{\frac{CpT}{p-1}} -1 \right)^{\frac{p-1}{p}}|\Phi_1 - \Phi_2|_{L^{{p}}([0,T]:E)}.\]

      The above equation along with a standard contraction mapping argument prove the well-posedness of $v^\Phi$.
\end{proof}

For any $T>0$ let $\mathcal{M}: C((0,T]:E) \to C([0,T]:E)$ be defined by
$\mathcal{M}(\Phi) = v^\Phi + \Phi$.
\begin{corollary}
  The mapping $\mathcal{M}$ is well-defined and
  \begin{enumerate}
    \item There exists $C>0$ such that for all $T>0$,
    \begin{equation}
      |\mathcal{M}(\Phi)|_{C([0,T]:E)} \leq C( 1 + |\Phi|_{C([0,T]:E)}).\nonumber
    \end{equation}
    \item For any $T>0, R>0$ and $p \in [1,\infty]$, there exists $C = C(T,R,p)$ such that if $|\Phi_i|_{C([0,T]:E)} \leq R$, $i=1,2$,
    \begin{equation}
      |\mathcal{M}(\Phi_1) - \mathcal{M}(\Phi_2)|_{L^p([0,T]:E)} \leq C|\Phi_1 - \Phi_2|_{L^p([0,T]:E)}.\nonumber
    \end{equation}
  \end{enumerate}
\end{corollary}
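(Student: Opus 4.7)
The plan is to derive both items directly from Theorem \ref{thm:v-properties} applied to the decomposition $\mathcal{M}(\Phi) = v^{\Phi} + \Phi$, together with the triangle inequality. Since Theorem \ref{thm:v-properties} already establishes that $v^{\Phi} \in C([0,T]:E)$ is well-defined for any $\Phi \in C([0,T]:E)$, the well-definedness of $\mathcal{M}: C([0,T]:E) \to C([0,T]:E)$ is immediate; no further fixed-point argument is needed here.

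For the growth bound in item (1), I would apply the triangle inequality to get $|\mathcal{M}(\Phi)|_{C([0,T]:E)} \leq |v^{\Phi}|_{C([0,T]:E)} + |\Phi|_{C([0,T]:E)}$, then invoke estimate \eqref{eq:v-bound-1} to bound the first term by $C(1 + |\Phi|_{C([0,T]:E)})$. Summing and absorbing the extra $|\Phi|_{C([0,T]:E)}$ into a larger constant $C' = C+1$ produces the claim. Crucially, the constant from \eqref{eq:v-bound-1} is independent of $T$, so $C'$ is independent of $T$ as well.

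For the Lipschitz bound in item (2), I would again start with $|\mathcal{M}(\Phi_1) - \mathcal{M}(\Phi_2)|_{L^p([0,T]:E)} \leq |v^{\Phi_1} - v^{\Phi_2}|_{L^p([0,T]:E)} + |\Phi_1 - \Phi_2|_{L^p([0,T]:E)}$. The second summand already has the desired form. For the first, I would use the continuous embedding $C([0,T]:E) \hookrightarrow L^p([0,T]:E)$ with embedding constant $T^{1/p}$ (and constant $1$ when $p = \infty$) to dominate $|v^{\Phi_1} - v^{\Phi_2}|_{L^p([0,T]:E)}$ by $T^{1/p}|v^{\Phi_1} - v^{\Phi_2}|_{C([0,T]:E)}$, and then apply item (2) of Theorem \ref{thm:v-properties} with the same exponent $p$ to bound this by a constant $C(T,R,p)$ times $|\Phi_1 - \Phi_2|_{L^p([0,T]:E)}$. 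Combining both terms yields the stated estimate with a new constant depending on $T$, $R$ and $p$.

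There is essentially no substantive obstacle here; the work is simply bookkeeping of constants and of the embedding passage between $C([0,T]:E)$ and $L^p([0,T]:E)$, and verifying that the hypothesis $|\Phi_i|_{C([0,T]:E)} \leq R$ transfers cleanly from Theorem \ref{thm:v-properties} to the present setting. All analytic content has already been carried out in Theorem \ref{thm:v-properties}; the corollary merely repackages those estimates through the affine shift $\Phi \mapsto v^{\Phi} + \Phi$.
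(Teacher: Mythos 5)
Your argument is correct and is exactly the intended one: the paper states this corollary without proof precisely because it follows immediately from Theorem \ref{thm:v-properties} via the decomposition $\mathcal{M}(\Phi)=v^\Phi+\Phi$, the triangle inequality, and the embedding $C([0,T]:E)\hookrightarrow L^p([0,T]:E)$. Your bookkeeping of the constants (the $T$-independence in item (1), the $T^{1/p}$ factor absorbed into $C(T,R,p)$ in item (2)) is accurate.
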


Now we show that as long as the nonlinearity $F$ satisfies Assumption \ref{assum:nonlinear}, we can bound $X^\e_x(t)$ and $X^{0,u}_x(t)$ in ways that are independent of the initial condition. For this purpose, for $x \in E$ and $\Phi \in C([0,T]:E)$, let
\[
v_x^\Phi(t) = S(t)x + \int_0^t S(t-s) F(v_x^\Phi(s) + \Phi(s))ds.
\]

The well-posedness of $v^\Phi_x$ is a consequence of Theorem \ref{thm:v-properties}.
\begin{theorem}
There exists a constant $C$ independent of $x$, $t$, and $\Phi$ such that for any $t>0$,
  \begin{equation} \label{eq:v-bound-sup-x}
    \sup_{x \in E} |v_x^\Phi(t)|_E \leq C( 1 + t^{-\frac{1}{\rho_{*}}} + \sup_{s \in [0, t]} |\Phi(s)|_E).
  \end{equation}
\end{theorem}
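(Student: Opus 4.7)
The plan is to establish a Dini-derivative inequality for $t \mapsto |v_x^\Phi(t)|_E$ that mirrors the one used in the proof of Theorem \ref{thm:v-properties}, then use a comparison argument with a one-dimensional ODE whose super-linear dissipative term forces instantaneous regularization, yielding the $t^{-1/\rho}$ bound uniformly in the initial condition $x$.

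Concretely, I would argue as follows. Pick $\delta_t \in \partial |v_x^\Phi(t)|_E$ and write $F(v_x^\Phi(t)+\Phi(t)) = F(\Phi(t)+v_x^\Phi(t))-F(\Phi(t))+F(\Phi(t))$. Using the dissipativity of $A$, the dissipativity estimate \eqref{eq:F-dissip} applied with $x\leadsto\Phi(t)$ and $h\leadsto v_x^\Phi(t)$, and the growth bound \eqref{eq:F-growth} for $|F(\Phi(t))|_E$, exactly as in the proof of Theorem \ref{thm:v-properties} one obtains
\begin{equation*}
\frac{d^-}{dt}|v_x^\Phi(t)|_E \leq -\lambda\,|v_x^\Phi(t)|_E^{1+\rho} + C\bigl(1+|\Phi(t)|_E^{1+\rho}\bigr).
\end{equation*}
Set $M := \sup_{s\in[0,t]}|\Phi(s)|_E$ and choose the threshold $R_0 := \bigl(2C(1+M^{1+\rho})/\lambda\bigr)^{1/(1+\rho)}$, which satisfies $R_0 \leq C'(1+M)$ for a constant $C'$ depending only on $\lambda, C, \rho$. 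Whenever $|v_x^\Phi(s)|_E \geq R_0$, the inequality above upgrades to $\frac{d^-}{ds}|v_x^\Phi(s)|_E \leq -\tfrac{\lambda}{2}|v_x^\Phi(s)|_E^{1+\rho}$.

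I would then split into two cases. If $|v_x^\Phi(s)|_E > R_0$ for every $s\in(0,t]$, then comparison with the ODE $y'=-\tfrac{\lambda}{2}y^{1+\rho}$ (whose solutions satisfy $y(s)^{-\rho} \geq y(0)^{-\rho}+\tfrac{\lambda\rho}{2}s$, hence $y(s)\leq (\lambda\rho s/2)^{-1/\rho}$ independently of $y(0)$) gives $|v_x^\Phi(t)|_E \leq (\lambda\rho t/2)^{-1/\rho}$. If instead there is some first time $\tau\in(0,t]$ with $|v_x^\Phi(\tau)|_E \leq R_0$, then at any subsequent instant $s\in[\tau,t]$ at which $|v_x^\Phi(s)|_E = R_0$ the Dini derivative is strictly negative, so by a standard invariance argument $|v_x^\Phi(s)|_E \leq R_0$ throughout $[\tau,t]$. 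In either case,
\begin{equation*}
|v_x^\Phi(t)|_E \leq \max\bigl(R_0, (\lambda\rho t/2)^{-1/\rho}\bigr) \leq C\bigl(1+t^{-1/\rho} + \sup_{s\in[0,t]}|\Phi(s)|_E\bigr),
\end{equation*}
and crucially the right-hand side does not depend on $x$.

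The main obstacle is justifying the one-sided derivative inequality rigorously for the mild solution $v_x^\Phi$, since $v_x^\Phi$ need not be classically differentiable and $|\cdot|_E$ is not smooth; however, this is exactly the mechanism already exploited in Theorem \ref{thm:v-properties}, so the same Yosida-approximation / integrated-form trick applies verbatim. The only novelty compared to \eqref{eq:v-bound-1} is recognizing that the superlinear $-\lambda|v|^{1+\rho}$ term gives instantaneous regularization via the ODE comparison above, which is what removes the dependence on $|x|_E$ and produces the $t^{-1/\rho}$ singularity as $t\downarrow 0$.
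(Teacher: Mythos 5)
Your proof is correct, and its first half coincides exactly with the paper's: both derive the Dini-derivative inequality $\frac{d^-}{dt}|v_x^\Phi(t)|_E \le -\lambda |v_x^\Phi(t)|_E^{1+\rho} + C(1+|\Phi(t)|_E^{1+\rho})$ from the dissipativity of $A$ together with \eqref{eq:F-dissip} and \eqref{eq:F-growth}. Where you differ is in how the $t^{-1/\rho}$ decay is then extracted. You set a threshold $R_0\sim 1+\sup_{s\le t}|\Phi(s)|_E$ and split into two cases: if $|v|$ stays above $R_0$ you compare with $y'=-\tfrac{\lambda}{2}y^{1+\rho}$, whose explicit solution yields the initial-condition-free bound $(\lambda\rho t/2)^{-1/\rho}$; otherwise a forward-invariance (first-crossing) argument keeps $|v|$ below $R_0$. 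The paper instead studies the weighted quantity $t^{1/\rho}|v(t)|_E$, which vanishes at $t=0$ no matter what $x$ is, derives a Dini-derivative inequality for it, and invokes a comparison principle to conclude that it remains bounded by the positive root of the resulting expression. The two mechanisms are equivalent in substance — both use the super-linear dissipation to erase the dependence on $|x|_E$ — and your case analysis is arguably the more elementary and transparent, while the paper's weighting trick avoids the explicit invariance argument at the threshold. Your closing remark about rigorously justifying the one-sided derivative for the mild solution is apt; the paper handles this exactly as in Theorem \ref{thm:v-properties}.
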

\begin{proof}
  For notational simplicity, let $v(t) := v_x^\Phi(t)$. For any $\delta_{v(t)} \in \partial |v(t)|_E$,
  \begin{align*}
    &\frac{d^-}{dt}|v(t)|_E \leq \left< Av(t) + F(v(t) + \Phi(t)), \delta_{v(t)} \right>_{E,E^\star}\\
    &\quad \leq \left< A v(t) + F(v(t) + \Phi(t)) - F(\Phi(t)), \delta_{v(t)} \right>_{E,E^\star} + |F(\Phi(t))|_E
  \end{align*}

  By \eqref{eq:F-dissip} and \eqref{eq:F-growth}, along with the dissipativity of $A$
  \begin{equation*}
    \frac{d^-}{dt}|v(t)|_E \leq - \lambda |v(t)|_E^{1 + \rho_{*}} + C(1 + |\Phi(t)|_E^{1+\rho_{*}}).
  \end{equation*}

  Now we analyze $t^{\frac{1}{\rho_{*}}}|v(t)|_E$.
  \begin{align*}
    &\frac{d^-}{dt} t^{\frac{1}{\rho_{*}}}|v(t)|_E \leq \frac{1}{\rho_{*}}t^{\frac{1}{\rho_{*}}-1}|v(t)|_E + t^{\frac{1}{\rho_{*}}}\frac{d^-}{dt}|v(t)|_E\\
    &\leq \frac{1}{\rho_{*}} t^{\frac{1}{\rho_{*}} - 1} |v(t)|_E -\lambda  t^{\frac{1}{\rho_{*}}}|v(t)|_E^{1+\rho_{*}} + C t^{\frac{1}{\rho_{*}}}(1+|\Phi(t)|_E^{1+\rho_{*}})\\
    &\leq \frac{1}{t} \left(\frac{1}{\rho_{*}}t^{\frac{1}{\rho_{*}}}|v(t)|_E - \lambda \left(t^{\frac{1}{\rho_{*}}}|v(t)|_E \right)^{1+\rho_{*}} + Ct^{\frac{1 + \rho_{*}}{\rho_{*}}} \left( 1 + |\Phi(t)|_E^{\rho_{*}+1}\right) \right).
  \end{align*}

  By Young's inequality there is a large constant $C>0$ such that
  \[\frac{1}{\rho_{*}}t^{\frac{1}{\rho_{*}}}|v(t)|_E \leq \frac{\lambda}{2} \left(t^{\frac{1}{\rho_{*}}}|v(t)|_E \right)^{1 + \rho_{*}} + C.\]
  Therefore,
  \begin{align*}
    &\frac{d^-}{dt} t^{\frac{1}{\rho_{*}}}|v(t)|_E \leq \frac{1}{t} \left(-\frac{\lambda}{2}\left(t^{\frac{1}{\rho_{*}}}|v(t)|_E\right)^{1+\rho_{*}} + C \left(1 + t^{\frac{1+\rho_{*}}{\rho_{*}}} + t^{\frac{1+\rho_{*}}{\rho_{*}}}{|\Phi(t)|_E^{1+\rho_{*}}} \right) \right).
  \end{align*}

  Because $t^{\frac{1}{\rho_{*}}}|v(t)|_E \bigg|_{t = 0} =0$, the arguments in the proof of Theorem \ref{thm:v-properties} guarantee that
  \[-\frac{\lambda}{2}\left(t^{\frac{1}{\rho_{*}}}|v(t)|_E\right)^{1+\rho_{*}} + C \left(1 + t^{\frac{1+\rho_{*}}{\rho_{*}}} + t^{\frac{1+\rho_{*}}{\rho_{*}}}\sup_{s \in [0,t]}{|\Phi(s)|_E^{1+\rho_{*}}} \right) \geq 0.\]

  We can conclude that there is a constant $C>0$ independent of $x$, $v$, $\Phi$, and $t$ such that
  \[t^{\frac{1}{\rho_{*}}}|v(t)|_E \leq C\left(1 + t^{\frac{1}{\rho_{*}}} + t^{\frac{1}{\rho_{*}}}\sup_{s \in [0,t]}|\Phi(s)|_E \right).\]

  Consequently,
  \[|v(t)|_E \leq C \left(t^{-\frac{1}{\rho_{*}}} + 1 + \sup_{s \in [0,t]} |\Phi(s)|_E \right).\]
  This bound is independent of $x \in E$.
\end{proof}

Now, we are ready to prove Theorem \ref{T:BOUNDEDNESSPROPOFX}.
\begin{proof}[Proof of Theorem \ref{T:BOUNDEDNESSPROPOFX}]
  First, let us set
  \[
  v(t) = X^\e_x(t) - Y^\e_x(t) - S(t)x = \int_0^t S(t-s)F(X^\e_x(s))ds.
  \]

  By \eqref{eq:v-bound-1}, we have that
  \[\sup_{s \in [0,t]} |v(s)|_E \leq C \left(1 + \sup_{s \in [0,t]} |S(s)x + Y^\e_x(s)|_E \right) \leq C\left(1 + |x|_E + \sup_{s \in [0,t]}|Y^\e_x(s)|_E\right).\]

  Then
  \[\sup_{s \in [0,t]} |X^\e_x(s)|_E \leq \sup_{s \in [0,t]}|v(s)|_E + |x|_E + \sup_{s \in [0,t]} |Y^\e_x(s)|_E,\]
  proving \eqref{eq:X-bound-x}.

  Next, let $v(t) = X^\e_x(t) - Y^\e_x(t) = S(t)x + \int_0^t S(t-s)F(X^\e_x(s))ds$. Then by \eqref{eq:v-bound-sup-x}
  \[|v(t)|_E \leq C \left(1 + t^{-\frac{1}{\rho_{*}}} + \sup_{s \leq t} |Y^\e_x(s)|_E \right).\]

  Hence, we obtain
  \[|X^\e_x(t)|_E \leq |v(t)|_E + |Y^\e_x(t)|_E \leq C \left(1 + t^{-\frac{1}{\rho_{*}}} + \sup_{s \leq t} |Y^\e_x(s)|_E  \right),\]
  proving (\ref{eq:X-bound-sup}).

 Lastly, we prove (\ref{eq:X-big-zero-prob}). By \eqref{eq:X-bound-sup}, we can choose $R_0=3C$ (where $C$ is the constant from \eqref{eq:X-bound-sup}). It follows that
  \begin{equation*}
    \sup_{|x|_E>R_0} \Pro(|X^\e_x(1)|_E> R_0) \leq \sup_{|x|_E > R_0}\Pro\left(\sup_{s \in [0,1]} |Y^\e_x(s)|^p>1\right).
  \end{equation*}

  By the Chebyshev inequality and \eqref{eq:stoch-conv-bound}, it then follows that (\ref{eq:X-big-zero-prob}) holds.
\end{proof}

We conclude this section with the proof of Theorem \ref{THM:CONTROL-BOUNDS}.
\begin{proof}[Proof of Theorem \ref{THM:CONTROL-BOUNDS}]
  Let
  \begin{equation*}
    Y^{0,u}_x(t) : = \int_0^t S(t-s)G(X^{0,u}_x(s))u(s)ds.
  \end{equation*}
  By \eqref{eq:semigroup-regularity}, \eqref{eq:G-bound}, and the H\"older inequality,
  \begin{align} \label{eq:Y-bound}
    &\left|Y^{0,u}_x(t) \right|_E \leq \int_0^t |S(t-s)G(X^{0,u}_x(s))u(s)|_Eds\nonumber \\
    &\leq C\int_0^t (t-s)^{-\frac{1}{4}} |G(X^{0,u}_x(s))u(s)|_Hds \leq C \int_0^t (t-s)^{-\frac{1}{4}} |u(s)|_Hds\nonumber\\
    & \leq C \left(\int_0^t (t-s)^{-\frac{1}{2}}ds \right)^{\frac{1}{2}}\left(\int_0^t |u(s)|_H^2 ds \right)^{\frac{1}{2}}\nonumber\\
    &\leq Ct^{\frac{1}{4}}|u|_{L^2([0,t]:H)}.
  \end{align}

  Let $v(t) = X^{0,u}_x(t) - Y_x^{0,u}(t) - S(t)x$. Then
   \[
     v(t) = \int_0^t S(t-s)F(v(s) + S(s)x + Y^{0,u}_x(s))ds
     \]
   and
  by \eqref{eq:v-bound-1}, for any $T>0$,
  \begin{equation*}
    |v|_{C([0,T]:E)} \leq C ( 1 + |S(\cdot)x + Y^{0,u}_x|_{C([0,T]:E)}) \leq C(1 + |x|_E + |Y^{0,u}_x|_{C([0,T]:E)}).
  \end{equation*}

  Therefore it follows that
  \begin{equation*}
    |X^{0,u}_x|_{C([0,T]:E)} \leq |v|_{C([0,T]:E)} + |S(\cdot)x|_{C([0,T]:E)} + |Y^{0,u}_x|_{C([0,T]:E)}.
  \end{equation*}

  It follows from \eqref{eq:Y-bound} that
  \begin{equation*}
    |X^{0,u}_x|_{C([0,T]:E)} \leq C(1 + |x|_E + T^{\frac{1}{4}}|u|_{L^2([0,T]:H)}),
  \end{equation*}
  and \eqref{eq:control-sup-time-bound} holds.

  On the other hand, we can let $v_x(t) = X^{0,u}_x(t) - Y^{0,u}_x(t)$ so that
  \[
    v_x(t) = S(t)x + \int_0^t S(t-s)F(v(s) + Y^{0,u}_x(s))ds.
   \]

  It follows from \eqref{eq:v-bound-sup-x} that
  \begin{equation}
    \sup_{x \in E} |v_x(t)|_E \leq C(1 + t^{-\frac{1}{\rho_{*}}} + |Y^{0,u}_x|_{C([0,t]:E)}) \leq C(1 + t^{-\frac{1}{\rho_{*}}} + t^{\frac{1}{4}}|u|_{L^2([0,t]:H)}).
  \end{equation}

  Then
  \begin{equation} \label{eq:extra-t-dependence}
    \sup_{x \in E}|X^{0,u}_x(t)|_E \leq \sup_{x \in E} |v_x(t)|_E + \sup_{x \in E} |Y^{0,u}_x(t)|_E \leq C ( 1 + t^{-\frac{1}{\rho_{*}}} + t^{\frac{1}{4}}|u|_{L^2([0,t]:H)}).
  \end{equation}

    Now we demonstrate that the $t^{\frac{1}{4}}$ in front of the $|u|_{L^2([0,t]:H)}$ term in \eqref{eq:extra-t-dependence} can be removed.
    If $t \leq 1$, then $t^{\frac{1}{4}} \leq 1$ so \eqref{eq:control-sup-x-bound} holds.
    On the other hand, if $t>1$, then $X^{0,u}_x(t) = X^{0,\tilde u}_{X^{0,u}_x(t-1)}(1)$ where $\tilde u(s) = u(t-1 + s)$.
    Therefore, by \eqref{eq:extra-t-dependence},
    \begin{align*}
      \sup_{x \in E} |X^{0,u}_x(t)|_E \leq \sup_{x \in E} |X^{0,\tilde u}_x(1)| &\leq  C ( 1 + 1^{-\frac{1}{\rho_{*}}} + 1^{\frac{1}{4}}|\tilde u|_{L^2([0,1]:H)}) \\
      &\leq C ( 1 + t^{-\frac{1}{\rho_{*}}} + |u|_{L^2([0,t]:H)}),
    \end{align*}
    proving \eqref{eq:control-sup-x-bound}.

\end{proof}

\section{Time reversed processes}\label{A:ReversingProcesses}
We show that if $X^0_x$ solves
\begin{equation} \label{eq:unperturbed}
  dX^0_x(t) = AX^0_x(t) + F(X^0_x(t)), \ \ \ X^0_x(0) = x
\end{equation}
with initial condition $x \in H^1$, then the reversed dynamics $Y(t) : = X^0_x(T-t)$ {satisfy $I^T_y(Y)<\infty$ where $y =X^0_x(T)$ and $I^T_y$ is defined in \eqref{eq:rate-fct-def}}.

The most important first step is to study the regularity of the semigroup convolution.
For $u \in L^2([0,T]:H)$ let
\begin{equation*}
  \Lambda(u)(t) : = \int_0^t S(t-s)u(s)ds.
\end{equation*}

\begin{lemma} \label{lem:conv-reg}
  $\Lambda$ is a bounded linear operator from $L^2([0,T]:H)$ to $L^\infty([0,T]:H^1)\cap L^2([0,T]:H^2)$
\end{lemma}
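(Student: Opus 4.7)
The plan is to prove both bounds simultaneously by spectral decomposition with respect to the orthonormal basis $\{e_k\}$ of $H$, reducing the estimates to scalar convolutions against the exponential kernels $t \mapsto e^{-\alpha_k t}$. Since $A$ is self-adjoint with $Ae_k = -\alpha_k e_k$ and the $H^\delta$ norms are defined via \eqref{eq:Sobolev-spaces}, writing $u(s) = \sum_k u_k(s) e_k$ with $u_k(s) = \langle u(s), e_k\rangle_H$ yields
\begin{equation*}
  \Lambda(u)(t) = \sum_k \Lambda_k(u)(t) e_k, \qquad \Lambda_k(u)(t) := \int_0^t e^{-\alpha_k(t-s)} u_k(s)\, ds,
\end{equation*}
and by Parseval $|\Lambda(u)(t)|_{H^\delta}^2 = \sum_k \alpha_k^\delta |\Lambda_k(u)(t)|^2$.

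First I would handle the $L^\infty([0,T]:H^1)$ bound. By Cauchy--Schwarz,
\begin{equation*}
  |\Lambda_k(u)(t)|^2 \leq \left(\int_0^t e^{-2\alpha_k(t-s)}\, ds\right) \left(\int_0^t |u_k(s)|^2\, ds\right) = \frac{1-e^{-2\alpha_k t}}{2\alpha_k}\int_0^t |u_k(s)|^2\, ds,
\end{equation*}
so $\alpha_k |\Lambda_k(u)(t)|^2 \leq \tfrac{1}{2}\int_0^t |u_k(s)|^2\, ds$. Summing in $k$ and applying Parseval to $u(s)$ gives
\begin{equation*}
  |\Lambda(u)(t)|_{H^1}^2 \leq \frac{1}{2}\int_0^t |u(s)|_H^2\, ds \leq \frac{1}{2}|u|_{L^2([0,T]:H)}^2,
\end{equation*}
which is uniform in $t \in [0,T]$.

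Next, for the $L^2([0,T]:H^2)$ bound, I would multiply through by $\alpha_k$ and view $\alpha_k \Lambda_k(u)$ as the convolution on $[0,T]$ of $u_k$ (extended by zero) against the kernel $\alpha_k e^{-\alpha_k \cdot}$. Since $\|\alpha_k e^{-\alpha_k \cdot}\|_{L^1([0,T])} = 1 - e^{-\alpha_k T} \leq 1$, Young's inequality gives $\|\alpha_k \Lambda_k(u)\|_{L^2([0,T])} \leq \|u_k\|_{L^2([0,T])}$. Summing in $k$ and using Parseval,
\begin{equation*}
  \int_0^T |\Lambda(u)(t)|_{H^2}^2\, dt = \sum_k \int_0^T \alpha_k^2 |\Lambda_k(u)(t)|^2\, dt \leq \sum_k \|u_k\|_{L^2([0,T])}^2 = |u|_{L^2([0,T]:H)}^2.
\end{equation*}
Linearity of $\Lambda$ is immediate, so combining both bounds yields the claim.

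There is no real obstacle here: the self-adjointness of $A$ diagonalizes the problem, and the required estimates for analytic semigroups become elementary scalar Cauchy--Schwarz and Young convolution bounds applied frequency by frequency. The only thing to be slightly careful about is that one uses $1-e^{-2\alpha_k t} \leq 1$ (rather than $\leq 2\alpha_k t$) to obtain a $k$-uniform bound; this is what produces the global $H^1$ regularity without any extra factor of $t$ and in particular allows the bound to be taken as $t \to T$.
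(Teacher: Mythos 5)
Your proof is correct and follows essentially the same route as the paper: spectral decomposition in the eigenbasis of $A$, Cauchy--Schwarz on the scalar convolutions for the $L^\infty([0,T]:H^1)$ bound, and Young's convolution inequality with the kernel bound $\|\alpha_k e^{-\alpha_k\cdot}\|_{L^1([0,T])}\leq 1$ for the $L^2([0,T]:H^2)$ bound. No gaps.
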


\begin{proof}
  Let $u \in L^2([0,T]:H)$ and set $u_k(t):=\left<u(t),e_k\right>_H$, where $\{e_k\}$ form a CONS of $H$ that diagonalizes $A$ as described in Section \ref{S:Assumptions}. For any $t \in [0,T]$,
  \begin{align*}
    &\left| \int_0^t S(t-s) u(s)ds \right|_{H^1}^2 = \sum_{k=1}^\infty \alpha_k \left(\int_0^t e^{-\alpha_k(t-s)}u_k(s)ds \right)^2\\
    &\leq \sum_{k=1}^\infty \alpha_k \left(\int_0^t e^{-2\alpha_k(t-s)}ds \right)\left(\int_0^t |u_k(s)|^2 ds \right)\\
    &\leq \frac{1}{2}\int_0^t |u(s)|_H^2.
  \end{align*}

  As for the $L^2([0,T]:H^2)$ bound, we calculate that
  \begin{align*}
    &|\Lambda(u)|_{L^2([0,T]:H^2)}^2 =\int_0^T \left|A\int_0^t S(t-s)u(s)ds \right|_H^2 dt\\
    &=\sum_{k=1}^\infty \alpha_k^2 \int_0^T  \left(\int_0^t e^{-\alpha_k(t-s)} u_k(s)ds \right)^2dt.
  \end{align*}

  By Young's inequality for convolutions,
  \[\int_0^T \left(\int_0^t e^{-\alpha_k(t-s)} u_k(s)ds \right)^2dt \leq \left(\int_0^T e^{-\alpha_k t}dt \right)^2 \left(\int_0^T |u_k(t)|^2 dt\right).\]

  Therefore,
  \begin{align*}
    &\left| \Lambda u \right|_{L^2([0,T]:H^2)}^2
    \leq \sum_{k=1}^\infty \int_0^T |u_k(t)|^2dt \leq |u|_{L^2([0,T]:H)}^2.
  \end{align*}
\end{proof}

\begin{lemma} \label{lem:semigroup-reg}
  If $x \in H^1$, then $t \mapsto S(t)x \in C([0,T]:H^1)\cap L^2([0,T]:H^2)$ and
  \begin{equation*}
    \int_0^t |S(t)x|_{H^2}^2 dt = \frac{1}{2} \left(|x|_{H^1}^2 - |S(t)x|_{H^1}^2 \right).
  \end{equation*}
\end{lemma}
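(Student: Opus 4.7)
My plan is to reduce everything to the eigenbasis $\{e_k\}$ of $A$ and compute. Writing $x_k = \langle x, e_k\rangle_H$, the definition of the fractional Sobolev norms \eqref{eq:Sobolev-spaces} together with $Ae_k = -\alpha_k e_k$ give the explicit expressions
\[
|S(t)x|_{H^1}^2 = \sum_{k} \alpha_k e^{-2\alpha_k t}\,x_k^2, \qquad |S(t)x|_{H^2}^2 = \sum_{k} \alpha_k^2 e^{-2\alpha_k t}\,x_k^2.
\]
Assuming $x \in H^1$ means $\sum_k \alpha_k x_k^2 < \infty$, and the first series is dominated termwise by $\alpha_k x_k^2$, a summable sequence.

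For the $L^2([0,T]:H^2)$ bound and the claimed identity, apply Tonelli (all terms nonnegative) to interchange sum and integral, then compute
\[
\int_0^t |S(s)x|_{H^2}^2\,ds = \sum_k \alpha_k^2 x_k^2 \int_0^t e^{-2\alpha_k s}\,ds = \sum_k \alpha_k^2 x_k^2 \cdot \frac{1-e^{-2\alpha_k t}}{2\alpha_k} = \tfrac{1}{2}\sum_k \alpha_k x_k^2(1-e^{-2\alpha_k t}),
\]
which is precisely $\tfrac{1}{2}(|x|_{H^1}^2 - |S(t)x|_{H^1}^2)$. In particular taking $t=T$ shows $S(\cdot)x \in L^2([0,T]:H^2)$ with norm bounded by $\tfrac{1}{2}|x|_{H^1}^2$.

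For the continuity statement $S(\cdot)x \in C([0,T]:H^1)$, fix $s \in [0,T]$ and observe
\[
|S(t)x - S(s)x|_{H^1}^2 = \sum_k \alpha_k x_k^2\bigl(e^{-\alpha_k t}-e^{-\alpha_k s}\bigr)^2.
\]
Each summand tends to $0$ as $t \to s$, and is bounded by $4\alpha_k x_k^2$, which is summable. Dominated convergence gives the required continuity. This completes the proof; there is no real obstacle, as the identity is an exact Parseval-type computation once the problem is projected onto the eigenbasis.
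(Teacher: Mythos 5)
Your proof is correct and follows essentially the same route as the paper: diagonalizing in the eigenbasis of $A$ and computing the integral $\int_0^t e^{-2\alpha_k s}\,ds$ termwise to obtain the exact identity. The only cosmetic difference is that you verify $S(\cdot)x \in C([0,T]:H^1)$ directly by dominated convergence, whereas the paper simply cites the fact that $S(t)$ is a $C_0$ semigroup on $H^1$; both are fine.
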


\begin{proof}
  The fact that $t \mapsto S(t)x \in C([0,T]:H^1)$ is the definition of $S(t)$ being a $C_0$ semigroup on $H^1$. As for the $L^2([0,T]:H^2)$ bound, if we let $x_k:=\left<x,e_k\right>_H$,
  \begin{align*}
    &\int_0^T |AS(t)x|_H^2dt = \sum_{k=1}^\infty \alpha_k^2 \int_0^T e^{-2\alpha_k t} x_k^2 ds \\
    &= \frac{1}{2} \sum_{k=1}^\infty \alpha_k(1 - e^{-2\alpha_k t}) x_k^2 = \frac{1}{2} \left(|x|_{H^1}^2 - |S(t)x|_{H^1}^2 \right).
  \end{align*}
\end{proof}


\begin{lemma} \label{lem:unperturbed-reg}
  If $x \in H^1$, then  $X^0_x \in L^\infty([0,T]:H^1) \cap L^2([0,T]:H^2)$.
\end{lemma}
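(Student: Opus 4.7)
The plan is to exploit the mild formulation
\[
X^0_x(t) = S(t)x + \int_0^t S(t-s) F(X^0_x(s))\,ds = S(t)x + \Lambda\!\left(F(X^0_x)\right)(t),
\]
and to treat the two summands separately, using Lemmas \ref{lem:semigroup-reg} and \ref{lem:conv-reg} respectively. Since $x \in H^1$, Lemma \ref{lem:semigroup-reg} gives immediately that $S(\cdot)x \in C([0,T]:H^1) \cap L^2([0,T]:H^2)$, so the first term lives in the desired space. For the second, Lemma \ref{lem:conv-reg} yields $\Lambda(F(X^0_x)) \in L^\infty([0,T]:H^1) \cap L^2([0,T]:H^2)$ provided we can first establish that $F(X^0_x) \in L^2([0,T]:H)$.

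The substantive step is therefore the verification that $F(X^0_x) \in L^2([0,T]:H)$. First I would use the one-dimensional Sobolev embedding $H^1 \hookrightarrow E$ to regard the initial datum $x \in H^1$ as an element of $E$. Then, applying Theorem \ref{thm:control-bounds} with the trivial control $u \equiv 0$ (equivalently, Theorem \ref{T:BoundednessPropOfX} specialized to $\e = 0$, in which case the stochastic convolution $Y^\e_x$ vanishes), one obtains
\[
|X^0_x|_{C([0,T]:E)} \leq C\bigl(1 + |x|_E\bigr) < \infty.
\]
Combining this uniform $E$-bound with the polynomial growth estimate \eqref{eq:F-growth}, namely $|F(y)|_E \leq C(1+|y|_E^{1+\rho})$, gives $F(X^0_x) \in L^\infty([0,T]:E)$. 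Since the inclusion $E \hookrightarrow H$ is continuous on the compact spatial interval $[0,L]$, this in turn yields $F(X^0_x) \in L^\infty([0,T]:H) \subset L^2([0,T]:H)$, as required.

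Feeding this input into Lemma \ref{lem:conv-reg} delivers the needed regularity of the nonlinear convolution term, and combining with the bound on $S(\cdot)x$ furnishes the claim $X^0_x \in L^\infty([0,T]:H^1) \cap L^2([0,T]:H^2)$. I do not anticipate any serious obstacle: the argument is essentially a one-shot bootstrap. Its only nontrivial ingredients are the improved parabolic regularity of $\Lambda$ proved in Lemma \ref{lem:conv-reg} and the already-available $E$-valued a priori bound for the unperturbed equation; the one-dimensional Sobolev embedding $H^1 \hookrightarrow E$ is what makes the transition from the $H^1$ hypothesis on the initial datum to the $E$-valued a priori estimate seamless.
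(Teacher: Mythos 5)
Your proof is correct and follows essentially the same route as the paper's: decompose the mild solution into $S(t)x$ plus the convolution term, apply Lemma \ref{lem:semigroup-reg} and Lemma \ref{lem:conv-reg} respectively, and verify $F(X^0_x)\in L^2([0,T]:H)$ via the growth bound \eqref{eq:F-growth}, the embedding $E\hookrightarrow H$, and the a priori bounds of Theorem \ref{thm:control-bounds}. Your explicit mention of the Sobolev embedding $H^1\hookrightarrow E$ is a detail the paper leaves implicit, but otherwise the arguments coincide.
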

\begin{proof}
  Lemma \ref{lem:semigroup-reg} guarantees that $t \mapsto S(t)x \in L^2([0,T]:H^2)$. Lemma \ref{lem:conv-reg} guarantees that
  \begin{align*}
    & \int_0^T \left|\int_0^t S(t-s)F(X^0_x(s))ds \right|_{H^2}^2 dt \leq \int_0^T |F(X^0_x(t))|_H^2 dt.
  \end{align*}

  We know that $E \subset H$ and that there exists $C>0$ such that for all $x \in E$, $|x|_H \leq C|x|_E$. We also have bounds on the growth on $F$ (see \eqref{eq:F-growth}) and can therefore conclude that
  \begin{align*}
    & \int_0^T \left|\int_0^t S(t-s)F(X^0_x(s))ds \right|_{H^2}^2 dt \leq CT \left( 1+ |X^0_x|_{C([0,T]:E)}^{1+\rho}\right)^2.
  \end{align*}

  The proof is then concluded by the a-priori bounds of Theorem \ref{THM:CONTROL-BOUNDS}.
\end{proof}

\begin{theorem}[Energy of a reversed path ] \label{thm:reversed}
  Let $T>0$ and $x \in H^1$. Let $Y(t) = X^0_x(T-t)$. That is, $Y(t)$ is a time-reversed version of the dynamics with terminal point $x \in H^1$. Then it follows that
  \begin{equation*}
    I_y^T(Y) \leq C \left(  |x|_{H^1}^2 - |S(T)x|_{H^1}^2 \right) + CT \left(1 + |X^0_x|_{C([0,T]:E)}^{1+\rho} \right)^2
  \end{equation*}
  where $y:=Y(0)=X^0_x(T)$ and $C>0$ is independent of $T$ and $x$.
\end{theorem}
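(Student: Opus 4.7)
The plan is to produce an explicit admissible control $u\in L^2([0,T]:H)$ whose controlled trajectory coincides with the reversed path $Y$, and then estimate its energy. Formally, differentiating $Y(t)=X^0_x(T-t)$ yields $\dot Y=-AY-F(Y)$, while the controlled equation demands $\dot Y=AY+F(Y)+G(Y)u$. Matching these forces the feedback choice
\begin{equation*}
 u(t):=-2\,G^{-1}(X^0_x(T-t))\bigl(AX^0_x(T-t)+F(X^0_x(T-t))\bigr),
\end{equation*}
which is exactly the control used earlier in Lemmas \ref{lem:quasipotential-finite} and \ref{lem:quasipotential-to-0}. I would first verify that $u$ is well defined and $L^2$-integrable: since $x\in H^1$, Lemma \ref{lem:unperturbed-reg} gives $X^0_x\in L^\infty([0,T]:H^1)\cap L^2([0,T]:H^2)$, so $AX^0_x\in L^2([0,T]:H)$; combined with the $E$-bound from Theorem \ref{thm:control-bounds} and the growth estimate \eqref{eq:F-growth}, the map $s\mapsto F(X^0_x(s))$ lies in $L^\infty([0,T]:H)$; finally \eqref{eq:G-inv-norm} gives the pointwise bound $|G^{-1}|_{\mathscr{L}(H)}\le 1/\kappa_0$.

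Next I would check that $Y=X^{0,u}_y$ in the mild sense. Because $X^0_x$ is sufficiently regular by Lemma \ref{lem:unperturbed-reg}, it is a strong (not merely mild) solution of \eqref{eq:unperturbed}, and one can verify directly that $Y$ satisfies $\dot Y=AY+F(Y)+G(Y)u$ in the appropriate strong sense with initial datum $y=X^0_x(T)$; integrating against the semigroup $S(t)$ then recasts this as the mild identity \eqref{eq:control}. Consequently $I_y^T(Y)\le \tfrac12\int_0^T|u(s)|_H^2\,ds$, and after the time-reversal substitution $r=T-s$ the energy becomes
\begin{equation*}
 \tfrac12\int_0^T|u(s)|_H^2\,ds=2\int_0^T\bigl|G^{-1}(X^0_x(r))\bigl(AX^0_x(r)+F(X^0_x(r))\bigr)\bigr|_H^2\,dr.
\end{equation*}

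For the estimate itself I would apply \eqref{eq:G-inv-norm} to pull out $1/\kappa_0^2$ and then split into $\int_0^T|AX^0_x|_H^2$ and $\int_0^T|F(X^0_x)|_H^2$. Writing $X^0_x(t)=S(t)x+\int_0^tS(t-s)F(X^0_x(s))\,ds$ and applying $A$, the first piece splits further as $\int_0^T|AS(t)x|_H^2\,dt$, which Lemma \ref{lem:semigroup-reg} identifies with $\tfrac12(|x|_{H^1}^2-|S(T)x|_{H^1}^2)$, plus $\int_0^T|A\Lambda(F(X^0_x))(t)|_H^2\,dt$, which Lemma \ref{lem:conv-reg} bounds by $|F(X^0_x)|^2_{L^2([0,T]:H)}$. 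Using the continuous embedding $E\hookrightarrow H$, \eqref{eq:F-growth}, and the a priori bound \eqref{eq:control-sup-time-bound}, both the $F$-term and this latter convolution piece are dominated by $CT(1+|X^0_x|_{C([0,T]:E)}^{1+\rho})^2$. Summing these contributions yields the stated bound.

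The main potential obstacle is the identification of $Y$ with a mild solution of the controlled equation: a naive pointwise computation of $\dot Y$ requires $AX^0_x$ to exist as an $H$-valued function, which is precisely what Lemma \ref{lem:unperturbed-reg} provides thanks to the hypothesis $x\in H^1$. Once this regularity is in hand, translating the strong identity $\dot Y=AY+F(Y)+G(Y)u$ into the mild form \eqref{eq:control} is a standard variation-of-constants argument, and the remaining energy estimate is essentially a bookkeeping exercise combining the two regularity lemmas with the boundedness of $G^{-1}$.
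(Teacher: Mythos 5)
Your proposal is correct and follows essentially the same route as the paper: the same reversed-feedback control $u=-2G^{-1}(Y)(AY+F(Y))$, the same identification $Y=X^{0,u}_y$, and the same energy estimate via \eqref{eq:G-inv-norm} together with Lemmas \ref{lem:conv-reg} and \ref{lem:semigroup-reg} applied to the decomposition $AX^0_x = AS(\cdot)x + A\Lambda(F(X^0_x))$. Your extra care in justifying that $Y$ is a genuine mild solution of the controlled equation (using the regularity from Lemma \ref{lem:unperturbed-reg}) is a point the paper passes over more quickly, but it does not change the argument.
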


\begin{proof}
Let $x \in H^1$. Then $X^0_x(t)$ is a weak solution to \eqref{eq:unperturbed}. Let $T>0$ and $Y(t): = X^0_x(T-t)$. Then $Y$ is a weak solution to
\begin{align*}
  dY(t) &= -A Y(t) -F(Y(t)) = A Y(t) + F(Y(t)) -2 (AY(t) + F(Y(t)))\\
  &=A Y(t) + F(Y(t)) -2G(Y(t))G^{-1}(Y(t)) (AY(t) + F(Y(t))).
\end{align*}

Letting $y = X^0_x(T)$ and $u(t)= -2G^{-1}(Y(t))(AY(t) + F(Y(t)))$, we can write $Y(t) = X^{0,u}_y(t)$.
The energy of the control is bounded by (applying  \eqref{eq:G-inv-norm} and Lemmas \ref{lem:conv-reg} and \ref{lem:semigroup-reg})
\begin{align} \label{eq:control-bound}
  &\frac{1}{2}\int_0^T |u(s)|_H^2 ds\nonumber \\
   &\leq C \int_0^T \left(|AS(t)x|_{H}^2 + \left|A \int_0^t S(t-s)F(X^0_x(s)) ds \right|_{H}^2 + |F(X^0_x(t))|_H^2 \right)dt \nonumber\\
  &\leq  C \left(  |x|_{H^1}^2 - |S(T)x|_{H^1}^2 \right) + CT \left(1 + |X^0_x|_{C([0,T]:E)}^{1+\rho} \right)^2.
\end{align}
\end{proof}

\end{document}